\definecolor{light-gray1}{gray}{0.90}
\definecolor{light-gray2}{gray}{0.80}
\definecolor{deepgreen}{cmyk}{1,0,1,0.5}
\newcommand{\B}{\mathcal{B}}
\newcommand{\E}{\mathcal{E}}
\newcommand{\HH}{\mathcal{H}}
\newcommand{\NN}{\mathcal{N}}
\newcommand{\M}{\mathcal{M}}
\newcommand{\cS}{\mathcal{S}}
\newcommand{\Cc}{\mathcal{C}}
\newcommand{\CC}{\mathscr{C}}
\newcommand{\Sm}{\mathscr{S}}
\newcommand{\Hp}{\mathbb{H}}
\newcommand{\N}{\mathbb{N}}
\newcommand{\R}{\mathbb{R}}
\newcommand{\Sp}{\mathbb{S}}
\newcommand{\Z}{\mathbb{Z}}
\newcommand{\al}{\alpha}
\newcommand{\be}{\beta}
\newcommand{\ga}{\gamma}
\newcommand{\de}{\delta}
\newcommand{\e}{\varepsilon}
\newcommand{\fy}{\varphi}
\newcommand{\om}{\omega}
\newcommand{\la}{\lambda}
\newcommand{\te}{\theta}
\newcommand{\s}{\sigma}
\newcommand{\De}{\Delta}
\newcommand{\p}{\partial}
\newcommand{\na}{\nabla}
\newcommand{\supp}{\operatorname{supp}}
\newcommand{\Rmnum}[1]{\expandafter\@slowromancap\romannumeral #1@}
\newcommand{\I}{\infty}
\newcommand{\ti}{\widetilde}
\newcommand{\ang}[1]{\left\langle{#1}\right\rangle}
\newcommand{\abs}[1]{\left\lvert{#1}\right\rvert}
\newcommand{\ali}[1]{\begin{align}\begin{split} #1 \end{split}\end{align}}
\newcommand{\ant}[1]{\begin{align*}\begin{split} #1 \end{split}\end{align*}}
\newcommand{\EQ}[1]{\begin{equation}\begin{split} #1 \end{split}\end{equation}}
\newcommand{\pmat}[1]{\begin{pmatrix} #1 \end{pmatrix}}
\newcommand{\Del}[1]{}
\def\ti{\tilde}
\numberwithin{equation}{section}
\newtheorem{thm}{Theorem}[section]
\newtheorem{cor}[thm]{Corollary}
\newtheorem{lem}[thm]{Lemma}
\newtheorem{prop}[thm]{Proposition}
\newtheorem{claim}[thm]{Claim}
\theoremstyle{remark}
\newtheorem{rem}{Remark}
\newtheorem{defn}{Definition}
\newcommand{\mand}{{\ \ \text{and} \ \  }}
\newcommand{\mif}{{\ \ \text{if} \ \ }}
\newcommand{\mfor}{{\ \ \text{for} \ \ }}
\newcommand{\mas}{{\ \ \text{as} \ \ }}
\def\glei{\mathrm{eq}}
 \def\Id{\mathrm{Id}}
\begin{document}

\title[Scattering for super-critical waves]{Scattering for radial, semi-linear, super-critical wave equations with bounded critical norm}
\author{Benjamin Dodson and Andrew Lawrie}
\begin{abstract} In this paper we study the \emph{focusing} cubic wave equation in $1+5$ dimensions with radial initial data as well as the one-equivariant wave maps equation in $1+3$ dimensions with the model  target  manifolds  $\Sp^3$ and $\Hp^3$. In both cases the scaling for the equation leaves the $\dot{H}^{\frac{3}{2}} \times \dot{H}^{\frac{1}{2}}$-norm of the solution invariant, which means that the equation is \emph{super-critical} with respect to the conserved energy.  Here we prove a conditional scattering result: If the critical norm of the solution stays bounded on its maximal time of existence, then the solution is global in time and scatters to free waves as $t \to \pm \infty$. The methods in this paper also apply to all supercritical power-type nonlinearities for both the focusing and defocusing radial semi-linear equation in $1+5$ dimensions, yielding analogous results. 

\end{abstract}

\thanks{Support of the National Science Foundation, DMS-1103914  for the first author, and DMS-1302782 for the second author, is gratefully acknowledged. }

\maketitle

\section{Introduction} 
In this paper we study three super-critical semi-linear wave equations, namely the focusing cubic wave equation in $\R^{1+5}$ with radial initial data and the one-equivariant wave maps equations from $\R^{1+3} \to \Sp^3$  and from $\R^{1+3}  \to \Hp^3$.  Under certain conditions the former equation serves as a good model for the first of the latter two, which have nonlinearities that arise  naturally from the geometry of the target manifold.  

\subsection{The cubic wave equation in $\R^{1+5}$}
Consider first the Cauchy problem for the focusing cubic  semi-linear wave equation in $\R^{1+5}$,  
\EQ{\label{u gen eq}
&u_{tt}- \Delta u - u^3 = 0,\\
&\vec u(0) = (u_0, u_1), 
}
restricted to the radial setting.  
The conserved energy for solutions, $$\vec u(t):= (u(t), u_t(t)),$$ to~\eqref{u gen eq} is given by 
\ant{
E( \vec u(t)) := \int_{\R^5}  \left[\frac{1}{2}( \abs{u_t(t)}^2 + \abs{\nabla u(t)}^2) - \frac{1}{4} \abs{u(t)}^4\right] \, dx = \textrm{constant}.
}
As we will only be considering radial solutions to~\eqref{u gen eq}, we slightly abuse notation by often writing $u(t, x) = u(t, r)$ where here  $(r, \om)$ with $r= \abs{x}$, $x= r  \om$,  $\om \in  \Sp^4$ are polar coordinates on $\R^5$. In this setting we can rewrite the 	equation~\eqref{u gen eq} as
\EQ{ \label{u eq}
&u_{tt}- u_{rr}- \frac{4}{r} u_r - u^3 = 0,\\
&\vec u(0) = (u_0, u_1),
}
and the conserved energy (up to a constant multiple) by
 \EQ{\label{E}
E( \vec u(t)) := \int_0^{\infty} \left[ \frac{1}{2}( u_t^2(t,r) + u_r^2(t,r)) - \frac{1}{4} u^4(t,r) \right] \, r^4 \, dr.
}
The Cauchy problem~\eqref{u eq} is invariant under the scaling
\EQ{
\vec u(t, r) \longmapsto   \vec u_{\la}(t, r) := (\la^{-1} u( t/ \la, r/ \la), \la^{-2} u_t( t/ \la, r/ \la)).
}
One can also check that this scaling leaves unchanged the $\dot{H}^{\frac{3}{2}} \times \dot{H}^{\frac{1}{2}}(\R^5)$-norm of the initial data. It is for this reason that~\eqref{u eq} is called {\em energy-supercritical}. 

The standard argument based on Strichartz estimates shows that~\eqref{u eq} is  locally well-posed in $\dot{H}^{\frac{3}{2}} \times \dot{H}^{\frac{1}{2}}(\R^5)$. This  means that for all initial data $\vec u(0) =(u_0, u_1) \in \dot{H}^{\frac{3}{2}} \times \dot{H}^{\frac{1}{2}}$, there is a unique solution, $\vec u(t)$, defined on a maximal interval of existence $I_{\max} = I_{\max}( \vec u)$ with $\vec u  \in C^0\left(I_{\max} ; \dot{H}^{\frac{3}{2}} \times \dot{H}^{\frac{1}{2}}(\R^5)\right)$. Moreover, for every compact time interval $J \subset I_{\max}$ we have 
\EQ{
u \in S(J):= L^{2}_t(J; L^{10}_x(\R^5)).
} 
The Strichartz norm $S(J)$ determines a criterion for both scattering and finite time blow up, see Proposition~\ref{prop: small data}.  In particular, one can show that if the initial data $\vec u(0)$ is sufficiently  small  in $ \dot{H}^{\frac{3}{2}} \times \dot{H}^{\frac{1}{2}}$, then the corresponding solution $\vec u(t)$ has finite $S(\R)$-norm and hence scatters to free waves as $t \to \pm \infty$. 

The theory for solutions to~\eqref{u eq} with initial data that is small in $\dot{H}^{\frac{3}{2}} \times \dot{H}^{\frac{1}{2}}$ is thus well understood -- all solutions are global in time and scatter to free waves as $t \to \pm \infty$. However, much less is known regarding the  \emph{asymptotic dynamics} of solutions once one leaves the perturbative regime. 

There are solutions to the focusing problem that blow-up in finite time. For example,   
\ant{
\fy_T(t, x) = \frac{ \sqrt{2}}{T-t}
}
solves the ODE,  $\fy_{tt} = \fy^3$. By the finite speed of propagation, one can construct from $\fy_T$ a compactly supported (in space) self-similar blow-up solution to~\eqref{u eq},  $\vec{u}_T(t)$,  with blow-up time $t=T$. However, such a self-similar solution  must have 
\begin{align*}
\lim_{t \to T} \| \vec{u}_T(t)\|_{\dot{H}^{\frac{3}{2}} \times \dot{H}^{\frac{1}{2}}(\R^5)} =  \infty.
\end{align*}
Such behavior is typically referred to as type-I blow-up. One the other hand, type-II solutions, $\vec u(t)$, are those whose critical norm remains bounded on their maximal interval of existence, $I_{\max}$, i.e., 
\EQ{ \label{type2}
\sup_{t  \in I_{\max}} \| \vec{u}(t)\|_{\dot{H}^{\frac{3}{2}} \times \dot{H}^{\frac{1}{2}}(\R^5)} <  \infty.
}
In this paper we restrict our attention to type-II solutions, i.e., those which satisfy the bound~\eqref{type2}. We prove that if a solution  $\vec u(t)$ to~\eqref{u eq} satisfies~\eqref{type2}, then it  must exist globally in time and scatter to free waves in both time directions. We establish the following result. 

\begin{thm}\label{thm: main}
 Let $\vec u(t) \in \dot{H}^{\frac{3}{2}} \times \dot{H}^{\frac{1}{2}}(\R^5)$ be a radial solution to~\eqref{u eq} defined on its maximal interval of existence $I_{\max}=(T_-, T_+)$. Suppose in addition that  
\EQ{\label{type ii}
\sup_{t \in I_{\max}} \| \vec u(t) \|_{\dot{H}^{\frac{3}{2}} \times \dot{H}^{\frac{1}{2}}(\R^{5})} < \infty.
}
Then,  $I_{\max} = \R$, that is, $\vec u(t)$ is defined globally in time. Moreover,  
\EQ{
\| u \|_{S(\R)}<  \infty, 
}
which means that $\vec u(t)$ scatters to free waves as $t \to  \pm \infty$, i.e., there exist radial solutions $\vec u_{L}^{\pm}(t)  \in \dot{H}^{\frac{3}{2}} \times \dot{H}^{\frac{1}{2}}(\R^5)$ to the free wave equation, $\Box u_L = 0$,  so that 
\EQ{
\| \vec u(t)- \vec u_L^{\pm}(t) \|_{\dot{H}^{\frac{3}{2}} \times \dot{H}^{\frac{1}{2}}(\R^5)} \longrightarrow 0 \mas t \to   \pm \infty.
}
\end{thm}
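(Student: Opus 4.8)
The plan is to run a concentration–compactness/rigidity argument in the spirit of Kenig–Merle, adapted to the energy-supercritical setting where the conserved energy no longer controls the critical norm. Since the equation is locally well-posed in $\dot H^{\frac32}\times\dot H^{\frac12}$ and has a good small-data theory (Proposition~\ref{prop: small data}), the first step is to argue by contradiction: if the theorem fails, then among all type-II radial solutions with $\sup_{t}\|\vec u(t)\|_{\dot H^{\frac32}\times\dot H^{\frac12}}<\infty$ there is a threshold beyond which $\|u\|_{S(I_{\max})}=\infty$. Using a profile decomposition for bounded sequences in $\dot H^{\frac32}\times\dot H^{\frac12}$ together with the nonlinear perturbation theory, one extracts a minimal (critical-element) solution $\vec u_*$ which is nonzero, type-II, and has $\|u_*\|_{S(I_{\max}^*)}=\infty$, and whose trajectory $\{\vec u_*(t)/\|\vec u_*(t)\|_{\text{crit}}\text{-like orbit}\}$ is precompact in $\dot H^{\frac32}\times\dot H^{\frac12}$ modulo the scaling symmetry. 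Concretely, there is a continuous function $\lambda(t)>0$ on $I_{\max}^*$ such that $\{(\lambda(t)^{-1/2}\text{-scaled profiles})\}$ — i.e. the rescaled family $\{\vec u_{*,\lambda(t)}(t)\}$ — lies in a compact subset of $\dot H^{\frac32}\times\dot H^{\frac12}(\R^5)$.

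The second step is to control the scaling parameter $\lambda(t)$ and to rule out finite-time blow-up for this critical element. Here I would exploit the radiality and the finite speed of propagation: compactness of the orbit forces the solution to remain concentrated at the origin at scale $\lambda(t)$, and a local-energy/virial-type argument shows $\lambda(t)$ cannot go to $0$ in finite time (otherwise one would violate the type-II bound or produce energy concentration inconsistent with the subcritical conservation laws that do survive, such as finiteness of energy on truncated cones). The self-similar solutions $\varphi_T$ described in the introduction are exactly the obstruction one must exclude, and the bounded-critical-norm hypothesis is what kills them. After handling finite time, one reduces to the global case $I_{\max}^*=\R$ with $\lambda(t)$ controlled (e.g. $\lambda(t)\gtrsim 1$, or $\lambda(t)\to 0$ only slowly), again via monotonicity of a suitable truncated virial/Morawetz quantity.

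The third step is the rigidity argument proper: show that a nonzero, global, type-II, radial solution with precompact (modulo scaling) trajectory must be identically zero, contradiction. The tool is a channel-of-energy / Morawetz estimate tailored to $\R^{1+5}$ and the cubic nonlinearity — integrating the multiplier identity associated with $(r\partial_r + \tfrac{5}{2})u$ (the generator of scaling) against the equation over truncated light cones, using the compactness to handle boundary/flux terms, and deducing that a positive-definite spacetime quantity is both nonnegative and $\le 0$, hence zero, which forces $\nabla_{t,x}u\equiv 0$ and then $u\equiv 0$. This is where I expect the main obstacle: the energy is supercritical, so the virial functional is not coercive in the critical norm and the multiplier estimates must be closed at the level of $\dot H^{\frac12}$-type quantities rather than energy, requiring careful weighted Hardy/Strichartz bookkeeping and exploiting radiality (pointwise decay $|u(r)|\lesssim r^{-3/2}\|\cdot\|$-type bounds) to absorb error terms; getting the concentration-scale $\lambda(t)$ under sufficient control to make the Morawetz truncation effective is the crux. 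Finally, once the critical element is excluded, the contradiction establishes $\|u\|_{S(\R)}<\infty$ for all type-II solutions, and the standard argument converting finite $S(\R)$-norm into scattering in $\dot H^{\frac32}\times\dot H^{\frac12}$ (again via Proposition~\ref{prop: small data} and Duhamel) finishes the proof; the wave maps cases follow by the same scheme after reducing them to a perturbed version of~\eqref{u eq}.
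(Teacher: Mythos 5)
Your overall architecture (small data theory, extraction of a minimal non-scattering critical element with precompact orbit modulo scaling, reduction to a rigidity statement) matches the paper through Section~3. The gap is in your rigidity step. You propose to close the argument with a virial/Morawetz identity built on the scaling multiplier $(r\partial_r+\tfrac52)u$, concluding that a positive-definite spacetime quantity vanishes. For the \emph{focusing} cubic equation this fails: the potential term $-\tfrac14\int u^4$ enters the multiplier identity with the wrong sign, so the resulting quantity is not sign-definite and no coercivity is available; the paper states explicitly that no useful virial or Morawetz inequality is known here. The actual rigidity argument is of channels-of-energy type: one uses the exterior energy estimate for the free radial wave in $\R^{1+5}$ (Proposition~\ref{prop: ext en}, with the projection off the plane spanned by $r^{-3}$) to derive, for the critical element, the inequality $\|\pi_R^\perp\vec u(t)\|_{\dot H^1\times L^2(r\ge R)}\lesssim R^{-1}\|\pi_R\vec u(t)\|^3$, which forces the data to have the exact asymptotics $r^3u_0(r)\to\ell_0$. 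One then shows $\vec u(0)$ must coincide with one of the explicit singular stationary solutions $\fy_{\ell_0}$ classified in Lemma~\ref{lem: ell}; since $\fy_{\ell_0}\notin\dot H^{3/2}$ for $\ell_0\neq0$, only $\ell_0=0$ survives, and a bootstrap on the difference estimates forces compact support and then $\vec u\equiv0$. This requires no monotone quantity for the nonlinear flow.

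A second, equally essential ingredient missing from your plan is the double Duhamel argument of Section~\ref{sec: dd}. The exterior energy machinery lives in $\dot H^1\times L^2$, which is \emph{below} the critical regularity, so one must first prove that the critical element gains decay (uniform boundedness in $\dot H^{3/4}\times\dot H^{-1/4}$, hence by interpolation compactness in $\dot H^1\times L^2$); similarly, excluding the compact self-similar blow-up scenario requires the regularity gain to $\dot H^{5/2}\times\dot H^{3/2}$ so that the solution becomes compact in $H^2\times H^1$ and the self-similar coordinates/degenerate elliptic argument closes. Without these gains neither half of the rigidity theorem goes through at the scaling-critical regularity $\dot H^{3/2}\times\dot H^{1/2}$.
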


\begin{rem}
Theorem~\ref{thm: main} is a conditional result. Other than the requirement that the initial data be small in $\dot H^{\frac{3}{2}} \times \dot H^{\frac{1}{2}}$, there is no known general criterion to determine when~\eqref{type ii} is satisfied by the evolution.  We remark that that this type of result is analogous to the work by Duyckaerts, Kenig, and Merle in $3$ dimensions in~\cite{DKM5} and also bears similarity to the $L^{3, \infty}$ result of Escauriaza, S\"eregin, and \v{S}verak for the Navier-Stokes equation, see~\cite{ESS03}. 
\end{rem}

\begin{rem} The proof of Theorem~\ref{thm: main} readily generalizes to all supercritical powers $p  > \frac{7}{3}$ in dimension $d = 5$ for both the focusing and defocusing equations with radial initial data.  As we demonstrate in the next subsection where we consider the case of one-equivariant wave maps, the techniques in this paper are very flexible with regards to  particular algebraic structure of the nonlinearity. For power-type nonlinearities we have chosen to present the details for only the focusing cubic equation to keep the exposition as simple as possible.  Another reason for choosing to study the \emph{focusing} cubic equation as opposed to other power-type nonlinearities is this equation requires several new techniques which fall outside the scope of what has previously been developed in the literature, such as~\cite{DKM5}, which is the first conditional result for a focusing supercritical equation, and~\cite{KM11a, KM11b, KV10CPDE, KV11TAMS, Bul12b}, which address defocusing supercritical waves. 
\end{rem}

\subsection{One-Equivariant Wave Maps} Next we consider one-equivariant wave maps in $1+3$ dimensions taking values in $3$-dimensional rotationally symmetric manifolds $\M$. Let $(r, \theta) \in \R^* \times \Sp^2$ be polar coordinates on $\R^3$ and let $(\psi, \om)$ be geodesic polar coordinates on $\M$, where the metric takes the form 
\ant{
ds^2  = dr^2 + g^2(\psi) d \om^2
}
and $d\om^2$ denotes the  round metric on $\Sp^2$.  Maps $U: \R^{1+3} \to  \M$ can then be written in the form $U(t, r, \theta)  = ( \psi(t,r , \theta), \om(t, r, \te))$.  In the usual one-equivariant (or co-rotational) reduction, one makes the ansatz
\EQ{
U(t, r, \theta)  = ( \psi(t,r), \te)
}
and the wave maps system reduces to a Cauchy problem for the coordinate function $\psi$, viz., 
\EQ{ \label{eq wm}
&\psi_{tt} - \psi_{rr} - \frac{2}{r} \psi_r + \frac{f(\psi)}{r^2} =  0 \\
&\vec \psi(0) = (\psi_0, \psi_1)
}
where $f(\psi) := g(\psi) g'(\psi)$.  The conserved energy is given by 
\EQ{
\E( \vec \psi)(t):= \frac{1}{2} \int_0^{\infty}   \left[ \psi_t^2 + \psi_r^2 + \frac{ 2 g^2( \psi)}{r^2} \right] \, r^2 \, dr = \textrm{constant}. 
}
We also note the scaling invariance, 
\EQ{
\psi(t, r) \longmapsto   \psi_{\la}(t, r):= \psi( t/ \la, r/ \la)
}
Note that it is energetically favorable for a solution to concentrate to a point  by the rescaling  above and sending $\la \to 0$ as we have 
\EQ{
\E( \vec \psi_\la)  = \la \E( \vec \psi)  \to 0 \mas \la \to 0
}
This means that $1+3$ dimensional wave maps are \emph{super-critical} with respect to the conserved energy. It is well know that $3d$ wave maps into positively curved targets such as the $3$-sphere, $\M = \Sp^3$, can blow up in finite time in a self-similar fashion. This was proved by Shatah in~\cite{Shatah} and an explicit example was given by Turok, Spergel \cite{TS} with 
\EQ{
\psi(t, r)  = 2 \arctan( r/t).
}
For negatively curved targets such as $3d$-hyperbolic space, $\M = \Hp^3$, it is not know whether singularities can develop in finite time. 

Wave maps arise as a model in particle physics, particularly with dimension $d=3$, and in this setting are referred to as nonlinear $\s$-models. Here for simplicity we will consider two model targets, namely $\M = \Sp^3$ and $\M = \Hp^3$. 

\subsubsection{Wave maps into $\Sp^3$} The case of the target $ \M = \Sp^3$ has traditionally been of interest due to the possibility of solutions with nontrivial topology.  In our equivariant formulation with the $\Sp^3$ target, we have $g( \psi) = \sin \psi$ and the equation and conserved energy become 
\EQ{\label{wm s3}
&\psi_{tt} - \psi_{rr} - \frac{2}{r} \psi_r + \frac{\sin(2\psi)}{r^2} =  0, \quad \vec \psi(0) = (\psi_0, \psi_1)\\
& \E( \vec \psi)(t):= \frac{1}{2} \int_0^{\infty}   \left[ \psi_t^2 + \psi_r^2 + \frac{ 2 \sin^2( \psi)}{r^2} \right] \, r^2 \, dr 
}
From the above it is clear that for initial data $\vec \psi(0) = (\psi_0, \psi_1)$ to have finite energy one requires that $\lim_{r \to \infty} \psi_0(r) = n \pi$ for some $n \in \N$,  and by a continuity argument, this endpoint is fixed by the evolution. The energy allows for more flexible behavior of $\psi_0(r)$  at $r = 0$ in contrast to the case of $2d$ wave maps into the $2$-sphere, which have topological degree which is fixed by the evolution. Here we see that simply requiring that the solution has finite energy allows for any finite limit $\lim_{r \to 0} \psi (t, r) = \al(t) \in \R$ and this limit can possibly change with the evolution.  Here, our techniques force us to ignore this subtlety as will impose the priori assumption that for all $t \in I_{\max}( \vec \psi)$ we have 
\EQ{ \label{h 32 wm}
 \vec u(t, r)  := (u(t, r), u_t(t, r)) := \left(\frac{\psi(t, r)}{r}, \frac{\psi_t(t, r)}{r} \right) \in  \dot{H}^{\frac{3}{2}}  \times \dot{H}^{\frac{1}{2}} (\R^5)
 }
 With this assumption we can recast the Cauchy problem~\eqref{wm s3} in terms of $\vec u(t)$, which solves 
 \EQ{ \label{u eq wms}
 &u_{tt} - u_{rr} - \frac{4}{r} u_r  = \frac{ 2ru - \sin(2ru)}{r^3}: = F_{\Sp^3}(r, u)\\
 & \vec u(0) = (u_0, u_1)
 }
By Sobolev embedding we must then have $u(t) \in L^5(\R^5)$ for all $t \in I_{\max}$, which in turn implies that 
\ant{
\int_0^{\infty} \frac{\psi^5(t, r)}{r} \, dr <\infty.
}
Hence we must have $\psi(t, 0) = 0$ and $\psi(t, \infty) = 0$ for all $t \in I_{\max}$ once we make the a priori assumption~\eqref{h 32 wm}. Note also that the nonlinearity satisfies 
\EQ{ \label{F S}
&F_{\Sp^3}(r, u) = u^3 Z_{\Sp^3}( r u) \\
&\abs{F_{\Sp^3}(r, u)  } \lesssim \abs{u}^3
}
where $Z_{\Sp^3}( \rho)  =8 \frac{ \rho -  \sin \rho}{ \rho^3}$ is a smooth bounded, \emph{nonnegative} function.

Thus, in the $5d$ formulation~\eqref{u eq wms}, the topologically trivial equivariant wave maps problem into $\Sp^3$ bears many similarities to the \emph{focusing} cubic equation~\eqref{u eq}. We establish a conditional result, which is completely analogous to Theorem~\ref{thm: main}.  Before stating our main theorem regarding equivariant wave maps, we first introduce the case the of the $\Hp^3$ target.

 \subsubsection{Wave maps into $\Hp^3$}
 The case of the negatively curved target $ \M = \Hp^3$ bears many similarities to the defocusing radial cubic wave equation in $\R^{1+3}$ after the reduction performed below.  In the equivariant formulation with the $\Hp^3$ target, we have $g( \psi) = \sinh \psi$ and the equation and conserved energy become 
\EQ{\label{wm h3}
&\psi_{tt} - \psi_{rr} - \frac{2}{r} \psi_r + \frac{\sinh(2\psi)}{r^2} =  0, \quad \vec \psi(0) = (\psi_0, \psi_1)\\
& \E( \vec \psi)(t):= \frac{1}{2} \int_0^{\infty}   \left[ \psi_t^2 + \psi_r^2 + \frac{ 2 \sinh^2( \psi)}{r^2} \right] \, r^2 \, dr 
}
From the above it is clear that for initial data $\vec \psi(0) = (\psi_0, \psi_1)$ to have finite energy one requires that $\lim_{r \to \infty} \psi_0(r) = 0$  and this endpoint is fixed by energy conservation.  As in the case of the $\Sp^3$ target, the energy allows for more flexible behavior of $\psi_0(r)$  at $r = 0$. Here we see that simply requiring that the solution has finite energy allows for any finite limit $\lim_{r \to 0} \psi (t, r) = \al(t) \in \R$ and this limit can possibly change with the evolution.  Here, again we ignore this subtlety and impose the priori assumption that for all $t \in I_{\max}( \vec \psi)$ we have 
\EQ{ \label{h 32 wm h}
 \vec u(t, r)  := (u(t, r), u_t(t, r)) := \left(\frac{\psi(t, r)}{r}, \frac{\psi_t(t, r)}{r} \right) \in  \dot{H}^{\frac{3}{2}}  \times \dot{H}^{\frac{1}{2}} (\R^5)
 }
 With this assumption we can recast the Cauchy problem~\eqref{wm h3} in terms of $\vec u(t)$, which solves 
 \EQ{ \label{u eq wmh}
 &u_{tt} - u_{rr} - \frac{4}{r} u_r  = \frac{ 2ru - \sinh(2ru)}{r^3}: = F_{\Hp^3}(r, u)\\
 & \vec u(0) = (u_0, u_1)
 }
Assuming~\eqref{h 32 wm h} and by Sobolev embedding we must then have $u(t) \in L^5(\R^5)$ for all $t \in I_{\max}$, which in turn implies that 
\ant{
\int_0^{\infty} \frac{\psi^5(t, r)}{r} \, dr <\infty.
}
Hence we must have $\psi(t, 0) = 0$ once we make the a priori assumption~\eqref{h 32 wm}. Note also that the nonlinearity satisfies 
\EQ{
&F_{\Hp^3}(r, u) = u^3 Z_{\Hp^3}( r u) \\
}
where $Z_{\Hp^3}( \rho)  =8 \frac{ \rho -  \sinh \rho}{ \rho^3}$ is a smooth \emph{nonpositive} function.  If we assume an a priori uniform bound 
\EQ{ \label{type ii wm}
\sup_{t \in I_{\max}} \|  \vec u(t) \|_{\dot H^{\frac{3}{2}} \times \dot{H}^{\frac{1}{2}}(\R^5)} \le C< \infty,
}
then we have $L^{\infty}$ control on $ \psi = r u$ by radial Sobolev embedding, 
\EQ{ 
 \abs{ r u(t, r)} \lesssim \sup_{t \in I_{\max}} \|  \vec u(t) \|_{\dot H^{\frac{3}{2}} \times \dot{H}^{\frac{1}{2}}(\R^5)} \lesssim C,
 } 
which follows from Lemma~\ref{lem: rad se}. With the assumption~\eqref{type ii wm}, we thus have a uniform bound on $Z_{\Hp^3} ( ru)$ and hence 
\EQ{ \label{F H bound}
\abs{F_{\Hp^3}(r, u)} \lesssim  \abs{u}^3
}
making the analogy with the defocusing cubic equation in $\R^{1+5}$ clear.

Finally, we state our main result for wave maps, which holds for both the $\Sp^3$ target,~\eqref{u eq wms},  and for the $\Hp^3$ target,~\eqref{u eq wmh}. 
\begin{thm}\label{thm: wm}
Let $\vec u(t) \in \dot{H}^{\frac{3}{2}} \times \dot{H}^{\frac{1}{2}}(\R^5)$ be a radial solution to either~\eqref{u eq wms} or to~\eqref{u eq wmh} defined on its maximal interval of existence $I_{\max}=(T_-, T_+)$. Suppose in addition that  
\EQ{\label{type ii1}
\sup_{t \in I_{\max}} \| \vec u(t) \|_{\dot{H}^{\frac{3}{2}} \times \dot{H}^{\frac{1}{2}}(\R^{5})} < \infty.
}
Then,  $I_{\max} = \R$, that is, $\vec u(t)$ is defined globally in time. Moreover,  
\EQ{
\| u \|_{S(\R)}<  \infty, 
}
which means that $\vec u(t)$ scatters to free waves as $t \to  \pm \infty$, i.e., there exist radial solutions $\vec u_{L}^{\pm}(t)  \in \dot{H}^{\frac{3}{2}} \times \dot{H}^{\frac{1}{2}}(\R^5)$ to the free wave equation, $\Box u_L = 0$,  so that 
\EQ{
\| \vec u(t)- \vec u_L^{\pm}(t) \|_{\dot{H}^{\frac{3}{2}} \times \dot{H}^{\frac{1}{2}}(\R^5)} \longrightarrow 0 \mas t \to   \pm \infty.
}
\end{thm}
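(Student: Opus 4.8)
\emph{Proof proposal.} The plan is to run the argument in parallel with the proof of Theorem~\ref{thm: main}, with the $\Sp^3$ target playing the role of the focusing cubic equation and the $\Hp^3$ target that of the defocusing one. The unifying point is that, under the type-II hypothesis~\eqref{type ii1}, both nonlinearities satisfy the cubic pointwise bound $\abs{F_{\M}(r,u)}\lesssim\abs{u}^3$: for $\Sp^3$ this is~\eqref{F S}, and for $\Hp^3$ it is~\eqref{F H bound}, which uses the radial Sobolev bound $\abs{ru(t,r)}\lesssim\sup_{t\in I_{\max}}\|\vec u(t)\|_{\dot H^{3/2}\times\dot H^{1/2}}$ from Lemma~\ref{lem: rad se} to keep $Z_{\Hp^3}(ru)$ bounded. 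With this in hand one runs the Kenig--Merle concentration--compactness/rigidity scheme in the supercritical form developed by Duyckaerts--Kenig--Merle. First, the perturbative theory for~\eqref{u eq wms}--\eqref{u eq wmh} --- small-data scattering in $\dot H^{3/2}\times\dot H^{1/2}(\R^5)$ and a long-time perturbation lemma, as in Proposition~\ref{prop: small data} --- reduces the theorem to proving $\|u\|_{S(I_{\max})}<\infty$. Assuming this fails, a linear profile decomposition for radial data in $\dot H^{3/2}\times\dot H^{1/2}$ together with the usual minimality selection produces a nonzero \emph{critical element} $\vec u_*$: a radial solution with $\sup_{t\in I_{\max}}\|\vec u_*(t)\|_{\dot H^{3/2}\times\dot H^{1/2}}<\infty$, with $\|u_*\|_{S(I_+)}=\infty$ on some $I_+=[0,T_+)$, and with a scale $\lambda\colon I_+\to(0,\infty)$ such that
\[
K=\bigl\{\bigl(\lambda(t)^{-1}u_*(t,\cdot/\lambda(t)),\ \lambda(t)^{-2}\partial_t u_*(t,\cdot/\lambda(t))\bigr):t\in I_+\bigr\}
\]
is precompact in $\dot H^{3/2}\times\dot H^{1/2}(\R^5)$. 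In the nonlinear-profile analysis one uses that when a profile concentrates at a scale tending to $0$ or $\infty$ its argument $ru$ tends to $0$, so $Z_{\M}(ru)\to Z_{\M}(0)=\pm\tfrac{4}{3}$ and the limiting profile equation is exactly the focusing ($\Sp^3$) or defocusing ($\Hp^3$) cubic equation~\eqref{u eq}, which is precisely the situation handled in the proof of Theorem~\ref{thm: main}.

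The second stage, the rigidity theorem, forces $\vec u_*\equiv 0$, a contradiction. One would first upgrade the critical element: precompactness of $K$ gives uniform frequency and spatial localization after rescaling by $\lambda(t)$, and the absence of a free-wave component permits a double-Duhamel argument --- pairing the Duhamel representations of $\vec u_*(t)$ taken from $t\to T_+$ and from $t\to T_-$ --- to gain regularity, decay, and space-time integrability for $\vec u_*$; in particular, in the $\Hp^3$ case this makes the conserved energy $\E(\vec\psi_*)$ finite, hence usable. After controlling the modulation parameter $\lambda(t)$, one splits into two cases. If $T_+<\infty$, finite speed of propagation and the exterior-energy (``channels of energy'') lower bounds for the free radial wave equation in $\R^5$ preclude finite-time blow-up: precompactness of $K$ would force all but a small fraction of the solution's exterior energy to sit inside $\{\abs{x}\lesssim\lambda(t)\}$ with $\lambda(t)=o(T_+-t)$, contradicting the channels-of-energy bound. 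If $T_+=\infty$, the solution is soliton-like; a virial/Morawetz identity --- essentially immediate in the $\Hp^3$ case once $\E$ is available, and requiring the channels-of-energy input in the $\Sp^3$ case --- together with the Derrick/Pohozaev scaling identity $\E(\vec\psi_\la)=\la\,\E(\vec\psi)$, which rules out nonconstant finite-energy stationary solutions, forces $\vec u_*\equiv 0$.

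The hard part will be this rigidity stage, for two reasons particular to the supercritical regime. First, the conserved energy sits at regularity $\dot H^{1}\times L^2$, strictly below the critical regularity $\dot H^{3/2}\times\dot H^{1/2}$, so it is not a priori at one's disposal; the double-Duhamel/extra-regularity step that recovers it --- and the decay needed to make the potential term of $\E$ finite --- must be arranged so that the gained regularity and decay are not destroyed when propagated through the cubic-type nonlinearity, and so that they coexist with the compactness of $K$. Second, one cannot simply quote the energy-critical channels-of-energy theory: the exterior-energy estimates in $\R^5$ and the analysis near a would-be finite-time blow-up must be carried out directly, and the interaction between the non-scale-invariant factor $Z_{\M}(ru)$ and the scaling of the critical norm has to be tracked throughout. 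The $\Hp^3$ target is genuinely the easier of the two --- the Morawetz estimate essentially closes the global case once $\E$ is finite --- whereas the $\Sp^3$ target inherits all the difficulties of the focusing cubic equation of Theorem~\ref{thm: main}.
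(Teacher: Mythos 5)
Your first stage (small data theory, perturbation lemma, profile decomposition, extraction of a critical element with precompact rescaled trajectory) matches the paper, as does the idea of using the double Duhamel trick to gain regularity and decay. The genuine gap is in the rigidity stage, in both of your cases. For the soliton-like case you propose to close the argument with ``a virial/Morawetz identity \ldots together with the Derrick/Pohozaev scaling identity, which rules out nonconstant finite-energy stationary solutions.'' This is exactly what the paper says cannot be done: there is no known useful virial or Morawetz inequality for the focusing supercritical equations \eqref{u eq} and \eqref{u eq wms}, and more importantly the Derrick/Pohozaev argument addresses the wrong class of obstructions. The obstruction to rigidity here is not a finite-energy stationary solution but the one-parameter family of \emph{singular} stationary solutions $\fy_\ell$ of Lemma~\ref{lem: hm} with $r^3\fy_\ell(r)\to\ell\neq0$, which have infinite energy and do not lie in $\dot H^{3/2}(\R^5)$. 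The actual argument (Section~\ref{sec: rig}) uses the exterior energy estimates of Proposition~\ref{prop: ext en}, together with the $\dot H^{3/4}\times\dot H^{-1/4}$ decay of Proposition~\ref{prop: wm 34} (which upgrades compactness to $\dot H^1\times L^2$), to derive the exact asymptotics $r^3u_0(r)\to\ell_0$, and then shows that $\ell_0\neq0$ forces $\vec u(0)=(\fy_{\ell_0},0)\notin\dot H^{3/2}\times\dot H^{1/2}$ while $\ell_0=0$ forces compact support and then vanishing. No monotone quantity at the level of the nonlinear flow is used, and this ODE/elliptic phase-portrait input is what your sketch is missing.

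Your finite-time blow-up case is also not the paper's route and is not adequately justified as stated. The reduction is not into $T_+<\infty$ versus $T_+=\infty$ but into (i) $N(t)$ bounded below on all of $I_{\max}$ and (ii) self-similar blow-up with $N(t)=(T_+-t)^{-1}$. Case (ii) is handled not by channels of energy but by passing to self-similar coordinates, exhibiting a Lyapunov functional, and showing the limit solves a nondegenerate elliptic equation with vanishing Dirichlet and Neumann data on $\partial B(0,1-\delta)$, hence vanishes; this step requires the $\dot H^{5/2}\times\dot H^{3/2}$ regularity gain of Proposition~\ref{prop: wm 52} (the cubic power is a borderline case where the unmodified argument of Duyckaerts--Kenig--Merle fails). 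Your assertion that precompactness ``contradicts the channels-of-energy bound'' near a finite-time singularity ignores the projection $\pi_R^{\perp}$ onto the complement of the plane spanned by $(r^{-3},r^{-3})$ in \eqref{ext}, without which the exterior-energy lower bound is false; making that quantitative is precisely where the singular stationary solutions reappear, and you have not addressed it.
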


\subsection{History of the problems}
There is very little known about energy super-critical semi-linear wave equations on $\R^{1+d}$, at least when compared to the vast body of literature devoted to their sub-critical and critical counterparts. 

There are several conditional results in the same vein as Theorem~\ref{thm: main} and Theorem~\ref{thm: wm} in the case of  \emph{defocusing} super-critical equations, where Morawetz type identities can be used;  see for example \cite{KM11a, KM11b, KV11TAMS, KV11a, Bul12a, Bul12b}. The only such results for  \emph{focusing} type equations as considered here are the work of Duyckaerts, Kenig, and Merle,~\cite{DKM5}, concerning super-critical power-type nonlinearities in dimension $d = 3$, and the work of the second author on a semi-linear Skyrme-type equation in~\cite{L13}. 

There is much less know in the way of unconditional results.  Recently,  in an exciting new direction,  Krieger and Schlag,~\cite{KS14}, have constructed a  family of solutions to the super-critical power type equation in $d=3$, which are smooth, global-in-time,  have ~\emph{infinite} critical norm, and are stable under small perturbations. 

For the focusing NLS and focusing wave equations in  high dimensions, $d \ge 11$, there have recently been  blow-up constructions based on the bubbling off of a solution to the underlying elliptic equation; see Merle, Raphael, Rodnianski,~\cite{MRR} and Collot,  \cite{Coll} (we note that in the aforementioned works something different is meant by  ``type II" than how this phrase is used in this paper). 

Super-critical equivariant wave maps in $d=3$ as considered here are called nonlinear $\s$-models in particle physics and have been extensively studied. As we mentioned above, self-similar blow-up was demonstrated by Shatah,~\cite{Shatah}, and Turok-Spergel in the case of the $\Sp^3$ target.  Donninger  has established stability  for such self-similar solutions,~\cite{Don10, Don11}; see also the work of Bizon,~\cite{Biz00}. Similar results have been established for power-type nonlinearities, see~\cite{DS14} for a stability result, and~\cite{BMW} for a construction of an infinite family of smooth, self-similar solutions.  Surprisingly, blow-up can occur even in the case of wave maps into negatively curved targets in high enough dimensions, as shown by Cazenave, Shatah, and Tahvildar-Zadeh, \cite{CST}; see  the book of Shatah and Stuwe~\cite{SSbook} for more.  
 
 Equations of the form 
\EQ{ \label{sl}
\Box u  =  \pm \abs{u}^{p-1} u
} 
for energy critical and sub-critical  values of $p$ have been  extensively studied. When say,  $d=3$, the energy critical power, $p=5$,  exhibits markedly  different phenomena than both the subcritical and supercritical problems. Global existence and scattering for all finite energy data was established by Struwe,~\cite{Struwe88}, for the radial defocusing equation and by Grillakis,~\cite{Gri90}, in the nonradial, defocusing case. 

In the case of  the focusing energy critical equation, type-II blow up does occur,  as explicitly demonstrated by Krieger, Schlag, and Tataru~\cite{KST3}, by way of  an energy concentration scenario resulting in the bubbling  off of the unique radial  ground state solution, $W$, for the underlying elliptic equation; see also \cite{KS12, DHKS, DK}. 

In~\cite{KM08}, Kenig and Merle initiated an extremely effective program of attack for semilinear equations such as~\eqref{sl} with the concentration compactness/rigidity method based on the fundamental profile decompositions of Bahouri and G\'erard,~\cite{BG}. There they gave a characterization of possible dynamics for solutions with energy strictly below the threshold energy of the ground state elliptic solution, $W$. The seminal work of Duyckaerts, Kenig, and Merle \cite{DKM1,DKM3, DKM2, DKM4} gave a classification of possible dynamics for large energies. To be more precise,  all type-II radial solutions asymptotically resolve into a sum of rescaled solitons  plus a radiation term. Dynamics at the threshold energy of $W$ have been studied by Duyckaerts and Merle~\cite{DM} and slightly above the threshold energy by Krieger, Nakanishi, and Schlag in~\cite{KNS13AJM, KNS13DCDS, KNS14CMP}. 

For analogues of Theorem~\ref{thm: main} and Theorem~\ref{thm: wm} for energy sub-critical equations we refer the reader to~\cite{Shen}, and to the recent work of the authors,~\cite{DL14a}. We also mention the remarkable works of Merle and Zaag,~\cite{MZ03ajm, MZ05ma} where it was determined that all subcritical blow-up for the focusing equation occurs at the self-similar rate. 

\subsection{Outline of the proofs of Theorem~\ref{thm: main} and Theorem~\ref{thm: wm}}

The proofs of Theorem~\ref{thm: main} and Theorem~\ref{thm: wm} proceed via the concentration compactness/ rigidity method developed by Kenig and Merle in~\cite{KM06, KM08}. The method is  based around an elaborate contradiction argument -- if Theorem~\ref{thm: main} (respectively Theorem~\ref{thm: wm}) were false, the linear and nonlinear profile decompositions of Bahouri and G\'erard~\cite{BG} allow for a construction of a minimal  non-scattering solution to~\eqref{u eq}, called the critical element. Here  minimality refers to the size of the norm in~\eqref{type ii} (resp.~\eqref{type ii1}). This construction is standard in the field and we give a brief outline in Section~\ref{sec: cc}. The crucial property of the critical element that drives the contradiction argument is that its trajectory is \emph{pre-compact} up to modulation in the space $\dot{H}^{\frac{3}{2}} \times \dot{H}^{\frac{1}{2}}$. The goal is then to prove that this compactness property is too restrictive  for a nonzero solution and therefore the critical element does not exist.  

For the rigidity argument that rules out the critical element, we roughly follow the strategy implemented in~\cite{DKM5} for the focusing super-critical wave equation in $3$-dimensions.  Given the \emph{exterior energy estimates} for the free radial wave equation proved in~\cite{KLS}, the precise strategy in~\cite{DKM5} can be adapted to super-critical equations of the form 
\EQ{
\Box u = \pm \abs{u}^{p-1} u
}
in higher dimensions, in particular $d=5$, \emph{but only when $p>5$}, i.e., for  critical regularity levels $\dot{H}^s \times  \dot{H}^{s-1}$ with $s \ge 2$.  There are several instances where the techniques used in ~\cite{DKM5} break down for supercritical powers $ p<5$, and in particular for the cubic-type equations considered here. Here we build on the strategy developed in~\cite{DKM5}, by developing collection of robust new techniques that work for all powers $p > \frac{7}{3}$ as well as for more complicated nonlinearities such as those which arise in the context of equivariant wave maps, as in~\eqref{u eq wms} and~\eqref{u eq wmh}. 

As in~\cite{DKM5} we reduce to two scenarios for the critical element $ \vec u^*(t)$, namely, 
\begin{enumerate}
\item $\vec u^*(t)$ is a self-similar blow-up solution with pre-compact trajectory, see Proposition~\ref{prop: r2}.
\item $\vec u^*(t)$ is pre-compact on its entire interval of existence $I$ up to a modulation parameter $N(t)$ that is bounded away from $0$ on $I$, see Proposition~\ref{prop: r1}. 
\end{enumerate}

In the first situation, $(1)$, we follow~\cite{DKM5} by introducing self-similar coordinates and using compactness to produce a stationary solution to a non-degenerate  elliptic equation with zero boundary data.  However, a straightforward application of their approach adapted to $5d$ breaks down for the cubic equation and the regularity $\dot{H}^{\frac{3}{2}} \times \dot{H}^{\frac{1}{2}}$. We overcome this difficulty  by proving, in Section~\ref{sec: dd},  a bound on the $\dot{H}^{\frac{5}{2}} \times \dot{H}^{\frac{3}{2}}$ norm of $\vec u^*(t)$, i.e., we show that solutions with pre-compact trajectories are in fact more regular than what is given by scaling; see Proposition~\ref{prop: u 52} and Proposition~\ref{prop: wm 52}. This bound on the $\dot{H}^{\frac{5}{2}} \times \dot{H}^{\frac{3}{2}}$ norm of $\vec u^*(t)$ then implies (by interpolation) that a self-similar compact blow-up solution is in fact also compact in $\dot{H}^2 \times \dot{H}^1 \cap \dot{H}^{\frac{3}{2}} \times \dot{H}^{\frac{1}{2}} $. With this additional regularity we are able to rule out compact self-similar blow up with a somewhat simpler implementation of the argument in~\cite{DKM5}. The crucial gain of regularity in Section~\ref{sec: dd} is established using the so-called double Duhamel trick which we will describe briefly below.  We note that an analogous implementation of the double Duhamel trick was performed by the authors  in~\cite{DL14a} for the cubic equation in $d=3$. 

There are several  major difficulties  when trying to rule out a critical element $\vec u^*(t)$ as in $(2)$ for supercritical equations as considered here. The first is that  $\vec u^*(t)$ is constructed in the homogeneous space $\dot{H}^{\frac{3}{2}} \times \dot{H}^{\frac{1}{2}}(\R^5)$. Although having a pre-compact trajectory up to modulation in this space is a strong property, there is very little in practice that one can do at this regularity, since useful global quantities such as the conserved energy are at the level of $(\dot H^1 \cap \dot{H}^{\frac{5}{4}}) \times L^2$, where here the intersection with $\dot{H}^{\frac{5}{4}}$ arises due to the $L^4(\R^5)$ term in the conserved energy associated to~\eqref{u eq} and Sobolev embedding.  In order to resolve this first issue, we prove in Section $4$ that solutions $\vec u^*(t)$ as in $(2)$ have more decay than what is given by scaling. In particular, we use the another implementation of the double Duhamel trick to prove that in fact a compact solution as in $(2)$ above must be uniformly bounded in $\dot{H}^{\frac{3}{4}} \times \dot{H}^{-\frac{1}{4}}$. Interpolating with the critical $ \dot{H}^{\frac{3}{2}} \times \dot{H}^{\frac{1}{2}}$  bound then implies compactness in the space $\dot{H}^1 \times L^2$, see Lemma~\ref{lem: compact}. An alternative method for establishing additional spacial decay based on delicate flux-type estimates  was used in \cite{KM11a, KM11b, DKM5}. However, this method does not seem to work for the cubic type nonlinearities  considered here, thus motivating the different approach based on the double-Duhamel trick. 

 Second, even once the decay issue is resolved, there are no known useful viral or Morawetz type inequalities for super-critical \emph{focusing}-type equations such as~\eqref{u eq} and~\eqref{u eq wms}.  Here we rely on a new rigidity argument, see~\cite{KLS, L13},  that is  based on the exterior energy estimates for the underlying free equation, see Proposition~\ref{prop: ext en}, and a good understanding of the underlying elliptic equations, see Lemma~\ref{lem: hm} and Lemma~\ref{lem: ell}. We refer the reader to Section~\ref{sec: rig} for the details of the argument, which was inspired by the so-called \emph{channels of energy} method developed by Duyckaerts, Kenig, and Merle in the seminal papers,~\cite{DKM4, DKM5}. We emphasize that this rigidity argument does not require the use of any monotone quantities at the level of the nonlinear dynamical equation and is thus very flexible when it comes to the structure of the nonlinearity.  Our implementation of this method relies crucially on the additional decay for compact trajectories proved in Section~\ref{sec: dd}; see in particular Proposition~\ref{prop: u 34} and Proposition~\ref{prop: wm 34}.

In general, solutions to~\eqref{u eq},~\eqref{u eq wms} or~\eqref{u eq wmh} are only as regular and only decay as much as their initial data as evidenced by the presence of the free propagator $S(t)$ in the Duhamel representation for the solution 
\EQ{\label{duhamel}
\vec u^*(t_0)  = S(t_0-t) \vec u^*(t)  + \int_{t}^{t_0} S(t_0-s) (0, \pm F(u^*)) \, ds.
}
The critical element is different however since  the pre-compacntess of its trajectory is at odds with the dispersive properties of the free part, $S(t_0-t) \vec u^*(t)$. It follows that  the first term on the right-hand-side above must  vanish weakly as $t  \to \sup I_{\max}, \inf I_{\max} $. The second term on the right-hand-side of~\eqref{duhamel} with $t = T_+$ or $t=T_-$ therefore  encodes both the regularity and the spacial decay of the critical element, and in fact gains can be expected due to the presence of the nonlinear term $F(u)$. The additional regularity and decay are extracted by way of the ``double Duhamel trick," which refers to the consideration of the pairing of 
\ant{
\ang{ \int_{T_1}^{t_0} S(t_0-s) (0, F(u)) \, ds, \int^{T_2}_{t_0} S(t_0-\tau) (0, F(u)) \, d\tau }
} 
where $T_1<t_0$ and $T_2>t_0$.  This technique was introduced  by Colliander, Keel, Staffilani, Takaoka, Tao \cite{ITeamAnnals}, Tao~\cite{Tao07}  and later utilized in the Kenig-Merle framework for non-linear Schr{\"o}dinger equations by Killip, Visan~\cite{KV10CPDE, KV10AJM, KVClay},  and for semi-linear wave equations~\cite{KV11TAMS, Bul12a, Bul12b}. This method is related to the in/out decomposition used by Killip, Tao, Visan~\cite[Section~$6$]{KTV09}. For more details on how to exploit the fact that differing time directions are chosen above we refer the reader to Section~\ref{sec: dd}.

\section{Preliminaries}

\subsection{Some facts from harmonic analysis}
We will denote by $P_N$ the usual Littlewood-Paley projections onto frequencies of size $ \abs{ \xi} \simeq N$ and by $P_{ \le N}$ the projection onto frequencies $ \abs{ \xi} \lesssim N$. The frequency size $N$ will often be a dyadic number $N = 2^k$ for some $k \in \Z$ and  in this case we will also  employ the following abuse of notation: When we write $P_k$ with a \emph{lowercase} subscript $k$, this will mean projection onto frequencies $ \abs{\xi} \simeq 2^k$. 

We recall the Bernstein inequalities.  
\begin{lem}[Bernstein's inequalities]\cite[Appendix A]{Taobook} \label{lem bern} Let $1 \le p \le q \le \infty$ and $s \ge 0$. Let $ f: \R^d \to \R$. Then 
\EQ{ \label{bern}
&\|P_{\ge N} f\|_{L^p} \lesssim N^{-s} \| \abs{\na}^s P_{\ge N} f\|_{L^p},\\
&\|P_{\le N} \abs{\na}^s f\|_{L^p} \lesssim N^{s} \|  P_{\le N} f\|_{L^p}, \, \quad
\|P_{ N} \abs{\na}^{\pm s} f\|_{L^p} \simeq N^{ \pm s} \|  P_{ N} f\|_{L^p}\\
&\|P_{\le N} f\|_{L^q} \lesssim N^{\frac{d}{p}- \frac{d}{q}} \| P_{\le N} f\|_{L^p}, \, \quad
\|P_{ N} f\|_{L^q} \lesssim N^{\frac{d}{p}- \frac{d}{q}} \| P_{N} f\|_{L^p}.
}
\end{lem}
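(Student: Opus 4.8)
The plan is to derive all the estimates in \eqref{bern} from a single mechanism: to realize each operator to be bounded as a composition whose outer factor is convolution against a kernel of the form $N^d K(N\,\cdot)$, with $K$ a fixed function (depending only on the operator and on $s$) lying in $L^1(\R^d)$ or, more generally, in $L^r(\R^d)$, and then to invoke Young's inequality $\|K \ast g\|_{L^q} \le \|K\|_{L^r}\|g\|_{L^p}$ for $1 + \tfrac1q = \tfrac1r + \tfrac1p$ together with the scaling identity $\|N^d K(N\,\cdot)\|_{L^r(\R^d)} = N^{d(1-1/r)}\|K\|_{L^r(\R^d)}$. Every power of $N$ in \eqref{bern} is then precisely the bookkeeping exponent $d(1-1/r)$ or $\pm s$ produced by these two facts, and every implicit constant is an $L^r$ norm of $K = \check m$ for a fixed bump-type symbol $m$, hence independent of $N$ and of $p,q$.

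Concretely, I would fix once and for all: a bump $\psi$ supported in an annulus $\{\tfrac12 \le \abs{\xi}\le 2\}$ and equal to $1$ on the Fourier support of $P_1$; a bump $\tilde\phi$ equal to $1$ on the Fourier support of $P_{\le 1}$; and a smooth $\tilde\phi_\ast$ that vanishes near $\xi = 0$ and equals $1$ on the Fourier support of $P_{\ge 1}$. For the last two estimates in \eqref{bern} I write $P_N = \tilde\psi(D/N)P_N$ and $P_{\le N} = \tilde\phi(D/N)P_{\le N}$, whose kernels are Schwartz, and apply Young with $\tfrac1r = 1 - (\tfrac1p - \tfrac1q)$: this lies in $[0,1]$ precisely because $p \le q$, and $d(1-\tfrac1r) = \tfrac dp - \tfrac dq$. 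For the annular relation $\|P_N\abs{\na}^{\pm s}f\|_{L^p}\simeq N^{\pm s}\|P_N f\|_{L^p}$ I use that $\abs{\xi}^{\pm s}$ is smooth and bounded away from $0$ and $\infty$ on $\supp\psi$, so that $\abs{\na}^{\pm s}\tilde\psi(D/N) = N^{\pm s}m_s^{\pm}(D/N)$ with $m_s^{\pm}(\xi) := \abs{\xi}^{\pm s}\tilde\psi(\xi)\in\mathcal S(\R^d)$; using $\tilde\psi \equiv 1$ on $\supp\psi$ to restore $P_N$, and running this with both signs, yields the two-sided bound. For the first estimate in \eqref{bern}, with $s>0$ (the case $s=0$ being trivial), I factor $P_{\ge N} = \bigl(N^{-s}g_s(D/N)\bigr)\circ\abs{\na}^sP_{\ge N}$ with $g_s(\xi) := \abs{\xi}^{-s}\tilde\phi_\ast(\xi)$, and for the low-frequency estimate ($s=0$ again trivial) I write $P_{\le N}\abs{\na}^s = \bigl(N^s h_s(D/N)\bigr)\circ P_{\le N}$ with $h_s(\xi) := \abs{\xi}^s\tilde\phi(\xi)$.

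The one step that is not purely mechanical --- and where I expect the only real obstacle to lie --- is that when $s$ is not an even integer the symbols $\abs{\xi}^{\pm s}$ fail to be smooth at the frequency origin, so the kernels $\check g_s$ and $\check h_s$ introduced above are not Schwartz. I would dispatch this by a dyadic decomposition of $h_s$ near $\xi = 0$ and of $g_s$ near $\xi = \infty$: the piece of $\abs{\xi}^s\tilde\phi$ localized to $\abs{\xi}\simeq 2^j$ with $j\le 0$ has the form $2^{js}\gamma(\xi/2^j)$ for a fixed $\gamma\in\mathcal S(\R^d)$, so its inverse Fourier transform has $L^1$ norm $\simeq 2^{js}$, and $\sum_{j\le 0}2^{js}<\infty$ exactly because $s>0$, while the residual piece of $h_s$ is smooth with compact support, hence Schwartz; the identical computation with $j\ge 0$ and weight $2^{-js}$ gives $\check g_s\in L^1(\R^d)$. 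Thus $\check g_s,\check h_s\in L^1(\R^d)$, Young's inequality with $r=1$ applies, and one is done. I would deliberately route the argument through $L^1$ kernels rather than through a H\"ormander--Mikhlin multiplier theorem, since the statement includes the endpoints $p\in\{1,\infty\}$, where such theorems are unavailable; with that one regularization in hand, every estimate in the lemma is a single application of Young's inequality together with the scaling identity recorded above.
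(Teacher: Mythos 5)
Your proof is correct and is essentially the standard argument of the cited reference (Tao's book, Appendix~A), which the paper invokes without proof: every estimate in \eqref{bern} reduces to Young's inequality plus the $L^r$ scaling of a rescaled multiplier kernel, with the failure of smoothness of $\abs{\xi}^{\pm s}$ at the origin (resp.\ its slow decay at infinity) handled by a dyadic summation that converges precisely because $s>0$. The only cosmetic repair needed is in your choice of bump: a smooth $\tilde\psi$ cannot be supported in $\{\tfrac12\le\abs{\xi}\le 2\}$ and equal $1$ on the full Fourier support of $P_1$, so take it supported in a slightly fattened annulus, say $\{\tfrac14\le\abs{\xi}\le 4\}$, and likewise allow the near-transition dyadic pieces of $h_s$, $g_s$ to come with uniformly bounded (rather than identical) Schwartz profiles, which does not affect the summation.
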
 
In what follows we will also require the the notion of a frequency envelope. 
\begin{defn}\cite[Definition~$1$]{Tao1} \label{def: freq en}We define a \emph{frequency envelope} to be a sequence  $\be = \{\be_k\}$ of positive real numbers with $\be \in \ell^2$ and 
\ant{
\| \be\|_{\ell^2}  \lesssim B.
}
 If $\be$ is a frequency envelope and $(f, g) \in \dot{H}^s \times \dot{H}^{s-1}$ then we say that \emph{$(f, g)$  lies underneath $\be$} if 
\ant{
\| (P_k f,P_k g)\|_{\dot H^s \times \dot{H}^{s-1}} \le \be_k \, \quad \forall k \in \Z,
} 
and we note that if $(f, g)$ lie underneath $\be$ then we have 
\ant{
\| (f, g)\|_{\dot H^s \times \dot{H}^{s-1}} \lesssim B.
}
\end{defn}

Next we recall a refinement of the Sobolev embedding for radially symmetric functions, which follows from the Hardy-Littlewood-Sobolev inequality. 
\begin{lem}[Radial Sobolev Embedding]\cite[Corollary A.3]{TVZ}\label{lem: rad se} Let  $0 <  \ga <5$ and suppose $f \in \dot{W}^{\ga, p}(\R^5)$ is a radial function. Suppose that 
\ant{
\beta > -\frac{5}{q}, \quad 1 \le  \frac{1}{p} + \frac{1}{q} \le 1+ \ga, \quad  \frac{5}{p'} + \frac{5}{q'}  =  5 - \be - \ga
}
and at most one of the equalities 
$
 q=1, \, \, \, q= \infty, \, \, p =1, \, \, p = \infty,\, \,    \frac{1}{p} + \frac{1}{q} = 1+ \ga,
 $
 holds. Then 
 \EQ{
 \| r^{\be} f \|_{L^{q'}(\R^5)} \le C \|f\|_{\dot{W}^{\ga, p}(\R^5)}.
 }
\end{lem}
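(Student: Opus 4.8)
The plan is to deduce the estimate from a weighted Hardy--Littlewood--Sobolev inequality for radial functions, which in turn rests on precise bounds for the spherically averaged Riesz kernel. First I would set $g := \abs{\na}^{\ga} f \in L^p(\R^5)$, so that $f = c_\ga I_\ga g$ with $I_\ga$ the Riesz potential of order $\ga$ (whose kernel is $c_{5,\ga}\abs{x}^{\ga-5}$ since $0<\ga<5$) and $\|g\|_{L^p} \simeq \|f\|_{\dot W^{\ga,p}}$. By duality and the self-adjointness of $I_\ga$, the claimed bound $\|r^\be f\|_{L^{q'}} \lec \|g\|_{L^p}$ is equivalent to $\|I_\ga(r^\be h)\|_{L^{p'}} \lec \|h\|_{L^q}$ for radial test functions $h$; since $I_\ga$ commutes with rotations we may symmetrize and assume every function in sight is radial.

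Next I would reduce to a one--dimensional weighted integral inequality. For radial $\ti h(y) = \ti h(\abs y)$,
\EQ{
I_\ga \ti h(r) = c\int_0^\I k_\ga(r,s)\, \ti h(s)\, s^4\, ds, \qquad k_\ga(r,s) := \int_{\Sp^4} \abs{r e_1 - s\om}^{\ga-5}\, d\s(\om).
}
The core technical step is the kernel estimate
\EQ{
k_\ga(r,s) \lec \CAS{\max(r,s)^{\ga-5} & \mif \ga>1,\\ \abs{r-s}^{\ga-1}(rs)^{-2} + \max(r,s)^{\ga-5} & \mif \ga\le 1,}
}
obtained by writing $\abs{re_1 - s\om}^2 = (r-s)^2 + 2rs(1-\cos\te)$ and splitting the angular integral according to whether $\sqrt{rs}\,\te$ is smaller or larger than $\abs{r-s}$: when $\ga>1$ one has $5-\ga<4$, so the spherical average remains finite even at $r\simeq s$, while for $\ga\le 1$ the near-diagonal contribution must be resolved by this two--scale analysis. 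Substituting $\ti h = r^\be h$ and inserting the kernel bound, matters reduce to a scalar inequality of the form $\bigl\|\,r^{a}\int_0^\I K(r,s)\,w(s)\,ds\bigr\|_{L^{p'}((0,\I),\,r^4dr)} \lec \|w\|_{L^q((0,\I),\,s^4ds)}$ with $K$ assembled from $k_\ga$ and explicit powers of $r,s$.

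Finally I would prove this scalar inequality by Schur's test with power weights. The homogeneity identity $\tfrac{5}{p'} + \tfrac{5}{q'} = 5 - \be - \ga$ is precisely what makes the measure-weighted kernel scale-invariant against a one--parameter family of comparison weights $r^{\de}$, $s^{\de'}$, and one then checks that both Schur integrals $\int_0^\I K(r,s)\,s^{4-\de'}\,ds$ and $\int_0^\I K(r,s)\,r^{4-\de}\,dr$ converge and are dominated by the correct powers of $r$, resp.\ $s$, exactly in the open range $\be > -\tfrac5q$ and $1 < \tfrac1p+\tfrac1q < 1+\ga$: the near-diagonal part of $k_\ga$ forces the upper constraint $\tfrac1p+\tfrac1q \le 1+\ga$ (local integrability), while the small-$s$ and large-$s$ tails force $\be > -\tfrac5q$ and $\tfrac1p+\tfrac1q \ge 1$. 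When exactly one of the boundary equalities is attained, Schur's test degenerates and must be replaced by the corresponding endpoint argument---a Lorentz-space refinement of Hardy--Littlewood--Sobolev, or, in the case $\tfrac1p+\tfrac1q = 1+\ga$, a direct Hardy-type computation---which is why at most one equality is permitted. I expect the main obstacle to be the kernel estimate in the regime $\ga\le 1$ and the careful accounting of which hypothesis is responsible for the finiteness of each piece of the Schur integrals; once these are settled the remainder is routine.
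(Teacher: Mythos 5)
The paper itself offers no proof of this lemma: it is quoted from \cite{TVZ} (Corollary A.3), so the only comparison available is with the standard proof of such weighted radial Stein--Weiss/Sobolev estimates, which is essentially the route you follow (Riesz potential representation, spherical average of the kernel, reduction to a one-dimensional weighted bound). Your architecture is therefore the right one, and your kernel bound is correct for $\ga\neq 1$; but note that at $\ga=1$ the near-diagonal average produces a logarithm, $k_1(r,s)\simeq (rs)^{-2}\bigl(1+\log\bigl(\max(r,s)/\abs{r-s}\bigr)\bigr)$ for $r\simeq s$, so the estimate as you state it fails there and must be patched by an $\e$-loss in the exponent or a separate treatment of $\ga=1$.

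The genuine gap is in your final step. The classical Schur test with the two first-power weight integrals $\int_0^\I K(r,s)\,s^{4-\de'}\,ds$ and $\int_0^\I K(r,s)\,r^{4-\de}\,dr$ only yields boundedness between the \emph{same} Lebesgue exponent on both sides, whereas here the (adjoint) operator must map $L^q(s^4ds)\to L^{p'}(r^4dr)$ with $q\neq p'$ in general --- indeed in every application in this paper (e.g.\ $\ga=3/2$, $p=2$, $\be=-1$ gives $q=5/3\neq 2=p'$) --- and no choice of comparison power weights in those two integrals can produce the required change of integrability. The correct (still elementary) replacement is to pass to logarithmic variables $r=e^x$, $s=e^y$, where the homogeneity encoded by $\frac{5}{p'}+\frac{5}{q'}=5-\be-\ga$ turns the weighted, spherically averaged kernel into a convolution kernel on $\R$, and then to apply Young's inequality, or its weak-type/Lorentz refinement at the admissible endpoints (equivalently, a two-exponent Okikiolu-type Schur test with powers of $K$); this is also where the conditions $\be>-\frac5q$ and $1\le\frac1p+\frac1q\le 1+\ga$ appear as (weak) integrability of the transformed kernel. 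Finally, the endpoint the paper actually uses, $q=1$, $q'=\I$ (the pointwise bounds $\abs{r\,u}\lesssim\|u\|_{\dot H^{3/2}}$ and the exterior decay estimate), is not covered by the ``routine'' open-range argument but is the easiest case: it follows directly from H\"older in $s$ against the column norm of the averaged kernel using $k_\ga(r,s)\lesssim\max(r,s)^{\ga-5}$ for $\ga>1$, with no duality needed; in fact the initial duality reduction, while harmless on the radial class after averaging, can be dispensed with entirely by estimating $r^{\be}I_\ga g$ directly.
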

\subsection{Strichartz estimates}
For the small data theory we require Strichartz estimates for the linear wave equation in $\R^{1+5}$, 
\EQ{ \label{lin eq}
&v_{tt} - \Delta v = G,\\
&\vec v(0) = (v_0, v_1).
}
A free wave will mean a solution to~\eqref{lin eq} with $G=0$ and will be denoted by $\vec u(t) = S(t) \vec u(0)$. We say that a triple $(p, q, \ga)$ is admissible  if 
\EQ{ \label{adm} 
p, q \ge 2,  \, \, \frac{1}{p} + \frac{2}{q}   \le 1, \quad
\frac{1}{p} + \frac{5}{q}  = \frac{5}{2} - \ga.
}
The Strichartz estimates below are standard and we refer the reader to~\cite{Kee-Tao, LinS} or the book~\cite{Sogge} as well as the references therein for proofs. 

\begin{prop}\cite{Kee-Tao, LinS, Sogge} \label{prop stric}Let $\vec v(t)$ be a solution to~\eqref{lin eq} with initial data $\vec v(0) \in \dot{H}^s \times \dot{H}^{s-1}(\R^5)$ for $s >0$. Let $(p, q, \ga)$, and $(a, b, \rho)$ be admissible triples. Then, for any time interval $I \ni 0$ we have the estimates 
\EQ{\label{stric}
\| v \|_{L^{p}_t(I; W^{s- \ga, q}_x)} \lesssim  \| (v_0, v_1) \|_{\dot H^s \times \dot H^{s-1}} + \|G\|_{L^{a'}_t(I; W^{s-1+ \rho, b'}_x)}.
}
\end{prop}

\subsection{Small data theory: global existence, scattering and the perturbation lemma}
The usual argument based on Proposition~\ref{prop stric} with $s = \frac{3}{2}$, $(p, q, \ga) = (2, 10, 3/2)$, and $(a', b', \rho) = (1, 2, 0)$ yields the following standard small data result. 
\begin{prop}[Small data theory]\label{prop: small data}
Let $\vec u(0) = (u_0, u_1) \in \dot{H}^{\frac{3}{2}} \times \dot{H}^{\frac{1}{2}}(\R^5)$ be initial data for either~\eqref{u eq},~\eqref{u eq wms} or~\eqref{u eq wmh}. Then there is a unique, solution $\vec u(t) \in \dot{H}^{\frac{3}{2}} \times \dot{H}^{\frac{1}{2}}(\R^5)$ on a maximal interval of existence $I_{\max}( \vec u )= (T_-(\vec u), T_+( \vec u))$.  Moreover, for any compact interval $J \subset I_{\max}$ we have 
\ant{ 
\| u\|_{L^2_t(J; L^{10}_x)(\R^5)} < \infty.
}
A globally defined solution $\vec u(t)$ for $t\in [0, \infty)$ scatters as $ t \to \infty$ to a free wave, i.e., a solution $\vec u_L(t)$ of $\Box u_L =0$
if and only if  $ \|u \|_{L^2_t([0, \infty), L^{10}_x)}< \infty$. In particular, there exists a constant $\de_0>0$ so that  
\EQ{ \label{global small}
 \| \vec u(0) \|_{\dot{H}^{\frac{3}{2}} \times \dot{H}^{\frac{1}{2}}} < \de_0 \Longrightarrow  \| u\|_{L^2_t(\R; L^{10}_x)} \lesssim \|\vec u(0) \|_{\dot{H}^{\frac{3}{2}} \times \dot{H}^{\frac{1}{2}}} \lesssim \de_0
 }
and therefore $\vec u(t)$  scatters to free waves as $t \to \pm \infty$. Lastly, we recall  the standard finite time blow-up criterion: 
\EQ{ \label{ft bu}
T_+( \vec u)< \infty \Longrightarrow \|u \|_{L^2_t([0, T_+( \vec u));L^{10}_x)} = + \infty
}
A nearly identical statement holds when $- \infty< T_-( \vec u)$. 
\end{prop}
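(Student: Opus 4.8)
The plan is to run the classical Strichartz-based contraction argument, treating~\eqref{u eq},~\eqref{u eq wms} and~\eqref{u eq wmh} uniformly through their Duhamel formulation $\vec u(t) = S(t)\vec u(0) \pm \int_0^t S(t-s)(0, F(u(s)))\,ds$, where for radial $u$ one has $\Delta u = u_{rr} + \tfrac4r u_r$ and $F \in \{u^3,\, F_{\Sp^3},\, F_{\Hp^3}\}$. The only structural facts I need about $F$ are the pointwise bound $\abs{F(u)} \lesssim \abs{u}^3$ and the difference bound $\abs{F(u)-F(v)} \lesssim (\abs u^2 + \abs v^2)\abs{u-v}$ coming from~\eqref{F S},~\eqref{F H bound} and their analogues, together with a fractional Leibniz/chain rule for $\abs{\nabla}^{1/2} F(u)$. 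First I would fix the iteration space: for an interval $I \ni 0$ set
\EQ{
\|\vec u\|_{X(I)} := \|\vec u\|_{L^\infty_t(I;\, \dot H^{3/2}\times\dot H^{1/2})} + \|u\|_{L^2_t(I;\, L^{10}_x)},
}
and look for a fixed point of the Duhamel map in a small ball of $X(I)$.

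The heart of the matter is the nonlinear estimate
\EQ{
\big\| \abs{\nabla}^{1/2} F(u) \big\|_{L^1_t(I;\, L^2_x)} \lesssim \|u\|_{L^2_t(I;\, L^{10}_x)}^{2}\, \|u\|_{L^\infty_t(I;\, \dot H^{3/2})}
}
together with its difference version. I would prove it by distributing $\abs{\nabla}^{1/2}$ onto a single factor of $u$ via the fractional Leibniz rule, applying Hölder in $x$ with $\tfrac12 = \tfrac1{10} + \tfrac1{10} + \tfrac3{10}$ to arrive at $\|u\|_{L^{10}_x}^2\,\|\abs{\nabla}^{1/2}u\|_{L^{10/3}_x}$, converting the last factor through the Sobolev embedding $\dot H^{3/2}(\R^5)\hookrightarrow \dot W^{1/2,10/3}(\R^5)$, and finishing with Hölder in time ($1 = \tfrac12 + \tfrac12$). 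In the wave-map cases the extra smooth factors $Z_{\Sp^3}(ru)$, $Z_{\Hp^3}(ru)$ and their first derivatives must be controlled in $L^\infty$; here I would invoke the radial Sobolev bound of Lemma~\ref{lem: rad se} applied to $ru$, as in the derivation of~\eqref{F H bound}, which is exactly the step making the $\Hp^3$ analysis behave like the $\Sp^3$ one.

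With this estimate in hand I would close the contraction by applying Proposition~\ref{prop stric} with $s = \tfrac32$ and the admissible triples $(p,q,\gamma)=(2,10,\tfrac32)$ and $(a',b',\rho)=(1,2,0)$, obtaining $\|\Phi(\vec u)\|_{X(I)} \lesssim \|\vec u(0)\|_{\dot H^{3/2}\times\dot H^{1/2}} + \|u\|_{X(I)}^3$ and a matching Lipschitz bound on differences. Thus $\Phi$ is a contraction on a small ball of $X(I)$ whenever $\|S(t)\vec u(0)\|_{L^2_t(I;L^{10}_x)}$ is small, which holds for $I$ a sufficiently short interval about $0$ for arbitrary data (by density of Schwartz data and monotone convergence of the Strichartz norm), and which holds for $I = \R$ as soon as $\|\vec u(0)\|_{\dot H^{3/2}\times\dot H^{1/2}} < \delta_0$, giving~\eqref{global small}. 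A standard continuation argument then produces the maximal interval $I_{\max}$ together with $u \in L^2_t(J;L^{10}_x)$ for every compact $J\subset I_{\max}$, and the blow-up criterion~\eqref{ft bu} follows by contradiction: if $T_+ < \infty$ while $\|u\|_{L^2_t([0,T_+);L^{10}_x)}<\infty$, the nonlinear estimate forces the Duhamel integral to be Cauchy in $\dot H^{3/2}\times\dot H^{1/2}$ as $t\uparrow T_+$, so $\vec u(t)$ has a limit there and the solution restarts past $T_+$.

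Finally, for scattering I would show that $\|u\|_{L^2_t([0,\infty);L^{10}_x)}<\infty$ forces $S(-t)\vec u(t)$ to be Cauchy in $\dot H^{3/2}\times\dot H^{1/2}$ as $t\to+\infty$ — using unitarity of $S(t)$ and the nonlinear estimate applied on tails $[t_1,t_2]$ — whence $\vec u_L^+ := S(t)\lim_{t\to\infty}S(-t)\vec u(t)$ is the desired free wave; conversely, if $\vec u(t)$ scatters then $u_L$ has finite $L^2_tL^{10}_x$ norm by the free Strichartz estimate, and a small-data perturbation argument run on a half-line $[T,\infty)$ where $\|\vec u(t)-\vec u_L^+(t)\|_{\dot H^{3/2}\times\dot H^{1/2}}$ is small, combined with the local bound on $[0,T]$, yields $\|u\|_{L^2_t([0,\infty);L^{10}_x)}<\infty$; the statement at $T_-$ is symmetric. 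The step I expect to be the real obstacle is the nonlinear estimate in the wave-map cases: because the unbounded weight $r$ sits inside $Z(ru)$, a fractional derivative landing on that factor is not directly dominated by $\abs{u}$-type bounds, and one must instead exploit the structure $F = g(ru)/r^3$ with $g(\psi) = O(\psi^3)$ together with the $L^\infty$ control on $ru$ from radial Sobolev embedding; the remaining ingredients are entirely routine.
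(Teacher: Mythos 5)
Your proposal is correct and is exactly the argument the paper invokes: the standard contraction/continuation scheme built on Proposition~\ref{prop stric} with $s=\tfrac32$, $(p,q,\ga)=(2,10,\tfrac32)$, $(a',b',\rho)=(1,2,0)$, the nonlinear estimate $\||\nabla|^{1/2}F(u)\|_{L^1_tL^2_x}\lesssim \|u\|_{L^2_tL^{10}_x}^2\|u\|_{L^\infty_t\dot H^{3/2}}$ (the same bound the paper uses later in Lemma~\ref{l1.1}), and radial Sobolev control of $ru$ to reduce the wave-map nonlinearities to the cubic case as in~\eqref{F S} and~\eqref{F H bound}. Nothing further is needed.
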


Another standard result is the Perturbation Lemma for approximate solutions to~\eqref{u eq},~\eqref{u eq wms}, or~\eqref{u eq wmh} which is needed in the concentration compactness procedure in Section~\ref{sec: cc}.  
\begin{lem}[Perturbation Lemma] \label{lem: pert}
There are continuous functions \\$\e_0,C_0:(0,\I)\to(0,\I)$ such that the following holds:
Let $I\subset \R$ be an open interval (possibly unbounded), $\vec u, \vec v\in C(I; \HH)$  satisfying for some $A>0$
\ant{ 
 \|\vec v\|_{L^\infty(I;\dot{H}^{\frac{3}{2}} \times \dot H^{\frac{1}{2}})(\R^5)} +   \|v\|_{L^{2}_t(I;L^{10}_x(\R^5))} & \le A \\
 \|\glei(u)\|_{L^{1}_t(I; \dot{H}^{\frac{1}{2}}_x)}
   + \|\glei(v)\|_{L^{1}_t(I; \dot{H}^{\frac{1}{2}}_x)} + \|w_0\|_{L^2_t(I;L^{10}_x)} &\le \e \le \e_0(A),
   }
where $\glei(u)$ is either $ \glei(u):=\Box u - u^3$, $\glei(u) := \Box u - Z_{\Sp^3}(ru) u^3$ or $ \glei(u) := \Box u - Z_{\Hp^3} (ru) u^3$  in the sense of distributions, and $\vec w_0(t):=S(t-t_0)(\vec u-\vec v)(t_0)$ with $t_0\in I$ arbitrary but fixed.  Then
\ant{ 
  \|\vec u-\vec v-\vec w_0\|_{L^\infty\left(I;\dot{H}^{\frac{3}{2}} \times \dot H^{\frac{1}{2}}\right)}+\|u-v\|_{L^2_t(I;L^{10}_x)} \le C_0(A)\e.}
  In particular,  $\|u\|_{S(I)}<\I$.
\end{lem}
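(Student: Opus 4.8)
The plan is the standard ``subdivide, Duhamel, bootstrap'' argument. Write $F(u)$ for the nonlinearity appearing in the relevant equation, so that $F(u)$ is one of $u^3$, $Z_{\Sp^3}(ru)\,u^3$, $Z_{\Hp^3}(ru)\,u^3$ and $\glei(u) = \Box u - F(u)$; then $\vec u - \vec v - \vec w_0$ solves
\[
\Box\big(\vec u - \vec v - \vec w_0\big) = \big(F(u) - F(v)\big) + \big(\glei(u) - \glei(v)\big), \qquad (\vec u - \vec v - \vec w_0)(t_0) = 0,
\]
and $\|\glei(u) - \glei(v)\|_{L^1_t(I;\dot H^{1/2}_x)} \le 2\e$ by hypothesis. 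Since $\|v\|_{L^2_t(I;L^{10}_x)} \le A$, I first partition $I$ into $N = N(A)$ consecutive subintervals $I_1, \dots, I_N$ on each of which $\|v\|_{L^2_t(I_j;L^{10}_x)} \le \de$, with $\de = \de(A)$ a small threshold to be chosen. It then suffices to prove the desired estimate on each $I_j$, with a constant depending only on $\de$, and to concatenate: running through the $I_j$ in order and resetting the base point to the left endpoint $\tau_j$ of $I_j$, the relevant free evolution on $I_j$ is $S(\cdot - \tau_j)(\vec u - \vec v)(\tau_j)$, and by the group property this differs from $\vec w_0$ only by $S(\cdot - \tau_j)\big[(\vec u - \vec v)(\tau_j) - \vec w_0(\tau_j)\big]$, whose Strichartz norm is $\lesssim \|(\vec u - \vec v)(\tau_j) - \vec w_0(\tau_j)\|_{\HH}$ — a quantity already controlled from $I_{j-1}$. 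Thus the constants multiply across the $N(A)$ steps and produce a bound of the form $C(\de)^{N(A)}\e$.

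On a fixed subinterval $J = I_j$ with left endpoint $\tau$, set $\vec w_1(t) := S(t-\tau)(\vec u-\vec v)(\tau)$ and $\vec w := \vec u - \vec v - \vec w_1$, so $\vec w(\tau) = 0$ and $\Box w = (F(u)-F(v)) + (\glei(u)-\glei(v))$. Applying the Strichartz estimates of Proposition~\ref{prop stric} with $s = \tfrac{3}{2}$, the admissible triple $(2,10,\tfrac{3}{2})$, the dual pair corresponding to $L^1_t\dot H^{1/2}_x$, and whatever further admissible triples are needed to measure $\vec w$, one obtains
\[
X_J := \|\vec w\|_{L^\infty_t(J;\HH)} + \|w\|_{L^2_t(J;L^{10}_x)} + (\text{auxiliary Strichartz norms of }w) \;\lesssim\; \e + \|F(u)-F(v)\|_{L^1_t(J;\dot H^{1/2}_x)}.
\]
To control the last term I substitute $u = v + w_1 + w$ and expand the difference of nonlinearities; in the wave-maps cases the smooth bounded factors $Z_{\Sp^3}(r\cdot)$, $Z_{\Hp^3}(r\cdot)$ are handled via the mean value theorem together with the pointwise bound $|ru(t,r)| \lesssim \sup_t\|\vec u(t)\|_{\HH}$ furnished by radial Sobolev embedding, Lemma~\ref{lem: rad se}. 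Every resulting term is (up to such factors) trilinear in $\{v, w_1, w\}$ and contains at least one factor from $\{w_1, w\}$; the fractional Leibniz rule, H\"older in time, and the Sobolev embeddings attached to the chosen admissible triples then bound $\|F(u)-F(v)\|_{L^1_t(J;\dot H^{1/2}_x)}$ by a finite sum of products of three Strichartz-type factors, in which $\|v\|_{S(J)} \le \de$ and $\|w_1\|_{S(J)} \le \e$ enter as small quantities, $\|\vec v\|_{L^\infty_t\HH} \le A$ appears in at most one factor per term, and the remaining factors are dominated by $X_J$. This produces an estimate schematically of the form $X_J \lesssim \e + \big(A\de + A\e + \de^2 + \e^2 + X_J^2\big)X_J$; choosing first $\de = \de(A)$ and then $\e_0 = \e_0(A)$ small, a continuity argument in the right endpoint of $J$ — legitimate because $X_J$ is a priori finite there — upgrades this to $X_J \le C(\de)\,\e$.

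Concatenating over $j = 1, \dots, N(A)$ as described gives $\|\vec u - \vec v - \vec w_0\|_{L^\infty(I;\HH)} + \|u-v\|_{L^2_t(I;L^{10}_x)} \le C_0(A)\,\e$, and then $\|u\|_{S(I)} \le \|u-v\|_{S(I)} + \|v\|_{S(I)} < \I$; since $\de(A)$ and $N(A)$ can be chosen monotonically one may replace $\e_0, C_0$ by a continuous positive minorant and majorant respectively, so that they become continuous functions $(0,\I) \to (0,\I)$. I expect the main obstacle to be the trilinear estimate for $F(u) - F(v)$ in the dual space $L^1_t\dot H^{1/2}_x$: one must select the admissible exponents so that (i) the purely nonlinear cubic contribution — the term with no factor of $v$, hence built only from $w_1$ and $w$ — is still absorbed using the smallness of $\|w_1\|_{S(J)}$ together with Strichartz bounds for the free wave $w_1$, and (ii) the wave-maps factors $Z_{\Sp^3}(ru)$, $Z_{\Hp^3}(ru)$ do not destroy the Leibniz structure, both points hinging on the uniform pointwise control of $ru$ from Lemma~\ref{lem: rad se}. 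A secondary, bookkeeping, difficulty is tracking how the (not necessarily small) free part $\vec w_0$ propagates through the concatenation while keeping the relative error $\vec u - \vec v - \vec w_0$ of size $O(\e)$.
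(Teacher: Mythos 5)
Your overall route — subdivide $I$ so that $\|v\|_{L^2_tL^{10}_x(I_j)}\le\de(A)$, write the Duhamel formula for $\vec w=\vec u-\vec v-\vec w_1$ on each piece, estimate $F(u)-F(v)$ trilinearly in the dual norm $L^1_t\dot H^{1/2}_x$, bootstrap, and concatenate — is exactly the standard argument the authors have in mind (the paper itself gives no proof of Lemma~\ref{lem: pert}; it is quoted as standard). However, there is a genuine gap at the heart of your trilinear estimate, and it is not merely bookkeeping. Because the critical regularity is $\dot H^{\frac32}\times\dot H^{\frac12}$, every dual Strichartz space available from Proposition~\ref{prop stric} carries half a derivative, and the fractional Leibniz rule forces terms in which that derivative lands on the \emph{free} part $w_1$, e.g. the contribution $v^2\,|\nabla|^{1/2}w_1$ coming from $3v^2(u-v)$. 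Any admissible way of closing Hölder, say $\||\nabla|^{1/2}w_1\|_{L^2_tL^5_x}\|v\|^2_{L^4_tL^{20/3}_x}$ or $\||\nabla|^{1/2}w_1\|_{L^\infty_tL^{10/3}_x}\|v\|^2_{L^2_tL^{10}_x}$, requires a \emph{derivative} Strichartz norm of $w_1$, and that norm is controlled only by $\|(\vec u-\vec v)(\tau_j)\|_{\dot H^{3/2}\times\dot H^{1/2}}$ — a quantity bounded by none of $A$, $\de$, $\e$, or your $X_J$ (which measures $w$, not $w_1$). So your schematic inequality $X_J\lesssim\e+(A\de+A\e+\de^2+\e^2+X_J^2)X_J$ is not what the estimates give: with the hypotheses exactly as stated one instead gets inhomogeneous terms like $B\,A\,\de$ or, after interpolating $\||\nabla|^{1/2}w_1\|$ between the small norm $\|w_1\|_{L^2_tL^{10}_x}$ and a full-strength Strichartz norm, terms like $B^{1-\theta}\e^{\theta}$, where $B=\|(\vec u-\vec v)(t_0)\|_{\dot H^{3/2}\times\dot H^{1/2}}$ is unbounded under the stated hypotheses. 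The argument therefore does not close with constants $\e_0(A)$, $C_0(A)$ depending on $A$ alone, nor with a bound linear in $\e$.

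The fix is the one used in the literature at fractional critical regularity (e.g. the long-time perturbation results in \cite{KM10} and in the supercritical wave papers \cite{KM11a,KV11TAMS}): add the hypothesis $\|(\vec u-\vec v)(t_0)\|_{\dot H^{3/2}\times\dot H^{1/2}}\le A'$ (equivalently, a uniform $\dot H^{3/2}\times\dot H^{1/2}$ bound on $\vec u$ as well as $\vec v$), let $\de$, $\e_0$ and $C_0$ depend on $(A,A')$, and accept that the derivative-on-$w_1$ terms are handled by interpolation against the small norm $\|w_1\|_{L^2_tL^{10}_x}$, which may degrade the conclusion to $C_0(A,A')\,\e^{c}$ for some $c\in(0,1]$. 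This costs nothing in the paper, since in the profile-decomposition argument of Section~\ref{sec: cc} both $u$ and $v$ are uniformly bounded in $\dot H^{3/2}\times\dot H^{1/2}$; but as a proof of the lemma \emph{as literally stated} your sketch is incomplete precisely at the step ``the remaining factors are dominated by $X_J$.'' A secondary point: since $u$ is only an approximate solution, the a priori finiteness of $\|u-v\|_{L^2_tL^{10}_x}$ on compact subintervals, which your continuity argument uses, should also be addressed (e.g. by running the bootstrap on the maximal subinterval where the norm is finite), rather than borrowed from Proposition~\ref{prop: small data}, which applies to exact solutions.
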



\section{Concentration compactness}\label{sec: cc}

\subsection{Existence and compactness of a critical element}
We begin the proofs of Theorem~\ref{thm: main} and Theorem~\ref{thm: wm}. In both cases we follow the concentration-compactness/rigidity method introduced by Kenig and Merle in~\cite{KM06, KM08}. The concentration compactness aspect of the argument  is based on the fundamental linear and nonlinear profile decompositions of Bahouri and Gerard,~\cite{BG},  and is by now standard. We essentially use the scheme from~\cite{KM10}, which is a refinement of the techniques  used in~\cite{KM06, KM08} to extract a critical element. Indeed, the conclusion of this section is that  in the event that Theorem~\ref{thm: main} or Theorem~\ref{thm: wm} fails, there exists a minimal, nontrivial, non-scattering solution to~\eqref{u eq}, that  is referred to as the critical element. 

First some notation, and  we follow~\cite{KM10} for convenience.  Given initial data $ (u_0, u_1) \in \dot H^{\frac{3}{2}} \times \dot H^{\frac{1}{2}}(\R^5)$ we denote by  $\vec u(t) \in \dot H^{\frac{3}{2}} \times \dot H^{\frac{1}{2}}(\R^5)$ the unique solution to either~\eqref{u eq},~\eqref{u eq wms}, or ~\eqref{u eq wmh}  with initial data $\vec u(0) = (u_0, u_1)$ defined on the maximal interval of existence $I_{\max}(\vec u) := (T_-( \vec u), T_+( \vec u))$.  For $A>0$ define 
\ant{
\B(A):= \left\{ (u_0, u_1) \in\dot H^{\frac{3}{2}} \times \dot H^{\frac{1}{2}}   \, \,  \, : \, \,\,        \|\vec u(t)\|_{L^{\infty}_t([0, T_+(\vec u)); \dot H^{\frac{3}{2}} \times \dot H^{\frac{1}{2}})} \le A\right\}.
}
\begin{defn} We say that $\mathcal{SC}(A)$ holds if for all $ \vec u = (u_0, u_1) \in \B(A)$ we have $T_+(\vec u) = + \infty$ and $ \|u \|_{S([0, \infty))} < \infty$. We also say that $\mathcal{SC}(A;  \vec u)$ holds if $\vec u \in \B(A)$, $T_+ (\vec u) = +\infty$ and $ \|u \|_{S([0, \infty))} < \infty$. 
\end{defn} 
\begin{rem}
Recall from Proposition~\ref{prop: small data} that $\| u \|_{S([0, \infty))}< \infty$ if and only if $\vec u$ scatters to a free waves as $t \to + \infty$. It follows that  both Theorem~\ref{thm: main}  and Theorem~\ref{thm: wm} are equivalent to the statement that $\mathcal{SC}(A)$ holds for all $A>0$. 
\end{rem} 

Now suppose that Theorem~\ref{thm: main} (resp. Theorem~\ref{thm: wm}) {\em is false}. By  Proposition~\ref{prop: small data}, there is an $A_0>0$  such that $\mathcal{SC}(A_0)$ holds.  As we are assuming that Theorem~\ref{thm: main} (resp. Theorem~\ref{thm: wm}) fails, there exists a threshold value $A_C$ so that for $A<A_C$, $\mathcal{SC}(A)$ holds, and for $A>A_C$, $\mathcal{SC}(A)$ fails. It is clear  that $0<A_0<A_C$. The standard conclusion of the supposed failure of the Theorem~\ref{thm: main} (resp. Theorem~\ref{thm: wm}) is that there then must exist a non-scattering solution $\vec u(t)$ to~\eqref{u eq} so that $\mathcal{SC}(A_C, \vec u)$ fails, which enjoys certain minimality and compactness properties. 

 We state a refined version of this result below, and we refer the reader to~\cite{KM10, KM11a, KM11b} for the details of the argument. As usual, the main ingredients are the linear and nonlinear Bahouri-Gerard type profile decompositions from~\cite{BG} used together with the nonlinear perturbation theory, Lemma~\ref{lem: pert}. 

\begin{prop} \label{prop: ce} Suppose that Theorem~\ref{thm: main} (resp. Theorem~\ref{thm: wm}) is false. Then, there exists a solution $\vec u^*(t)$, referred to as a critical element  such that $\mathcal{SC}(A_C;  \vec u^*)$ fails. Moreover, we can assume that $\vec u^*(t)$ does not scatter in either time direction, which means that  
\EQ{\label{blow up}
  \|u^*\|_{S((T_-(\vec u), 0])}=\|u^*\|_{S([0, T_+(\vec u)))} =  \infty.
}
In addition, there exists a continuous  function $N: I_{\max}(\vec u) \to (0, \infty)$ so that the set 
\EQ{\label{K}
K:= \left\{ \left(\frac{1}{N(t)} u^*\left( t, \frac{\cdot}{N(t)} \right), \, \frac{1}{N^2(t)} u_t^*\left( t, \frac{\cdot}{N(t)} \right) \right) \mid t \in I_{\max}(\vec u)\right\}
} 
is pre-compact in $\dot{H}^\frac{3}{2} \times \dot H^{\frac{1}{2}} (\R^3)$ and we have 
\EQ{ \label{inf N0}
\inf_{t \in [0, T_+(\vec u))} N(t) >0.
}
\end{prop}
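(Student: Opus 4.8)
The statement to establish is Proposition~\ref{prop: ce}, the existence and compactness of a critical element. The plan is to run the standard concentration-compactness machinery of Kenig-Merle, adapted to the supercritical spaces $\dot{H}^{\frac{3}{2}} \times \dot{H}^{\frac{1}{2}}(\R^5)$, following closely the organization in~\cite{KM10, KM11a, KM11b}. The proof is a contradiction argument starting from the supposed failure of Theorem~\ref{thm: main} (resp.\ Theorem~\ref{thm: wm}), which by the Remark after Proposition~\ref{prop: small data} is equivalent to the failure of $\mathcal{SC}(A)$ for some $A>0$; together with the small-data theory this yields the threshold $0 < A_0 < A_C < \infty$.

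\textbf{Step 1: Extract an almost-minimizing sequence and apply profile decomposition.} By the definition of $A_C$, choose a sequence of solutions $\vec u_n(t)$ with initial data in $\B(A_C + \tfrac1n)$ such that $\|u_n\|_{S(I_n)} \to \infty$ on time intervals $I_n$ containing $0$, while $\sup_n \|\vec u_n(0)\|_{\dot H^{3/2}\times\dot H^{1/2}} \le A_C + o(1)$. Apply the linear profile decomposition of Bahouri-G\'erard~\cite{BG} (in the form used in~\cite{KM10}) to the bounded sequence $\vec u_n(0)$: up to a subsequence, $\vec u_n(0) = \sum_{j=1}^{J} S(-t_n^j)\, T_{\la_n^j} \vec\phi^j + \vec w_n^J$, with the usual orthogonality of the parameters $(\la_n^j, t_n^j)$, the smallness of $\limsup_n \|w_n^J\|_{S(\R)}$ as $J \to \infty$, and the Pythagorean decoupling of the $\dot H^{3/2}\times\dot H^{1/2}$ norms.

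\textbf{Step 2: Nonlinear profiles and the dichotomy.} Replace each linear profile by its associated nonlinear profile $\vec U^j$ (solving the relevant equation~\eqref{u eq},~\eqref{u eq wms} or~\eqref{u eq wmh}), using the local/global well-posedness of Proposition~\ref{prop: small data}. The standard argument then splits into two cases. If more than one profile is nontrivial, or if a single profile has strictly smaller critical norm than $A_C$, then by the sub-threshold hypothesis $\mathcal{SC}(A)$ for $A < A_C$ each nonlinear profile scatters with finite $S$-norm; the Perturbation Lemma~\ref{lem: pert} (applied with the superposition of nonlinear profiles as the approximate solution, using the decoupling to control the nonlinear interaction errors in $L^1_t\dot H^{1/2}_x$) then forces $\|u_n\|_{S}$ to be finite for large $n$, a contradiction. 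Hence there is exactly one profile, it carries the full critical norm $A_C$, the remainder $\vec w_n$ vanishes in $\dot H^{3/2}\times\dot H^{1/2}$, and after passing to the scaling/time-translation frame we obtain a solution $\vec u^*$ with $\mathcal{SC}(A_C;\vec u^*)$ failing and non-scattering in both time directions, i.e.~\eqref{blow up}.

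\textbf{Step 3: Compactness of the trajectory and the lower bound on $N(t)$.} Given $\vec u^*$, apply the profile decomposition again to any sequence $\vec u^*(t_n)$, $t_n \in I_{\max}$. Minimality of $A_C$ forces a single profile carrying the full norm and a vanishing remainder, which (by a now-standard argument) produces a continuous $N: I_{\max} \to (0,\infty)$ so that the rescaled set $K$ in~\eqref{K} is pre-compact in $\dot H^{3/2}\times\dot H^{1/2}$. Finally, to get~\eqref{inf N0}, suppose $N(t_n) \to 0$ for some $t_n \to T_+$; rescaling and using compactness one shows the solution would become small in the critical norm at times near $T_+$, hence scatter forward by Proposition~\ref{prop: small data}, contradicting~\eqref{blow up}. (If $T_+ < \infty$ one argues instead via the finite-time blow-up criterion~\eqref{ft bu} combined with the local theory applied in the rescaled frame.)

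\textbf{Main obstacle.} Everything here is standard modulo the fact that we work at the supercritical regularity $\dot H^{3/2}\times\dot H^{1/2}$ rather than at the energy level, so the only genuine point requiring care is verifying that the Bahouri-G\'erard profile decomposition and the associated decoupling and nonlinear perturbation estimates go through cleanly in these spaces for the nonlinearities at hand. The key inputs are the Strichartz estimates of Proposition~\ref{prop stric} at $s=\tfrac32$ with the pair $(2,10,\tfrac32)$, the $S(J)=L^2_t L^{10}_x$ blow-up/scattering criterion of Proposition~\ref{prop: small data}, and the Perturbation Lemma~\ref{lem: pert}; since the nonlinearities in~\eqref{u eq wms} and~\eqref{u eq wmh} satisfy the pointwise bounds $\abs{F_{\Sp^3}(r,u)}\lesssim\abs{u}^3$ and (under the a priori bound~\eqref{type ii wm}) $\abs{F_{\Hp^3}(r,u)}\lesssim\abs{u}^3$, the estimates are structurally identical to the cubic case~\eqref{u eq}. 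With these in hand the argument is verbatim that of~\cite{KM10, KM11a, KM11b}, so we will only indicate the changes and refer the reader there for the remaining details.
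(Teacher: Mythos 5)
Your overall outline coincides with the route the paper itself takes: the paper does not prove Proposition~\ref{prop: ce} in detail either, but extracts the critical element exactly as you describe, via the Bahouri--G\'erard linear and nonlinear profile decompositions, the small data theory of Proposition~\ref{prop: small data}, and the Perturbation Lemma~\ref{lem: pert}, deferring the details to~\cite{KM10, KM11a, KM11b}. Through your Step 2 and the compactness claim in Step 3 there is nothing to object to beyond the level of detail the paper also omits; the one point you gloss over (and the reason the paper invokes the refined scheme of~\cite{KM10}) is that membership in $\B(A)$ is a bound on the whole forward trajectory, not merely on the data, so before applying the sub-threshold hypothesis $\mathcal{SC}(A)$, $A<A_C$, one must check that the nonlinear profiles inherit trajectory bounds below $A_C$, not just data bounds.

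However, your justification of~\eqref{inf N0} has a genuine gap. You argue that if $N(t_n)\to 0$ with $t_n\to T_+$ then ``rescaling and using compactness the solution would become small in the critical norm'' and hence scatter. This cannot work: the $\dot H^{3/2}\times\dot H^{1/2}$ norm is invariant under the scaling $u\mapsto \la^{-1}u(\cdot/\la)$, so $\|\vec u^*(t_n)\|_{\dot H^{3/2}\times\dot H^{1/2}}$ equals the norm of the modulated pair, which by pre-compactness converges to the norm of a fixed \emph{nonzero} limit profile; it does not become small, and no choice of rescaling can make it small. The case $T_+=\infty$ with $N(t_n)\to 0$ (the solution spreading to ever larger spatial scales) is precisely the nontrivial scenario, and in~\cite{KM10, KM11a} it is handled by a genuine further argument: either one re-extracts a new minimal element from the rescaled sequence at such times and shows its scaling parameter is bounded below in forward time, or one organizes the rigidity analysis around the dichotomy that the paper records as Propositions~\ref{prop: r1} and~\ref{prop: r2}. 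Your parenthetical for $T_+<\infty$ is closer to the correct mechanism --- there one shows $N(t)\to\infty$ as $t\to T_+$, since otherwise the rescaled limits and the local theory would extend the solution past $T_+$ --- but the global-in-time case requires the extraction argument, not the smallness claim, so as written this step would fail.
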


In what follows it will be convenient to give a name to the compactness property~\eqref{K} satisfied by the critical element. 
\begin{defn}[The Compactness Property] Let $I \ni 0$ be a time interval and suppose $\vec u(t)$ be a solution to either ~\eqref{u eq},~\eqref{u eq wms}, or~\eqref{u eq wmh}  on  an interval $I$. We will say $\vec u(t)$ has the \emph{compactness property on $I$} if there exists a continuous  function $N: I \to (0, \infty)$ so that the set 
\ant{
K:= \left\{ \left(\frac{1}{N(t)} u\left( t, \frac{\cdot}{N(t)} \right), \, \frac{1}{N^2(t)} u_t\left( t, \frac{\cdot}{N(t)} \right) \right) \mid t \in I\right\}
} 
is pre-compact in $\dot{H}^\frac{3}{2} \times \dot H^{\frac{1}{2}} (\R^5)$. 
\end{defn} 

\begin{rem}[Uniformly Small Tails]\label{Ceta}
A straightforward consequence of a solution having the \emph{compactness property} on an interval $I$ is that, after modulation,  we can control the $\dot{H}^{\frac{3}{2}} \times \dot{H}^{\frac{1}{2}}$ tails uniformly in $t \in I$. In fact, by the Arzela-Ascoli theorem,  for any $\eta_0 > 0$ there exists  $ 0 < c(\eta_0)< C(\eta_0) < \infty$   such that
\EQ{\label{small tails}
&\int_{\abs{x} \geq \frac{C(\eta)}{N(t)}}\abs{|\nabla|^{3/2} u(t,x)}^{2} dx + \int_{\abs{\xi} \geq C(\eta)N(t)} \abs{\xi}^3 \abs{\hat{u}(t,\xi)}^{2}\,  d\xi  \le \eta_0, \\
&\int_{\abs{x} \le \frac{c(\eta)}{N(t)}}\abs{|\nabla|^{3/2} u(t,x)}^{2} dx + \int_{\abs{\xi} \le c(\eta)N(t)} \abs{\xi}^3 \abs{\hat{u}(t,\xi)}^{2}\,  d\xi  \le \eta_0, \\
&\int_{\abs{x} \geq \frac{C(\eta)}{N(t)}}\abs{|\nabla|^{1/2} u_t(t,x)}^{2} dx + \int_{\abs{\xi} \geq C(\eta)N(t)} \abs{\xi} \abs{\hat{u_t}(t,\xi)}^{2} \, d\xi  \le  \eta_0, \\
&\int_{\abs{x} \le \frac{c(\eta)}{N(t)}}\abs{|\nabla|^{1/2} u_t(t,x)}^{2} dx + \int_{\abs{\xi} \le c(\eta)N(t)} \abs{\xi} \abs{\hat{u_t}(t,\xi)}^{2} \, d\xi  \le  \eta_0, 
}
for all $t \in I$. 
\end{rem}

Another standard fact about solutions to~\eqref{u eq} with the compactness property on an open interval $I = (T_-, T_+)$ is that any Strichartrz norm of the linear part of the evolution vanishes  as $t \to T_- $ and as $t \to T_+$. A concentration compactness argument then implies that the linear part of the evolution must vanish weakly in $\dot{H}^{\frac{3}{2}} \times \dot{H}^{\frac{1}{2}}$, see  \cite[Section $6$]{TVZ08}, \cite[Proposition $3.6$]{Shen}. 
This implies the following lemma, which is essential to  the  proofs of Theorem~\ref{thm: main} and Theorem~\ref{thm: wm}. 
\begin{lem}[Weak Limits] \cite[Section $6$]{TVZ08}, \cite[Proposition $3.6$]{Shen} \label{lem: weak}Let $\vec u(t)$ be a  solution to either~\eqref{u eq},~\eqref{u eq wms}, or~\eqref{u eq wmh} with the compactness property on an interval $I = (T_-, T_+)$. Then for any $t_0 \in I$ we have 
\EQ{
-\int_{t_0}^T S(t_0 - s) (0, F(u))  \, ds \rightharpoonup \vec u(t_0)  \mas T \nearrow T_+ \quad \textrm{weakly in} \, \, \dot{H}^{\frac{3}{2}} \times \dot{H}^{\frac{1}{2}}\\
+\int^{t_0}_T S(t_0 - s) (0, F(u))  \, ds \rightharpoonup \vec u(t_0)  \mas T \searrow T_- \quad \textrm{weakly in} \, \, \dot{H}^{\frac{3}{2}} \times \dot{H}^{\frac{1}{2}}
}
where here $F(u)$ denotes the nonlinearity in either~\eqref{u eq},~\eqref{u eq wms}, or~\eqref{u eq wmh}.
\end{lem}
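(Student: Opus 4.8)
The assertion to prove is Lemma~\ref{lem: weak}: for a solution $\vec u(t)$ with the compactness property on $I=(T_-,T_+)$, the Duhamel integrals from $t_0$ out to the endpoints converge weakly in $\dot H^{3/2}\times\dot H^{1/2}$ to $\vec u(t_0)$. The plan is to extract this from two ingredients: (i) the identity
\EQ{
\vec u(t_0)=S(t_0-T)\vec u(T)-\int_{t_0}^{T}S(t_0-s)(0,F(u))\,ds,
}
valid for $t_0,T\in I$ by Duhamel's formula for the nonlinear equation; and (ii) the claim that the \emph{free part} $S(t_0-T)\vec u(T)$ tends to $0$ weakly in $\dot H^{3/2}\times\dot H^{1/2}$ as $T\nearrow T_+$ (and similarly as $T\searrow T_-$, using the time-reflected identity). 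Granting (ii), subtracting it from the identity and using that the free part goes weakly to zero immediately gives the stated weak convergence of the Duhamel integral to $\vec u(t_0)$, since weak limits are linear and the remaining term is exactly $-\int_{t_0}^T S(t_0-s)(0,F(u))\,ds$.

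\textbf{The free-part decay.} The heart of the matter is (ii), and here I would argue in two steps. First, a \emph{Strichartz} step: any Strichartz norm of the linear evolution of $\vec u$ on a half-neighborhood of $T_+$ vanishes. Concretely, fix an admissible triple, say $(p,q,\gamma)=(2,10,3/2)$, so the relevant norm is $\|v\|_{L^2_t L^{10}_x}$ of the free solution $v(t)=S(t-t_0)\vec u(t_0)$. If this norm did \emph{not} vanish as we approach $T_+$, one runs a standard concentration compactness argument on the sequence $\vec u(t_n)$, $t_n\nearrow T_+$: after rescaling by $N(t_n)$ these lie in the compact set $K$, so along a subsequence they converge strongly in $\dot H^{3/2}\times\dot H^{1/2}$ to some limit $\vec v_\infty\ne 0$; by the local theory and the perturbation lemma (Lemma~\ref{lem: pert}) the solutions with data $\vec u(t_n)$ then inherit uniform Strichartz bounds on a fixed-length (in rescaled time) interval, contradicting the blow-up criterion~\eqref{ft bu} / the non-scattering property~\eqref{blow up} when $T_+=+\infty$, or forcing a contradiction with maximality of $I_{\max}$ when $T_+<\infty$. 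This is precisely the argument of \cite[Section $6$]{TVZ08} and \cite[Proposition $3.6$]{Shen}, which I would cite rather than reproduce. Second, a \emph{weak-limit} step: knowing $\|S(\cdot-t_0)\vec u(t_0)\|_{S((t_0,T_+))}\to 0$ (or is already small near the endpoint), one shows $S(t_0-T)\vec u(T)\rightharpoonup 0$. For this, suppose along some sequence $T_n\nearrow T_+$ we had $S(t_0-T_n)\vec u(T_n)\rightharpoonup \vec w\ne 0$ weakly in $\dot H^{3/2}\times\dot H^{1/2}$; translating in time and using that the free propagator is a unitary group, $S(t-T_n)\vec u(T_n)\rightharpoonup S(t-t_0)\vec w$ for each $t$, and then lower semicontinuity of the Strichartz norm under weak limits (together with the vanishing from the first step along the shrinking intervals $(t_0,T_n)$) forces $\|S(\cdot-t_0)\vec w\|_{S(\R)}=0$, hence $\vec w=0$ by the small-data / injectivity part of Proposition~\ref{prop: small data}, a contradiction. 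Since every weak subsequential limit is $0$ and the family $S(t_0-T)\vec u(T)$ is bounded in $\dot H^{3/2}\times\dot H^{1/2}$ by~\eqref{type ii}, the full weak limit is $0$.

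\textbf{Expected main obstacle.} The routine part is the algebraic manipulation of Duhamel's identity; the substantive part — and the one I'd expect to cost the most care — is the concentration-compactness argument establishing the vanishing of the Strichartz norm of the free part near $T_+$ uniformly, i.e. tying the compactness of $K$ together with the perturbation lemma to rule out a nontrivial profile surviving at the endpoint. In particular one must be careful that the rescaling by $N(t_n)$ is harmless: because $K$ is precompact one gets strong subsequential convergence of the \emph{rescaled} data, and one must transfer the resulting local-in-rescaled-time Strichartz control back to the original solution, using $\inf_{[0,T_+)}N(t)>0$ from~\eqref{inf N0} so that a fixed rescaled-time interval corresponds to a nondegenerate original-time interval. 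Once that is in hand, both the Strichartz-vanishing and the weak-vanishing of the free evolution follow, and the lemma is immediate. I would present the proof by reducing to the two cited references for the hard step and then spelling out only the short passage from ``free part $\rightharpoonup 0$'' to the stated conclusion.
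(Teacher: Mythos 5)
Your proposal is correct and takes essentially the same route as the paper: the paper offers no proof of Lemma~\ref{lem: weak} beyond remarking that the Strichartz norm of the linear part vanishes at the endpoints and that a concentration-compactness argument then gives weak vanishing of the free evolution, citing \cite[Section 6]{TVZ08} and \cite[Proposition 3.6]{Shen} --- precisely your two ingredients (the Duhamel identity plus $S(t_0-T)\vec u(T)\rightharpoonup 0$), with the hard step likewise delegated to those references. One small caveat: your sketched contradiction with the blow-up criterion only operates when $N(t_n)$ stays bounded (when $N(t_n)\to\infty$ the fixed-length rescaled interval need not exit $I_{\max}$, and one instead argues via dispersion of the strong limit profile or triviality of the weak limit under the blow-up of scales), but since both you and the paper ultimately outsource this step to the cited references, this does not affect the verdict.
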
 

\subsection{Reduction to Rigidity Theorems}

The proofs of Theorem~\ref{thm: main} and Theorem~\ref{thm: wm} are now reduced to  showing that a nonzero critical element, $\vec u^*(t)$ as in Proposition~\ref{prop: ce} cannot exist. We prove the following rigidity statement. 
\begin{thm}[Rigidity Theorem]\label{thm: rig} Let $\vec u(t)$ be a solution to either~\eqref{u eq},~\eqref{u eq wms}, or~\eqref{u eq wmh}. Suppose that there exits a continuous function $N: [0, T_+( \vec u)) \to (0, \infty)$ so that the trajectory 
\EQ{
K:=   \left\{ \left(\frac{1}{N(t)} u^*\left( t, \frac{\cdot}{N(t)} \right), \, \frac{1}{N^2(t)} u_t^*\left( t, \frac{\cdot}{N(t)} \right) \right) \mid t \in I_{\max}\right\}
} 
is pre-compact in $\dot{H}^\frac{3}{2} \times \dot H^{\frac{1}{2}} (\R^3)$ and we have 
\EQ{ \label{inf N}
\inf_{t \in [0, T_+(\vec u))} N(t) >0.
}
Then $\vec u  \equiv ( 0, 0)$. 
\end{thm}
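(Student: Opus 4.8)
The plan is to run the two-case dichotomy familiar from \cite{DKM5}, dividing according to the behavior of the modulation parameter $N(t)$ as $t \to T_+$. If $T_+ < \infty$, then a rescaling argument shows that $N(t)$ must blow up at a definite rate as $t \to T_+$ (otherwise the solution would persist past $T_+$, contradicting the blow-up criterion \eqref{ft bu}); after normalizing so that $T_+ = 1$ one is in the self-similar blow-up regime, which is ruled out by Proposition~\ref{prop: r2}. If $T_+ = \infty$, the hypothesis \eqref{inf N} places us in the global, non-self-similar regime treated by Proposition~\ref{prop: r1}. So the heart of the matter is really these two propositions, and the present theorem is the assembly step: show the dichotomy is exhaustive and invoke the appropriate proposition in each case. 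I would first establish that $N(t) \not\to 0$ and, in the finite-time case, that $\liminf_{t\to T_+}(T_+-t)N(t) > 0$, using the local theory together with the compactness of $K$ to rule out "slow" concentration.

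\textbf{The decay and regularity gains.} Before either rigidity proposition can be applied, the critical element must be upgraded out of the scaling-critical space $\dot H^{3/2}\times\dot H^{1/2}$. The plan is to invoke the double Duhamel trick (Section~\ref{sec: dd}): using Lemma~\ref{lem: weak}, the free part $S(t_0-t)\vec u^*(t)$ vanishes weakly as $t\to T_\pm$, so $\vec u^*(t_0)$ is represented purely by the Duhamel integrals from the two endpoints; pairing the backward and forward integrals and exploiting the dispersive decay that comes from the opposing time directions yields (i) a uniform $\dot H^{5/2}\times\dot H^{3/2}$ bound (Proposition~\ref{prop: u 52}, Proposition~\ref{prop: wm 52}) and (ii) in the global case a uniform $\dot H^{3/4}\times\dot H^{-1/4}$ bound (Proposition~\ref{prop: u 34}, Proposition~\ref{prop: wm 34}). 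Interpolating (i) with the critical bound gives compactness in $\dot H^2\times\dot H^1$, which is what the self-similar argument needs; interpolating (ii) with the critical bound gives compactness in $\dot H^1\times L^2$ (Lemma~\ref{lem: compact}), which puts the energy at our disposal in the global case. I would carry out these upgrades first, then feed the improved solution into the case analysis above.

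\textbf{Ruling out each case.} In the self-similar case, after passing to self-similar variables $y = x/(1-t)$, $s = -\log(1-t)$, the compactness of $K$ (now in $\dot H^2\times\dot H^1$) forces the solution to converge along a subsequence to a time-independent solution of a non-degenerate elliptic equation on the unit ball with zero boundary data; the extra regularity makes the boundary trace and the integration-by-parts identities legitimate, and one concludes the limit is zero, then propagates this back to rule out the original solution (Proposition~\ref{prop: r2}). In the global case, one uses the exterior energy estimates for the free $5$-dimensional radial wave equation (Proposition~\ref{prop: ext en}) in the channels-of-energy spirit: a nonzero compact solution would emit a fixed quantum of energy into the exterior region $|x| > t$ in at least one time direction, which is incompatible with the pre-compactness of the trajectory, unless the solution coincides with a static solution of the elliptic equation $-\Delta u = F(u)$; the analysis of that elliptic equation (Lemma~\ref{lem: hm}, Lemma~\ref{lem: ell}) shows the only such $\dot H^{3/2}$ solution is $u\equiv 0$ (Proposition~\ref{prop: r1}). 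Since both cases yield $\vec u\equiv (0,0)$, the theorem follows.

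\textbf{Main obstacle.} The delicate point I expect to be hardest is not the abstract dichotomy but the interface between the regularity/decay upgrades and the rigidity arguments: one needs the double Duhamel pairing to converge, which requires just enough dispersive decay of the Duhamel integrals in the relevant mixed norms, and for the cubic-type nonlinearities with critical regularity $\dot H^{3/2}$ this is genuinely borderline — the naive estimates used in \cite{DKM5} for $p>5$ fail here. Making the pairing absolutely convergent (rather than merely conditionally so) while keeping all norms scaling-consistent, and then verifying that the gained regularity is quantitative enough to justify the elliptic boundary-value manipulations and the exterior-energy lower bounds, is where the real work lies; everything else in the proof of this particular theorem is bookkeeping around Propositions~\ref{prop: r1} and~\ref{prop: r2}.
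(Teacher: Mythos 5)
Your proposal follows the paper's route exactly: Theorem~\ref{thm: rig} is reduced, via the standard dichotomy on the behavior of $N(t)$, to Propositions~\ref{prop: r1} and~\ref{prop: r2}, whose proofs rest on the double Duhamel regularity and decay upgrades of Section~\ref{sec: dd} and, respectively, the self-similar elliptic argument and the exterior-energy rigidity argument, just as you describe. The paper likewise does not spell out the reduction itself, deferring to \cite[Section 2]{DKM5} and \cite[Sections 5 and 6]{KM11a}; the only point to keep straight is that Proposition~\ref{prop: r1} requires $N(t)$ bounded below on all of $I_{\max}$ (both time directions), not just $[0,T_+)$, so the cited reduction must also handle the behavior as $t\to T_-$, which your sketch only implicitly acknowledges.
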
 

Following the work of Kenig, Merle~\cite[Sections $5$ and $6$]{KM11a} and Duyckaerts, Kenig, and Merle~\cite[Section $2$]{DKM5}, the proof of Theorem~\ref{thm: rig} reduces to proving the following two propositions, the first where the scaling parameter $N(t)$ is bounded away from $0$ on the entire interval $I_{\max}$, as opposed to just the half-open interval $[0, T_+)$, and the second which assumes that the solution experiences self-similar finite time blow up in forward time. 
\begin{prop} \label{prop: r1}
Let $\vec u(t)$ be a solution to either~\eqref{u eq},~\eqref{u eq wms}, or~\eqref{u eq wmh} defined on an interval $I_{\max}(\vec u) = (T_-, T_+)$. Suppose that there exits a continuous function $N: I_{\max}(\vec u) \to (0, \infty)$ so that the trajectory 
\EQ{
K:=   \left\{ \left(\frac{1}{N(t)} u^*\left( t, \frac{\cdot}{N(t)} \right), \, \frac{1}{N^2(t)} u_t^*\left( t, \frac{\cdot}{N(t)} \right) \right) \mid t \in I_{\max}(\vec u))\right\}
} 
is pre-compact in $\dot{H}^\frac{3}{2} \times \dot H^{\frac{1}{2}} (\R^3)$ and we have 
\EQ{ \label{inf N1}
\inf_{t \in I_{\max}(\vec u)} N(t) >0.
}
Then $\vec u  \equiv ( 0, 0)$. 
\end{prop}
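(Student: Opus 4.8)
To prove Proposition~\ref{prop: r1} I would show that a solution with the stated properties must vanish identically, by combining the additional decay available for compact trajectories with the exterior-energy (``channels of energy'') estimates for the free radial wave equation in $\R^{1+5}$ and the nonexistence of nontrivial stationary solutions of the associated elliptic equations. After a time translation assume $0 \in I_{\max}$. The first step is a reduction to the global case $I_{\max} = \R$: if a finite endpoint of $I_{\max}$ occurred, the standard argument using the compactness property, the lower bound~\eqref{inf N1}, finite speed of propagation, and the blow-up criterion of Proposition~\ref{prop: small data} would force self-similar concentration of $N(t)$ at that endpoint, placing us in the setting of Proposition~\ref{prop: r2}; so from here on $I_{\max} = \R$.

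Next I would upgrade the decay of $\vec u$. By Proposition~\ref{prop: u 34} (resp.\ Proposition~\ref{prop: wm 34}), established via the double Duhamel trick, the solution is uniformly bounded in $\dot H^{\frac34} \times \dot H^{-\frac14}$; interpolating with the a priori critical bound~\eqref{inf N1} and invoking Lemma~\ref{lem: compact} shows that $\vec u(t)$ is in fact pre-compact in $\dot H^1 \times L^2$, hence lies uniformly in $(\dot H^1 \cap \dot H^{5/4}) \times L^2$ and has finite conserved energy. Combined with Lemma~\ref{lem: weak}, which asserts that the free part in the Duhamel formula vanishes weakly as $t \to \pm\infty$, this yields the representation
\EQ{
\vec u(0) &= -\int_0^{\infty} S(-s)\,(0,F(u))\,ds \\
&= \int_{-\infty}^{0} S(-s)\,(0,F(u))\,ds
}
weakly in $\dot H^{\frac32} \times \dot H^{\frac12}$, where $F$ denotes the nonlinearity in~\eqref{u eq},~\eqref{u eq wms}, or~\eqref{u eq wmh}.

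The heart of the argument is the channels-of-energy mechanism. Given $\eta > 0$, the compactness property, Remark~\ref{Ceta}, and $\inf_t N(t) > 0$ produce a radius $R = R(\eta)$ with $\|\vec u(t)\|_{\dot H^{\frac32} \times \dot H^{\frac12}(|x| \ge R)} < \eta$ for all $t$. I would then apply Proposition~\ref{prop: ext en} to the free evolution $S(t)\vec u(0)$ in the exterior cone $\{|x| \ge R + |t|\}$: in at least one of the two time directions its exterior energy is bounded below by a fixed multiple of the component of $\vec u(0)$ in $\{|x| \ge R\}$ orthogonal to the finite-dimensional space of non-radiating data. On the other hand, writing $S(t)\vec u(0) = \vec u(t) - \int_0^t S(t-s)(0,F(u))\,ds$, finite speed of propagation restricts the Duhamel term on $\{|x| \ge R + |t|\}$ to only feel $F(u)$ in the exterior region $\{|x| \ge R\}$, where $|F(u)| \lesssim |u|^3$ and exterior-adapted Strichartz estimates bound it in terms of the small tail; since $\|\vec u(t)\|_{\dot H^{\frac32} \times \dot H^{\frac12}(|x| \ge R + |t|)} < \eta$ as well, the exterior free energy is $\lesssim \eta$. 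Matching the two bounds forces the restriction of $\vec u(0)$ to $\{|x| \ge R\}$ to agree, up to an error strictly smaller than its own exterior norm, with an explicit non-radiating profile; bootstrapping over $R \in [R_0, \infty)$ then identifies $\vec u(0)$ on $\{|x| \ge R_0\}$ with a single stationary profile whose leading behavior at spatial infinity is of the form $c\,|x|^{-1}$.

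Finally I would close the argument with the elliptic analysis. The profile $c\,|x|^{-1}$ is precisely borderline and fails to have finite $\dot H^{\frac32}(\R^5)$ norm near infinity, so $c = 0$; more generally Lemma~\ref{lem: hm} and Lemma~\ref{lem: ell} show that any stationary solution of $-\Delta W = \pm F(\cdot, W)$ compatible with the regularity and finite energy of $\vec u(0)$ must vanish on $\{|x| \ge R_0\}$. Running this argument at each time $t$, and using that the tails beyond the fixed radius $R_* = C(\eta_0)/\inf_t N(t)$ are uniformly small, gives $\vec u(t) \equiv 0$ on $\{|x| \ge R_*\}$ for all $t$, so $\vec u(t)$ is supported in $B(0, R_*)$; a standard argument — a global, finite-energy solution with the compactness property and fixed compact support must be trivial, cf.~\cite{DKM5} — then yields $\vec u \equiv (0,0)$. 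The step I expect to be the main obstacle is the nonlinear estimate inside the channels-of-energy argument: at the low regularity $\dot H^{\frac32} \times \dot H^{\frac12}$ one must control the Duhamel integral restricted to the exterior light cone using only $|F(u)| \lesssim |u|^3$ (together with the bounded factors $Z_{\Sp^3}, Z_{\Hp^3}$ in the wave maps cases), which demands exterior-adapted Strichartz estimates and a careful accounting of the finite-dimensional obstruction in Proposition~\ref{prop: ext en} and its interaction with the elliptic rigidity for all three nonlinearities at once — this is the part that genuinely goes beyond~\cite{DKM5}.
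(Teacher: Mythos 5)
Your high-level strategy (double-Duhamel decay to get compactness in $\dot H^1\times L^2$, then exterior energy estimates plus elliptic classification) is the paper's strategy, but there is a genuine gap at the core of your channels-of-energy step: the treatment of the non-radiating component. In $\R^5$ the plane of non-radiating exterior data in Proposition~\ref{prop: ext en} is spanned by the Newton potential $r^{-3}$ (not $r^{-1}$), and $c\,r^{-3}$ restricted to $\{r\ge R\}$ \emph{does} have finite $\dot H^{3/2}$ (indeed finite energy) norm there, so your quick conclusion ``the profile fails to be in $\dot H^{3/2}$ near infinity, hence $c=0$'' is false: the exterior estimate only controls $\pi_R^\perp\vec u$, and the $r^{-3}$ tail is a genuine obstruction that cannot be killed by a norm argument at infinity. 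This is exactly why the paper, after proving the quantitative inequality $\|\pi_R^\perp\vec u(t)\|\lesssim R^{-1}\|\pi_R\vec u(t)\|^3$ (Lemma~\ref{lem: R ineq}, via a cut-off exterior Cauchy problem rather than generic ``exterior Strichartz''), must extract precise asymptotics $r^3u_0(r)\to\ell_0$, $r\int_r^\infty u_1\rho\,d\rho\to\ell_1$ through difference estimates, prove $\ell_1$ is time-independent and zero using the equation itself (Lemma~\ref{lem: space decay}), and then split into cases. Your proposal skips all of this, and your appeal to Lemma~\ref{lem: hm} and Lemma~\ref{lem: ell} misreads them: they do not say that admissible stationary solutions vanish on $\{r\ge R_0\}$; they construct the family $\fy_\ell$ with tail $\ell r^{-3}$, whose failure to lie in $L^5$ (hence $\dot H^{3/2}$) occurs at the \emph{origin}, and they are used in the case $\ell_0\neq 0$ by \emph{subtracting} $\fy_{\ell_0}$ and rerunning the entire channel/difference-estimate machinery for the perturbation $w_{\ell_0}=u-\fy_{\ell_0}$ (Lemma~\ref{lem: wl 0}), with a new perturbative exterior theory around $\fy_{\ell_0,R}$. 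Without that linearization step, the case $\ell_0\neq0$ is not excluded by your argument.

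Two further points. Your final reduction — ``$\vec u$ is supported in a fixed ball, and a compactly supported global solution with the compactness property must be trivial, cf.~\cite{DKM5}'' — is not available off the shelf for these focusing supercritical nonlinearities (there is no virial/Morawetz input here); the paper instead reruns the exterior inequality at the boundary of the support, contradicting minimality of the support radius (Lemma~\ref{lem: ell=0}), and that argument needs the quantitative $\pi_R^\perp$ bound, not just vanishing of tails. Also, your opening reduction to $I_{\max}=\R$ by invoking Proposition~\ref{prop: r2} is unjustified: the hypotheses of Proposition~\ref{prop: r1} with $T_+<\infty$ do not force the self-similar scaling $N(t)=(T_+-t)^{-1}$, and the paper handles finite endpoints inside the proof via the support property $\supp\vec u(t)\subset B(0,T_+-t)$ of Remark~\ref{rem: N}, which is then used crucially (e.g.\ to show $\ell_1=0$). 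As written, your proof establishes the correct framework but does not close either the $\ell_0\neq0$ or the $\ell_0=0$ branch.
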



\begin{prop} \label{prop: r2} There is \textbf{no} radial solution to either~\eqref{u eq},~\eqref{u eq wms}, or~\eqref{u eq wmh} with the compactness property on $I_{\max}(\vec u)$ as in Theorem~\ref{thm: rig},  with  $T_+(\vec u)<\infty$ and with the scaling parameter given by $$N(t)  = (T_+ - t)^{-1}$$ on the half interval $[0, T_+)$. Note that in this case the trajectory 
\ant{
K_+ :=   \left\{ \left((T_+ - t) u^*\left( t, \,  (T_+ - t) \cdot  \, \right), \, (T_+ - t)^2 u_t^*\left( t, \, (T_+ - t) \cdot \,  \right) \right) \mid t \in [0, T_+(\vec u))\right\}
} 
is pre-compact in $\dot{H}^\frac{3}{2} \times \dot H^{\frac{1}{2}} (\R^3)$.
\end{prop}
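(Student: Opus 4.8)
The plan is to run the self-similar rigidity argument of~\cite[Section~2]{DKM5}, using crucially the gain of regularity established in Section~\ref{sec: dd}. Assume, towards a contradiction, that such a solution $\vec u^*$ exists. A standard argument (finite speed of propagation, the compactness property, and the exterior energy estimates of Proposition~\ref{prop: ext en}) shows that $\supp\vec u^*(t)\subset\{\abs{x}\le T_+-t\}$ for $t\in[0,T_+)$. Introduce the self-similar variables
\EQ{\label{ss vars}
y := \frac{x}{T_+ - t}, \qquad s := -\log(T_+ - t)\in[s_0,\infty), \qquad w(s,y) := (T_+-t)\,u^*\big(t,(T_+-t)y\big),
}
so that $w(s,\cdot)$ is supported in $\overline{B_1}\subset\R^5$ and satisfies an equation of the schematic form
\EQ{\label{w eq}
\p_s^2 w + (2\Lambda+1)\p_s w + \LL w = \tilde F(y,w), \qquad \Lambda := 1 + y\cdot\na, \quad \LL := \Lambda^2 + \Lambda - \De_y,
}
where $\LL$ is uniformly elliptic on compact subsets of $B_1$ and degenerate on $\p B_1$, and $\abs{\tilde F(y,w)}\lesssim\abs{w}^3$ by~\eqref{F S},~\eqref{F H bound}. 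The compactness property of $\vec u^*$ on $[0,T_+)$ with $N(t)=(T_+-t)^{-1}$ says precisely that $\{(w(s),\p_s w(s)+\Lambda w(s)):s\ge s_0\}$ is pre-compact in $\dot H^{3/2}\times\dot H^{1/2}(\R^5)$; by Proposition~\ref{prop: u 52} for~\eqref{u eq} (resp.~Proposition~\ref{prop: wm 52} for the wave-maps equations) and interpolation, this family is moreover uniformly bounded in $(\dot H^{5/2}\cap\dot H^{3/2})\times(\dot H^{3/2}\cap\dot H^{1/2})(\R^5)$. This extra regularity is what makes the argument below close; it is exactly what is missing in a naive transcription of~\cite{DKM5} to the regularity $\dot H^{3/2}\times\dot H^{1/2}$.

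Following~\cite[Section~2]{DKM5}, I would introduce the weighted self-similar energy $\mathbf E(s):=\mathbf E(w(s),\p_s w(s))$ -- a functional of the schematic form $\int_{B_1}\big(\tfrac12\abs{\p_s w}^2 + Q(w,\na w) - G(y,w)\big)\,d\mu$, with $Q$ a quadratic form, $\p_w G = \tilde F$, and conformal weight $d\mu$ chosen so that the flux through the characteristic boundary $\p B_1$ has a favorable sign -- which obeys a monotonicity identity $\frac{d}{ds}\mathbf E(s)\le -c\int_{B_1}\abs{\p_s w}^2\,d\mu$. The uniform bounds of the previous paragraph force $\mathbf E(s)$ to stay bounded, so it converges as $s\to\infty$ and $\int_{s_0}^\infty\int_{B_1}\abs{\p_s w}^2\,d\mu\,ds<\infty$. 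Picking $s_n\to\infty$ with $\|\p_s w(s_n)\|_{L^2(d\mu)}\to 0$ and using pre-compactness (sharpened by the uniform $\dot H^{5/2}$-bound, which upgrades the mode of convergence), I extract a subsequence with $w(s_n)\to W_\infty$ strongly in $\dot H^{\sigma}(\R^5)$ for every $\sigma<5/2$; passing to the limit in~\eqref{w eq}, $W_\infty$ is a radial weak solution of the stationary degenerate elliptic problem
\EQ{\label{stationary}
\LL W_\infty = \tilde F(y,W_\infty)\quad\text{in } B_1, \qquad W_\infty\in\dot H^{5/2}\cap\dot H^{3/2}(\R^5), \quad \supp W_\infty\subset\overline{B_1}.
}

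The main obstacle is to show that $W_\infty\equiv 0$. Interior elliptic regularity gives $W_\infty\in C^\infty(B_1)$; the $\dot H^{5/2}$-bound -- the ingredient unavailable at the bare critical regularity -- provides the control of $W_\infty$ up to $\p B_1$, and after the substitution of~\cite{DKM5} removing the degeneracy of $\LL$, $W_\infty$ solves a non-degenerate elliptic equation on $B_1$ with zero Dirichlet data on $\p B_1$. A Pohozaev / Rellich--Ne\v{c}as identity -- pair~\eqref{stationary} with $y\cdot\na W_\infty$ and with $W_\infty$, integrate over $B_1$, and use $W_\infty|_{\p B_1}=0$ -- then yields a rigid identity which, using the sign of $Z_{\Sp^3}$ resp.~$Z_{\Hp^3}$, the bound $\abs{\tilde F}\lesssim\abs{w}^3$ from~\eqref{F S},~\eqref{F H bound}, and (for~\eqref{u eq}) the exact homogeneity $\tilde F=w^3$, forces $W_\infty\equiv 0$. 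To conclude: the $\dot H^{3/2}\times\dot H^{1/2}$-norm is invariant under the rescaling~\eqref{ss vars}, so using $W_\infty=0$, $\p_s w(s_n)\to 0$ in $\dot H^{1/2}$, and (from $w(s_n)\to 0$ in $\dot H^{5/2-}$ together with the extra regularity) $\Lambda w(s_n)\to 0$ in $\dot H^{1/2}$, we obtain
\EQ{\label{to zero}
\|\vec u^*(t_n)\|_{\dot H^{3/2}\times\dot H^{1/2}(\R^5)} = \big\|\big(w(s_n),\,\p_s w(s_n)+\Lambda w(s_n)\big)\big\|_{\dot H^{3/2}\times\dot H^{1/2}}\longrightarrow 0, \qquad t_n:=T_+-e^{-s_n}\nearrow T_+;
}
for $n$ large this falls below the small-data threshold $\de_0$ of Proposition~\ref{prop: small data}, so $\vec u^*$ extends globally forward of $t_n$, contradicting $T_+<\infty$. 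Hence no such solution exists. (Two shortcuts are worth noting. Given the support property, a shorter route to~\eqref{to zero} bypasses the elliptic analysis: since the interpolant $\dot H^2(\R^5)$ sits below the Sobolev endpoint for $L^\infty$ -- unlike the gained regularity in the $3d$ setting of~\cite{DKM5} -- one has, for $\vec u^*(t)$ supported in $\overline{B_{T_+-t}}$, the bound $\|\vec u^*(t)\|_{\dot H^{3/2}\times\dot H^{1/2}}\lesssim(T_+-t)\sup_\tau\|\vec u^*(\tau)\|_{\dot H^{5/2}\times\dot H^{3/2}}\to 0$ by Sobolev embedding, interpolation, and Bernstein. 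Likewise, for the two wave-maps equations the Pohozaev step can be skipped: $\E(\vec\psi)$ is nonnegative and, by the self-similar scaling together with the support property and Sobolev embedding, each of its scaling-subcritical pieces tends to $0$ as $t\to T_+$, so $\E(\vec\psi)\equiv 0$ and $\vec\psi\equiv 0$; only the indefinite focusing cubic energy genuinely needs the full argument.)
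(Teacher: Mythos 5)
Your overall architecture is the same as the paper's: pass to self-similar variables, use the regularity gain of Proposition~\ref{prop: u 52} (resp.\ Proposition~\ref{prop: wm 52}) together with the support property to get control up to the light cone, run a monotonicity identity for a weighted self-similar energy, extract a stationary solution of the degenerate elliptic equation, show it vanishes, and conclude by contradiction with blow-up. Your side remark on the wave maps is also essentially what the paper does for the $\Hp^3$ target (and, since $\E_{\Sp^3}\ge 0$ as well, it is a reasonable alternative for $\Sp^3$, which the paper instead treats by rerunning the cubic argument). However, the step that actually carries the weight of the proof --- showing $W_\infty\equiv 0$ for the \emph{focusing} cubic --- is exactly where your proposal is not a proof. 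You invoke a Pohozaev/Rellich--Ne\v{c}as identity with zero Dirichlet data and assert that homogeneity ``forces $W_\infty\equiv 0$,'' but for a focusing nonlinearity a Pohozaev identity on a ball with zero boundary data does not by itself yield triviality (that is precisely why focusing equations admit nontrivial solitons); any such argument must exploit the degenerate weight $(1-|y|^2)$ and the rate of vanishing of $W_\infty$ at $\p B_1$, and you give no computation. The paper closes this step differently and more concretely: using the $H^2$ bound (the payoff of Section~\ref{sec: dd}) it derives $|w^*|\lesssim (1-r)^{3/2}$, $|w^*_r|\lesssim(1-r)^{1/2}$, runs a bootstrap in the ODE to show $w^*$ vanishes identically in a neighborhood of $r=1$, and then concludes from a nondegenerate elliptic equation on a smaller ball with vanishing Dirichlet and Neumann data. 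A related looseness: extracting the stationary equation from a sequence with $\|\p_s w(s_n)\|\to 0$ does not by itself control the $\p_s^2 w$ and $\p_r\p_s w$ terms; the paper instead shows $w(s_n+T)\to w^*$ for all $T\ge 0$ and identifies $w^*$ as an $s$-independent trajectory of the nonlinear flow via the Perturbation Lemma~\ref{lem: pert} (your version can be repaired by space-time averaging, but as written it is a gap).

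Separately, your first ``shortcut'' is wrong. Proposition~\ref{prop: u 52} gives $\|\vec u^*(\tau)\|_{\dot H^{5/2}\times\dot H^{3/2}}\lesssim N(\tau)=(T_+-\tau)^{-1}$, not a uniform bound, so $\sup_\tau\|\vec u^*(\tau)\|_{\dot H^{5/2}\times\dot H^{3/2}}=\infty$; and the time-slice version of your support/Poincar\'e--Bernstein estimate only yields $\|\vec u^*(t)\|_{\dot H^{3/2}\times\dot H^{1/2}}\lesssim (T_+-t)\,\|\vec u^*(t)\|_{\dot H^{5/2}\times\dot H^{3/2}}\lesssim 1$, i.e.\ it merely reproduces the known uniform bound on the critical norm rather than any decay. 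Had such a two-line argument worked it would trivialize the proposition (the critical norm of a blow-up solution cannot tend to zero without contradicting the small data theory immediately), so this should have been a red flag. The legitimate route to the contradiction is the one in the paper (and in the main body of your sketch): prove $w^*\equiv 0$ first, and only then deduce that the modulated data converge to zero, which is incompatible with $T_+<\infty$ by Proposition~\ref{prop: small data}.
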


\begin{rem} \label{rem: N}
We make note of the following  reductions which we will use later. \begin{enumerate}
\item Suppose that the scaling parameter $N(t)$ as in~\eqref{inf N} satisfies $N(t) \le C< \infty$ on $[0, T_+)$ (resp. $(T_-, 0])$. 
Then, a standard argument, see for example~\cite{KM11a} or~\cite[proof of Proposition~$4.4$]{L13}, shows that in fact we must have $T_+ = \infty$ (resp. $T_- = - \infty$). In fact, one can then  modify  the profile so that $N(t)  = 1$ for all $t \ge 0$, (resp. $t \le 0$). 
\item Let $\vec u(t)$ have the compactness property as in Theorem~\ref{thm: rig}, or Proposition~\ref{prop: r1},~\ref{prop: r2}. If, say, $T_+<\infty$, then without loss of generality we can assume that $\supp u(t), u_t(t) \in B(0, T_+-t)$; see for example~\cite[Lemma $4.15$]{KM11a} for more details.  
\end{enumerate}
\end{rem} 

\begin{rem}The reduction of Theorem~\ref{thm: rig} to Propositions~\ref{prop: r1},~\ref{prop: r2} is a standard argument in the field and does not depend on the dimension or the precise structure of the nonlinearity. In particular, the  argument in~\cite[Section $2$]{DKM5}, which deals with the focusing radial, supercritical semilinear wave equation in $d=3$, applies here as well. Such reductions hold as well for different equations such as the nonlinear Schr\"odinger equation, see for example \cite{KTV}, or for the gKdV equation, see for example~\cite{KKSV, D13}. 

Rather than repeat the argument that reduces Theorem~\ref{thm: rig} to the two propositions, we focus the rest of the paper on the proofs of Proposition~\ref{prop: r1}, and Proposition~\ref{prop: r2}, where several aspects of proof differ significantly from the arguments in~\cite{DKM5}. 
\end{rem}

\section{Additional regularity and decay for solutions with the Compactness Property} \label{sec: dd}
In this section we prove additional regularity and additional spacial decay for solutions to~\eqref{u eq},~\eqref{u eq wms}, and~\eqref{u eq wmh} with the compactness property on an open interval $I_{\max} \ni 0$ using the so-called Double Duhamel trick. The methods used in this section are similar to the techniques used in~\cite{DL14a} for the cubic wave equation in $\R^{1+3}$. In the first two subsections we carry out the arguments in the case of the focusing cubic equation~\eqref{u eq} as this keeps the technical difficulties at a minimum. In the last two subsections we extend the main results to solutions to~\eqref{u eq wms} and~\eqref{u eq wmh}. 

\subsection{Higher regularity for compact solutions to~\eqref{u eq}}
We begin by showing that  a solution to~\eqref{u eq} with that compactness property in $\dot{H}^{3/2} \times \dot H^{1/2}$ is in fact, more regular. In particular we prove the following estimates for the $\dot{H}^{5/2} \times \dot H^{3/2}$ norm of $\vec u(t)$. 

\begin{prop}\label{prop: u 52}  Suppose $\vec u(t)$ is a solution to~\eqref{u eq} with the compactness property on $I_{\max}(\vec u)$ as in Theorem~\ref{thm: rig}, i.e., assume that there exists a function $N: I_{\max} \to (0, \infty)$ so that the set 
\EQ{\label{1.1}
K:= \left\{ \left(\frac{1}{N(t)} u\left( t, \frac{\cdot}{N(t)} \right), \, \frac{1}{N^2(t)} u_t\left( t, \frac{\cdot}{N(t)} \right) \right) \mid t \in I\right\}
}
is pre-compact in $\dot{H}^{3/2} \times \dot H^{1/2}$. 
%
Then for all $t \in I_{\max}$,
\ant{
\| \vec u(t) \|_{\dot{H}^{5/2} \times \dot{H}^{3/2}(\R^{5})} \lesssim N(t).
}
with a constant that is uniform in $t \in I_{\max}$. 
\end{prop}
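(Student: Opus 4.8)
\textbf{Proof proposal for Proposition~\ref{prop: u 52}.}

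The plan is to run the double Duhamel trick to bootstrap the regularity of a compact solution from the scaling-critical level $\dot H^{3/2}\times\dot H^{1/2}$ up to $\dot H^{5/2}\times\dot H^{3/2}$. By the scaling symmetry of~\eqref{u eq} we may normalize and work at a fixed time, say $t_0=0$, and prove the bound there; the general statement then follows by rescaling (replacing $\vec u(t)$ by $\vec u_{N(t)}(t)$), and by translation in time applied to the whole trajectory, since $0$ can be assumed to lie in $I_{\max}$ and every time in $I_{\max}$ can be brought to the origin. After normalizing $N(0)=1$, the uniformly-small-tails property (Remark~\ref{Ceta}) holds with constants uniform in $t$, which is the only consequence of compactness we will actually use. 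First I would apply the frequency projection $P_N$ to the Duhamel formula~\eqref{duhamel}: using Lemma~\ref{lem: weak}, the free part drops out in the weak limit, so for each dyadic $N\ge 1$ we get two representations
\EQ{
P_N\vec u(0) = -\int_0^{T_+} P_N S(-s)(0,F(u))\,ds = +\int_{T_-}^{0} P_N S(-s)(0,F(u))\,ds,
}
where $F(u)=u^3$ (the sign conventions being absorbed into $F$). The goal is the weighted square-summability estimate $\sum_{N\ge 1} N^2 \|P_N\vec u(0)\|_{\dot H^{3/2}\times\dot H^{1/2}}^2\lesssim 1$, i.e. control of the $\dot H^{5/2}\times\dot H^{3/2}$-piece of the high frequencies; the low frequencies $N\lesssim 1$ are already controlled by the critical norm together with the small-tails bound in frequency.

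The core of the argument is to estimate, for fixed $N\ge 1$, the inner product
\EQ{
\|P_N\vec u(0)\|_{\dot H^{3/2}\times\dot H^{1/2}}^2 \lesssim \Big\langle \int_0^{T_+}\! P_N S(-s)(0,F(u))\,ds,\ \int_{T_-}^{0}\! P_N S(-\tau)(0,F(u))\,d\tau\Big\rangle_{\dot H^{3/2}\times\dot H^{1/2}}
}
by pairing the two one-sided Duhamel integrals taken in \emph{opposite} time directions. Writing this as a double integral over $s>0>\tau$ and using the dispersive/energy decay of the free propagator between well-separated times $s-\tau$ (the kernel of $P_N S(t)$ at frequency $N$ decays in $|t|$; concretely one uses the $L^1\to L^\infty$ type bound $\|P_N S(t) f\|_{L^\infty}\lesssim N^{?}|t|^{-2}\|f\|_{L^1}$ in $\R^5$, or the cleaner frequency-localized Strichartz estimate of Proposition~\ref{prop stric}), one reduces the matter to estimating $\|P_N F(u(s))\|$ in a suitable dual Strichartz norm, uniformly in $s$. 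Here the nonlinearity is handled by the standard product/fractional-Leibniz estimates: $\|F(u)\|=\|u^3\|$ is estimated using that $u\in\dot H^{3/2}(\R^5)\hookrightarrow L^5(\R^5)$ (critical Sobolev embedding) together with radial Sobolev embedding (Lemma~\ref{lem: rad se}) to absorb the loss of derivatives, and crucially the small-tails bound lets one make the relevant high-frequency contributions of $u$ as small as one likes. A Gronwall / summation argument over the dyadic scales $N$ then closes the bootstrap: one shows that the contribution of frequencies $\gtrsim N_0$ to the $\dot H^{5/2}\times\dot H^{3/2}$ norm is bounded by a small constant times itself plus a fixed constant, forcing finiteness.

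The main obstacle — and the reason the argument is delicate for the cubic power rather than the $p>5$ case treated in~\cite{DKM5} — is the mismatch between the regularity one is trying to gain (one full derivative, from $\dot H^{3/2}$ to $\dot H^{5/2}$) and the limited smoothing available from a cubic nonlinearity at this low critical regularity: a naive estimate of $P_N(u^3)$ only sees $u$ at regularity $\dot H^{3/2}$, which barely embeds into $L^5$ with no room to spare, so one cannot afford any clumsy derivative distribution. The fix is to split each factor of $u$ into low- and high-frequency pieces relative to $N$, estimate the resonant interactions (all three factors at frequency $\sim N$) using the small-tails smallness, handle the high$\times$low$\times$low interactions by putting the derivatives on the single high factor, and use the opposite-time-direction structure of the double Duhamel pairing to gain the decay in $|s-\tau|$ that makes the double time integral converge. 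Care is also needed near the endpoints of $I_{\max}$: if $T_+<\infty$ one invokes finite speed of propagation (Remark~\ref{rem: N}(2)) so that the spatial support shrinks, keeping the $L^1$-based kernel estimates usable; if $T_+=\infty$ the decay in $|s-\tau|$ itself provides the needed integrability. Once the bound at $t_0=0$ is in hand, undoing the rescaling gives $\|\vec u(t)\|_{\dot H^{5/2}\times\dot H^{3/2}}\lesssim N(t)$ with a uniform implicit constant, as claimed.
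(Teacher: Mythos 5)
Your overall skeleton (double Duhamel pairing of forward and backward integrals, weak vanishing of the free part via Lemma~\ref{lem: weak}, a frequency-envelope bootstrap over dyadic scales, and the small local-in-time Strichartz norm coming from the uniformly small tails) matches the paper's strategy. But the step you lean on to make the double time integral converge is exactly where your argument breaks, and it is the heart of the matter. You propose to use the frequency-localized dispersive bound, i.e.\ decay of the kernel of $P_N S(t)$ like $|t|^{-2}$ in $\R^5$, together with an $L^1\to L^\infty$ estimate applied to $F(u)=u^3$. Two things fail there. First, $|s-\tau|^{-2}$ is not integrable over the region $\{s>\delta,\ \tau<-\delta\}$: $\int_\delta^\infty\int_{-\infty}^{-\delta}(s-\tau)^{-2}\,d\tau\,ds=\int_\delta^\infty (s+\delta)^{-1}\,ds=\infty$, so dispersive decay alone cannot close the pairing, no matter how you distribute derivatives. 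Second, the $L^1\to L^\infty$ bound needs $\|u^3\|_{L^1}=\|u\|_{L^3}^3$, which is not controlled by the critical norm ($u\in\dot H^{3/2}\hookrightarrow L^5$ only gives $u^3\in L^{5/3}$); and "estimating $\|P_N F(u(s))\|$ in a dual Strichartz norm uniformly in $s$" cannot substitute, since global Strichartz control of $u$ is precisely what is unavailable (the solution is not known to scatter) and a uniform-in-$s$ bound with no decay does not make the $ds\,d\tau$ integral finite.

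What the paper does instead, and what is missing from your proposal, is a \emph{spatial} light-cone decomposition of each Duhamel integrand: write $u^3=\chi(x/ct)u^3+(1-\chi)(x/ct)u^3$ with $c=1/4$. The exterior piece is estimated in norm (not via the pairing): radial Sobolev embedding (Lemma~\ref{lem: rad se}) gives $\|u\|_{L^\infty(|x|\ge ct)}\lesssim (ct)^{-1}\|u\|_{\dot H^{3/2}}$, which yields $\|(1-\chi)(x/ct)u^3\|_{\dot H^{3/2}}\lesssim (ct)^{-2}$, integrable in a \emph{single} time integral (Claim~\ref{claim: N to inf}), producing the $2^{-k}b_k$ gain. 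The interior-interior pairing $\langle B,B'\rangle$ is then handled not by dispersive decay but by a non-stationary phase/Huygens-type mechanism: since the two spatial supports satisfy $|x|\le\frac14|t|$, $|y|\le\frac14|\tau|$ with $t<0<\tau$, one has $(\tau-t)-|x-y|\ge\frac12|\tau-t|$, so integration by parts in the frequency variable gives kernel decay $\lesssim_L 2^{6k}\langle 2^k|\tau-t|\rangle^{-L}$ for arbitrary $L$; this beats the volume factors $|t|^2|\tau|^2$ coming from H\"older on the truncated balls (which is how the lack of $L^1$ control of $u^3$ is circumvented) and produces the crucial $\min(2^{-9k/4}N(0)^{9/4},1)$ bound. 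Without this interior/exterior splitting, neither the convergence of the double time integral nor the gain of a full derivative at the cubic (low-regularity) level can be obtained, which is precisely why the naive adaptation of~\cite{DKM5} fails for $p<5$. The short-time piece and the envelope bookkeeping in your sketch are fine, but you would need to insert this spatial decomposition and the two separate decay mechanisms to have a complete proof.
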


\begin{rem}
We note that all implicit constants in this section in the symbol $\lesssim$ will be allowed to depend on the $L^{\infty}_t (I_{\max}; \dot{H}^{3/2} \times \dot H^{1/2})$ norm of $\vec u$, which is bounded by a fixed constant. 
\end{rem}

Before beginning the proof of Proposition~\ref{prop: u 52} we establish a few preliminary definitions and facts. Define 
\ant{
v = u + \frac{i}{\sqrt{-\Delta}} u_{t}.
}
Note that if $u$ solves
\EQ{ \label{u F eq}
u_{tt} - \Delta u = F(u),
}
then  $v$ solves
\ant{
v_{t} = u_{t} + \frac{i}{\sqrt{-\Delta}} (\Delta u + F(u)) = -i \sqrt{-\Delta} v + \frac{i}{\sqrt{-\Delta}} F(u).
}
and we have 
\EQ{
\| \vec u(t) \|_{\dot{H}^s \times \dot{H}^{s-1}} \simeq \| v(t) \|_{\dot{H}^s}
}
By Duhamel's formula
\ant{
v(t) = e^{-it \sqrt{-\Delta}} v(0) + \int_{0}^{t} \frac{e^{-i(t - \tau) \sqrt{-\Delta}}}{\sqrt{-\Delta}}  F(u) d\tau.
}
If $\vec u(t)$ has the compactness property on $I_{\max}$ then using Lemma~\ref{lem: weak} we note that for any $t_{0} \in I_{\max}$, as 
\EQ{ \label{v weak}
\int_{t_{0}}^{T} e^{-i(t_{0} - \tau) \sqrt{-\Delta}} F(u(\tau)) d\tau \rightharpoonup v(t_{0}), \mas T \to T_-, T_+
}
weakly in $\dot{H}^{3/2}(\R^{5})$.

 We next prove a refined local estimate on the scattering norm of $\vec u(t)$. 
\begin{lem}\label{l1.1} Let $\vec u(t)$ satisfy the assumptions in Proposition~\ref{prop: u 52}. Then for any $\eta > 0$ there exists $\delta(\eta) > 0$ such that for any $t_0 \in I_{\max}$,
\ant{
\| u \|_{L_{t}^{2} L_{x}^{10}([t_{0} - \frac{\delta}{N(t_{0})}, t_{0} + \frac{\delta}{N(t_{0})}] \times \R^{5})} \lesssim \eta.
}
\end{lem}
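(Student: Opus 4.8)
\textbf{Proof proposal for Lemma~\ref{l1.1}.}

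The plan is to exploit the compactness property to reduce the local scattering-norm bound to a single ``universal'' estimate at the unit scale, and then run a continuity/small-data argument. First I would normalize: by the scaling invariance of~\eqref{u eq}, it suffices to prove the statement at $t_0$ with $N(t_0) = 1$, i.e., to show that for every $\eta > 0$ there is $\delta(\eta) > 0$, \emph{independent of} $t_0$, with $\|u\|_{L^2_t L^{10}_x([t_0-\delta, t_0+\delta]\times\R^5)} \lesssim \eta$ whenever the rescaled data at time $t_0$ lies in the precompact set $K$. The point of the rescaling is that $N(t)$ is continuous, hence comparable to $N(t_0)$ on a time interval of length $c/N(t_0)$ for a small absolute constant $c$; so on the interval $[t_0 - \delta/N(t_0), t_0 + \delta/N(t_0)]$ (for $\delta$ small enough that this is contained in $I_{\max}$ and $N$ doesn't vary much) the rescaled solution is genuinely a solution of the same equation whose profile stays near $K$.

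Next I would use precompactness of $K$ together with the Strichartz estimate (Proposition~\ref{prop stric}, with $s=3/2$, $(p,q,\gamma) = (2,10,3/2)$) to control the free evolution: for the free wave $\vec w_0(t) := S(t - t_0)\vec u(t_0)$ one has $\|w_0\|_{L^2_t L^{10}_x(\R\times\R^5)} \lesssim \|\vec u(t_0)\|_{\dot H^{3/2}\times\dot H^{1/2}} \lesssim 1$, and by dominated convergence (using that the set of initial data is precompact in $\dot H^{3/2}\times\dot H^{1/2}$, so the tails of the Strichartz norm are uniformly small) the restriction $\|w_0\|_{L^2_t L^{10}_x([t_0-\sigma, t_0+\sigma]\times\R^5)}$ can be made $\le \eta/(2C_0)$ for all $t_0$ simultaneously, provided $\sigma = \sigma(\eta)$ is small; here $C_0$ is the constant from the Perturbation Lemma (Lemma~\ref{lem: pert}) applied with $A \sim 1$. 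Then I would feed this into the small-data/perturbation machinery: taking $\vec v \equiv 0$ (an exact solution) in Lemma~\ref{lem: pert}, or more directly running the fixed-point contraction for~\eqref{u eq} in $L^2_t L^{10}_x$ on the short interval, the nonlinear term is handled by the cubic estimate $\|\glei(u)\|_{L^1_t \dot H^{1/2}} \lesssim \|u\|_{L^2_t L^{10}_x}^3$ (after the rescaling this gain is where the compactness bound enters for the wave-maps nonlinearities, via~\eqref{F S} and~\eqref{F H bound}), so one obtains $\|u\|_{L^2_t L^{10}_x(J)} \lesssim \|w_0\|_{L^2_t L^{10}_x(J)} \lesssim \eta$ on $J = [t_0-\sigma, t_0+\sigma]$, and undoing the scaling gives the claim with $\delta = c\,\sigma(\eta)$.

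The main obstacle is making the smallness of the \emph{local} Strichartz norm of the free evolution uniform in $t_0$ — a single free wave need not have small Strichartz norm on short intervals unless its frequency support is localized, and here different $t_0$ give different (rescaled) data. This is exactly what precompactness of $K$ buys: the family $\{\vec u(t_0)\ \text{rescaled}\}$ is totally bounded in $\dot H^{3/2}\times\dot H^{1/2}$, so one can cover it by finitely many $\epsilon$-balls, apply the (continuity in time of the) Strichartz norm on each center, and conclude uniform local smallness; one must be slightly careful to also control the time-dilation induced by the modulation parameter $N(t)$ varying near $t_0$, but this is controlled by continuity of $N$ and absorbed by shrinking $\delta$. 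Everything else is the standard subcritical-type perturbative argument, so I do not anticipate further essential difficulties.
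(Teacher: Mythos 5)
Your argument is correct and yields the lemma, but the key step is handled by a genuinely different mechanism than in the paper. Both proofs run Duhamel on the window $J=[t_0-\delta/N(t_0),\,t_0+\delta/N(t_0)]$, estimate the cubic Duhamel term by Strichartz, and close with a continuity/small-data argument; the difference lies in how the free evolution's local $L^2_tL^{10}_x$ norm is made small uniformly in $t_0$. The paper does not rescale: it splits $S(t)\vec u(t_0)$ at frequency $C(\eta)N(t_0)$, using the uniform frequency tails of Remark~\ref{Ceta} together with Strichartz for the high frequencies, and Bernstein (giving $\|P_{\le C(\eta)N(t_0)}S(t)\vec u(t_0)\|_{L^{10}_x}\lesssim C(\eta)^{1/2}N(t_0)^{1/2}$) integrated over the window of length $\sim\delta/N(t_0)$ for the low frequencies, producing the explicit contribution $\eta+C(\eta)^{1/2}\delta^{1/2}$. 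You instead normalize $N(t_0)=1$ by scaling (legitimate: the $L^2_tL^{10}_x$ norm and the equations are scale-invariant, so the rescaled data is exactly an element of $K$) and obtain uniform local smallness of the free part by covering the compact closure of $K$ with finitely many $\epsilon$-balls and using absolute continuity of the finite global Strichartz integral of each center on shrinking time intervals. Your route is softer and avoids Littlewood--Paley entirely; the paper's is more quantitative, giving $\delta$ explicitly in terms of $C(\eta)$, in the spirit of the frequency-envelope bookkeeping used immediately afterwards. Two small points to tighten: the cubic bound as you wrote it, $\|u^3\|_{L^1_t\dot H^{1/2}}\lesssim\|u\|_{L^2_tL^{10}_x}^3$, is not obtainable by H\"older in time with three $L^2_t$ factors — one factor must go in $L^\infty_t\dot H^{3/2}$ (via Sobolev into $L^{10/3}_x$), exactly as in the paper's estimate $\|(D^{1/2}u)u^2\|_{L^1_tL^2_x}\lesssim\|u\|_{L^\infty_t\dot H^{3/2}}\|u\|^2_{L^2_tL^{10}_x}$, with that factor absorbed into the implicit constant; and you should not try to pick $\delta$ so that $J\subset I_{\max}$ a priori (this cannot be arranged uniformly in $t_0$) — instead run the argument on $J\cap I_{\max}$ and observe that the resulting bound, via the blow-up criterion~\eqref{ft bu} (or directly from the small-free-part local theory), forces $J\subset I_{\max}$. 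Likewise, the worry that $N(t)$ not vary much on $J$ is unnecessary: only the rescaled data at the single time $t_0$ needs to lie in $K$.
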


\begin{proof} We can assume without loss of generality that $t_0 = 0$ and define the interval $J:=[- \frac{\delta}{N(t_{0})},  + \frac{\delta}{N(t_{0})}]$. By Duhamel's formula
\EQ{ \label{str 1}
 \|u(t) \|_{L^2(J; L^{10})} \le  \| S(t) \vec u(0) \|_{L^2(J; L^{10})} +   \left\| \int_0^t S(t-s)(0, u^3) \, ds \right\|_{L^2(J; L^{10})}
}
We begin by estimating the first term on the right-hand-side of~\eqref{str 1}.  Choose $C(\eta)$ as in Remark~\ref{Ceta}, so that 
\EQ{ \label{P C}
\| P_{\geq C(\eta) N(0)} \vec u(0) \|_{\dot{H}^{3/2} \times \dot{H}^{1/2}(\R^{5})}  \le \eta.
}
By compactness $C(\eta)$ above can be chosen uniformly in $t \in I_{\max}$, which is the reason why it suffices to just consider $t_0=0$ in this argument. Next, we have
\begin{multline*}
\|S(t) \vec u(0) \|_{L^2_t(J; L^{10}_x)} \lesssim \\ \|S(t) P_{\ge C(\eta) N(0)}\vec u(0) \|_{L^2_t(J; L^{10}_x)} + \|S(t) P_{\le C(\eta) N(0)}\vec u(0) \|_{L^2_t(J; L^{10}_x)}
\end{multline*}
We use~\eqref{P C} together with Strichartz estimates to handle the first term on the right-hand-side above: 
\ant{
\|S(t) P_{\ge C(\eta) N(0)}\vec u(0) \|_{L^2_t(J; L^{10}_x)} \lesssim \| P_{\geq C(\eta) N(0)} \vec u(0) \|_{\dot{H}^{3/2} \times \dot{H}^{1/2}}  \lesssim \eta.
}
To control the second term we use Bernstein's inequalities,~\eqref{bern} and Sobolev embedding, 
\begin{equation}\label{bern1}
\| P_{\leq C(\eta) N(0)} S(t) \vec u(0) \|_{L_{x}^{10}(\R^{5})} \lesssim C(\eta)^{1/2} N(0)^{1/2}
\end{equation}
Taking the $L^2_t(J)$ norm above yields 
\ant{
\|S(t) P_{\le C(\eta) N(0)}\vec u(0) \|_{L^2_t(J; L^{10}_x)} \lesssim C(\eta)^{\frac{1}{2}} \de^{\frac{1}{2}} 
}
Next, we use Strichartz estimates on the second term on the right-hand-side of~\eqref{str 1}. 
\ant{
 \left\|  \int_0^t S(t-s) \, (0, \pm u^3) \, ds \right\|_{L^2_t(J; L^{10}_x)} \lesssim \| (D^{\frac{1}{2}}u) u^2\|_{L^1_t(J;  L^2)} \lesssim \| u\|^2_{L^2_t(J; L^{10}_x)}
 }
Combining all of the above we obtain, 
\begin{equation}\label{4.49}
\| u \|_{L^2_t(J; L^{10}_x)} \lesssim \eta + C(\eta)^{\frac{1}{2}}\de^{\frac{1}{2}} + \| u\|^2_{L^2_t(J; L^{10}_x)}
\end{equation}
The proof is concluded using the usual continuity argument after taking $\de$ small enough. 
\end{proof}

We can now begin the proof of Proposition~\ref{prop: u 52}. 

\begin{proof}[Proof of Proposition~\ref{prop: u 52}] Using compactness, we can assume, without loss of generality that $t_0= 0$. We prove Proposition~\ref{prop: u 52} by finding a frequency envelope $\al_k(0)$ so that 
\EQ{ \label{f e}
&\| P_k  \vec u(0)\|_{\dot{H}^{5/2} \times \dot H^{3/2}} \lesssim 2^{k}  \al_k(0)\\
&\| \{2^k \al_k(0)\}_{k \in \Z} \|_{\ell^2} \lesssim N(0)
}
We note that finding $\al_{k}(0)$ as above proves Proposition~\ref{prop: u 52} in light of Defninition~\ref{def: freq en}. 
\begin{claim}\label{l1.2} Let $\eta>0$ be a small number and let $J:= [ -  \de/ N(0), + \de/ N(0)]$ where $\de  = \de (\eta)$ is as in Lemma~\ref{l1.1}. Define 
\ant{
&a_k : = 2^{\frac{3k}{2}} \| P_k u\|_{L^{\infty}(J; L^2)} + 2^{\frac{k}{2}} \| P_k u_t\|_{L^{\infty}(J; L^2)} \\
& a_{k}(0) := 2^{\frac{3k}{2}} \| P_k u(0)\|_{ L^2} + 2^{\frac{k}{2}} \| P_k u_t(0)\|_{ L^2} 
}
Define the frequency envelopes
\ant{
 \al_k: =  \sum_{j} 2^{-\frac{5}{4} |j - k|} a_j, \quad \al_k(0):= \sum_{j} 2^{-\frac{5}{4} |j - k|} a_j(0)
 }
 Then,  
 \EQ{ \label{a k 0}
 a_k \lesssim a_k(0) + \eta^2 \sum_{j \ge k-3} 2^{3(k-j)/2} a_j
 }
 and $\eta>0$ can be chosen small enough so that 
\begin{equation}\label{1.14}
\alpha_{k} \lesssim \alpha_{k}(0).
\end{equation}
\end{claim}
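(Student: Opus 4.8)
\textbf{Proof proposal for Claim~\ref{l1.2}.}

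The plan is to run a frequency-localized Duhamel/Strichartz estimate on the short interval $J = [-\delta/N(0),\delta/N(0)]$ and then convert the resulting bilinear-type bound on $a_k$ into the frequency-envelope bound~\eqref{1.14} by a standard summation argument. First I would apply $P_k$ to the Duhamel formula
\ant{
\vec u(t) = S(t)\vec u(0) + \int_0^t S(t-s)(0, u^3)\, ds,
}
and take the $L^\infty_t(J; \dot H^{3/2}\times \dot H^{1/2})$ norm after applying $P_k$. The free term contributes exactly $a_k(0)$ (up to the $\simeq$ in Bernstein for the projection, which is harmless). For the Duhamel term I would use the energy (or Strichartz) estimate with the endpoint exponents $(a',b',\rho) = (1,2,0)$ from Proposition~\ref{prop stric}, giving
\ant{
2^{\frac{k}{2}}\Big\| \int_0^t S(t-s)(0,P_k(u^3))\,ds\Big\|_{L^\infty_t(J;\dot H^{1/2})} \lesssim \| |\nabla|^{1/2} P_k (u^3)\|_{L^1_t(J; L^2_x)}.
}
Then the main work is to estimate $\| |\nabla|^{1/2} P_k(u^3)\|_{L^1_t(J;L^2_x)}$. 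Here I would use the Coifman–Meyer / paraproduct decomposition of $u^3$: writing $u = \sum_j P_j u$, the frequency-$k$ piece of $u^3$ is controlled, via Littlewood–Paley trichotomy, by terms where the highest frequency $j \ge k - O(1)$. Putting the (two) low-frequency factors in $L^{10}_x$ and using Lemma~\ref{l1.1} (which gives $\|u\|_{L^2_t(J;L^{10}_x)} \lesssim \eta$ on $J$, hence $\|u\|_{L^2_t(J;L^{10}_x)}^2 \lesssim \eta^2$) and putting the high-frequency factor in $L^\infty_t L^2_x$ (this contributes $2^{-3j/2} a_j$), and accounting for the derivative $|\nabla|^{1/2}\sim 2^{k/2}$ that lands on the output frequency $k$, one obtains a bound of the shape
\ant{
2^{\frac{k}{2}}\| |\nabla|^{1/2}P_k(u^3)\|_{L^1_t(J;L^2_x)} \lesssim \eta^2 \sum_{j \ge k - 3} 2^{\frac{3(k-j)}{2}} a_j,
}
which is precisely~\eqref{a k 0}. (A symmetric estimate controls $2^{3k/2}\|P_k u\|_{L^\infty_t L^2_x}$ with the same right-hand side, using that $|\nabla|^{3/2}$ on the output also produces $2^{3k/2}$; one should be slightly careful about which factor carries the derivative, but the high-low structure makes the high-frequency factor $j \ge k-3$ absorb it in all cases.)

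The final step is the passage from~\eqref{a k 0} to~\eqref{1.14}. Convolving~\eqref{a k 0} against the kernel $2^{-\frac54|j-k|}$ (which is how $\alpha_k$ is defined from $a_k$) and using Schur's test / Young's inequality for the discrete convolution of $2^{-\frac54|\cdot|}$ with $2^{\frac{3}{2}(k-j)}\mathbf 1_{j \ge k-3}$ — noting $\tfrac32 < \tfrac54 + $ something is false, so one must check the exponents carefully: the kernel $2^{-\frac54|j-k|}$ decays faster than $2^{\frac32(k-j)}$ grows as $j\to -\infty$ is automatic since $j\ge k-3$ bounds the growth, and as $j \to +\infty$ we need $\frac54 > 0$, which holds — one gets $\alpha_k \lesssim \alpha_k(0) + \eta^2 \sum_{j} c_{kj}\alpha_j$ with $\sup_k \sum_j c_{kj} + \sup_j \sum_k c_{kj}$ bounded by an absolute constant. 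Choosing $\eta$ small enough that $C\eta^2 < \tfrac12$, the term $\eta^2\sum_j c_{kj}\alpha_j$ can be absorbed (after first truncating to $k \le M$ to ensure finiteness, using that $\vec u(0)\in \dot H^{3/2}\times\dot H^{1/2}$ makes all quantities finite, then letting $M\to\infty$), yielding $\alpha_k \lesssim \alpha_k(0)$.

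The main obstacle I anticipate is the bookkeeping in the paraproduct/Littlewood–Paley step: one must be sure that in \emph{every} interaction contributing to $P_k(u^3)$ at least one factor has frequency $\gtrsim 2^k$ so that the output derivative $2^{k/2}$ (or $2^{3k/2}$) is dominated by the high-frequency factor's own weight, and that the two remaining factors are genuinely placed in $L^{10}_x$ so that Lemma~\ref{l1.1} applies and produces the crucial $\eta^2$ — a single factor left in a norm uncontrolled on $J$ would break the smallness. The geometric weight $2^{\frac32(k-j)}$ in~\eqref{a k 0} (rather than a flat $\sum_{j\ge k}$) reflects that exactly two low-frequency factors are summed, each contributing a $2^{k/2}$-type gain; getting this weight with the correct exponent $3/2$, matching against the $5/4$ in the envelope kernel, is where care is needed. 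Everything else — Strichartz, Bernstein, Schur's test, the continuity/absorption argument — is routine.
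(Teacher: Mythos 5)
Your proposal is correct and follows essentially the same route as the paper: frequency-localized Duhamel plus Strichartz/energy estimates on $J$, a Littlewood--Paley decomposition of $u^3$ exploiting that every contribution to $P_k(u^3)$ carries a factor at frequency $\gtrsim 2^{k}$ (placed in $L^\infty_t L^2_x$, giving $a_j$) while the low-frequency factors go in $L^{10}_x$ so that Lemma~\ref{l1.1} supplies the $\eta^2$, yielding \eqref{a k 0}. The passage to \eqref{1.14} by convolving with the $2^{-\frac54|j-k|}$ kernel and absorbing the $\eta^2$ term is exactly the paper's summation argument (the paper just checks the two ranges $j\le k$, $j>k$ by hand rather than invoking Schur's test).
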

\begin{proof}

First we observe that after localizing to frequency $k$, Strichartz estimates along with Lemma $\ref{l1.1}$ give  
\EQ{\label{a k}
a_k  
& \lesssim 2^{3k/2} \| P_{k} u(0) \|_{L_{x}^{2}}  + 2^{k/2} \| P_{k} u_{t}(0) \|_{L_{x}^{2}} +2^{\frac{k}{2}} \| P_{k}(u^{3}) \|_{L_{t}^{1} L_{x}^{2}(J)} \\
& \lesssim a_{k}(0) + \eta^{2} \sum_{j \geq k - 3} 2^{3(k - j)/2} a_{j}.
}
Indeed, to prove the last line above we observe that it suffices to show that 
\EQ{ \label{cubic lp}
2^{\frac{k}{2}}  \| P_{k}(u^{3}) \|_{ L_{x}^{2}} \lesssim 2^{\frac{3k}{2}} \| u\|_{L^{10}}^2 \sum_{j \ge k-3} \| P_{j} u\|_{L^2}
}
First, noting that since $P_{k}((P_{\leq k - 4} u)^{3}) = 0$,  we have 
\ant{
\|P_k u^3\|_{L^{2}_x} &\lesssim  \| P_k [(P_{\le k-4} u)^2 P_{\ge k-3} u]\|_{L^2} +\| P_k [(P_{\le k-4} u (P_{\ge k-3} u)^2]\|_{L^2} \\
& \quad +\| P_k [ P_{\ge k-3} u]^3\|_{L^2}
}
We estimate the terms on the right-hand side above as follows: First by Young's inequality and Bernstein's inequality, 
\ant{
 \| P_k [(P_{\le k-4} u)^2 P_{\ge k-3} u]\|_{L^2} &\lesssim \| (P_{\le k-4} u)^2 P_{\ge k-3} u \|_{L^2} \\
 & \lesssim  \| P_{\le k-4} u\|_{L^{\infty}}^2   \sum_{j \ge k-3} \| P_j u\|_{L^2}   \\
& \lesssim 2^k  \| u\|_{L^{10}}^2 \sum_{j \ge k-3} \| P_{j} u\|_{L^2}
 }
 Next, again using Young's inequality and the fact that $P_k$ is given by convolution with  $ \check \phi_k( \cdot):= 2^{5k}  \check  \phi(  2^k \cdot)$ where $\check \phi \in \cS$, we have 
  \ant{
  \| P_k [(P_{\le k-4} u (P_{\ge k-3} u)^2]\|_{L^2}& \lesssim   \| P_{\le k-4} u \|_{L^{\infty}} \| \check \phi_k\|_{L^{\frac{10}{9}}} \| (P_{\ge k-3} u)^2\|_{L^{\frac{5}{3}}} \\
  & \lesssim 2^k \| u\|_{L^{10}}^2 \sum_{j \ge k-3} \| P_{j} u\|_{L^2}
  }
  Finally, arguing similarly for the last term we have 
  \ant{
  \| P_k [ P_{\ge k-3} u]^3\|_{L^2}& \lesssim  \| \check \phi_k\|_{L^{\frac{5}{4}}} \| (P_{\ge k-3} u)^2 P_{\ge k-3} u\|_{L^{\frac{10}{7}}} \\
  & \lesssim 2^k  \| u\|_{L^{10}}^2 \sum_{j \ge k-3} \| P_{j} u\|_{L^2}
  }
  This proves~\eqref{cubic lp} as desired, which implies~\eqref{a k 0}.   To establish~\eqref{1.14} we use~\eqref{a k 0} to obtain
\EQ{\label{al k sum}
\sum_j 2^{-\frac{5}{4}\abs{j - k}} a_{j} \lesssim \sum_j a_{j}(0) 2^{-\frac{5}{4} |j - k|} + \eta^{2} \sum_{j} 2^{-\frac{5}{4} |j - k|} \sum_{j_{1} \ge j - 3} 2^{3(j - j_{1})/2} a_{j_{1}}.
}
Reversing the order of summation in the second term above we have
\EQ{ \label{al k 0}
&\sum_{j_{1} \leq k} \sum_{j \le j_{1} + 3} 2^{\frac{3}{2}(j - j_{1})} 2^{\frac{5}{4} (j - k)} a_{j_{1}} \lesssim \sum_{j_{1} \leq k} 2^{\frac{5}{4} (j_{1} - k)} a_{j_{1}} \lesssim \alpha_{k},\\
&\sum_{j_{1} > k} \sum_{j \leq j_{1} + 3} 2^{\frac{3}{2}(j - j_{1})} 2^{-\frac{5}{4} \abs{j - k}} a_{j_{1}} \lesssim \sum_{j_{1} > k} (2^{-\frac{3}{2} ( j_{1} - k)} + 2^{-\frac{5}{4} ( j_{1}-k)}) a_{j_{1}} \lesssim \alpha_{k}.
}
Therefore,~\eqref{al k sum} implies that
\ant{
\alpha_{k} \lesssim \alpha_{k}(0) + \eta^{2} \alpha_{k},
}
which in turn yields~\eqref{1.14} once $\eta>0$ is chosen small enough. 
\end{proof}

 Now we are ready to use the double Duhamel trick to prove Proposition~\ref{prop: u 52}. 
 Let $\langle \cdot, \cdot \rangle_{\dot{H}^{3/2}}$ denote the inner product with 
\ant{
\langle v, v \rangle_{\dot{H}^{3/2}} = \| v \|_{\dot{H}^{3/2}}^{2}.
}
Again, recall that we can assume  without loss of generality that $t_{0} = 0$. For any $k$, and for any $T_1<T_+$ we have
\begin{multline*}
\ang{P_{k} v(0), P_{k} v(0)}_{\dot{H}^{3/2}} \\=  \Bigg\langle P_{k}\Big(e^{-i T_{1} \sqrt{-\Delta}} v(T_{1}) + \int_{0}^{T_{1}} \frac{e^{-i\tau \sqrt{-\Delta}}}{\sqrt{-\Delta}} F(u(\tau)) d\tau\Big), P_{k} v(0) \Bigg\rangle_{\dot{H}^{3/2}} \\
= \lim_{T_{1} \rightarrow T_+} \left\langle P_{k}\left(\int_{0}^{T_{1}} \frac{e^{-i \tau \sqrt{-\Delta}}}{\sqrt{-\Delta}} F(u(\tau)) d\tau\right), P_{k} v(0) \right\rangle_{\dot{H}^{3/2}},
\end{multline*}
where $F(u)  = u^3$. Then for any $T_- < T_{2} < 0$, this further reduces as 
\begin{multline*}
= \left\langle P_{k} v(0), P_{k} e^{-iT_{2} \sqrt{-\Delta}} v(T_{2}) \right\rangle_{\dot{H}^{3/2}} \\ 
+ \lim_{T_{1} \rightarrow T_+} \left\langle P_{k}\left(\int_{0}^{T_{1}} \frac{e^{-i \tau \sqrt{-\Delta}}}{\sqrt{-\Delta}} F(u(\tau)) d\tau\right), P_{k}\left(\int_{T_{2}}^{0} \frac{e^{-it \sqrt{-\Delta}}}{\sqrt{-\Delta}} F(u(t)) dt\right) \right\rangle_{\dot{H}^{3/2}}
\end{multline*}
Taking the limit $T_{2} \rightarrow T_-$, for any integer $k$, yields
\begin{multline*}
\left\langle P_{k} v(0), P_{k} v(0)\right\rangle_{\dot{H}^{3/2}} = \\ =    \left\langle P_{k}\left(\int_{0}^{T_+}\frac{e^{-i\tau \sqrt{-\Delta}}}{\sqrt{-\Delta}} F(u(\tau))d \tau\right) , \,  P_{k} \left(\int_{T_-}^{0}\frac{e^{-it \sqrt{-\Delta}}}{\sqrt{-\Delta}} F(u(t))dt \right) \right\rangle_{\dot{H}^{3/2}}.
\end{multline*}

Now let $\chi$ be a smooth, radial, non-increasing function, $\chi(x) = 1$ when $|x| \leq 1$, $\chi(x) = 0$ when $|x| \geq 2$. Let $c > 0$ be a small fixed constant, say $c = \frac{1}{4}$. 
We rewrite the inner product above as 
\EQ{ \label{A B} 
\langle A + B, A' + B' \rangle_{\dot{H}^{3/2}} &= \langle A, A' + B' \rangle_{\dot{H}^{3/2}} + \langle A + B, A' \rangle_{\dot{H}^{3/2}} \\
& \quad - \langle A, A' \rangle_{\dot{H}^{3/2}} + \langle B, B' \rangle_{\dot{H}^{3/2}},
}
 where
\ant{
&A := P_{k}\left( \int_{0}^{\frac{\delta}{N(0)}} \frac{e^{-it \sqrt{-\Delta}}}{\sqrt{-\Delta}} u^{3} dt\right) + P_{k}\left( \int_{\frac{\delta}{N(0)}}^{T_+} \frac{e^{-it \sqrt{-\Delta}}}{\sqrt{-\Delta}} (1 - \chi)(\frac{x}{ct}) u^{3} dt \right), \\
&B := P_{k}\left( \int_{\frac{\delta}{N(0)}}^{T_+} \frac{e^{-it \sqrt{-\Delta}}}{\sqrt{-\Delta}} \chi (\frac{x}{ct}) u^{3} dt\right).
}
and  $A'$, $B'$ are the corresponding integrals in the negative time direction.

First we estimate the term $ \ang{A, A'}_{\dot{H}^{3/2}} \le \| A \|_{\dot{H}^{3/2}} \|A'\|_{\dot{H}^{3/2}}$. To control the terms in the right-hand side in the inequality in the preceding line we observe first that by the argument in Claim~\ref{l1.2} we have 
\EQ{ \label{0 to N}
 \left\| P_{k}\left( \int_{0}^{\frac{\delta}{N(0)}} \frac{e^{-it \sqrt{-\Delta}}}{\sqrt{-\Delta}} u^{3} dt\right)  \right \|_{\dot{H}^{3/2}} \lesssim \eta^2 \sum_{j \ge 3} 2^{\frac{3}{2}(k-j)} a_j.
 }
 Next, we prove the following claim: 
 \begin{claim} \label{claim: N to inf} There exists a sequence $b_{k} \in \ell^2$, so that
 \begin{equation}\label{1.39}
\left\| P_{k} \left( \int_{\frac{\delta}{N(0)}}^{T_+} \frac{e^{-i t \sqrt{-\Delta}}}{\sqrt{-\Delta}} (1 - \chi)(\frac{x}{ct}) u^{3} \, dt  \right)\right\|_{\dot{H}^{3/2}} \lesssim 2^{-k} b_{k}.
\end{equation}
and 
\begin{equation}\label{bj}
\| b_{k} \|_{l^{2}} \lesssim \frac{N(0)}{c^{2} \delta} \| u \|_{L_{t}^{\infty} \dot{H}^{3/2}}^{3},
\end{equation}
 \end{claim}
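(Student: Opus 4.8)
The plan is to pull the $\dot H^{3/2}$ norm inside the time integral by Minkowski's inequality, thereby reducing \eqref{1.39}--\eqref{bj} to a single pointwise-in-time estimate on the cut-off nonlinearity, and then to derive that estimate from the pointwise decay of radial $\dot H^{3/2}(\R^5)$ functions on the region where the spatial cutoff is supported.

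Write $g(t):=(1-\chi)(x/(ct))\,u(t)^3$. Since $P_k$ localizes to frequencies $|\xi|\simeq 2^k$ and $e^{-it\sqrt{-\Delta}}$ is an $L^2$-isometry commuting with $P_k$ and $|\nabla|^s$, one has $\big\|P_k\,|\nabla|^{-1}e^{-it\sqrt{-\Delta}}g(t)\big\|_{\dot H^{3/2}}\simeq 2^{k/2}\|P_k g(t)\|_{L^2}$, so Minkowski's inequality gives
\[
\Big\|P_k\Big(\int_{\delta/N(0)}^{T_+}\frac{e^{-it\sqrt{-\Delta}}}{\sqrt{-\Delta}}\,g(t)\,dt\Big)\Big\|_{\dot H^{3/2}}\ \lesssim\ 2^{-k}b_k,\qquad b_k:=2^{3k/2}\int_{\delta/N(0)}^{T_+}\|P_k g(t)\|_{L^2}\,dt,
\]
which is \eqref{1.39}. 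Applying Minkowski again in $\ell^2_k$ together with the almost-orthogonality of the Littlewood--Paley pieces yields $\|b_k\|_{\ell^2}\lesssim\int_{\delta/N(0)}^{T_+}\|g(t)\|_{\dot H^{3/2}}\,dt$.

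Everything then rests on the pointwise-in-time bound $\|g(t)\|_{\dot H^{3/2}(\R^5)}\lesssim (ct)^{-2}\|u(t)\|_{\dot H^{3/2}}^3$, valid for $t\ge\delta/N(0)$; granting it, $\|b_k\|_{\ell^2}\lesssim c^{-2}\|u\|_{L^\infty_t\dot H^{3/2}}^3\int_{\delta/N(0)}^\infty t^{-2}\,dt=\frac{N(0)}{c^2\delta}\|u\|_{L^\infty_t\dot H^{3/2}}^3$, which is \eqref{bj}. To prove this bound I would first invoke the radial Sobolev embedding, Lemma~\ref{lem: rad se}, which gives $|u(t,x)|\lesssim |x|^{-1}\|u(t)\|_{\dot H^{3/2}}$; since $(1-\chi)(\cdot/(ct))$ is supported in $\{|x|\ge ct\}$, this supplies the pointwise gain $|u(t,x)|\lesssim (ct)^{-1}\|u(t)\|_{\dot H^{3/2}}$ on that set. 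Inserting an auxiliary cutoff $\tilde\chi$ equal to $1$ on $\supp(1-\chi)$ and supported in $\{|x|\gtrsim ct/2\}$, one writes $g(t)=(1-\chi)(x/(ct))\big(\tilde\chi(x/(ct))u(t)\big)^3$; the localized factor satisfies both $\|\tilde\chi(\cdot/(ct))u(t)\|_{L^\infty}\lesssim (ct)^{-1}\|u(t)\|_{\dot H^{3/2}}$ and $\|\tilde\chi(\cdot/(ct))u(t)\|_{\dot H^{3/2}}\lesssim\|u(t)\|_{\dot H^{3/2}}$, the latter because multiplication by a cutoff at scale $ct$ commutes with $|\nabla|^{3/2}$ up to a remainder controlled using that the $|x|^{-1}$ pointwise decay places radial $u(t)$ into $L^2$ of the relevant ball. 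A fractional Leibniz (Kato--Ponce) estimate for $|\nabla|^{3/2}$ of the resulting triple product --- placing the $3/2$ derivatives on one factor, measured in $L^2$ by $\|u(t)\|_{\dot H^{3/2}}$, the other two factors in $L^\infty$ with the gain $(ct)^{-1}\|u(t)\|_{\dot H^{3/2}}$ each, and using $|\nabla^j[(1-\chi)(\cdot/(ct))]|\lesssim(ct)^{-j}$ for derivatives landing on the outer cutoff --- then produces precisely the factor $(ct)^{-2}\|u(t)\|_{\dot H^{3/2}}^3$.

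The main obstacle is this last product estimate. Its subtlety is that $\dot H^{3/2}$ sits above the scaling of a spatial cutoff, and that $u^3$ for a generic radial $u\in\dot H^{3/2}(\R^5)$ need not belong to $\dot H^{3/2}$ at all, so one must genuinely exploit both that the outer cutoff confines the product to $\{|x|\gtrsim ct\}$, where $u$ carries the $L^\infty$-gain, and that the $|x|^{-1}$ pointwise bound is exactly what makes the auxiliary localization admissible in $\dot H^{3/2}$; this is the analogue here of the corresponding step in the $d=3$ analysis of \cite{DL14a}.
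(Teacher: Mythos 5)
Your proposal is correct and follows essentially the same route as the paper: Minkowski's inequality reduces \eqref{1.39}--\eqref{bj} to the pointwise-in-time bound $\|(1-\chi)(x/(ct))u^3(t)\|_{\dot H^{3/2}}\lesssim (ct)^{-2}\|u(t)\|_{\dot H^{3/2}}^3$, which is exactly the estimate the paper proves using the radial exterior decay $\|u\|_{L^\infty(|x|\ge ct)}\lesssim (ct)^{-1}\|u\|_{\dot H^{3/2}}$ and the fractional product rule, followed by integrating $t^{-2}$ over $[\delta/N(0),\infty)$. The only (immaterial) difference is bookkeeping: the paper passes to $\nabla$ of the product and works at the $\dot H^{1/2}$ level via $\dot W^{1,5/3}\hookrightarrow\dot H^{1/2}$, whereas you apply Kato--Ponce directly at $|\nabla|^{3/2}$ with an auxiliary cutoff.
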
 
\begin{proof}[Proof of Claim~\ref{claim: N to inf}]
Note that it suffices to show that 
\EQ{ \label{bk suf}
\left\|  \int_{\frac{\delta}{N(0)}}^{T_+} \frac{e^{-i t \sqrt{-\Delta}}}{\sqrt{-\Delta}} (1 - \chi)(\frac{x}{ct}) u^{3} \, dt  \right\|_{\dot{H}^{5/2}} \lesssim \frac{N(0)}{c^{2} \delta} \| u \|_{L_{t}^{\infty} \dot{H}^{3/2}}^{3}.
}
To see this we begin by observing that 
\EQ{ \label{N to T+}
\Bigg\|   \int_{\frac{\delta}{N(0)}}^{T_+} &\frac{e^{-i t \sqrt{-\Delta}}}{\sqrt{-\Delta}} (1 - \chi)(\frac{x}{ct}) u^{3} \, dt  \Bigg\|_{\dot{H}^{5/2}}  
\lesssim \int_{\frac{\delta}{N(0)}}^{\infty}  \| (1-  \chi)( \frac{x}{ ct}) u^3 \|_{\dot{H}^{\frac{3}{2}}} \, dt \\
& \lesssim  \int_{\frac{\delta}{N(0)}}^{\infty}  \bigg[ \frac{1}{c t} \| \chi'( \frac{x}{ct}) u^3 \|_{\dot{H}^{\frac{1}{2}}} +  \| (1- \chi)( \frac{x}{ct}) \na u u^2 \|_{\dot{H}^{\frac{1}{2}}} \bigg] \, dt
}
Next,  using Sobolev embedding we have 
\ant{
\frac{1}{c t} \| \chi'( \frac{x}{ct}) u^3 \|_{\dot{H}^{\frac{1}{2}}} &\lesssim   \frac{1}{ct} \| \chi'( \frac{x}{ct}) u^3 \|_{\dot{W}^{1, \frac{5}{3}}} \\
& \lesssim  \frac{1}{(ct)^2} \| u \|_{L^5}^3  +  \frac{1}{ct} \| \chi'( \frac{x}{ct})  \na u u^2 \|_{L^{ \frac{5}{3}}}   \\
& \lesssim \frac{1}{(ct)^2}   \| u \|_{\dot{H}^{\frac{3}{2}}}^3  +  \frac{1}{ct} \| \na u\|_{L^{\frac{5}{2}}}  \| u \|_{L^{5}} \| u \|_{L^{\infty}( \abs{x} \ge ct)}\\
& \lesssim \frac{1}{(ct)^2}   \| u \|_{\dot{H}^{\frac{3}{2}}}^3
}
where in the last line above we have used   Lemma~\ref{lem: rad se}, i.e.,  
\begin{equation}\label{1.32}
\| u \|_{L_{x}^{\infty}(|x| \geq R)} \lesssim R^{-1} \| u \|_{\dot{H}^{3/2}(\R^{5})}.
\end{equation}
By the fractional product rule 
\EQ{ \label{f p r}
\| (1 - \chi)(\frac{x}{t c}) (\nabla u) u^{2} \|_{\dot{H}^{1/2}}& \lesssim \| \nabla u \|_{L_{x}^{5/2}} \| (1 - \chi)(\frac{x}{tc}) u^{2} \|_{\dot{W}^{1/2, 10}}  \\
& \quad \, \, + \| u \|_{\dot{H}^{3/2}} \| (1 - \chi)(\frac{x}{tc}) u^{2} \|_{L_{x}^{\infty}}.
}
Again, by Sobolev embedding  and Lemma~\ref{lem: rad se} we have 
\ant{
\| (1 - \chi)(\frac{x}{ct}) u^{2} \|_{\dot{W}^{1/2, 10}} &\lesssim \| (1 - \chi)(\frac{x}{ct}) u^{2} \|_{\dot{W}^{1, 5}} \\
& \lesssim  \frac{1}{ct} \| u\|_{L^{\infty}( \abs{x \ge ct})} \| u \|_{L^5} +  \| (1 - \chi)(\frac{x}{ct}) u  \na u\|_{L^5}\\
& \lesssim \frac{1}{(ct)^2} \| u \|_{\dot{H}^{\frac{3}{2}}} +  \| (1 - \chi)(\frac{x}{ct}) u \|_{L^{10}} \| (1 - \chi)(\frac{x}{ct})  \na u\|_{L^{10}} \\
& \lesssim \frac{1}{(ct)^2} \| u \|_{\dot{H}^{\frac{3}{2}}}
}
We also can use Lemma~\ref{1.32} to deduce that 
\ant{
 \| (1 - \chi)(\frac{x}{tc}) u^{2} \|_{L_{x}^{\infty}} \lesssim \frac{1}{(ct)^2} \| u \|_{\dot{H}^{\frac{3}{2}}}^2
 }
Therefore we can estimate~\eqref{f p r} by 
\EQ{
\| (1 - \chi)(\frac{x}{t c}) (\nabla u) u^{2} \|_{\dot{H}^{1/2}} \lesssim  \frac{1}{(ct)^2} \| u \|_{\dot{H}^{\frac{3}{2}}}^3
}
Plugging the preceding estimates into~\eqref{N to T+} gives 
\EQ{ \label{N to T+2}
\Bigg\|   \int_{\frac{\delta}{N(0)}}^{T_+} &\frac{e^{-i t \sqrt{-\Delta}}}{\sqrt{-\Delta}} (1 - \chi)(\frac{x}{ct}) u^{3} \, dt  \Bigg\|_{\dot{H}^{5/2}}   \lesssim  \int_{\frac{ \de}{N(0)}}^{\infty} \frac{1}{(ct)^2}\| u \|_{\dot{H}^{\frac{3}{2}}}^3 \, dt 
} 
from which~\eqref{bk suf} is immediate. 
\end{proof} 
Combining~\eqref{0 to N} and~\eqref{1.39} and recalling the definition of $A$ yields 
\ant{
\| A \|_{\dot{H}^{3/2}} \lesssim 2^{-k} b_{k} + \eta^{2} \sum_{j \geq k - 3} a_{j} 2^{3(k - j)/2}.
}
Estimating $A'$ in an identical manner then yields
\EQ{\label{A A'}
\langle A, A' \rangle_{\dot{H}^{\frac{3}{2}}} \lesssim \left(2^{-k} b_{k} + \eta^{2} \sum_{j \geq k - 3} a_{j} 2^{3(k - j)/2}\right)^{2}.
}
Next using again the fact that $e^{-it \sqrt{-\Delta}} v(t) \rightharpoonup 0$ weakly in $\dot{H}^{3/2}$ as $t \nearrow T_+$, $t \searrow T_-$, as well as~\eqref{0 to N} and~\eqref{1.39} we have
\EQ{ \label{AB}
\langle A + B, A' \rangle + \langle A, A' + B' \rangle \lesssim a_{k}(0) \left(2^{-k} b_{k} + \eta^{2} \sum_{j \geq k - 3} a_{j} 2^{3(k - j)/2}\right).
}
Finally it remains to estimate  $\langle B, B' \rangle$ which is given by 

\begin{align}
&\langle B, B' \rangle_{\dot{H}^{3/2}} =   \notag\\
&= \int_{T_-}^{\frac{-\delta}{ N(0)}} \int_{\frac{\delta}{N(0)}}^{T_+} \ang{ P_{k}\bigg(\frac{e^{-it \sqrt{-\Delta}}}{\sqrt{-\Delta}} \chi(\frac{\cdot}{ct}) u^{3}(t)\bigg), P_{k} \bigg(\frac{e^{-i \tau \sqrt{-\Delta}}}{\sqrt{-\Delta}} \chi(\frac{\cdot}{c\tau}) u^{3}(\tau) \bigg) }_{\dot{H}^{\frac{3}{2}}} dt d\tau \notag \\
&= \int_{-T_-}^{\frac{-\delta}{N(0)}} \int_{\frac{\delta}{N(0)}}^{T_+} \ang{  \chi(\frac{x}{ct}) u^{3}(t), P_{k}^2 \bigg(e^{i(t - \tau) \sqrt{-\Delta}}\sqrt{-\Delta} \chi(\frac{\cdot}{c\tau}) u^{3}(\tau) \bigg) }_{L^2} dt d\tau. \label{b b'}
\end{align}
Here we use an argument based on the sharp Huygens principle, see for example~\cite[Section 4]{DL14a}.  The kernel $K_k( \cdot)$ of the operator $ P_k e^{i(t-\tau) \sqrt{- \De}}  \sqrt{- \De}$ is given by 
\EQ{
K_k(x) = K_k( \abs{x}) := c 2^k \int_0^{\pi} \int_0^{\infty} e^{i \abs{x} \rho \cos \te} e^{i  \rho (t- \tau)} \phi( \frac{\rho}{2^k}) \rho^5 \, d \rho \sin^3 \te \, d \te
}
where the integrand is written in polar coordinates on $\R^5$ where $ \rho = \abs{ \xi}$,  $\te$ is the azimuthal angle on $\Sp^4$, and $\phi( \cdot/ 2^k)$ is the Fourier multiplier for  the $k$th Littlewood-Paley projection $P_k$. Integration by parts  $L \in \N$ times in $\rho$ yields the estimate 
\ant{
\abs{K_k( \abs{x-y})} \lesssim_L  \frac{2^{6k}}{\ang{2^k \abs{ ( \tau- t) - \abs{x-y}}}^L}
}
where here recall that $ \tau >0$ and $t <0$. Note that in~\eqref{b b'} we have $\abs{x} \le \frac{1}{4} \abs{t}$  and $ \abs{y} \le \frac{1}{4} \abs{\tau}$ which means that $\abs{x-y} \le \frac{1}{4}\abs{t- \tau}$. Therefore we have 
\ant{
(\tau- t) - \abs{x-y} \ge \frac{1}{2} \abs{\tau- t}
}
which yields the estimate 
\EQ{ \label{L}
\abs{K_k( \abs{x-y})} \lesssim_L  \frac{2^{6k}}{\ang{2^k \abs{ \tau- t }}^L}
}
in the relevant region of integration. 
%
%
%
First, if $N(0) \ll 2^k$, we use~\eqref{L} with $L = 9$ which yields, 
\EQ{ \label{N<2k}
& \ang{  \chi(\frac{x}{ct}) u^{3}(t), P_{k}^2 \bigg(e^{i(t - \tau) \sqrt{-\Delta}}\sqrt{-\Delta} \chi(\frac{\cdot}{c\tau}) u^{3}(\tau) \bigg) }_{L^2}  \\
& \lesssim  \int_0^{c \abs{t}} u^3(t, x) \int_0^{c \abs{\tau}} \abs{K_k(x - y)} u^3(\tau, y) \, dy \, dx \\
& \lesssim 2^{-3k} \frac{ \abs{\tau}^2 \abs{t}^2}{ \abs{ \tau-t}^9} \| u\|_{L^{\infty}_tL^5_x}^6 \lesssim  \frac{ 2^{-3k}}{ \abs{ \tau- t}^5} \| u\|_{L^{\infty}_t \dot{H}^{\frac{3}{2}}_x}^6
}
Integrating~\eqref{N<2k} in $\tau$ and in $t $ as in~\eqref{b b'} gives 
\EQ{ \label{le 2k}
 \ang{B, B'}_{\dot{H}^{\frac{3}{2}}} &\lesssim \int_{-T_-}^{\frac{-\delta}{N(0)}} \int_{\frac{\delta}{N(0)}}^{T_+}\frac{ 2^{-3k}}{ \abs{ \tau- t}^5}  \, d \tau \, dt \\
 & \lesssim 2^{-3k} N(0)^3 \lesssim 2^{-\frac{9}{4} k} N(0)^{\frac{9}{4}} \mif N(0) \ll 2^k
 }
where in the last inequality above we have used that $N(0) \ll 2^{k}$. Next suppose that $2^k \lesssim N(0)$. In the region $\abs{ \tau- t} \le 2^{-k}$ we used the crude estimate $ \abs{K_k( x-y)} \lesssim 2^{6k}$ and in the region $\abs{\tau-t} \ge 2^{-k}$ we use~\eqref{L} with $L = 7$ to obtain, via the same argument as above, that 
\EQ{ \label{ge 2k}
\ang{B, B'}_{\dot{H}^{\frac{3}{2}}} \lesssim 1 \mif N(0) \gtrsim 2^k
}
Putting~\eqref{le 2k} and~\eqref{ge 2k} together gives 
\EQ{ \label{b final}
\ang{B, B'}_{\dot{H}^{\frac{3}{2}}} \lesssim  \min(2^{-\frac{9}{4}k} N(0)^{\frac{9}{4}} \, , \, 1)
}

Combining~\eqref{A A'}, \eqref{AB}, and~\eqref{b final} gives 
\ant{
a_k^2(0) &\lesssim  \bigg(2^{-k} b_{k} + \eta^{2} \sum_{j \geq k - 3}  2^{3(k - j)/2}a_j\bigg)^{2} + a_k(0) \bigg(2^{-k} b_{k} + \eta^{2} \sum_{j \geq k - 3} a_{j} 2^{3(k - j)/2}\bigg) \\
& \quad  \min(2^{-\frac{9}{4}k} N(0)^{\frac{9}{4}} \, , \, 1)
}
which implies that 
\EQ{
a_k(0) \lesssim 2^{-k} b_{k} + \eta^{2} \sum_{j \geq k - 3}  2^{3(k - j)/2} a_j+ \min(2^{-\frac{9}{8}k} N(0)^{\frac{9}{8}} \, , \, 1)
}
Recalling the definitions of the envelopes $\al_k(0)$ and $\al_k$ we have 
\EQ{
 \al_k(0) \lesssim  \eta^2 \al_k +  \sum_{j} 2^{-\frac{5}{4} \abs{j-k}} 2^{-j} b_j + \sum_j  2^{-\frac{5}{4} \abs{j-k}} 2^{-j}\min(2^{-\frac{1}{8}j} N(0)^{\frac{9}{8}} \, , \, 2^j)
}
%
%
%
Next,  using~\eqref{1.14} to control $\al_k  \lesssim \al_k(0)$, and taking $\eta>0$ small enough we obtain 
\EQ{
 \al_k(0) \lesssim \sum_{j} 2^{-\frac{5}{4} \abs{j-k}} 2^{-j} b_j + \sum_j  2^{-\frac{5}{4} \abs{j-k}} 2^{-j}c_j
}
where $c_j:= \min(2^{-\frac{1}{8}j} N(0)^{\frac{9}{8}} \, , \, 2^j)$ satisfies 
\EQ{ \label{cj}
 \| \{c_j\} \|_{\ell^2} \lesssim N(0)
}
Finally, we use Schur's test along with~\eqref{bj},~\eqref{cj} to deduce that 
\EQ{
\| \{2^k \al_k\} \|_{\ell^2} \lesssim N(0)
}
as desired. This completes the proof of Proposition~\ref{prop: u 52}
\end{proof}
%
%
%
%
%
%
%

\subsection{Improved uniform spacial decay for compact solutions to~\eqref{u eq}}
\noindent In this section we prove that in the case that $\inf_{t \in I}N(t)>0$ then a solution $\vec u(t)$ to~\eqref{u eq} with the compactness property on $I$ has uniform spacial decay that ``breaks the scaling." In particular, we show that $\vec u(t)$ must be uniformly bounded in $\dot{H}^{\frac{3}{4}} \times \dot{H}^{-\frac{1}{4}}({\R}^{5})$, which in turn implies by Lemma~\ref{lem: rad se}  that $\abs{ r^{7/4}u(t, r)} \lesssim 1$. We will not use this pointwise estimate in particular, but rather the fact that boundedness in $\dot{H}^{\frac{3}{4}} \times \dot{H}^{-\frac{1}{4}}({\R}^{5})$ and compactness in $\dot{H}^{\frac{3}{2}} \times \dot{H}^{\frac{1}{2}}({\R}^{5})$ will yield compactness in the energy space $\dot{H}^1 \times L^2$. This last point will be crucial in Section~\ref{sec: rig}.  
\begin{prop}\label{prop: u 34}
 Let $\vec u(t)$ be a solution to~\eqref{u eq} with the Compactness Property as in Proposition~\ref{prop: r1}, i.e, suppose that the scaling parameter $N(t)$ satisfies 
\ant{
 \inf_{t \in I_{\max}(\vec u)} N(t) >0
 }
 then for all $t \in I_{\max}$,
\begin{equation}\label{u 34}
\| \vec u(t) \|_{\dot{H}^{\frac{3}{4}} \times \dot{H}^{-\frac{1}{4}}({\R}^{5})} \lesssim 1.
\end{equation}
with a constant that is uniform in $t \in I_{\max}$. 
\end{prop}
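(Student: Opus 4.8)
The plan is to run the same \emph{double Duhamel} scheme used for Proposition~\ref{prop: u 52}, but now to extract spatial \emph{decay} rather than regularity: instead of bounding the $\dot H^{5/2}\times\dot H^{3/2}$ norm by climbing up from $\dot H^{3/2}\times\dot H^{1/2}$, the goal is to bound the $\dot H^{3/4}\times\dot H^{-1/4}$ norm by descending from it. By compactness one may take $t_0=0$, and since $\inf_{I_{\max}}N>0$ all relevant spatial scales are bounded below. Writing $v=u+i(-\Delta)^{-1/2}u_t$, so that $\|\vec u(t)\|_{\dot H^s\times\dot H^{s-1}}\simeq\|v(t)\|_{\dot H^s}$, the high-frequency part of the target norm, $\sum_{k\ge0}\|P_k\vec u(0)\|_{\dot H^{3/4}\times\dot H^{-1/4}}^2=\sum_{k\ge0}2^{-3k/2}\|P_k\vec u(0)\|_{\dot H^{3/2}\times\dot H^{1/2}}^2$, is already controlled by the a priori critical bound, so the whole task is to control the low-frequency part $\sum_{k<0}\|P_k\vec u(0)\|^2_{\dot H^{3/4}\times\dot H^{-1/4}}$. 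For this I would construct an $\ell^2$ frequency envelope, in the spirit of Claim~\ref{l1.2}, dominating $\|P_k\vec u(0)\|_{\dot H^{3/4}\times\dot H^{-1/4}}$ and decaying as $k\to-\infty$. The starting point is the double Duhamel identity
\[
\|P_kv(0)\|_{\dot H^{3/4}}^2=\int_0^{T_+}\!\!\int_{T_-}^0\big\langle u^3(\tau),\,P_k^2\,|\nabla|^{-1/2}e^{i(t-\tau)\sqrt{-\Delta}}\,u^3(t)\big\rangle_{L^2}\,dt\,d\tau,
\]
which is legitimate once one knows that $e^{-it\sqrt{-\Delta}}v(t)\rightharpoonup0$ weakly in $\dot H^{3/4}$ as $t\to T_\pm$; establishing this weak-limit statement at the sub-critical regularity $\dot H^{3/4}$ (the analogue of Lemma~\ref{lem: weak}) is the first point requiring attention, and it follows from the usual vanishing-of-Strichartz-norms argument together with $\inf N>0$.

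Next I would split each Duhamel integral, exactly as in the proof of Proposition~\ref{prop: u 52}, into a short-time piece $\int_0^{\delta/N(0)}$, an exterior piece carrying the cutoff $(1-\chi)(x/ct)$, and an interior piece carrying $\chi(x/ct)$, and expand the resulting inner product as in~\eqref{A B}. The short-time piece is harmless: Lemma~\ref{l1.1} and the argument of Claim~\ref{l1.2} bound its $\dot H^{3/4}$-contribution by $\eta^2$ times the envelope, to be absorbed at the end. The interior--interior pairing $\langle B,B'\rangle_{\dot H^{3/4}}$ is where the gain at low frequency comes from: in the region $|x|\lesssim c|t|$, $|y|\lesssim c|\tau|$ one has $|x-y|\le\tfrac12|t-\tau|$, so by the sharp Huygens principle the kernel $K_k$ of $P_k|\nabla|^{-1/2}e^{i(t-\tau)\sqrt{-\Delta}}$ satisfies $|K_k(x-y)|\lesssim_L 2^{9k/2}\langle 2^k|t-\tau|\rangle^{-L}$; combining this with $\int_{|x|\lesssim c|t|}|u(t,x)|^3\,dx\lesssim(c|t|)^2\|u(t)\|_{\dot H^{3/2}}^3$ (Hölder and Sobolev) and integrating in $t$ and $\tau$ --- using $\inf N>0$ to keep the spatial scale under control --- should produce a bound of the form $\langle B,B'\rangle_{\dot H^{3/4}}\lesssim\min(2^{\sigma k}(\ldots),1)$ with $\sigma>0$, which is exactly the low-frequency decay of the envelope that is needed. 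Throughout these nonlinear estimates I would use Proposition~\ref{prop: u 52} together with interpolation to control $\|u(t)\|_{L^p}$ for every $p\in[5,\infty)$.

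The main obstacle is the \emph{exterior} contribution. In Proposition~\ref{prop: u 52} this term was dispatched by the triangle inequality, using that $\|(1-\chi)(x/ct)u^3\|_{\dot H^{3/2}}\lesssim(ct)^{-2}\|u\|_{\dot H^{3/2}}^3$ is integrable in $t$; at the negative regularity $\dot H^{-1/4}$ this strategy fails, since forcing the pointwise decay $|u|\lesssim(ct)^{-1}$ (radial Sobolev, Lemma~\ref{lem: rad se}) onto factors of $u^3$ throws the remaining factors into Lebesgue spaces strictly below $L^5=\dot H^{3/2}$, which are unavailable even after Proposition~\ref{prop: u 52}, and only a non-integrable decay rate emerges. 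The resolution I would pursue is to exploit the uniformly small tails guaranteed by the compactness property and $\inf N>0$ (Remark~\ref{Ceta}): on $|x|\gtrsim 1/N(t)$ the $\dot H^{3/2}$-mass of $u(t)$ is arbitrarily small, so feeding this smallness --- rather than pointwise decay --- into the exterior estimate, and, where necessary, pairing the two exterior integrals against each other and against the interior integrals via the sharp Huygens kernel rather than estimating them separately, should produce a bound summable in $k$. Collecting the short-time, interior, and exterior contributions into a recursive inequality for the envelope $\alpha_k(0)$ of the form $\alpha_k(0)\lesssim\eta^2\alpha_k+(\text{summable in }k)$ and choosing $\eta$ and $c$ small then closes the bootstrap, giving $\|\vec u(0)\|_{\dot H^{3/4}\times\dot H^{-1/4}}\lesssim1$; rescaling back by compactness yields the bound at every $t\in I_{\max}$.
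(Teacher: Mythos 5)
Your skeleton (double Duhamel, frequency envelopes, sharp Huygens, small tails) is the right one, but the two places where you locate the gain and the difficulty are not where the paper's argument actually closes, and as written the plan has genuine gaps. First, with your frequency-\emph{independent} time threshold $\delta/N(0)$ and cutoffs applied to $u^3$ before the projection, the interior--interior pairing does \emph{not} produce low-frequency decay: the kernel bound $|K_k|\lesssim 2^{9k/2}\langle 2^k|t-\tau|\rangle^{-L}$ only gains once $|t-\tau|\gtrsim 2^{-k}$, and for $k<0$ the region $\delta/N(0)\le |t|,|\tau|\lesssim 2^{-k}$ contributes, via $\int_{|x|\le c|t|}|u|^3\lesssim |t|^2\|u\|_{\dot H^{3/2}}^3$, a bound of size $2^{9k/2}\cdot 2^{-6k}=2^{-3k/2}$ --- exactly the trivial size of $\|P_kv(0)\|_{\dot H^{3/4}}^2$, with no smallness. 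This is the low-frequency analogue of the dichotomy \eqref{le 2k}/\eqref{ge 2k} in the proof of Proposition~\ref{prop: u 52}: the kernel-decay mechanism helps only for $2^k\gg N(0)$, which is the opposite of the regime you need here. Second, you correctly identify the exterior term as the obstruction at negative regularity, but your proposed fix ("feed in the small tails\,\dots\,should produce a bound summable in $k$", "pair the exterior integrals against each other via sharp Huygens") is not an argument: exterior--exterior pairs are supported near the light cone, so neither Huygens vanishing nor kernel decay applies, and multiplying a non-integrable $t$-decay by a small constant $\eta$ does not make it integrable. (A smaller point: the weak limit of $e^{-it\sqrt{-\Delta}}v(t)$ in $\dot H^{3/4}$ is circular to assert before \eqref{u 34} is known; it is also unnecessary, since pairing the frequency-localized, hence smooth, test element against Lemma~\ref{lem: weak}'s $\dot H^{3/2}$ weak limit already justifies your identity.)

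The paper resolves all three issues by a different architecture, which you may want to compare with. The double Duhamel pairing is performed in the \emph{critical} inner product $\langle P_Mv(0),P_Mv(0)\rangle_{\dot H^{3/2}}$, so no negative-regularity norm of $u^3$ ever appears and Lemma~\ref{lem: weak} is used as is; the gain in $M=2^k$, $k<0$, is then read off from the powers of $M$ in the estimates and fed into an envelope with weights $2^{-|j-k|}$, yielding $a_k\lesssim 2^{3k/2}$-type decay and hence $\{2^{-3k/4}\alpha_k\}\in\ell^2$. The time splitting is frequency dependent, at $CM^{-1}$ with $C=\eta^{-1}$: on $[0,CM^{-1}]$ one does not use the local Strichartz smallness of Lemma~\ref{l1.1} (the interval is long) but Bernstein together with the Fourier-side small tails of Remark~\ref{Ceta} and $\inf N\ge1$, i.e.\ $\|P_{\le N_0}u\|_{L^5}\lesssim\eta$ and $\|P_{>N_0}u\|_{L^2}\lesssim N_0^{-3/2}$, giving $C\eta^2M^{3/2}\|P_{>M/4}u\|_{L^\infty_tL^2}+CN_0^{-3/2}M^{3/2}$. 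The spatial cutoffs are applied to $P_M(u^3)$, \emph{after} the projection, so the interior--interior pairing vanishes identically by sharp Huygens (no kernel estimates needed), and the exterior piece is handled by the triangle inequality with radial Sobolev weights (Lemma~\ref{lem: rad se}) acting on $P_M(u^3)$, producing time decay $t^{-2}$ and $t^{-6/5}$ integrable on $[CM^{-1},\infty)$ together with the crucial factor $M^{3/2}\|P_{>M/4}u\|_{L^2}$. In short: the gain comes from the $M^{3/2}$ factors generated by Bernstein on $P_M(u^3)$ and the low-frequency $L^5$ smallness, not from the interior--interior term, and the negative-regularity difficulty you flagged is avoided rather than confronted. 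To make your proposal into a proof you would need to adopt (or reinvent) these three changes: critical-norm pairing, frequency-dependent time threshold, and cutoff after projection.
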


\begin{proof}  Without loss of generality,  it will again suffice to consider only the case $t=0$.  We will prove Proposition~\ref{prop: u 34} by finding a frequency envelope $\al_k = \al_k(0)$ so that 
\EQ{
&\| (P_k u(0), P_k u_t(0))\|_{\dot{H}^{\frac{3}{4}} \times \dot{H}^{-\frac{1}{4}}} \lesssim 2^{-\frac{3k}{4}} \al_k\\
& \| \{2^{-\frac{3k}{4}} \al_k\}_{k \in \Z} \|_{\ell^2} \lesssim 1
}
Note that for $k \ge 0$ it suffices, by the uniform boundedness of the $\dot{H}^{\frac{3}{2}} \times \dot H^{\frac{1}{2}}$ norm of $\vec u(t)$ to take 
\ant{
\al_k := 1 \mfor k  \ge 0
}
Now, for each $j$ define 
\EQ{
a_j:= 2^{3j/2} \|P_j u\|_{L^{\infty}_t L^2_x}  + 2^{j/2}\|P_j u_t\|_{L^{\infty}_t L^2_x}
}
and for $k < 0$ set 
\EQ{
\al_k:= \sum_j 2^{-\abs{j-k}} a_j \mfor k <0
}
 As in the previous subsection  let $v = u + \frac{i}{\sqrt{-\Delta}} u_{t}$. Then $v$ solves the equation
\ant{
v_{t} = u_{t} + \frac{i}{\sqrt{-\Delta}} u_{tt} = i u_{t} + \frac{i}{\sqrt{-\Delta}} (\Delta u + u^{3}) = -i \sqrt{-\Delta} v + \frac{i}{\sqrt{-\Delta}} u^{3}.
}
and we have 
\EQ{
\|P_j v\|_{L^{\infty}_t \dot{H}^{\frac{3}{2}}}  \simeq a_j
}
Next, note that we can also assume without loss of generality that $N(t) \ge 1$ for all $t \in I_{\max}$. By Lemma~\ref{lem: weak} we have the weak limits
\ant{
\lim_{t \nearrow T_+, t \searrow T_-} e^{i t \sqrt{-\Delta}} v(t) \rightharpoonup 0,
}
weakly in $\dot{H}^{3/2}$. As in the previous subsection we thus obtain the reduction 
\begin{multline*}
\left\langle P_{M} v(0), P_{M} v(0)\right\rangle_{\dot{H}^{3/2}} = \\ =    \left\langle P_{M}\left(\int_{0}^{T_+}\frac{e^{-i\tau \sqrt{-\Delta}}}{\sqrt{-\Delta}} u^3(\tau)d \tau\right) , \,  P_{M} \left(\int_{T_-}^{0}\frac{e^{-it \sqrt{-\Delta}}}{\sqrt{-\Delta}} u^3(t)dt \right) \right\rangle_{\dot{H}^{3/2}}.
\end{multline*}
for any fixed frequency $M$. As we are only concerned with low frequencies in what follows $M <1$ and later we will write $M = 2^k$ for $k <0$.  

Again as in the previous subsection we let $\chi$ be a smooth, radial, non-increasing function, $\chi(x) = 1$ when $|x| \leq 1$, $\chi(x) = 0$ when $|x| \geq 2$. Let $c > 0$ be a small fixed constant, say $c = \frac{1}{4}$. 
We rewrite the inner product above as 
\EQ{ \label{A B 2} 
\langle A + B, A' + B' \rangle_{\dot{H}^{3/2}} &= \langle A, A' + B' \rangle_{\dot{H}^{3/2}} + \langle A + B, A' \rangle_{\dot{H}^{3/2}} \\
& \quad - \langle A, A' \rangle_{\dot{H}^{3/2}} + \langle B, B' \rangle_{\dot{H}^{3/2}},
}
 where
\ant{
&A :=  \int_{0}^{CM^{-1}} \frac{e^{-it \sqrt{-\Delta}}}{\sqrt{-\Delta}} P_M(u^{3}(t)) dt + \int_{CM^{-1}}^{T_+} \frac{e^{-it \sqrt{-\Delta}}}{\sqrt{-\Delta}} (1 - \chi)(\frac{x}{ct}) P_M(u^{3}(t)) dt , \\
&B :=  \int_{CM^{-1}}^{T_+} \frac{e^{-it \sqrt{-\Delta}}}{\sqrt{-\Delta}} \chi (\frac{x}{ct}) P_M (u^{3}(t)) dt.
}
and  $C$ is will be a fixed constant to be determined below. Again,  $A'$, $B'$ are the corresponding integrals in the negative time direction.

First we estimate the term $ \ang{A, A'}_{\dot{H}^{3/2}} \le \| A \|_{\dot{H}^{3/2}} \|A'\|_{\dot{H}^{3/2}}$.  First,  we have 
\begin{align} \notag
 \left\| P_M \left(\int_{0}^{CM^{-1}}\frac{e^{-i\tau \sqrt{-\Delta}}}{\sqrt{-\Delta}} u^3(\tau)d \tau\right) \right\|_{\dot{H}^{\frac{3}{2}}} &\lesssim M^{\frac{1}{2}}\left\| \int_0^{CM^{-1}}  e^{-i\tau \sqrt{-\Delta}}P_M u^3(\tau) \, d \tau\right\|_{L^2}  \\
 & \lesssim CM^{-\frac{1}{2}} \| P_M u^3 \|_{L^{\infty}_t L^2_x} \label{A est}
 \end{align}
 We also note  that by the compactness of the set $K$ as in Proposition~\ref{prop: r1} and using the fact that $N(t) \ge 1$ we can find a small number $N_0 = N_0(\eta)$ so that 
\EQ{
\| P_{\le N_0} u\|_{L^5} \lesssim \| P_{\le N_0} u\|_{\dot{H}^{\frac{3}{2}}} \lesssim \eta
}
where we have used Sobolev embedding and Remark~\ref{Ceta} above. We now estimate the right-hand-side of~\eqref{A est}. We claim that  
 \EQ{\label{A est 1}
 CM^{-\frac{1}{2}} \| P_M u^3 \|_{L^{\infty}_t L^2_x} \lesssim C \eta^2 M^{\frac{3}{2}} \|P_{>M/4} u \|_{L^{\infty}_tL^2} + C N_0^{-\frac{3}{2}} M^{\frac{3}{2}}
 }
 where the implicit constants above are allowed, as always, to depend on $\| v\|_{L^{\infty}_t \dot{H}^{\frac{3}{2}}}$ and the constant $C$ is yet to be chosen. To prove~\eqref{A est 1} we write 
 \ant{
  \|P_M(u^3) \|_{L^2} &= \left\|P_M \left[(P_{\le M/4} u + P_{>M/4} u)(P_{\le N_0} u + P_{>N_0} u)^2\right] \right\|_{L^2} \\
  & \lesssim \|P_M( P_{\le M/4} u(P_{\le N_0} u)^2) \|_{L^2} + \|P_M( P_{\le M/4} u P_{\le N_0} u P_{>N_0} u) \|_{L^2} \\
  &  \quad + \|P_M( P_{\le M/4} u ( P_{>N_0} u)^2) \|_{L^2} + \|P_M( P_{> M/4} u(P_{\le N_0} u)^2) \|_{L^2} \\
  & \quad  \|P_M( P_{> M/4} u P_{\le N_0} u P_{>N_0} u) \|_{L^2} + \|P_M( P_{> M/4} u ( P_{>N_0} u)^2) \|_{L^2}
 }
We next estimate each term on the right-hand-side above. We begin with the term $\|P_M( P_{\le M/4} u(P_{\le N_0} u)^2) \|_{L^2}$. If $N_0 \le M/4$ then this term is identically zero. In the case that $M/4 < N_0$ we write $P_{\le N_0} u = P_{ \le M/4} u + P_{M/4 <  \cdot \le N_0}u$ and we have 
\ant{
\|P_M( P_{\le M/4} u(P_{\le N_0} u)^2) \|_{L^2} &\lesssim \|P_M( (P_{\le M/4}  u)^2 \, P_{M/4 <  \cdot \le N_0}u) \|_{L^2} \\
& \quad + \|P_M( P_{\le M/4} u  \, (P_{M/4 <  \cdot \le N_0}u)^2) \|_{L^2}
}
recalling that $P_M$ is given by convolution with $\check \phi_M = M^5  \check \phi( M \cdot)$, $ \check \phi \in \cS$, we use Young's inequality followed by H\"older to obtain 
\ant{
\|P_M( (P_{\le M/4}  u)^2 \, P_{M/4 <  \cdot \le N_0}u) \|_{L^2} &\lesssim \| \check \phi_M\|_{L^{\frac{5}{3}}} \|P_{ \le M/4} u\|_{L^5}^2 \|P_{M/4 <  \cdot \le N_0}u \|_{L^2} \\
 & \lesssim   M^2  \eta^2 \|P_{> M/4} u\|_{L^2}
}
Similarly, we have 
\begin{multline*}
\|P_M( P_{\le M/4} u  \, (P_{M/4 <  \cdot \le N_0}u)^2) \|_{L^2} \lesssim \\
\lesssim  \| \check \phi_M\|_{L^{\frac{5}{3}}} \|P_{ \le M/4} u\|_{L^5} \|P_{M/4 <  \cdot \le N_0}u \|_{L^5} \|P_{M/4 <  \cdot \le N_0}u \|_{L^2} \\
 \lesssim M^2 \eta^2 \|P_{> M/4} u\|_{L^2}
\end{multline*}
where  we have been using throughout the fact that $M/4 < N_0$. Next we show how to control the term $\|P_M( P_{> M/4} u ( P_{>N_0} u)^2) \|_{L^2}$. Using the same argument as above, together with Sobolev embedding we have 
\ant{
\|P_M( P_{> M/4} u ( P_{>N_0} u)^2) \|_{L^2} &\lesssim  \| \check \phi_M\|_{L^{\frac{5}{3}}} \|P_{ > M/4} u\|_{L^5} \|P_{> N_0}u \|_{L^5}   \|P_{> N_0}u \|_{L^2}\\
 & \lesssim   M^2  \|u\|_{ \dot{H}^{\frac{3}{2}}}^2 \|P_{> N_0} u\|_{L^2} \lesssim M^2 N_0^{-\frac{3}{2}}
 }
The remaining terms are all handled similarly as the above two are representative and we thus obtain~\eqref{A est 1}. Combining~\eqref{A est 1} with~\eqref{A est} gives 
\EQ{ \label{0 to CM}
 \left\| P_M \left(\int_{0}^{CM^{-1}}\frac{e^{-i\tau \sqrt{-\Delta}}}{\sqrt{-\Delta}} u^3(\tau)d \tau\right) \right\|_{\dot{H}^{\frac{3}{2}}} \lesssim C \eta^2 M^{\frac{3}{2}} \|P_{>M/4} u \|_{L^{\infty}_tL^2} + C N_0^{-\frac{3}{2}} M^{\frac{3}{2}} 
 }  
 We next control the second term in $A$ using the Lemma~\ref{lem: rad se} to obtain sufficient time decay.  Indeed, we have 
 \begin{multline} \label{A2}
 \left\| \int_{CM^{-1}}^{T_+} \frac{e^{-it \sqrt{-\Delta}}}{\sqrt{-\Delta}} (1 - \chi)(\frac{x}{ct}) P_M(u^{3}(t)) dt  \right\|_{\dot{H}^{\frac{3}{2}}} \\ \lesssim  \int_{CM^{-1}}^{\infty} \left \| (1- \chi)(\frac{x}{ct}) P_M u^3 \right\|_{\dot{H}^{\frac{1}{2}}} \, dt  
  \lesssim \int_{CM^{-1}}^{\infty} \left \| (1- \chi)(\frac{x}{ct}) P_M u^3 \right\|_{ \dot{W}^{1, \frac{5}{3}}} \, dt 
 \end{multline}
 We estimate the integrand on the right-hand-side above. 
 \EQ{\label{A2 integrand}
 \left\| (1- \chi)(\frac{x}{ct}) P_M u^3 \right\|_{ \dot{W}^{1, \frac{5}{3}}}  \lesssim  \frac{1}{ct}\| \chi'( \frac{x}{ct}) P_M u^3 \|_{L^{\frac{5}{3}}} +  \left\| (1- \chi)(\frac{x}{ct}) \na P_M u^3 \right\|_{ L^{\frac{5}{3}}} 
 }
 Using Lemma~\ref{lem: rad se} on the first term on the right in~\eqref{A2 integrand} gives 
 \EQ{ \label{A2 est 1}
 \frac{1}{ct}\| \chi'( \frac{x}{ct}) P_M u^3 \|_{L^{\frac{5}{3}}} &\lesssim \frac{1}{(ct)^2}\| r P_M u^3 \|_{L^{\frac{5}{3}}}  \lesssim \frac{1}{(ct)^2}\|  \abs{\na}^{\frac{1}{2}} P_M u^3 \|_{L^{\frac{10}{9}}} \\
 & \lesssim \frac{1}{(ct)^2} M^{\frac{1}{2}} \|P_M u^3 \|_{L^{\frac{10}{9}}} \lesssim  \frac{1}{(ct)^2} M^{\frac{1}{2}} \|P_{>M/4} u\|_{L^2} \|u \|_{L^5}^2
 }
 where in the very last line we used Young's inequality and the decomposition $u = P_{\le M/4} u + P_{>M/4}$. We again use Lemma~\ref{lem: rad se} to estimate the second term in~\eqref{A2 integrand}. 
 \begin{align} \notag
 \left\| (1- \chi)(\frac{x}{ct}) \na P_M u^3 \right\|_{ L^{\frac{5}{3}}}  &\lesssim (ct)^{-\frac{6}{5}} \| r^{\frac{6}{5}} \na P_M u^3\|_{L^{\frac{5}{3}}} \\ \notag
 & \lesssim (ct)^{-\frac{6}{5}}  \| \abs{\na}^{\frac{3}{10}} \na P_M u^3\|_{L^{\frac{10}{9}}}  \lesssim (ct)^{-\frac{6}{5}}M^{\frac{13}{10}}  \|  \na P_M u^3\|_{L^{\frac{10}{9}}}  \\
 & \lesssim (ct)^{-\frac{6}{5}}M^{\frac{13}{10}}  \|P_{>M/4} u\|_{L^2} \|u \|_{L^5}^2 \label{A2 est 2}
 \end{align}
 Plugging the estimates~\eqref{A2 est 1} and \eqref{A2 est 2} into the integrand in~\eqref{A2} and integrating in $t$ from $CM^{-1}$ to $\infty$ gives 
 \begin{multline} \label{CM to inf}
  \left\| \int_{CM^{-1}}^{T_+} \frac{e^{-it \sqrt{-\Delta}}}{\sqrt{-\Delta}} (1 - \chi)(\frac{x}{ct}) P_M(u^{3}(t)) dt  \right\|_{\dot{H}^{\frac{3}{2}}} \lesssim \\
\lesssim ( \frac{1}{C}+ \frac{1}{C^{\frac{1}{5}}})  M^{\frac{3}{2}} \|P_{>M/4} u\|_{L^{\infty}_t L^2} \| u\|_{L^{\infty}_t \dot H^{\frac{3}{2}}}  
  \end{multline}
 Combining~\eqref{0 to CM} and~\eqref{CM to inf} and choosing the constant $C:= \frac{1}{\eta}>1$ (where $\eta$ remains to be fixed below) we obtain, 
 \EQ{ \label{A M}
 \| A \|_{\dot{H}^{\frac{3}{2}}} &\lesssim  ( C\eta^2 + C^{-\frac{1}{5}} + C^{-1}) M^{\frac{3}{2}}  \|P_{>M/4} u\|_{L^{\infty}_t L^2}  + C N_0^{-\frac{3}{2}} M^{\frac{3}{2}} \\
 & \lesssim  \eta^{\frac{1}{5}} M^{\frac{3}{2}}  \|P_{>M/4} u\|_{L^{\infty}_t L^2}  + \eta^{-1} N_0^{-\frac{3}{2}} M^{\frac{3}{2}}
 }
 The same estimate holds for $\|A'\|_{\dot H^{\frac{3}{2}}}$. 
Using again the fact that  $e^{-it \sqrt{-\Delta}} v(t) \rightharpoonup 0$ weakly in $\dot{H}^{3/2}$ as $t \rightarrow T_+$, $T_-$, we estimate $\ang{A, A' + B'}_{\dot{H}^{\frac{3}{2}}}$ and $\ang{A + B, A'}_{\dot{H}^{\frac{3}{2}}}$ by 
\EQ{ \label{A B A} 
\abs{\ang{A, A' + B'}_{\dot{H}^{\frac{3}{2}}} } &\lesssim \| A\|_{\dot{H}^{\frac{3}{2}}} M^{\frac{3}{2}}\| P_M v\|_{L^{\infty}_t L^2}  \\
&  \lesssim  \left( \eta^{\frac{1}{5}} M^{\frac{3}{2}}  \|P_{>M/4} u\|_{L^{\infty}_t L^2}  + \frac{ M^{\frac{3}{2}}}{ \eta N_0^{\frac{3}{2}}} \right) M^{\frac{3}{2}}\| P_M v\|_{L^{\infty}_t L^2} \\
\abs{\ang{A+B, A'}_{\dot{H}^{\frac{3}{2}}} } &
\lesssim \left( \eta^{\frac{1}{5}} M^{\frac{3}{2}}  \|P_{>M/4} u\|_{L^{\infty}_t L^2}  + \frac{ M^{\frac{3}{2}}}{ \eta N_0^{\frac{3}{2}}} \right) M^{\frac{3}{2}}\| P_M v\|_{L^{\infty}_t L^2}
}
Finally, we use the sharp Huygens principle to deduce that $\ang{B, B'}_{\dot{H}^{\frac{3}{2}}} = 0$. Indeed, we have 
\begin{align} \notag
\ang{B, B'}_{\dot{H}^{\frac{3}{2}}}  &= \int_{T_-}^{-\frac{C}{M}} \int_{\frac{C}{M}}^{T_+}\ang{  \chi(\frac{x}{c \abs{t}}) P_M u^3 (t) , \, \na e^{- i(t- \tau) \sqrt{- \De}} \chi( \frac{\cdot}{ c\abs{ \tau}}) P_M u^3 (\tau) }_{L^2} \, dt\, d \tau\\
& = 0 \label{B 0}
\end{align}
With say $c = \frac{1}{4}$, it follows  from the sharp Huygens principle that the two terms inside the $L^2$ bracket above have disjoint spacial supports -- the term on the left part of the bracket  is supported in $\abs{x} \le \frac{1}{4} \abs{t}$ and the term on the right side of the bracket is supported on $|x| > \frac{3}{4} |t - \tau|> \frac{3}{4} \abs{t}$. 
 
Now, we let $M = 2^k$ for $k <0$. By~\eqref{A M},~\eqref{A B A}, and~\eqref{B 0}, we have 
\EQ{
a_k^2 &\lesssim  \left( \eta^{\frac{1}{5}}  \sum_{j \ge k-3} 2^{\frac{3}{2}(k-j)}a_j   + \eta^{-1} N_0^{-\frac{3}{2}} 2^{\frac{3}{2}k}\right)^2  \\
& \quad + a_k \left( \eta^{\frac{1}{5}}  \sum_{j \ge k-3} 2^{\frac{3}{2}(k-j)}a_j   + \eta^{-1} N_0^{-\frac{3}{2}} 2^{\frac{3}{2}k}\right)
}
 which implies that 
 \EQ{
 a_k \lesssim \eta^{\frac{1}{5}}  \sum_{j \ge k-3} 2^{\frac{3}{2}(k-j)}a_j   + \eta^{-1} N_0^{-\frac{3}{2}} 2^{\frac{3}{2}k}
 }
 for all $k <0$. For $k >0$ we simply use the estimate $a_k \lesssim 1$. Recalling the definition of $\al_k$ we then have 
 \EQ{
 \al_k \lesssim  \eta^{\frac{1}{5}} \al_k +  \sum_j  2^{-\abs{j-k}} \inf( \eta^{-1} N_0^{-1}2^{\frac{3}{2}j}, \, \, 1)
 }
 Now we fix $\eta>0$ small enough in order to absorb the first term on the right into to  the left-hand-side giving 
 \EQ{
 \al_k \lesssim  \sum_j  2^{-\abs{j-k}} \inf( \eta^{-1} N_0^{-1}2^{\frac{3}{2}j}, \, \, 1)
 }
This implies that 
\EQ{
\al_k \lesssim 2^k \mfor k<0.
}
Since we have also set $\al_k:=1$ for $k \ge0$ we have  now proved that $\{2^{-\frac{3}{4}k} \al_k \}\in \ell^2$. This completes the proof of Proposition~\ref{prop: u 34}. 
\end{proof}

\subsection{Higher regularity and additional decay for wave maps with the Compactness Property}
In this section we extend the conclusions of Proposition~\ref{prop: u 52}  and Proposition~\ref{prop: u 34} to the case when $\vec u$ is a solution to either~\eqref{u eq wms} or~\eqref{u eq wmh} with the Compactness Property on an interval $I$. In particular we prove the following results.


\begin{prop}\label{prop: wm 52}
Suppose $\vec u(t)$ is a solution to either~\eqref{u eq wms} or~\eqref{u eq wmh}  with the Compactness Property on $I_{\max}(\vec u)$ as in Theorem~\ref{thm: rig}, i.e., assume that there exists a function $N: I_{\max} \to (0, \infty)$ so that the set 
\EQ{
K:= \left\{ \left(\frac{1}{N(t)} u\left( t, \frac{\cdot}{N(t)} \right), \, \frac{1}{N^2(t)} u_t\left( t, \frac{\cdot}{N(t)} \right) \right) \mid t \in I\right\}
}
is pre-compact in $\dot{H}^{3/2} \times \dot H^{1/2}$. 
Then for all $t \in I_{\max}$,
\ant{
\| \vec u(t) \|_{\dot{H}^{5/2} \times \dot{H}^{3/2}(\R^{5})} \lesssim N(t).
}
with a constant that is uniform in $t \in I_{\max}$. 
\end{prop}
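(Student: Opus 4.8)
The plan is to run the double Duhamel argument of the proof of Proposition~\ref{prop: u 52} essentially verbatim, the only genuinely new ingredient being the analysis of the nonlinearity $F(r,u) = u^3 Z(ru)$, with $Z = Z_{\Sp^3}$ or $Z = Z_{\Hp^3}$, in place of the cubic term $u^3$. First I would record the reduction that unifies the two targets: the compactness property forces $\sup_{t}\|\vec u(t)\|_{\dot H^{3/2}\times\dot H^{1/2}}<\infty$, whence Lemma~\ref{lem: rad se} (with $\gamma = 3/2$, $p=2$, $\beta = 1$, $q' = \infty$) gives the pointwise bound $|ru(t,r)|\lesssim 1$ uniformly in $t\in I_{\max}$. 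On the range $|\rho|\lesssim 1$ the (even, smooth) functions $Z(\rho)$, $Z'(\rho)$ and $\rho\,Z'(\rho)$ are all uniformly bounded; in particular $|F(r,u)|\lesssim|u|^3$ as in \eqref{F S} and \eqref{F H bound}. This is the only place the $\Hp^3$ case differs from $\Sp^3$, since $Z_{\Hp^3}$ grows like $e^{\rho}/\rho^3$ at infinity and is controlled only after restricting to bounded $ru$.

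Next I would re-establish the three nonlinear estimates that feed the argument. (i) The Strichartz input: $\|F(u)\|_{\dot H^{1/2}}\lesssim\|F(u)\|_{\dot W^{1,5/3}}\lesssim\|u\|_{L^{10}}^2\|u\|_{\dot H^{3/2}}$. Writing $\nabla F = 3u^2(\nabla u)Z(ru) + u^3 Z'(ru)\big(\tfrac{x}{|x|}u + r\nabla u\big)$, the terms $u^2(\nabla u)Z(ru)$ and $(ru)u^2 Z'(ru)\nabla u$ are pointwise $\lesssim u^2|\nabla u|$ (bounded factors) and are handled as in the cubic case, while the genuinely new term obeys $|u^4 Z'(ru)| = \tfrac1r|u|^3\,|(ru)Z'(ru)|\lesssim|u|^3/r$ and is controlled by Hardy's inequality, $\|u^3/r\|_{L^{5/3}}\lesssim\|\nabla(u^3)\|_{L^{5/3}}\lesssim\|u\|_{L^{10}}^2\|\nabla u\|_{L^{5/2}}$. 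With this, Lemma~\ref{l1.1} carries over unchanged. (ii) The exterior estimate, i.e. the analog of Claim~\ref{claim: N to inf}, $\big\|\int_{\delta/N(0)}^{T_+}\tfrac{e^{-it\sqrt{-\Delta}}}{\sqrt{-\Delta}}(1-\chi)(\tfrac{x}{ct})F(u)\,dt\big\|_{\dot H^{5/2}}\lesssim\tfrac{N(0)}{c^2\delta}\|u\|_{L^\infty_t\dot H^{3/2}}^3$, follows by repeating the manipulation of \eqref{N to T+}, using on the support of $(1-\chi)(\tfrac{x}{ct})$ the radial bound $\|u\|_{L^\infty(|x|\ge R)}\lesssim R^{-1}\|u\|_{\dot H^{3/2}}$ and, for the new term, $|(1-\chi)(\tfrac{x}{ct})u^4 Z'(ru)|\lesssim\tfrac1{ct}|u|^3\mathbf 1_{|x|\ge ct}$. (iii) The bilinear pairing $\langle B,B'\rangle_{\dot H^{3/2}}$ with the sharp-Huygens/kernel bounds \eqref{L}--\eqref{b final} uses only $|F(u)|\lesssim|u|^3$ pointwise and the free propagator, so it is unchanged.

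The one estimate requiring real new work is the Littlewood--Paley product bound \eqref{cubic lp}, whose proof used $P_k\big((P_{\le k-4}u)^3\big)=0$ --- false for $P_k\big(u^3 Z(ru)\big)$ since $Z(ru)$ carries high-frequency content. I would resolve this by a paraproduct decomposition $Z(ru) = P_{\le k-10}[Z(ru)] + P_{>k-10}[Z(ru)]$: the piece $(P_{\le k-5}u)^3 P_{\le k-10}[Z(ru)]$ is annihilated by $P_k$; terms carrying a factor $P_{>k-5}u$ are estimated exactly as in \eqref{cubic lp} (one such factor in $L^2$, two factors of $u$ in $L^{10}$, $Z(ru)$ in $L^\infty$, $P_k$ realized via Young's inequality with $\check\phi_k\in L^{5/4}$), contributing $\lesssim 2^k\|u\|_{L^{10}}^2\sum_{j\ge k-5}\|P_ju\|_{L^2}$; and the remaining piece $(P_{\le k-5}u)^3 P_{>k-10}[Z(ru)]$ is controlled using that $Z(ru)-Z(0) = \widetilde Z(ru)(ru)^2$ for a smooth bounded $\widetilde Z$, so that $P_{>k-10}[Z(ru)] = P_{>k-10}[\widetilde Z(ru)(ru)^2]$ and $\big|\nabla[\widetilde Z(ru)(ru)^2]\big|\lesssim |u| + r|\nabla u|$, whence by Bernstein and the radial-Sobolev weighted bounds $\|u\|_{L^5},\ \|r\nabla u\|_{L^5}\lesssim\|u\|_{\dot H^{3/2}}$ (Lemma~\ref{lem: rad se}) one gets $\|P_{>k-10}[Z(ru)]\|_{L^5}\lesssim 2^{-k}\|u\|_{\dot H^{3/2}}$ --- extra decay that, after a paraproduct refinement localizing this bound to input frequencies $\gtrsim 2^k$ of $u$, places this term on the same footing as the other error contributions. (Alternatively one can expand $Z(\rho)=\sum_{n\ge1}c_n\rho^{2n-2}$ with $|c_n|\lesssim C^n/(2n)!$ and argue monomial by monomial --- each $r^{2n-2}|x|^{2n-2}$ being a polynomial, multiplication by it preserves frequency support --- the super-exponential decay of $c_n$ absorbing the $C^{2n}$ losses coming from $|ru|\lesssim 1$ and the slowly growing cutoff thresholds $C_n\sim\log n$.) The honest bookkeeping of the $r$-weights here is the main obstacle; once this substitute for \eqref{cubic lp} is in place, the frequency envelopes $\alpha_k,\alpha_k(0)$ are defined as before, and the double Duhamel computation of $\langle A+B,A'+B'\rangle_{\dot H^{3/2}}$ together with the final Schur test run identically, yielding $\|\vec u(t)\|_{\dot H^{5/2}\times\dot H^{3/2}}\lesssim N(t)$. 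The $\Hp^3$ case is handled by the same argument, the a~priori bound $|ru|\lesssim 1$ being exactly what makes $Z_{\Hp^3}$, $Z_{\Hp^3}'$ and $\rho Z_{\Hp^3}'$ bounded on the relevant range.
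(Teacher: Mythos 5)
Your overall architecture matches the paper's: the double Duhamel pairing, the frequency envelopes $\al_k,\al_k(0)$, the exterior estimate via $(1-\chi)(x/ct)$ and radial Sobolev, and the sharp-Huygens treatment of $\ang{B,B'}$ all go through using only $|F|\lesssim|u|^3$ and the uniform bound $|ru|\lesssim1$, exactly as the paper observes. The divergence is in the Littlewood--Paley product estimate replacing~\eqref{cubic lp}, and here your \emph{primary} route has a genuine gap that you yourself flag: the bound $\|P_{>k-10}[Z(ru)]\|_{L^5}\lesssim 2^{-k}\|u\|_{\dot H^{3/2}}$ applied to $(P_{\le k-5}u)^3\,P_{>k-10}[Z(ru)]$ via H\"older ($L^{10/3}\cdot L^5\to L^2$) produces, after multiplying by $2^{k/2}$ and integrating over $J$, a contribution of size $\eta^2$ \emph{uniformly in $k$} with no frequency localization of the $u$-inputs. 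Such a term is not of the form $\eta^2\sum_j 2^{-c|j-k|}a_j$, and a $k$-independent additive constant in $a_k(0)$ destroys the $\ell^2$ summability of $\{2^k\al_k(0)\}$ that the conclusion $\|\vec u(0)\|_{\dot H^{5/2}\times\dot H^{3/2}}\lesssim N(0)$ requires. The ``paraproduct refinement localizing to input frequencies $\gtrsim 2^k$ of $u$'' is therefore not an optional tidying-up but the whole content of the estimate, and as written it is missing.

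Your fallback route --- expanding $F(r,u)=\sum_{n\ge1}\frac{\iota^{2n+1}}{(2n+1)!}2^{2n+1}r^{2n-2}u^{2n+1}$ and arguing monomial by monomial --- is essentially what the paper does, and it is the route I would recommend you carry out. One structural difference worth noting: the paper does \emph{not} rely on the fact that multiplication by the polynomial $r^{2n-2}$ preserves Fourier support (which would force $n$-dependent cutoff thresholds, as you anticipate, since $(P_{\le k-4}u)^{2n+1}$ has support out to $(2n+1)2^{k-4}$). Instead, for the low-frequency piece $P_k\bigl(r^{2n-2}(P_{\le k-4}u)^{2n+1}\bigr)$ it accepts that the term is nonzero and applies Bernstein in the form $\|P_kg\|_{L^2}\lesssim 2^{-2k}\|P_k\nabla^2 g\|_{L^2}$, distributing the two derivatives over $r^{2n-2}u^{2n+1}$ and controlling the resulting weighted terms ($r^{2n-4}u^{2n+1}$, $r^{2n-3}u^{2n}\nabla u$, etc.) by $\|ru\|_{L^\infty}$ and Lemma~\ref{lem: rad se}; this yields a tail $\eta^2\sum_{j\le k-4}2^{\frac32(j-k)}a_j$ supported on frequencies \emph{below} $k$, which the two-sided envelope $\al_k=\sum_j2^{-\frac54|j-k|}a_j$ is designed to absorb. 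This avoids any bookkeeping of $\log n$-adjusted cutoffs and lets the factorial decay of the Taylor coefficients do all the work of summing in $n$. Either variant is viable, but if you keep the frequency-support argument you must make the $n$-dependence of the cutoffs and of the implied constants explicit before summing the series.
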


\begin{prop} \label{prop: wm 34}
 Let $\vec u(t)$ be a solution to either~\eqref{u eq wms} or~\eqref{u eq wmh} with the Compactness Property as in Proposition~\ref{prop: r1}, i.e, suppose that the scaling parameter $N(t)$ satisfies 
\ant{
 \inf_{t \in I_{\max}(\vec u)} N(t) >0
 }
 then for all $t \in I_{\max}$,
\begin{equation}\label{wm 34}
\| \vec u(t) \|_{\dot{H}^{\frac{3}{4}} \times \dot{H}^{-\frac{1}{4}}({\R}^{5})} \lesssim 1.
\end{equation}
with a constant that is uniform in $t \in I_{\max}$. 
\end{prop}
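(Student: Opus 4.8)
The plan is to rerun the proofs of Proposition~\ref{prop: u 52} and Proposition~\ref{prop: u 34} essentially verbatim, with the pure power $u^{3}$ replaced by the wave maps nonlinearity $F(r,u) = u^{3} Z(ru)$, where $Z$ stands for $Z_{\Sp^3}$ or $Z_{\Hp^3}$. The first point is that this replacement is harmless. Since the trajectory $K$ is pre-compact in $\dot H^{3/2}\times\dot H^{1/2}$ we have $\sup_{t}\|\vec u(t)\|_{\dot H^{3/2}\times\dot H^{1/2}}<\infty$ --- which for the $\Hp^3$ target is exactly the hypothesis \eqref{type ii wm} --- so by the radial Sobolev embedding (Lemma~\ref{lem: rad se}; see also \eqref{1.32}) one has $\sup_{t,r}|r\,u(t,r)|\lesssim 1$, and on the resulting bounded range of $\rho=ru$ both $Z_{\Sp^3}$ and $Z_{\Hp^3}$, together with all their derivatives, are bounded. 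The cleanest way to exploit this is to write out the Taylor expansion $Z(\rho)=\sum_{m\ge 0}a_{m}\rho^{2m}$, which converges absolutely and uniformly on $\{|\rho|\lesssim 1\}$ (here $\sum_{m}|a_{m}|\,\|ru\|_{L^{\infty}}^{2m}<\infty$), so that $F(r,u)=a_{0}u^{3}+\sum_{m\ge 1}a_{m}\,(ru)^{2m}u^{3}$: the leading term is precisely the cubic handled in Section~\ref{sec: dd}, and each remaining term is a higher odd power of $u$ carrying the bounded, slowly-varying weight $(ru)^{2m}$.

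With this in hand I would run every multilinear and Littlewood--Paley estimate in the proofs of Proposition~\ref{prop: u 52} and Proposition~\ref{prop: u 34} --- the analogue of \eqref{cubic lp} and of the estimates inside the proof of Claim~\ref{claim: N to inf}, the short-time Strichartz bound (the analogue of Lemma~\ref{l1.1}), the fractional product-rule estimates --- term by term in this expansion. For the term $(ru)^{2m}u^{3}$ one treats $(ru)^{2m}$ as a bounded coefficient and places the $2m$ extra copies of $u$ in $L^{5}$ or $L^{10}$, each costing only an additional (bounded) factor of $\|u\|_{\dot H^{3/2}}$; higher powers of $u$ therefore only improve the estimates, and the sum over $m$ converges by the geometric bound above. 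The one place that needs care is when a spatial derivative (arising in the $\dot H^{5/2}$- or $\dot H^{3/4}$-estimate, or from differentiating a cutoff $(1-\chi)(x/ct)$) falls on one of the $(ru)=\psi$ factors: then $\nabla(ru)=\psi_{r}\,\omega$ with $\psi_{r}=u+r u_{r}$, which is controlled by $\|\psi_{r}\|_{L^{5}}\lesssim\|u\|_{L^{5}}+\|r\nabla u\|_{L^{5}}\lesssim\|u\|_{\dot H^{3/2}}$ via Lemma~\ref{lem: rad se}. Equivalently, a direct computation using the boundedness of $Z$, $Z'$ and $\rho Z'(\rho)$ on the relevant range gives $|\nabla_{x}F(r,u)|\lesssim |u|^{2}|\nabla u|+r^{-1}|u|^{3}$; the extra $r^{-1}$ is in fact an asset, since $\nabla F$ always appears multiplied by a cutoff $(1-\chi)(x/ct)$ supported in $r\gtrsim ct$, so that term gains an extra factor $(ct)^{-1}$ and is more than summable in the time integrals.

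Everything else transfers line for line: the double Duhamel pairing and the algebraic splitting \eqref{A B}, the vanishing of $\langle B,B'\rangle$ by the sharp Huygens principle (which uses only finite speed of propagation and the supports of the cutoffs, not the form of the nonlinearity), the weak-limit input (Lemma~\ref{lem: weak}, already stated for all three equations), the reductions of Remark~\ref{rem: N}, and the final interpolation step. For the $\Hp^3$ target the sole additional ingredient beyond the $\Sp^3$ argument is the a priori bound \eqref{type ii wm}, which via \eqref{F H bound} is exactly what renders $Z_{\Hp^3}(ru)$ and its Taylor series bounded on the relevant range. The hard part is thus essentially bookkeeping --- organizing the expansion and verifying summability in $m$ --- together with checking that the spatial-decay gains in Section~\ref{sec: dd} survive a derivative hitting a $\psi=ru$ factor, which the bound $|\nabla_{x}F(r,u)|\lesssim |u|^{2}|\nabla u|+r^{-1}|u|^{3}$ resolves; I do not expect any essentially new difficulty. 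Running the frequency-envelope bootstraps then gives $\|\vec u(t)\|_{\dot H^{5/2}\times\dot H^{3/2}}\lesssim N(t)$ for Proposition~\ref{prop: wm 52} and $\|\vec u(t)\|_{\dot H^{3/4}\times\dot H^{-1/4}}\lesssim 1$ for Proposition~\ref{prop: wm 34}, uniformly in $t\in I_{\max}$.
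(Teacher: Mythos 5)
Your proposal is correct and follows essentially the same route as the paper: the paper's own treatment of Proposition~\ref{prop: wm 34} (via the sketch of Proposition~\ref{prop: wm 52}) likewise uses the uniform bound $\|ru\|_{L^\infty}\lesssim \|u\|_{\dot H^{3/2}}$ from Lemma~\ref{lem: rad se}, the power-series expansion of $F_{\Sp^3}$, $F_{\Hp^3}$ into terms $r^{2n-2}u^{2n+1}$ with bounded coefficients, and the observation that $u/r$ (and $(ru)Z'(ru)$) can be treated exactly like $\nabla u$ (respectively like a bounded weight), after which the double-Duhamel and frequency-envelope arguments of Propositions~\ref{prop: u 52} and~\ref{prop: u 34} are rerun term by term.
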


The proof of Proposition~\ref{prop: wm 52} (respectively Proposition~\ref{prop: wm 34}) is nearly identical to the proof of Proposition~\ref{prop: u 52} (respectively Proposition~\ref{prop: u 34}) and we will thus omit many of the details. In fact, because of the assumption that $\vec u(t) \in \dot{H}^{\frac{3}{2}} \times \dot{H}^{\frac{1}{2}}$ with a uniform in $t \in I$ bound, we also have a uniform $L^{\infty}_x$ estimate for $ r u$, i.e., by Lemma~\ref{lem: rad se} we have 
\ant{
 \| r u\|_{L^{\infty}_t L^\infty_x} \lesssim \| u \|_{L^{\infty}_t \dot{H}^{\frac{3}{2}}} \lesssim 1
 }
 This means that for both $F_{\Sp^3}$ and $F_{\Hp^3}$ we have the estimate
 \EQ{
 \abs{ F_{\Sp^3}(r, u)} , \abs{ F_{\Hp^3}(r, u)}  =  \abs{ u^3 Z_{\Sp^3}(ru)}, \abs{ u^3 Z_{\Hp^3}(ru)} \lesssim \abs{u}^3
 }
 as was previously mentioned in~\eqref{F S} and~\eqref{F H bound}. Therefore, we will only highlight below the instances in the proof of Proposition~\ref{prop: wm 52} in which the structure, rather than just the size of the nonlinearities comes into play. For the proof of Proposition~\ref{prop: wm 34} we provide even fewer details since the necessary additional techniques will already have been introduced in the proof of Proposition~\ref{prop: wm 52}. 



\begin{proof}[Sketch of the proof of Proposition~\ref{prop: wm 52}] 

As we mentioned above, the proof follows the exact same argument as in the proof of Propsition~\ref{prop: u 52} and therefore below we will only give a few details regarding the estimates where the structure, rather than just the size, of the nonlinearity enters into the argument. In particular, we will need replacements for the estimates in Claim~\ref{l1.2}, the related estimate~\eqref{0 to N}, and Claim~\ref{claim: N to inf}.







We begin by providing the estimates necessary to prove~\eqref{N to T+2} which then implies~\eqref{N to T+} and thus the analog of Claim~\ref{claim: N to inf}.  Indeed here we can easily prove that 
\ant{
\| (1 - \chi)(\frac{x}{ct}) F(r, u) \|_{\dot{H}^{3/2}} \lesssim_{\| u\|_{L^{\infty}_t\dot{H}^{\frac{3}{2}}}}  \frac{1}{(ct)^{2}}.
}
where here $F(r,u)$ is either $F(u) = F_{\Sp^3}(r, u)$ or $F(r, u) = F_{\Hp^3}(r, u)$. We have 
\ant{
\| (1 - \chi)(\frac{r}{ct}) F(r, u) \|_{\dot{H}^{3/2}} \lesssim \|  \na ((1 - \chi)(\frac{r}{ct}) F(r, u)) \|_{\dot{H}^{\frac{1}{2}}} 
}
Since everything is radial above we compute, writing $F(r, u) = Z(ru) u^3$ with $Z = Z_{\Sp^3}$ or $Z = Z_{\Hp^3}$, 
\begin{multline*}
\p_r  \left( (1 - \chi)(\frac{r}{ct}) Z(ru) u^3 \right)  =   - \frac{1}{ct} \chi'( \frac{r}{ct}) Z(ru) u^3  + \\
+ (1- \chi( \frac{r}{ct})) (ru) Z'(ru)\left( \frac{u^3}{r}  +  u_r u^2\right)  
+ 2(1- \chi( \frac{r}{ct}))Z(ru) u^2 u_r
\end{multline*}
We estimate the right-hand-side above in $\dot{H}^{\frac{1}{2}}$ proceeding exactly as in the proof of Claim~\ref{claim: N to inf} using now the  boundedness of $ru$  and the structure of $Z$ to note that $Z(ru)$ and $ru Z'( ru)$ are uniformly bounded. The only minor difference in the argument is that we note that Lemma~\ref{lem: rad se} allows us to treat $\frac{u}{r}$ exactly as we would treat $\na u$ and indeed we have 
\ant{
\| r^{-1} u\|_{L^{\frac{5}{2}}} \lesssim \| u\|_{\dot{H}^{\frac{3}{2}}}, \mand \| r^{-1} u\|_{\dot{H}^{\frac{1}{2}}} \lesssim \| u\|_{\dot{H}^{\frac{3}{2}}}
}
which are terms that arise after an application of the fractional product rule as in~\eqref{f p r}.

Next, we show how to proceed in the proofs of the analog  Claim~\ref{l1.2} and the related estimate~\eqref{0 to N}.  Setting $J:=[- \de/ N(0), \de/N(0)]$ and $\eta$ as in Lemma~\ref{l1.1} (of course now with a solution $u$ to \eqref{u eq wms} or~\eqref{u eq wmh}) we know that 
\EQ{ \label{2 10 J}
\| u\|_{L^{2}_t (J; L^{10}_x)} \lesssim \eta
}
Examining the proof of Claim~\ref{l1.2} we note that  we have 
\EQ{
a_k &\lesssim a_k(0) + 2^{\frac{k}{2}}  \| P_k F(r, u) \|_{L^1_t(J;  L^2_x)} \\
}
where $a_k$ and $a_k(0)$ are as in the statement of Claim~\ref{l1.2} and $\vec u$ is as in  Proposition~\ref{prop: wm 52}.  As in the proof of Claim~\ref{l1.2} we need to estimate $\| P_k F(r, u) \|_{L^1_t(J;  L^2_x)}$ and we claim that 
\EQ{  \label{Pk F}
2^{\frac{k}{2}}\| P_k F(r, u) \|_{L^1_t(J;  L^2_x)} \lesssim_{\|u\|_{L^{\infty}_t \dot H^{\frac{3}{2}}}}   \eta^2 \sum_{j} 2^{-\frac{3}{2}\abs{k-j}} a_j 
}
To prove~\eqref{Pk F} we simply consider the power series expansion of $F(r, u)$. Indeed we have 
\ant{
F(r, u) =  \sum_{n \ge 1} \frac{ \iota^{2n+1}}{ (2n+1)!} 2^{2n+1} r^{2n-2} u^{2n+1}
}
where $\iota = -1$ in the case that $F = F_{\Sp^3}$ and $\iota = 1$ in the case that $F = F_{\Hp^3}$. It follows that 
\EQ{
2^{\frac{k}{2}}\| P_k F(r, u) \|_{L^1_t(J;  L^2_x)} &\lesssim 2^{\frac{k}{2}} \|P_k u^3\|_{L^1_t(J;  L^2_x)}  \\
& \quad +  \sum_{n \ge2} \frac{2^{2n+1}}{ (2n+1)!} 2^{\frac{k}{2}} \| P_k ( r^{2n-2} u^{2n+1})\|_{L^1_t(J;  L^2_x)}
}
From the proof of Claim~\ref{l1.2} we know that 
\EQ{ \label{eta 2}
2^{\frac{k}{2}} \|P_k u^3\|_{L^1_t(J;  L^2_x)}  \lesssim \eta^2 \sum_{j>k-4} 2^{\frac{3}{2}(k-j)} a_j 
}
Hence it suffices to show that 
\EQ{ \label{n sum}
\sum_{n \ge2} \frac{2^{2n+1}}{ (2n+1)!} 2^{\frac{k}{2}} \| P_k ( r^{2n-2} u^{2n+1})\|_{L^1_t(J;  L^2_x)} \lesssim_{\|u\|_{L^{\infty}_t \dot H^{\frac{3}{2}}}}   \eta^2 \sum_{j} 2^{\frac{3}{2}\abs{k-j}} a_j 
}
To start, we note that  
\ant{
2^{\frac{k}{2}} \| P_k ( r^{2n-2} u^{2n+1})\|_{L^1_t(J;  L^2_x)} &\lesssim 2^{\frac{k}{2}} \| P_k ( r^{2n-2} u^{2n+1} - r^{2n-2} (P_{\le k-4}u)^{2n+1})\|_{L^1_t(J;  L^2_x)} \\
& \quad + 2^{\frac{k}{2}} \| P_k ( r^{2n-2} (P_{\le k-4}u)^{2n+1})\|_{L^1_t(J;  L^2_x)}
}
We estimate the first term on the right-hand-side above. We claim that 
\EQ{ \label{>M}
2^{\frac{k}{2}} \| P_k ( r^{2n-2} u^{2n+1} - r^{2n-2} (P_{\le k-4}u)^{2n+1})\|_{L^1_t(J;  L^2_x)}  \lesssim_{\|u\|_{L^{\infty}_t \dot H^{\frac{3}{2}}}} \eta^2 \sum_{j >k-4} 2^{-\frac{3}{2}(k-j)} a_j
}
Indeed, writing $M = 2^k$, the left-hand-side can be broken into terms of the form 
\EQ{
M^{\frac{1}{2}} \| P_M (r^{2n-2} u_{>M/4}^{\ell} u_{ \le M/4}^m )\|_{L^2}
}
with $\ell + m = 2n+1$ and $\ell  \ge 1$ and we have introduced the notation $u_{\le K} := P_{\le K} u$. We then have, using Young's inequality and Lemma~\ref{lem: rad se},
\ant{
 M^{\frac{1}{2}} \| P_M (r^{2n-2} u_{>M/4}^{\ell} u_{ \le M/4}^m )\|_{L^2} &\lesssim  M^{\frac{1}{2}} \| \check \phi_M\|_{L^{\frac{5}{4}}} \| ru\|_{L^{\infty}}^{2n-2} \| u_{>M/4}^{\ell_2} u_{\le M/4}^{r_2}\|_{L^{\frac{10}{7}}} \\
 & \lesssim M^{\frac{3}{2}} \| u\|_{\dot{H}^{\frac{3}{2}}}^{2n-2} \| u_{>M/4}\|_{L^2} \|u\|_{L^{10}}^2
 }
 where $m_2 +r^2 = 3$ above and $m_2 \ge 1$. Integrating in time over the interval $J$  proves~\eqref{>M}.  To finish the proof of~\eqref{n sum} we show that 
 \EQ{ \label{j le k}
 2^{\frac{k}{2}} \| P_k ( r^{2n-2} (P_{\le k-4}u)^{2n+1})\|_{L^1_t(J;  L^2_x)} \lesssim_{\|u\|_{L^{\infty}_t \dot H^{\frac{3}{2}}}} \eta^2 \sum_{j \le k-4} 2^{\frac{3}{2}(j-k)} a_j
 }
Again writing $M = 2^k$ we use Lemma~\ref{lem bern} to deduce that 
\EQ{ \label{na 2}
M^{\frac{1}{2}} &\| P_k ( r^{2n-2} (P_{\le k-4}u)^{2n+1})\|_{L^2} \lesssim M^{-\frac{3}{2}}\| P_k \na^2( r^{2n-2} (P_{\le k-4}u)^{2n+1})\|_{L^2}  \\
& \lesssim M^{-\frac{3}{2}} \bigg( (2n-2)(2n-3)\|P_k(r^{2n-4} u_{\le M/4}^{2n+1})\|_{L^2} + \\
& \quad + 2(2n-2)(2n+1) \|P_k(r^{2n-3} u_{\le M/4}^{2n} \na u_{\le M/4})\|_{L^2} +\\
& \quad + (2n+1)(2n)  \|P_k(r^{2n-2} u_{\le M/4}^{2n-1} (\na u_{\le M/4})^2)\|_{L^2} +\\
& \quad + (2n+1)(2n)  \|P_k(r^{2n-2} u_{\le M/4}^{2n-1} \na^2 u_{\le M/4}^2)\|_{L^2}  \bigg)
}
We estimate the first term inside the parentheses on the right-hand-side above as follows 
\ant{
\|P_k(r^{2n-4} u_{\le M/4}^{2n+1})\|_{L^2} \lesssim \| r u\|_{L^{\infty}}^{2n-4} \| u_{\le M/4}^5\|_{L^2}
}
Then note that replacing again $M = 2^k$ we have using Lemma~\ref{lem bern} that 
\ant{
\| (P_{\le k-4}u)^5\|_{L^2} &\lesssim \sum_{j_1 \le  \dots \le j_5 \le k-4}  \| P_{j_1} u\|_{L^{10}} \| P_{j_2} u\|_{L^{10}}\|P_{j_3} u\|_{L^{\infty}}\|P_{j_4} u\|_{L^{\infty}} \| P_{j_5} u\|_{L^{\frac{10}{3}}} \\
& \lesssim \| u\|_{L^{10}}^2  \sum_{ j_3 \le j_4 \le j_5 \le k-4} 2^{-\frac{1}{2}j_5} 2^{j_4} 2^{j_3} \|P_{j_3} u\|_{L^5} \| P_{j_4} u\|_{L^5} \| P_{j_5}u \|_{ \dot{H}^{\frac{3}{2}}} \\
& \lesssim \| u\|_{L^{10}}^2  \| u \|_{\dot{H}^{\frac{3}{2}}}^2 \sum_{j \le k-4} 2^{\frac{3}{2}j } a_j
}
The three remaining terms inside the parentheses on the right-hand-side of~\eqref{na 2} are handled in an almost identical manner and we omit the details. Integrating in time over $J$ and using~\eqref{2 10 J} completes the proof of~\eqref{j le k} and hence of~\eqref{n sum}. 

We have now completed the proof of~\eqref{Pk F}. By an analogous argument we can establish the analog of~\eqref{0 to N} for $u$ as in Proposition~\ref{prop: wm 52}. To complete  the proof of Proposition~\ref{prop: wm 52} simply run the same argument as the proof of Proposition~\ref{prop: u 52} with the estimates proved above inserted in the appropriate places.  
\end{proof}

\begin{proof}[Brief Sketch of the proof of Proposition~\ref{prop: wm 34}]
For the proof of Proposition~\ref{prop: wm 34} simply follow the exact same outline as the proof of Propostion~\ref{prop: u 34} inserting arguments based on the power series expansions of $F_{\Sp^3}$ and $F_{\Hp^3}$ where necessary as in the sketch of Propostion~\ref{prop: wm 52} above. 
 We omit the details. 
\end{proof} 

\section{Rigidity Part I: Nonexistence of compact self-similar blow-up} \label{sec: ss} 
In this section we prove Proposition~\ref{prop: r2}. We divide the augment into the following two subsections. The first deals with the case that the compact solution $\vec u(t)$ solves the focusing cubic  equation~\eqref{u eq}. The second subsection deals with the case of self-similar wave maps with the compactness property. The arguments in both subsections are similar to and were inspired by the argument presented in~\cite[Section $4$]{DKM5}. However, here the extra regularity gained in Section~\ref{sec: dd} is required to make the proof go through. In fact the cubic type nonlinearity would be a limiting case of the argument  in~\cite{DKM5} adapted to the present $5d$ setting and falls just out of the scope of the techniques used there. Thankfully, the extra regularity gained for critical elements in Section~\ref{sec: dd} is strong enough to allow us to adapt the argument from~\cite{DKM5} to our situation, and in fact allows for a few simplifications. 
\subsection{Proof of Proposition~\ref{prop: r2} for solutions to the cubic equation~\eqref{u eq}} \label{sec: u 3 ss}

Suppose that $ \vec u$ is a self-similar solution to~\eqref{u eq} with the compactness property as in Proposition~\ref{prop: r2}. Without loss of generality we can assume, 
 $T_{+}(\vec u) = 1$. 
 \begin{lem}\label{lem: H2} Let $\vec u(t)$ be as in Proposition~\ref{prop: r2}. Then 
 the set 
 \ant{
K_+:= \{ ((1- t) u(t, (1-t) r), \, (1-t)^2 u_t(t, (1-t) r) \mid t \in [0, 1)\}
 }
 is pre-compact in $(\dot{H}^2 \times \dot H^1) \cap (\dot{H}^{3/2} \times \dot H^{1/2})$. 
 \end{lem}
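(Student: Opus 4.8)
The plan is to bootstrap the compactness of $K_+$ in the critical space up to compactness in $\dot H^2\times\dot H^1$ by combining it with the extra regularity from Proposition~\ref{prop: u 52} and a soft interpolation argument; essentially all the real work has already been done in Section~\ref{sec: dd}. First I would observe that $K_+$ is precisely the set $K$ attached to the compactness property with scaling parameter $N(t)=(T_+-t)^{-1}=(1-t)^{-1}$, so by the hypothesis of Proposition~\ref{prop: r2} it is already pre-compact in $\dot H^{3/2}\times\dot H^{1/2}(\R^5)$. It therefore suffices to prove that $K_+$ is in addition pre-compact in $\dot H^2\times\dot H^1(\R^5)$: once that is known, any sequence extracted from $K_+$ has a subsequence converging in both spaces, the two limits agree (both being the distributional limit), and hence the subsequence converges in the intersection space.

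To get pre-compactness in $\dot H^2\times\dot H^1$ I would start from the elementary scaling identity: for any $\mu>0$, any $s$, and any $(f,g)$,
\[
\left\|\left(\tfrac1\mu f(\tfrac{\cdot}{\mu}),\,\tfrac1{\mu^2}\,g(\tfrac{\cdot}{\mu})\right)\right\|_{\dot H^s\times\dot H^{s-1}(\R^5)}=\mu^{\frac32-s}\,\|(f,g)\|_{\dot H^s\times\dot H^{s-1}(\R^5)} .
\]
Taking $s=\tfrac52$, $\mu=N(t)$, and $(f,g)=\vec u(t)$, and inserting the bound $\|\vec u(t)\|_{\dot H^{5/2}\times\dot H^{3/2}(\R^5)}\lesssim N(t)$ of Proposition~\ref{prop: u 52}, one finds
\[
\sup_{t\in[0,1)}\;N(t)^{-1}\,\|\vec u(t)\|_{\dot H^{5/2}\times\dot H^{3/2}(\R^5)}\;\lesssim\;\sup_{t\in[0,1)}\;N(t)^{-1}\cdot N(t)\;=\;1 ,
\]
so the elements of $K_+$ are uniformly bounded in $\dot H^{5/2}\times\dot H^{3/2}(\R^5)$. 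The content of this step is that the scaling weight $\mu^{\frac32-s}=\mu^{-1}$ exactly absorbs the blow-up factor $N(t)=(1-t)^{-1}$ coming from Proposition~\ref{prop: u 52}.

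Then I would conclude by interpolation. Given $t_n\in[0,1)$, pass to a subsequence along which the corresponding rescaled data $\vec v_n:=\big((1-t_n)u(t_n,(1-t_n)\cdot),\,(1-t_n)^2u_t(t_n,(1-t_n)\cdot)\big)$ converge, hence are Cauchy, in $\dot H^{3/2}\times\dot H^{1/2}$. Since $(\vec v_n)$ is also bounded in $\dot H^{5/2}\times\dot H^{3/2}$ by the previous step, the interpolation inequalities $\|f\|_{\dot H^2}\le\|f\|_{\dot H^{3/2}}^{1/2}\|f\|_{\dot H^{5/2}}^{1/2}$ and $\|g\|_{\dot H^1}\le\|g\|_{\dot H^{1/2}}^{1/2}\|g\|_{\dot H^{3/2}}^{1/2}$ force $(\vec v_n)$ to be Cauchy, hence convergent, in $\dot H^2\times\dot H^1$ as well. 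This yields pre-compactness of $K_+$ in $\dot H^2\times\dot H^1$, and combined with the first paragraph, in $(\dot H^2\times\dot H^1)\cap(\dot H^{3/2}\times\dot H^{1/2})$.

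I do not expect any genuine obstacle here; the only points needing care are (i) matching $K_+$ with the set $K$ for the correct $N(t)=(1-t)^{-1}$ so that Proposition~\ref{prop: u 52} is applicable, and (ii) checking that the scaling exponent in the identity above precisely cancels the factor $N(t)$ on the right-hand side of the regularity estimate — which is exactly why one needs the full $\dot H^{5/2}\times\dot H^{3/2}$ gain proved in Section~\ref{sec: dd}, and why a weaker statement such as a bound in $\dot H^{2}\times\dot H^{1}$ with a constant would not suffice.
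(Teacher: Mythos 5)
Your proposal is correct and follows essentially the same route as the paper: Proposition~\ref{prop: u 52} gives the $\dot H^{5/2}\times\dot H^{3/2}$ bound, whose scaling factor $N(t)^{-1}$ exactly cancels the right-hand side so that the rescaled trajectory is uniformly bounded there, and interpolation with the pre-compactness in $\dot H^{3/2}\times\dot H^{1/2}$ yields pre-compactness in $\dot H^{2}\times\dot H^{1}$ and hence in the intersection. The paper's proof is just a more compressed version of the same interpolation argument, with the scaling bookkeeping left implicit.
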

 \begin{proof} This is immediate consequence of Proposition~\ref{prop: u 52} as we can interpolate between the $\dot{H}^{5/2} \times \dot{H}^{3/2}$ bound and the  $\dot{H}^{3/2} \times \dot H^{1/2}$ bound to control the $\dot{H}^{2} \times \dot H^1$ norm of $((1- t) u(t, (1-t)  \cdot), \, (1-t)^2 u_t(t, (1-t)  \cdot)$. Compactness in $\dot{H}^{3/2} \times \dot H^{1/2}$  together with uniform  boundedness in $\dot{H}^{5/2} \times \dot{H}^{3/2}$ then implies compactness in $\dot{H}^{2} \times \dot H^1$. 
 \end{proof}

 
 Now we introduce new  coordinates. Let $s = -\log(1 - t)$, $e^{-s} = 1 - t$, and $t = 1 - e^{-s}$. Then for $s \in [0, \infty)$
\begin{align}\label{3.2}
&w(s, r) = e^{-s} u(1 - e^{-s}, e^{-s} r) \\
&\partial_{s} w(s, r) = -w(s, r) - e^{-2s} r u_{r}(1 - e^{-s}, e^{-s} r) + e^{-2s} u_{t}(1 - e^{-s}, e^{-s} r). \label{3.3}
\end{align}
 Now, using~\cite[Lemma $4.15$]{KM11a} we know that $T_+(\vec u) = 1$ implies that  $\vec u(t)$ is supported in $B(0, 1-t)$, which means that we only need to consider $ r \in [0, 1)$. The compact support then implies that the set $K_+$ in Lemma~\ref{lem: H2} is pre-compact in the inhomogeneous space $H^2 \times H^1(\R^5)$. 
Rephrasing this in terms of $w$, and using the compact support of $\vec u(t) \in B(0, 1-t)$ we see that the set 
\EQ{
\ti K_+:= \{ w(s) \mid s \in [0, \infty)\}
}
is pre-compact in  $H^{2}$. Next, we derive an equation for $w(s)$.  
%
Given that $\vec u(t)$ solves~\eqref{u eq} we have 
\begin{equation}\label{w eq ss} 
\partial_{s}^{2} w - \frac{(1 - r^{2})}{r^{4}} \partial_{r} (r^{4} \partial_{r} w) + 2r \partial_{r} \partial_{s} w + 3 \partial_{s} w + 2w - w^{3} = 0.
\end{equation}
First we prove the following estimates using a monotonicity formula for an energy that we associate to~\eqref{w eq ss}.  

\begin{lem}\label{lem: w mf} 
Under the preceding assumptions 
\ant{
\int_{0}^{\infty} \int_{0}^{1} (1 - r^{2})^{-2} (\partial_{s} w(s,r))^{2} r^{4} dr < \infty.
}
\end{lem}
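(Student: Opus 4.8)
The plan is to establish a monotonicity formula for a weighted energy adapted to~\eqref{w eq ss}, in the spirit of~\cite[Section~4]{DKM5}. For $s\ge 0$ set
\[
\Phi(s):=\int_0^1\Bigl[\tfrac12(\partial_s w)^2+\tfrac12(1-r^2)(\partial_r w)^2+w^2-\tfrac14 w^4\Bigr]\frac{r^4}{1-r^2}\,dr ,
\]
so that the kinetic and potential densities carry the weight $(1-r^2)^{-1}$ while the gradient density $(1-r^2)(\partial_r w)^2$ reduces to the unweighted $\tfrac12 r^4(\partial_r w)^2$. The lemma will follow once I show that $\Phi(0)<\infty$, that $\Phi(s)\le C_0$ for all $s\ge0$ with $C_0$ independent of $s$, and that
\[
\frac{d}{ds}\Phi(s)=2\int_0^1(\partial_s w(s,r))^2\,\frac{r^4}{(1-r^2)^2}\,dr .
\]

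To derive this identity I would multiply~\eqref{w eq ss} by $\partial_s w\cdot\frac{r^4}{1-r^2}$ and integrate in $r$ over $(0,1)$. The term $\partial_s^2 w$ yields $\tfrac12\partial_s\int(\partial_s w)^2\frac{r^4}{1-r^2}$; after one integration by parts the elliptic term yields $\tfrac12\partial_s\int r^4(\partial_r w)^2$ with \emph{no} remaining cross term, precisely because $(1-r^2)\cdot\frac{1}{1-r^2}\equiv1$ (this is the reason for the choice of weight); the terms $2w$ and $-w^3$ yield $\partial_s\int w^2\frac{r^4}{1-r^2}$ and $-\tfrac14\partial_s\int w^4\frac{r^4}{1-r^2}$; and, writing $2r\,\partial_r\partial_s w\cdot\partial_s w=r\,\partial_r((\partial_s w)^2)$ and integrating by parts once more, the first-order part $2r\partial_r\partial_s w+3\partial_s w$ yields $-\int(\partial_s w)^2\,\partial_r\bigl(\frac{r^5}{1-r^2}\bigr)+3\int(\partial_s w)^2\frac{r^4}{1-r^2}$. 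Since $\partial_r\bigl(\frac{r^5}{1-r^2}\bigr)-\frac{3r^4}{1-r^2}=\frac{2r^4}{(1-r^2)^2}$, collecting all contributions gives the stated formula for $\frac{d}{ds}\Phi$; in particular $\Phi$ is nondecreasing.

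Two points require care. First, the integrations by parts produce boundary terms at $r=1$ of the shape $[r^4\partial_r w\,\partial_s w]_{r=1}$ and $\bigl[\tfrac{r^5}{1-r^2}(\partial_s w)^2\bigr]_{r=1}$. By Remark~\ref{rem: N} the solution is supported in $B(0,1-t)$, so $w$ and $\partial_s w$ vanish for $r\ge1$; the displayed boundary terms then vanish provided $\partial_r w$ is bounded near $r=1$ and $\partial_s w$ is Hölder continuous of some exponent strictly greater than $\tfrac12$ up to $r=1$. This is exactly where the additional regularity from Section~\ref{sec: dd} is used: combining Proposition~\ref{prop: u 52} with the self-similar change of variables $w(s,\cdot)=(1-t)u(t,(1-t)\,\cdot)$ and the identity~\eqref{3.3}, one checks that $\{w(s,\cdot)\}_{s\ge0}$ is bounded in $H^{5/2}(\R^5)$ and $\{\partial_s w(s,\cdot)\}_{s\ge0}$ is bounded in $H^{3/2}(\R^5)$, uniformly in $s$; restricting to the annulus $\{\tfrac12\le|x|\le2\}$ and using one-dimensional Sobolev embedding gives uniform $C^{1,\alpha}$ control of $w$ and uniform $C^{0,\alpha}$ control of $\partial_s w$ near $r=1$ for any $\alpha<1$, which more than suffices. (With only the critical regularity $\dot H^{3/2}\times\dot H^{1/2}$ these pointwise statements at $r=1$ would not even be meaningful.) Second, the same Hölder bounds, together with $w(s,1)=\partial_s w(s,1)=0$, give $|w(s,r)|+|\partial_s w(s,r)|\lesssim(1-r)^\alpha$ near $r=1$, so the weight $(1-r^2)^{-1}$ is integrated against a quantity that is $O((1-r)^{2\alpha-1})$, whence $\Phi(s)<\infty$; away from $r=1$ one uses that $\{(w(s),\partial_s w(s))\}$ is bounded in $H^2\times H^1$ (Lemma~\ref{lem: H2} and~\eqref{3.3}). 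Discarding the nonpositive term $-\tfrac14 w^4\frac{r^4}{1-r^2}$ gives $\Phi(s)\le C_0$ uniformly.

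With these in hand the conclusion is immediate: integrating the monotonicity identity over $[0,S]$,
\[
2\int_0^S\!\!\int_0^1(\partial_s w(s,r))^2\,\frac{r^4}{(1-r^2)^2}\,dr\,ds=\Phi(S)-\Phi(0)\le C_0-\Phi(0)<\infty ,
\]
uniformly in $S$, and letting $S\to\infty$ gives the assertion of Lemma~\ref{lem: w mf}. The main obstacle is therefore not the (routine, if lengthy) algebra behind the monotonicity identity but the boundary analysis at the light cone $r=1$, which forces one to invoke the double-Duhamel regularity gain of Section~\ref{sec: dd}; a secondary technical matter is to legitimize the termwise integration by parts and the differentiation of $\Phi(s)$, which I would handle by deriving the identity first for smooth approximations and passing to the limit via the uniform bounds.
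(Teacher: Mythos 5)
Your argument is correct and is essentially the paper's own proof: the functional $\Phi(s)$ coincides with the energy $E(s)$ in~\eqref{3.11}, the monotonicity identity is exactly~\eqref{3.12}, and the uniform two-sided bounds on the energy are obtained, as in the paper, from the $H^2\times H^1$ compactness supplied by Proposition~\ref{prop: u 52} together with pointwise vanishing of $w$ and $\partial_s w$ near $r=1$. The only (immaterial) difference is that you extract H\"older control of $\partial_s w$ from an $H^{3/2}$ bound, whereas the paper gets the weaker but sufficient $(1-r)^{1/2}$ decay directly from the $H^1$ bound via the fundamental theorem of calculus and H\"older's inequality.
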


\begin{proof} By \eqref{w eq ss}, we have 

\EQ{\label{3.10}
&0 = \int r^{4} (1 - r^{2})^{-1}  w_s \bigg( w_{ss} - \frac{(1 - r^{2})}{r^{4}} \partial_{r} (r^{4}  w_r) + 2r  w_{sr} + 3 \partial_{s} w + 2w - w^{3}\bigg) dr \\
&= \frac{1}{2} \partial_{s} \int (\partial_{s} w)^{2} (1 - r^{2})^{-1} r^{4} dr + \frac{1}{2} \partial_{s} \int (\partial_{r} w)^{2} r^{4} dr + \int r^{5}(1 - r^{2})^{-1} \partial_{r} (\partial_{s} w)^{2} dr \\
&+ \partial_{s} \int w^{2} (1 - r^{2})^{-1} r^{4} dr - \frac{1}{4} \partial_{s} \int w^{4}  (1 - r^{2})^{-1} r^{4}dr + 3 \int   (\partial_{s} w)^{2} (1 - r^{2})^{-1}r^{4}dr.
}
Defining
\EQ{\label{3.11}
E(s) = &\frac{1}{2} \int_0^1  w_s^{2}(s) (1 - r^{2})^{-1} r^{4} dr + \frac{1}{2} \int_0^1  w_r^{2}(s) r^{4} dr \\
& \quad + \int_0^1 w^{2}(s) (1 - r^{2})^{-1} r^{4} dr - \frac{1}{4} \int_0^1 (1 - r^{2})^{-1} w^{4}(s) r^{4} dr, 
}
and integrating $(\ref{3.10})$ by parts, we have that 
\begin{equation}\label{3.12}
\frac{d}{ds} E(s) = 2 \int_{0}^{1}  (1 - r^{2})^{-2} w_s^{2}(s, r) r^{4}dr.
\end{equation}
 Next, we observe that  $E(0) \geq -C_{0}$ for some constant $C_{0}$. Indeed, by inspection

\begin{equation}\label{3.13}
E(0) \geq -\frac{1}{4} \int_{0}^{1} (1 - r^{2})^{-1} w^{4} r^{4} dr.
\end{equation}

\noindent Now $\| w \|_{H^{2}} \lesssim 1$, so by the radial Sobolev embedding this implies that $\| w_{r} \|_{L^{\infty}} \lesssim 1$ for $r \in [\frac{1}{2}, 1]$. Therefore, since $w(s)$ is supported on $B(0,1)$, by the fundamental theorem of calculus $\| w(s,r) \|_{L^{\infty}} \lesssim (1 - r)$ for $r \in [\frac{1}{2}, 1]$. Therefore, since $w(s)$ lies in a pre-compact subset of $H^{2}$,
\EQ{\label{3.14}
\int_0^1 w^{4}(s)(1 - r^{2})^{-1} r^{4} dr \leq C.
}
with a uniform in $s$ constant.  It remains to show that $E(s)$ is uniformly bounded above for all $s \in [0, \infty)$. Since $\| w(s,r) \|_{L^{\infty}} \lesssim 1 - r$ for all $r \in [\frac{1}{2}, 1]$ and $w(s)$ lies in a  pre-compact subset of $H^{2}$, we can find a constant  $C$ independent of $s \in [0, \infty)$ so that 
\ant{
\int_0^1 w^{2}(s) (1 - r^{2})^{-1} r^{4} dr \leq C.
}
Also since $\| w(s) \|_{\dot{H}^{1}}$ is uniformly bounded in $s \in [0, \infty)$,
\begin{equation}\label{3.16}
\int_{0}^{1}  w_r^{2}(s) r^{4}dr \leq C.
\end{equation}
Also, by H{\"o}lder's inequality, the fundamental theorem of calculus, the support of $w$ lying in $B(0,1)$, and pre-compactness of $\ti K_+$ in $H^{2} \times H^{1}$ for all $t \in [0, 1)$ yields 
\ant{
|e^{-2s} r u_{r}(e^{-s} y, 1 - e^{-s})| + |e^{-2s} u_{t}(e^{-s} y, 1 - e^{-s})| \lesssim (1 - r)^{1/2},
}
 which combined with $(\ref{3.3})$ and $(\ref{3.16})$ proves that

\begin{equation}\label{3.18}
\int_{0}^{1} (1 - r^{2})^{-1}  w_s^{2}(s, r)\,  r^{4} dr \leq C.
\end{equation}

\noindent Then by the fundmantal theorem of calculus in $s$ and $(\ref{3.12})$, the proof of Lemma~\ref{lem: w mf} is complete. 
\end{proof}

Next, we show that in fact $w(s)$ must converge to a \emph{stationary solution} to~\eqref{w eq ss} along a sequence $s_n \to \infty$.
First note that 
 since $(1 - r^{2})^{-2} \geq 1$, we can conclude from Lemma~\ref{lem: w mf} that 
\begin{equation}\label{3.19}
\int_{0}^{\infty} \int_{0}^{1}   w_s^{2}(s, r) r^{4}dr \leq C_{0}.
\end{equation}
Now, let $s_n \to \infty$ be any sequence. Using the compactness of $\ti K_+$ we can find $w^* \in H^2$ with $\supp w^* \in B(0, 1)$ so that after passing to a subsequence we have
\EQ{ \label{w star}
w(s_n) \to w^* \in H^2
}
Next we claim  that for any $T \ge 0$ we have 
\EQ{ \label{w star h2}
 \|w(s_n +T) - w^*\|_{H^2} \to 0 \mas n \to \infty
 }
Indeed, let $T \ge0$. Then by the Fundamental Theorem of Calculus and~\eqref{3.19} we deduce that 
\EQ{
 \| w(s_n+T) - w(s_n) \|_{L^2}^2  &= \int \abs{ w(s_n +T, r) - w(s_n, r)}^2 \, r^4 \, dr \\
 & =  \int \abs{  \int_{s_n}^{s_n +T} w_s(s, r) \, ds}^2 \, r^4 \, dr \\
 & \le T  \int_{s_n}^{s_n +T} \int  w_s^2(s, r) \, r^4 \, dr \, ds \to 0 \mas n \to \infty
 }
This implies that for all $T  \ge 0$ we have 
\EQ{
 \| w(s_n+T) - w^* \|_{L^2} \to 0  \mas n \to \infty
 }
 Another application of the compactness in $H^2$ of $ \ti K_+$ allows us to upgrade the above to obtain~\eqref{w star h2}. 
 
 Next, define $\vec v_n(0):=(v_{n, 0}(r), v_{n, 1}(r))$ by 
 \EQ{
   (v_{n, 0}(r), v_{n, 1}(r)):=   \left( e^{-s_n} u( 1- e^{-s_n}, \, e^{-s_n} r), \, e^{-2s_n} u_t( 1-e^{-s_n}, \, e^{-s_n} r) \right)
 }
 By the pre-compactness of $K_+$ in $H^2 \times H^1$ we can find a strong limit $ \vec v(0) = (v_0, v_1)$ so that 
 \ant{
  (v_{n, 0}, v_{n, 1}) \to  \vec v(0) \in H^2 \times H^1
  }
  Let $\vec v_n(t)$ be the solution to~\eqref{u eq} with initial data $ \vec v_n(0)$ and let $\vec v(t)$ be the solution to~\eqref{u eq} with data $\vec v(0)$. By the Perturbation Lemma~\ref{lem: pert} we know that for each $s \in [0, T_+( \vec v))$ we have
 \EQ{
 \| \vec v_n(s) -  \vec v(s)\|_{\dot H^{3/2} \times \dot H^{1/2}} \to 0 \mas n \to \infty
 }
 Moreover, for $s<\min \{1, T_+(\vec v)\}$ and $T$ chosen so that $s= 1-e^{-T}$ we have 
 \EQ{
 v_{n}(s, r)  = e^{-s_n} u( 1-e^{-s_n} + e^{-s_n} s, \, e^{-s_n} r)
 }
 Next observe that for $T$ as above
 \EQ{
 w(s_n +T, r) = e^{-s_n} e^{-T}u( 1-e^{-s_n -T}, \, e^{-s_n-T} r) = e^{-T} v_n(1-e^{-T}, e^{-T} r)
 }
 The left hand side above tends to $w^*$ in $H^2$ so letting $n \to \infty$ above gives that for all $T$ as above we have 
 \EQ{ \label{w=v}
 w^*(r)  = e^{-T} v(1-e^{-T}, e^{-T} r)
 }
 Since $\vec v$ is a solution to~\eqref{u eq} we have that $w^*$ is a solution to~\eqref{w eq ss} which is \emph{independent of $s$}. Therefore $w^*$ solves  
\begin{equation}\label{3.24}
(1 - r^{2})(w^*_{rr} + \frac{4}{r} w^*_{r}) + 2w^* - (w^*)^{3} = 0.
\end{equation}

\begin{lem}
$w^*\equiv 0$.
\end{lem}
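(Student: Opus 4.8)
\emph{Overview of the approach.} By Lemma~\ref{lem: H2} and the compact support of $\vec u(t)$, the stationary profile $w^*$ is a \emph{radial} element of $H^2(\R^5)$ with $\supp w^*\subset\overline{B(0,1)}$ solving~\eqref{3.24} on $(0,1)$. The plan is to use the extra regularity $w^*\in H^2$ (which is exactly what Section~\ref{sec: dd} buys us) to pin down the two boundary conditions $w^*(1)=w^*_r(1)=0$, then to run a weighted Gronwall estimate to show $w^*\equiv 0$ near $r=1$, and finally to propagate this vanishing inward by uniqueness for the ODE~\eqref{3.24}.

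\emph{Boundary behaviour.} Since $w^*\in H^2(\R^5)$ is radial with $\supp w^*\subset\overline{B(0,1)}$, restricting to an annulus such as $\{\frac12<|x|<\frac32\}$, on which the weight $r^4$ is bounded above and below, shows that $r\mapsto w^*(r)$ belongs to $H^2$ of a bounded interval and hence, by the one–dimensional embedding $H^2(I)\hookrightarrow C^1(\bar I)$, extends to a $C^1$ function near $r=1$. As $w^*$ vanishes for $r\ge 1$, this forces $w^*(1)=0$ and $w^*_r(1)=0$. This is precisely the place where the gain of regularity from Section~\ref{sec: dd} is needed: at the critical level $\dot H^{3/2}$ one does not even control $w^*_r$ up to the light cone.

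\emph{Vanishing near $r=1$.} Writing~\eqref{3.24} in divergence form, $(r^4 w^*_r)_r=\frac{r^4}{1-r^2}\big((w^*)^3-2w^*\big)$, and using $\|w^*\|_{L^\infty}\lesssim 1$ together with $1-r^2\ge 1-r$, one gets $|(r^4 w^*_r)_r|\lesssim \frac{|w^*(r)|}{1-r}$ for $r$ close to $1$. Set $h(r):=\sup_{s\in[r,1]}\frac{|w^*(s)|}{1-s}$, which is finite because $w^*\in C^1$ near $r=1$ with $w^*(1)=0$. Integrating the displayed identity from $r$ to $1$ and using $w^*_r(1)=0$ gives $|w^*_r(r)|\lesssim (1-r)\,h(r)$; integrating once more and using $w^*(1)=0$ gives $\frac{|w^*(r)|}{1-r}\lesssim (1-r)\,h(r)$, and taking the supremum over $r\in[r_0,1]$ yields $h(r_0)\le C(1-r_0)\,h(r_0)$. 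Thus $h(r_0)=0$, i.e.\ $w^*\equiv 0$ on $(1-\e,1)$, once $1-r_0<1/C$. The subtlety here is that the coefficient $\frac1{1-r}$ is not integrable, so one cannot close a Gronwall argument on $\sup|w^*|$ directly; it is essential to propagate the \emph{weighted} quantity $h$, which is harmless precisely because $w^*$ vanishes linearly at $r=1$.

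\emph{Conclusion and the main obstacle.} On the open interval $(0,1)$ the equation~\eqref{3.24} is a non-degenerate second–order ODE, since its leading coefficient $1-r^2$ does not vanish there and its right–hand side is smooth in $(r,w,w_r)$; hence solutions are determined by their value and first derivative at any point of $(0,1)$. Since $w^*$ and $w^*_r$ both vanish at some $r_0\in(1-\e,1)$, uniqueness forces $w^*\equiv 0$ on all of $(0,1)$, and therefore $w^*\equiv 0$ a.e.\ on $\R^5$ (the origin has measure zero and $w^*$ vanishes for $r\ge1$ by hypothesis), so $w^*\equiv0$. I expect the only genuinely delicate points to be the two ingredients of the boundary analysis — upgrading $w^*$ to $C^1$ up to $r=1$ so as to read off $w^*(1)=w^*_r(1)=0$, and setting up the weighted Gronwall estimate with the non-integrable weight — after which the propagation of the vanishing is soft.
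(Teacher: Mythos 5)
Your proof is correct and follows the same overall architecture as the paper's: use the $H^2$ regularity (from Proposition~\ref{prop: u 52}) and the compact support to read off the Cauchy data $w^*(1)=w^*_r(1)=0$, show $w^*$ vanishes on a neighborhood of $r=1$ by exploiting the degenerate factor $1-r^2$, and then propagate the vanishing inward by uniqueness for the non-degenerate equation on $(0,1)$. The only real difference is in the middle step: the paper runs an infinite induction, gaining a power of $(1-r)$ at each pass ($|w^*_{rr}|\le C(1-r)^{1/2}\Rightarrow |w^*_{rr}|\le \tfrac{C}{2}(1-r)^{1/2}$ on a slightly smaller interval, iterated to force $w^*_{rr}\equiv 0$), whereas you close in one shot with a contraction estimate $h(r_0)\le C(1-r_0)h(r_0)$ for the weighted quantity $h(r)=\sup_{[r,1]}|w^*(s)|/(1-s)$. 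Your version is a mild simplification and is perfectly sound: finiteness of $h$ follows from $w^*\in C^1$ up to $r=1$ with $w^*(1)=0$ (indeed the paper's bound $|w^*(r)|\lesssim (1-r)^{3/2}$ already gives $h\to 0$), and the two integrations from $r$ to $1$ of the divergence-form identity use exactly the boundary conditions you established. For the final propagation, note that the equation upgrades $w^*$ to $C^2$ on $(0,1)$ (the right-hand side of $w^*_{rr}=\frac{(w^*)^3-2w^*}{1-r^2}-\frac{4}{r}w^*_r$ is continuous there), so Picard--Lindel\"of uniqueness applies on compact subintervals of $(0,1)$; this is the radial ODE version of the paper's appeal to uniqueness for the non-degenerate elliptic problem with vanishing Dirichlet and Neumann data.
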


\begin{proof} Since $w^*\in H^{2}$, and $\supp w^* \in B(0, 1)$ we have $\| w^*_{rr} \|_{L^{2}([\frac{1}{2}, 1])} \lesssim 1$. Since $w^*$ is supported on $B(0,1)$ and by the Fundamental Theorem of Calculus and H{\"o}lder's inequality,
\begin{equation}\label{3.25}
|w^*_{r}(r)| \lesssim (1 - r)^{1/2},  \mfor r \in [1/2, 1]
\end{equation}
and from this we 
\begin{equation}\label{3.26}
|w^*(r)| \lesssim (1 - r)^{3/2} \mfor  \, r \in [1/2, 1]
\end{equation}
 Plugging~\eqref{3.25} and~\eqref{3.26} into~\eqref{3.24}, for $\frac{1}{2} \leq r \leq 1$, gives 
\begin{equation}\label{3.27}
|w^*_{rr}(r)| \leq \frac{4}{r} \abs{w^*_{r}} + \frac{2}{1 - r} \abs{w^*} + \frac{\abs{w^*}^{3}}{1 - r} \leq C_{0} (1 - r)^{1/2}.
\end{equation}
Now also by~\eqref{3.24}, if $|w^*_{rr}| \le C(1 - r)^{1/2}$ for all $r \in [1 - \delta, 1]$, and $C \leq C_{0}$, then by the Fundamental Theorem of Calculus,~\eqref{3.25}, and~\eqref{3.26} (which implies that $w^*(1) = w^*_{r}(1) = 0$), we have 
\begin{equation}\label{3.28}
| ((1 - r^{2}) (w^*_{rr} + \frac{4}{r} w^*_{r}))| \leq 2 |w^*(r)| + |w^*(r)|^{3} \leq 2C(1 - r)^{5/2} + C^{3} (1 - r)^{15/2}.
\end{equation}
Then since $1 + r \geq 1$,
\begin{equation}\label{3.29}
|w^*_{rr} + \frac{4}{r} w^*_{r}| \leq 2C (1 - r)^{3/2} + C^{3} (1 - r)^{13/2}.
\end{equation}
For $r \in [\frac{1}{2}, 1]$, $|\frac{4}{r} w^*_{r}| \leq 8C(1 - r)^{3/2}$. Therefore,
\begin{equation}\label{3.30}
|w^*_{rr}(r)| \leq 10C (1 - r)^{3/2} + 3 C^{3} (1 - r)^{13/2}.
\end{equation}
Then for $r \in [1 - \frac{1}{100 (1 + C_{0}^{3})}, 1]$,~\eqref{3.30} implies that $|w^*_{rr}(r)| \leq \frac{C}{2} (1 - r)^{1/2}$. Then by induction starting with~\eqref{3.27}, one can easily prove that $w^*_{rr}(r) = 0$ for $r \in [1 - \frac{1}{100 (1 + C^{3})}, 1]$. Therefore, $w^*$ is the solution to the elliptic partial differential equation on $B(0, 1 - \delta)$, $\delta > 0$,
\begin{equation}\label{3.31}
(1 - |x|^{2}) \Delta w^* + 2w^* - (w^*)^{3} = 0, \, \, \,  w^*|_{\partial B(0, 1 - \delta)} = 0, \, \, \,  \partial_{n} w^*|_{\partial B(0, 1 - \delta)} = 0.
\end{equation}
Note that \eqref{3.31} is a \emph{nondegenerate} elliptic equation with Dirichlet and Neumann boundary conditions. Therefore $w^* \equiv 0$. 
\end{proof}

At this point it is easy to conclude the proof. Since $w^*  \equiv 0$ we can deduce from~\eqref{w=v} that $ \vec v \equiv 0$. But this means that $\vec v_n(0)$ converges to $(0, 0)$ in $H^2 \times H^1$. But this is impossible since we have assumed that $\vec u(t)$ is a blow-up solution. 
We have arrived at a contradiction and hence we have proved Proposition~\ref{prop: r2} for solutions to~\eqref{u eq}.

\subsection{Nonexistence of self-similar wave maps with the compactness property}
Next we exclude the possibility of compact, self-similar blow-up for solutions to~\eqref{u eq wms} and~\eqref{u eq wmh}. We again can assume that $T_+ =1$ and that by \cite[Lemma~$4.15$]{KM11a} we have $\supp u (t, r), u_t(t, r) \in B(0, 1-t)$. Again we use this support property along with Proposition~\ref{prop: wm 52} to deduce that  the set 
\EQ{
K_+ := \{ ((1- t) u(t, (1-t) r), \, (1-t)^2 u_t(t, (1-t) r) \mid t \in [0, 1)\}
 }
is in fact pre-compact in the inhomogeneous space $H^2 \times H^1$. A simple argument allows us to exclude this type of solution to the defocusing-type equation~\eqref{u eq wmh}. Solutions to~\eqref{u eq wmh} have a positive definite conserved energy given by 
\ant{
\E_{\Hp^3} (\vec u):=  \frac{1}{2} \int_{0}^{\infty}  \left( u_r^2  + u_t^2 + 2\frac{\sinh^2(ru) -(ru)^2}{r^4} \right) \, r^4 \, dr
}
Setting 
\EQ{
\vec v(t, r) = (v(t, r), v_t(t, r)):= ((1- t) u(t, (1-t) r), \, (1-t)^2 u_t(t, (1-t) r)
}
we have that $\vec v(t)$ is pre-compact in $H^2 \times H^1$ for $t \in [0, 1)$ and  thus
\EQ{
 \E_{\Hp^3} (\vec u(t)) \lesssim  (1-t)  \left(  \| v_t(t)\|_{L^2}^2 + \|v_r(t)\|_{L^2}^2 + \| v(t)\|_{L^4}^4\right) \to 0 \mas t \to 1
 }
 Therefore $\E_{\Hp^3} (\vec u) \equiv 0$ and it follows that $\vec u \equiv (0, 0)$, which is contradiction since we assuming that $\vec u(t)$ blows up at $t =1$. 
 
 In the case that $\vec u(t)$ solves the focusing-type equation~\eqref{u eq wms} we follow the exact same argument as in the Section~\ref{sec: u 3 ss}, defining $s = -\log(1 - t)$, $e^{-s} = 1 - t$, and $t = 1 - e^{-s}$ and  for $s \in [0, \infty)$, setting 
\begin{align*}
&w(s, r) = e^{-s} u(1 - e^{-s}, e^{-s} r) \\
&\partial_{s} w(s, r) = -w(s, r) - e^{-2s} r u_{r}(1 - e^{-s}, e^{-s} r) + e^{-2s} u_{t}(1 - e^{-s}, e^{-s} r). 
\end{align*}
Of course in the present situation the nonlinearity $F_{\Sp^3}(r, u)$ is different than $u^3$, however an inspection of the proof in Section~\ref{sec: u 3 ss} reveals that only the rough size of the nonlinearity 
 \EQ{
\abs{ F(r, u)} \lesssim \abs{u}^3
}
is needed along with the uniform estimate 
\EQ{
 (rw)^2 - \sin^2 (rw)   = ((rw) - \sin rw)(rw + \sin rw) \lesssim (rw)^4
 }
 which arises when providing a lower bound for the analog of $E(0)$ as in~\eqref{3.13}.  With these observations the proof proceeds exactly as in Section~\ref{sec: u 3 ss}. We omit the details. This completes the proof of Proposition~\ref{prop: r2}.

\section{Rigidity Part II: Proof of~Proposition~\ref{prop: r1}} \label{sec: rig}

In this section we prove Proposition~\ref{prop: r1}. We use an argument developed in~\cite{KLS} that is based on exterior energy estimates for the free wave equation in $\R^{1+5}$. This method is a refinement of the channels of energy method developed by Duyckaerts, Kenig, and Merle in~\cite{DKM4, DKM5}. In the current implementation, the argument closely resembles the one in~\cite[Section $5$]{L13}, however, we provide a detailed argument here for completeness. We begin with the following consequence of Proposition~\ref{prop: u 34} and Proposition~\ref{prop: wm 34}. 

\begin{lem} \label{lem: compact}
Let $\vec u(t)$ be a solution to either~\eqref{u eq},~\eqref{u eq wms},~\eqref{u eq wmh} with the Compactness Property on its maximal interval of existence $I_{\max}(\vec u)$ as in Propsition~\ref{prop: r1}, i.e., assume that the scaling parameter $N(t)$ satisfies 
\ant{
\inf_{t \in I_{\max}} N(t) >0. 
}
Then, we have $\vec u(t) \in \dot{H}^1 \times L^2 (\R^5)$ for all $t \in I_{\max}$, and   in fact the trajectory 
\EQ{\label{K1}
K_1:= \{ \vec u(t) \mid t\in I_{\max}\}
}
is pre-compact in $\dot{H}^1 \times L^2(\R^5)$. 
As a consequence,  we have the following vanishing: Let $R>0$ be arbitrary. Then 
\EQ{ \label{u h1 R}
\limsup_{t \to T_-}\| \vec u(t) \|_{\dot{H}^1 \times L^2 (r \ge R + \abs{t})}   = \limsup_{t \to T_+}\| \vec u(t) \|_{\dot{H}^1 \times L^2 (r \ge R + \abs{t})}  = 0
}
where
\ant{
\| \vec u(t) \|_{\dot H^1 \times L^2( r \ge R+\abs{t})}^2 := \int_{R + \abs{t}}^{\infty}( u_t^2(t, r) + u_r^2(t, r) )\, r^4 \, dr
}
\end{lem}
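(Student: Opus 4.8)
\textbf{Proof plan for Lemma~\ref{lem: compact}.}

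The plan is to combine the critical compactness in $\dot H^{3/2}\times\dot H^{1/2}$ with the subcritical uniform bound $\|\vec u(t)\|_{\dot H^{3/4}\times\dot H^{-1/4}}\lesssim 1$ coming from Proposition~\ref{prop: u 34} (resp.~Proposition~\ref{prop: wm 34}) to produce compactness at the intermediate regularity $\dot H^1\times L^2$. First I would reduce to the case $N(t)\equiv 1$: by Remark~\ref{rem: N}, since $\inf_t N(t)>0$, we may renormalize and in fact assume $N(t)$ is bounded above and below, so the modulated set $K$ being pre-compact in $\dot H^{3/2}\times\dot H^{1/2}$ is equivalent to $\{\vec u(t)\}$ itself being pre-compact there (the dilations by $N(t)$ range over a compact set of scales, and dilation is continuous on $\dot H^{3/2}\times\dot H^{1/2}$). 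Then the key analytic input is a Littlewood--Paley interpolation estimate: for each dyadic $k$,
\EQ{
\|(P_k u(t), P_k u_t(t))\|_{\dot H^1\times L^2}
\lesssim \|(P_k u(t),P_k u_t(t))\|_{\dot H^{3/4}\times\dot H^{-1/4}}^{1-\theta}\,
\|(P_k u(t),P_k u_t(t))\|_{\dot H^{3/2}\times\dot H^{1/2}}^{\theta}
}
with $\theta$ determined by $1=(1-\theta)\tfrac34+\theta\tfrac32$, i.e.\ $\theta=\tfrac13$. Summing in $k$ via Cauchy--Schwarz (or H\"older on $\ell^2$), the high frequencies $k\ge 0$ are controlled by the small tails of the $\dot H^{3/2}\times\dot H^{1/2}$ norm uniformly in $t$ (Remark~\ref{Ceta}, applied with $N(t)\simeq 1$), while the low frequencies $k<0$ are controlled by the uniform $\dot H^{3/4}\times\dot H^{-1/4}$ bound, since there the exponential gain in $k$ from that norm dominates. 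This shows $\{\vec u(t)\}$ is uniformly bounded in $\dot H^1\times L^2$ with uniformly small high- and low-frequency tails.

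To upgrade uniform boundedness with uniformly small tails to actual pre-compactness in $\dot H^1\times L^2$, I would invoke the standard characterization of pre-compact sets in these homogeneous Sobolev spaces (a Fr\'echet--Kolmogorov/Arzel\`a--Ascoli type criterion): a bounded family is pre-compact if and only if it has uniformly small frequency tails at both ends \emph{and} uniformly small spatial tails (no escape of mass to spatial infinity or concentration at the origin). The frequency tails were just handled. For the spatial tails, I would transfer them from the already-known compactness of $K$ in $\dot H^{3/2}\times\dot H^{1/2}$: that compactness gives, for each $\eta>0$, radii so that the $\dot H^{3/2}\times\dot H^{1/2}$ mass outside $|x|\ge C(\eta)$ and inside $|x|\le c(\eta)$ is $\le\eta$ (again Remark~\ref{Ceta} with $N\simeq 1$); combining a spatial cutoff with the frequency interpolation above (and the uniform $\dot H^{3/4}\times\dot H^{-1/4}$ bound to absorb commutator/low-frequency errors from the cutoff) yields the same spatial smallness at the $\dot H^1\times L^2$ level. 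Hence $K_1$ is pre-compact in $\dot H^1\times L^2$.

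For the final vanishing statement~\eqref{u h1 R}, fix $R>0$. By Remark~\ref{rem: N}(2), when $T_\pm$ is finite the solution is supported in $B(0,T_\pm-t)$, so $\|\vec u(t)\|_{\dot H^1\times L^2(r\ge R+|t|)}$ is eventually zero and there is nothing to prove; thus assume (in forward time, say) $T_+=\infty$. Now $N(t)$ is bounded above and below, so $\{\vec u(t):t\ge 0\}$ is pre-compact in $\dot H^1\times L^2$ and, by Remark~\ref{Ceta} transferred to $\dot H^1\times L^2$ as above, for every $\eta>0$ there is $\rho(\eta)$ with $\|\vec u(t)\|_{\dot H^1\times L^2(r\ge\rho(\eta))}\le\eta$ for all $t\ge 0$. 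Since $R+|t|\to\infty$ as $t\to\infty$, for $t$ large enough we have $R+|t|\ge\rho(\eta)$, giving $\limsup_{t\to T_+}\|\vec u(t)\|_{\dot H^1\times L^2(r\ge R+|t|)}\le\eta$; letting $\eta\to 0$ finishes it, and the $t\to T_-$ case is identical.

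The main obstacle I anticipate is making the low-frequency interpolation genuinely uniform: the $\dot H^{3/4}\times\dot H^{-1/4}$ bound is merely uniform boundedness, not smallness, so at low frequencies one relies purely on the dyadic gain $2^{k/4}$ summing to a geometric series; one must be careful that the interpolation inequality holds on each Littlewood--Paley block with a $k$-independent constant and that the resulting $\ell^2_k$ (or $\ell^1_k$ after Cauchy--Schwarz) sum of the mixed-norm terms really is finite and uniformly bounded in $t$—this is where writing out the frequency envelope bookkeeping, rather than a naive three-line interpolation, is needed.
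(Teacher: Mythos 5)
There is a genuine gap at the very first step: you reduce to the case where $N(t)$ is bounded above and below, citing Remark~\ref{rem: N}. But that remark only goes the other way: \emph{if} $N(t)$ is bounded above on $[0,T_+)$, then $T_+=\infty$ and one may normalize $N\equiv 1$. Under the hypotheses of Proposition~\ref{prop: r1} the only assumption is $\inf_{t\in I_{\max}}N(t)>0$; nothing prevents $N(t_n)\to\infty$ along a sequence $t_n\to T_\pm$ (e.g.\ a finite-time blow-up scenario that is not exactly self-similar), and this is precisely the case the paper's proof must, and does, treat separately. Once $N(t_n)$ is unbounded, both halves of your frequency argument collapse: the smallness of the critical-norm tails from Remark~\ref{Ceta} lives above the \emph{moving} frequency $C(\eta)N(t_n)$, not above a fixed dyadic threshold, and the claim that the modulated compactness of $K$ transfers to compactness of $\{\vec u(t)\}$ itself (``dilations range over a compact set of scales'') is no longer available. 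The paper handles this case by showing directly that $\vec u(t_n)\to 0$ in $\dot H^1\times L^2$: split $\|u(t_n)\|_{\dot H^1}^2$ at frequency $c(\eta)N(t_n)$, bound the high-frequency piece by $N(t_n)^{-1}c(\eta)^{-1}\|u(t_n)\|_{\dot H^{3/2}}^2\to 0$, and bound the low-frequency piece by interpolating between the $\eta$-small low-frequency $\dot H^{3/2}$ tail of Remark~\ref{Ceta} and the uniform $\dot H^{3/4}$ bound of Proposition~\ref{prop: u 34} (similarly for $u_t$). You need this, or an equivalent argument, to cover the unbounded-$N$ case; as written, your proof only establishes the lemma when $N$ is bounded.

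A secondary remark: in the bounded-$N$ regime your Littlewood--Paley plus Fr\'echet--Kolmogorov route can be made to work, but the spatial-tail transfer step (commuting cutoffs with fractional derivatives, absorbing commutator errors) is exactly the bookkeeping the paper avoids. The paper simply interpolates differences, $\|\vec u(t_n)-\vec u(t_m)\|_{\dot H^1\times L^2}\lesssim\|\vec u(t_n)-\vec u(t_m)\|_{\dot H^{3/4}\times\dot H^{-1/4}}^{2/3}\,\|\vec u(t_n)-\vec u(t_m)\|_{\dot H^{3/2}\times\dot H^{1/2}}^{1/3}$, so that the uniform $\dot H^{3/4}\times\dot H^{-1/4}$ bound turns any subsequence convergent in the critical space into a Cauchy sequence in the energy space; no characterization of compact subsets of $\dot H^1\times L^2$ is needed. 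Your argument for the vanishing statement~\eqref{u h1 R}, including the compact-support reduction via Remark~\ref{rem: N}(2) when $T_\pm$ is finite, coincides with the paper's and is fine.
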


\begin{proof} 
We begin by proving that $K_1$ is pre-compact in the energy space. We show that for every sequence $t_n \to T_+,$ or $t_n \to  T_-$ the sequence $ \vec u(t_n)$ has a convergent subsequence in $\dot{H}^1 \times L^2$. Let $t_n \to T_+$. First assume that $N(t_n)$ remains bounded for all $n$. In this case we can, without loss of generality assume $N(t_n)  = 1$ for all $n$ and we have using Proposition~\ref{prop: u 34} and interpolation 
\EQ{
  \| \vec u(t_n) - \vec u(t_m)\|_{\dot H^1 \times L^2} &\le  \|\vec u(t_n) - \vec u(t_m)\|^{\frac{2}{3}}_{\dot H^{\frac{3}{4}} \times \dot{H}^{-\frac{1}{4}}}  \|\vec u(t_n) - \vec u(t_m)\|^{\frac{1}{3}}_{\dot H^{\frac{3}{2}} \times \dot{H}^{\frac{1}{2}}}  \\
  & \le C \|\vec u(t_n) - \vec u(t_m)\|^{\frac{1}{3}}_{\dot H^{\frac{3}{2}} \times \dot{H}^{\frac{1}{2}}}
}
The claim then follows from the compactness in $\dot H^{\frac{3}{2}} \times \dot{H}^{\frac{1}{2}}$ since we know that $\vec u(t_n)$ has a convergent subsequence there. Next assume that $t_n$ has a subsequence, still denoted by $t_n$ so that $N(t_n) \to \infty$. In this case we claim that $\vec u(t_n) \to 0$ in $\dot{H}^1 \times L^2$. To see this let $\eta$ be a small number and find  $c(\eta)$ as in Remark~\ref{Ceta}  so that 
\EQ{
 \int_{\abs{\xi} \le c(\eta)N(t)} \abs{\xi}^3 \abs{\hat{u}(t,\xi)}^{2}\,  d\xi  \le \eta_0,
}
Then 
\ant{
 \| u(t_n)\|_{\dot{H}^1}^2 &= \int_{\abs{\xi} \le c(\eta)N(t_n)} \abs{\xi}^2 \abs{\hat{u}(t_n,\xi)}^{2}\,  d\xi  + \int_{\abs{\xi} \ge c(\eta)N(t_n)} \abs{\xi}^2 \abs{\hat{u}(t_n,\xi)}^{2}\,  d\xi   \\
 & \le \int_{\abs{\xi} \le c(\eta)N(t_n)} \abs{\xi}^2 \abs{\hat{u}(t_n,\xi)}^{2}\,  d\xi  + N(t_n)^{-1} c(\eta)^{-1} \|u(t_n)\|_{\dot{H}^{\frac{3}{2}}}^2
 }
The second term on the right-hand-side above tends to zero as $n \to \infty$ since we have assumed $N(t_n) \to \infty$  as $n \to \infty$. For the first term, we interpolate 
\ant{
&\int_{\abs{\xi} \le c(\eta)N(t_n)} \abs{\xi}^2 \abs{\hat{u}(t_n,\xi)}^{2}\,  d\xi \le  \\
 & \le \left(\int_{\abs{\xi} \le c(\eta)N(t_n)} \abs{\xi}^{\frac{3}{2}} \abs{\hat{u}(t_n,\xi)}^{2}\,  d\xi \right)^{2/3} \left(\int_{\abs{\xi} \le c(\eta)N(t_n)} \abs{\xi}^3 \abs{\hat{u}(t_n,\xi)}^{2}\,  d\xi \right)^{1/3} \\
 &  \le  \eta \| u(t_n)\|_{\dot{H}^{3/4}} \lesssim \eta
}
where the last line follows from Proposition~\ref{prop: u 34} and Remark~\ref{Ceta}. The same argument works for the time derivatives $u_t(t_n)$. As $\eta$ can be chosen arbitrarily small we have proved that $\vec u(t_n) \to 0$ in $\dot{H}^1 \times L^2$. Hence $K_1$ is pre-compact in the energy space.  

Next we prove~\eqref{u h1 R}. First assume that $T_+ = \infty$. Then since $K_1$ is pre-compact in $\dot{H}^1 \times L^2$ is follows that 
for every $\e >0$ there exists an $R(\e)>0$ so that for all $t \in [0, \infty)$, we have
\ant{
 \| \vec u(t) \|_{\dot{H}^1 \times L^2 (r \ge R(\e))} \le \e, 
}
and~\eqref{u h1 R} is a direct consequence of the above. 

Next, assume that $T_+<\infty$. A standard argument using compactness,  see Remark~\ref{rem: N}$(2)$,  gives that in this case we must have 
\EQ{
\supp \vec u(t, \cdot) \subset B(0, T_+ - t)
}
where $B(0, r)$ denotes the ball of radius $r>0$ in $\R^5$. Then for any $R>0$, we can find a $t_0>0$, $t_0 = t_0(R)$ so that 
\ant{
 R+ t > T_+ - t, \quad \forall \, \, t_0  \le t<T_+
 }
 Hence, 
 \ant{
  \| \vec u(t) \|_{\dot{H}^1 \times L^2 (r \ge R+ \abs{t})} = 0  \quad \,  \forall \,t_0  \le t<T_+. 
}
The statements in~\eqref{u h1 R} in the negative time directions follow from identical arguments. 
\end{proof}

\subsection{Singular stationary solutions}
Next, we construct one parameter families of stationary solutions to~\eqref{u eq},~\eqref{u eq wms} and~\eqref{u eq wmh} that do not lie in the critical space $\dot{H}^{\frac{3}{2}}(\R^5)$. We will later show that any nonzero pre-compact trajectory as in Proposition~\ref{prop: r1} has to be equal to one of these singular stationary solutions, which will yield a contradiction. We begin with the construction of a family of infinite energy stationary wave maps to $\Sp^3$ and to $\Hp^3$.  

\subsubsection{Singular stationary wave maps}
 We prove the following result via a simple phase portrait analysis after a reduction to an autonomous ODE. 

\begin{lem} \label{lem: hm} For any $\ell    \in \R$, there exists a unique radial $C^2$ solution $ \fy^1_{\ell}$ of 
\EQ{ \label{hm 5 s}
-\fy_{rr}  - \frac{4}{r} \fy_r  = Z_{\Sp^3}(r \fy)\fy^3\,  \quad r >0
}
as well as a unique radial $C^2$ solution $\fy^2_{\ell}$ of 
\EQ{ \label{hm 5 h}
-\fy_{rr}  - \frac{4}{r} \fy_r  = Z_{\Hp^3}(r \fy)\fy^3\,  \quad r >0
}
where in both cases the solution has the asymptotic behavior 
\EQ{ \label{hm ell s}
r^3\fy_{\ell}^j(r)  =  \ell + O( r^{-4}) \mas r  \to \infty, \quad j = 1, 2
}
The $O( \cdot)$ is determined by $\ell$ and vanishes for $\ell =0$. Moreover, in each case $j  = 1, 2$  for $\ell \neq 0$ we have $ \fy_{\ell}^j \notin L^5( \R^5)$, which means that $\fy_{\ell}^j\notin \dot{H}^{\frac{3}{2}}(\R^5)$ by Sobolev embedding. 

\end{lem}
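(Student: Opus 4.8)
The strategy is to reduce both equations~\eqref{hm 5 s} and~\eqref{hm 5 h} to a single autonomous first-order system by passing to logarithmic coordinates, and then to run a phase-plane argument. Set $\rho = r\fy$, the natural ``wave maps'' variable, so that $\fy = \rho/r$ and the equation $-\fy_{rr} - \frac4r \fy_r = \fy^3 Z(r\fy)$ becomes, after substituting $\fy = \rho/r$ and simplifying, a second-order ODE for $\rho$ in $r$ of the form $\rho_{rr} + \frac{2}{r}\rho_r - \frac{2}{r^2}\rho + r^{-2}\,G(\rho) = 0$, where $G(\rho) = \rho^3 Z(\rho)$ equals $8(\rho - \sin\rho)$ in the $\Sp^3$ case and $8(\rho - \sinh\rho)$ in the $\Hp^3$ case. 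Introducing $s = \log r$ removes the explicit $r$-dependence: writing $\rho = \rho(s)$ one gets an autonomous equation $\rho_{ss} + \rho_s - 2\rho + G(\rho) = 0$. Because $G(\rho) = O(\rho^3)$ near $\rho = 0$, the linearization at the origin is $\rho_{ss} + \rho_s - 2\rho = 0$, with characteristic roots $1$ and $-2$; hence the origin is a hyperbolic saddle, and its stable manifold (the $e^{-2s}$-eigendirection) is one-dimensional. The solutions we want are precisely those that lie on this stable manifold, i.e.\ those with $\rho(s) \to 0$ as $s \to +\infty$ at the rate $e^{-2s}$, which translates back to $\rho(r) = r\fy(r) = O(r^{-2})$, i.e.\ $\fy(r) = O(r^{-3})$.

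The key steps, in order, would be: (1) carry out the change of variables carefully and verify the autonomous form; (2) invoke the stable manifold theorem at the saddle point to produce, for the trajectory leaving the origin along the stable eigendirection, a one-parameter family of solutions parametrized by the coefficient of $e^{-2s}$ — equivalently, rescaling by the ODE's own scaling symmetry (if $\rho(s)$ solves it then so does a time-translate $\rho(s + s_0)$, which changes the $e^{-2s}$ coefficient multiplicatively), one obtains exactly the real parameter $\ell$ with $r^3\fy_\ell(r) = \rho(r)/ r^{-2} \to \ell$; (3) establish the refined asymptotic $r^3\fy_\ell^j(r) = \ell + O(r^{-4})$ by bootstrapping in the ODE: once $\rho = \ell r^{-2}(1 + o(1))$, feeding this into the $G(\rho) = O(\rho^3) = O(r^{-6})$ term and solving the linear inhomogeneous equation shows the correction is $O(r^{-4})$ relative to the leading $r^{-2}$, i.e.\ $\rho(r) = \ell r^{-2} + O(r^{-6})$ and so $r^3\fy_\ell(r) = \ell + O(r^{-4})$; (4) check that the solution extends to all $r > 0$ and is $C^2$ — smoothness away from $r=0$ is automatic from ODE regularity since the right-hand side is smooth, and global existence on $(0,\infty)$ follows because the trajectory on the stable manifold cannot blow up in finite $s$ (a Lyapunov/energy estimate for $E(s) = \frac12\rho_s^2 - \rho^2 + \int_0^\rho G$ shows $\frac{d}{ds}E = -\rho_s^2 \le 0$ in the $\Sp^3$ case, and in the $\Hp^3$ case $\int_0^\rho G = 8(\frac{\rho^2}{2} - \cosh\rho + 1) \le 0$ makes $E$ even more obviously controlled, bounding the trajectory).

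Finally, for the non-membership statement: if $\ell \ne 0$ then $\fy_\ell(r) \sim \ell r^{-3}$ as $r \to \infty$, so $|\fy_\ell(r)|^5 \sim |\ell|^5 r^{-15}$, and $\int_0^\infty |\fy_\ell(r)|^5 r^4\,dr$ has a divergent tail $\int^\infty r^{-15} r^4\,dr = \int^\infty r^{-11}\,dr$ — wait, that converges; the divergence must come from $r \to 0$ instead. Indeed the point is the behavior near $r = 0$: the \emph{other} branch of behavior at the origin corresponds to $\rho(s) \to 0$ as $s \to -\infty$ along the unstable direction, giving $\rho(r) \sim c r$ hence $\fy$ bounded; but a solution on the \emph{stable} manifold at $s = +\infty$ generically does \emph{not} return to the origin as $s \to -\infty$ — instead $\rho(s)$ tends to a nonzero equilibrium of $G$ or grows, so that as $r \to 0^+$ the function $\fy(r) = \rho/r$ blows up like $r^{-1}$, and then $\int_0 |\fy|^5 r^4\,dr \sim \int_0 r^{-5} r^4\,dr = \int_0 r^{-1}\,dr = \infty$. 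Thus $\fy_\ell \notin L^5(\R^5)$, and by the Sobolev embedding $\dot H^{3/2}(\R^5) \hookrightarrow L^5(\R^5)$ (or $\dot H^{3/2} \hookrightarrow L^{10/3}$ — one should use the correct exponent; the relevant embedding is $\dot H^{3/2}(\R^5) \hookrightarrow L^5(\R^5)$ since $\frac15 = \frac12 - \frac{3/2}{5}$), this forces $\fy_\ell \notin \dot H^{3/2}(\R^5)$.

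\textbf{Main obstacle.} The delicate point is step (3)–(4): pinning down both endpoints of the trajectory. The stable manifold theorem gives the behavior as $r \to \infty$ cleanly, and gives the one-parameter family and the leading asymptotic $r^3\fy_\ell \to \ell$. But to get the \emph{sharp} error $O(r^{-4})$ one needs a careful bootstrap in the ODE, and — more importantly — to conclude $\fy_\ell \notin L^5$ one must control the solution as $r \to 0^+$ (i.e.\ $s \to -\infty$) and rule out the coincidence that the stable trajectory happens to also be homoclinic to $0$. Here the sign structure of $Z_{\Sp^3}$ (nonnegative) versus $Z_{\Hp^3}$ (nonpositive), equivalently the shape of the potential $\int_0^\rho G$, is what guarantees the trajectory stays away from the origin as $s \to -\infty$ for $\ell \ne 0$; handling the two target geometries may require slightly different phase-portrait bookkeeping, though the qualitative conclusion is the same. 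I expect the global-existence-on-$(0,\infty)$ and the $r \to 0$ blow-up analysis to be where essentially all the real work lies.
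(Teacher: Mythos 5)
Your reduction is exactly the paper's: set $Q=r\fy$, pass to $s=\log r$, obtain the autonomous damped-pendulum-type equation $\ddot\phi+\dot\phi=\sin(2\phi)$ (resp.\ $\sinh(2\phi)$), identify the hyperbolic saddle at the origin with eigenvalues $1,-2$, and parametrize the one-dimensional stable manifold by the coefficient $\ell$ of $e^{-2s}$, whose expansion $\phi=\ell e^{-2s}+O(e^{-6s})$ gives \eqref{hm ell s}. Up to that point you and the paper coincide.

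The genuine gap is the step you describe as ``a solution on the stable manifold generically does not return to the origin as $s\to-\infty$.'' The lemma needs this for \emph{every} $\ell\neq0$, and ``generically'' plus an appeal to the sign of $Z$ is not an argument: for the $\Sp^3$ case the phase portrait has infinitely many equilibria, and nothing in the local stable-manifold analysis by itself rules out the backward continuation being homoclinic to $(0,0)$. The paper closes this with precisely the Lyapunov functional you write down, but in \emph{integrated} form: multiplying the equation by $\dot\phi$ and integrating from $s$ to $+\infty$ (where the trajectory tends to the origin) yields $\dot\phi^2(s)=\sin^2(\phi(s))+2\int_s^{\infty}\dot\phi^2\,d\sigma$ (and the $\sinh^2$ analogue). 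If $(\phi,\dot\phi)$ ever equalled $(0,0)$ --- including in the limit $s\to-\infty$ --- the identity forces $\int\dot\phi^2=0$, hence $\phi\equiv0$ and $\ell=0$. In fact it gives $\dot\phi^2(s)\ge 2\int_{s_1}^\infty\dot\phi^2>0$ for all $s\le s_1$, so $\dot\phi$ has a fixed sign and $|\phi|$ grows linearly as $s\to-\infty$; thus $|Q(r)|$ stays bounded away from $0$ near $r=0$ and $\int_0|\fy|^5r^4\,dr=\int_0|Q(r)|^5r^{-1}\,dr=\infty$. You have the right object ($\tfrac{d}{ds}E=-\dot\phi^2$ is the differential form of this identity); you never integrate it to extract the conclusion, which is where all the content of the ``$\fy_\ell\notin L^5$'' claim lives. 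Separately, your assertion that this energy ``bounds the trajectory'' backward in $s$ is incorrect: $E$ is non-increasing forward in $s$, hence non-decreasing backward, which controls nothing; for the $\sinh$ target backward finite-$s$ blow-up is not excluded by such an estimate (the paper is silent on this too, and it does not affect the $L^5$ conclusion, which only requires the trajectory to stay away from the origin).
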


\begin{proof}  We first prove the lemma for the $\Sp^3$ target, which concerns solutions to~\eqref{hm 5 s}. Recall  then if we define $Q(r) := r \fy(r)$
then $\fy$ solves~\eqref{hm 5 s}, if and only if $Q$ solves 
\EQ{
Q_{rr} +  \frac{2}{r} Q_r  =  \sin 2 Q
}
Moreover, if we would like $\fy$ to satisfy~\eqref{hm ell s} with $\ell  \in \R$ then we need to impose the boundary condition 
\EQ{
\lim_{r  \to \infty}  Q(r)  = 0
}
A standard trick is  to introduce new variables $s= \log(r)$ and $\phi(s) = Q(r)$, and  we obtain an autonomous differential equation for $\phi$,
\EQ{\label{eq: phi}
\ddot \phi + \dot\phi= \sin(2 \phi)
 }
Note that the above is the equation for a damped pendulum. The proof thus reduces to an analysis  of  the phase portrait associated to \eqref{eq: phi}. 
Setting $x(s) = \phi(s)$, $y(s) =  \dot\phi(s)$ we can rewrite~\eqref{eq: phi} as the system
\EQ{\label{xy ode}
 \pmat{ \dot{x} \\  \dot y}  = \pmat{ y \\ - y+ \sin(2x)} =: X(x, y)
}
and we denote by $\Phi_s$ the flow associated to the vector filed $X$. The equilibria of \eqref{xy ode}  occur at points $z_{k/2} = (\frac{ k \pi }{2} , 0)$ where $k \in \Z$. For each $\frac{k}{2} = n \in  \Z$  the flow has a saddle structure with eigenvalues $\la_+ = 1$, $\la_-= -2$, and the corresponding unstable and stable invariant subspaces for the linearized flow are given by the spans of $(1, \la_+)=(1, 1)$, respectively $(1, \la_-) = (1, -2)$. In a neighborhood $V \ni v_n = (n \pi, 0)$ we then have the  one-dimensional invariant unstable manifold 
\ant{
W^u_n = \{ (x, y) \in V \mid \Phi_s(x, y) \to v_n\, \,  \textrm{as} \, \, s \to - \infty\}
}
and the one-dimensional invariant stable manifold 
\ant{
W^{s}_n = \{ (x, y) \in V \mid \Phi_s(x, y) \to v_n\, \,  \textrm{as} \, \, s \to  \infty\}
}
which are tangent at $z_n$ to the invariant subspaces of the linearized flow. In particular, for  $n = 0$ we can parameterize the stable manifold $W^{s}_0$ by 
\ant{
\phi_{0, \ell}(s) = \ell e^{-2s} + O(e^{-6s})
}
with $\ell \in \R$ determining all the coefficients of higher order and the $O( \cdot)$ vanishing for $\ell  = 0$. It is straightforward  to show that if $\ell>0$ then $\phi_{0, \ell}(s)$ lies on the branch of the stable manifold that stays above $0$ for all $s \in \R$, i.e., $\phi_{0, \ell}(s) > 0$ for all $s \in \R$. If $\ell=0$ then $\phi_{0, \ell}(s) = 0$ for all $s \in \R$. Lastly, if $\ell<0$ then $\phi_{0, \ell}(s) < 0$ for all $s \in \R$.  Different choices of $\ell$ correspond to translations  in $s$ along the respective branches of the stable manifold, which is what uniqueness means in the statement of Lemma~\ref{lem: hm}.  

Multiplying the equation~\eqref{eq: phi} by $\dot{\phi}$ and integrating from $s$ to $\infty$, one obtains the energy identity 
\EQ{
- \dot{\phi}_{0, \ell}^2(s) +  2\int_{s}^{\infty}  \dot{\phi}^2_{0, \ell}( \rho) \, d \rho  =  -\sin^2( \phi_{0, \ell}(s))
}
from which one deduces that if $\ell \neq 0$ it is impossible for $(\phi_{0, \ell}(s), \dot{\phi}_{0, \ell}(s))$ to ever equal $(0, 0)$ for any $s \in [- \infty,  \infty)$. 
Passing back to the original variables, $r, Q(r)$ we have three trajectories
\EQ{ \label{Q pm}
Q_{\ell_{\pm}}(r) :=  \ell_{\pm} r^{-2} + O(r^{-6}), \quad 
Q_{0}(r) := 0
}
To pass back to the $5d$ setting we set $\phi_{\ell}(r) := Q_{\ell}(r)/ r$ proving~\eqref{hm ell s} for our solutions to~\eqref{hm 5 s}. When $\ell \neq 0$ we note that the fact that $\lim_{r \to 0}Q_{\ell}(r)  \neq 0$  means that $\phi_{\ell} \notin L^5(\R^5)$ for $\ell \neq 0$. 

Next we prove the lemma for solutions to~\eqref{hm 5 h}. The proof is nearly identical so we only briefly summarize the argument. Again, first defining $Q(r):= r \fy(r)$ and then setting $s = \log(r)$ and $\phi(s) = Q(r)$, we can reduce matters to a phase portrait analysis for the autonomous ODE, 
\EQ{\label{eq: phi h}
\ddot \phi + \dot\phi= \sinh(2 \phi) 
 }
 We note that here there is only one fixed point for the vector field $X(x, y) =  (y, -y + \sinh 2x)$ at $(x, y) = (0, 0)$. The rest of the proof is identical to the $\Sp^3$ target case. 
 
\end{proof}

\subsubsection{Singular stationary solutions to~\eqref{u eq}} 
We prove the an analogous result for the cubic equation,~\eqref{u eq}. Again below we reduce matters to a phase portrait analysis after a reduction to an autonomous ODE. For a alternative, but also simple approach to the analogous result for the focusing supercritical semilinear equation in $3d$ we refer the reader to \cite[Proposition $3.2$]{DKM5}. 
\begin{lem}\label{lem: ell} For any $\ell  \in \R$, there exists a  radial $C^2$ solution $ \fy_{\ell}$ of 
\EQ{ \label{eq e}
-\fy_{rr}  - \frac{4}{r} \fy_r  = \fy^3\,  \quad r >0
}
with the asymptotic behavior 
\EQ{ \label{fy ell}
r^3\fy_{\ell}(r)  =  \ell + O( r^{-4}) \mas r  \to \infty
}
The $O( \cdot)$ is determined by $\ell$ and vanishes for $\ell =0$. Moreover, for $\ell  \neq 0$ we have $ \fy_{\ell} \notin L^5( \R^5)$, which means that $\fy_{\ell} \notin \dot{H}^{\frac{3}{2}}(\R^5)$ by Sobolev embedding.

\end{lem}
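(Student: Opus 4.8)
The plan is to reduce~\eqref{eq e} to an autonomous second-order ODE by exploiting the scaling symmetry $\fy(r)\mapsto\la^{-1}\fy(r/\la)$. Setting $s=\log r$ and writing $\fy(r)=r^{-1}\psi(s)$, a direct computation transforms~\eqref{eq e} into the damped cubic oscillator (dots denoting $d/ds$)
\EQ{ \label{eq: psi aut}
\ddot\psi + \dot\psi - 2\psi + \psi^3 = 0.
}
Viewed as a first-order system for $(\psi,\dot\psi)$, the equilibria are $(0,0)$ and $(\pm\sqrt 2,0)$; the latter two correspond to the explicit singular solutions $\fy(r)=\pm\sqrt 2\, r^{-1}$ and play no role here. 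We seek the solution whose behavior as $s\to+\infty$ (i.e. $r\to\infty$) is $\psi\sim\ell e^{-2s}$, equivalently $\fy_\ell\sim\ell r^{-3}$.

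First I would analyze the fixed point at the origin. The linearization of~\eqref{eq: psi aut} there is $\ddot\psi+\dot\psi-2\psi=0$, with characteristic roots $\la_+=1$ and $\la_-=-2$, so $(0,0)$ is a hyperbolic saddle with a one-dimensional local stable manifold $W^s_0$ tangent to the $\la_-$-eigendirection. By the stable manifold theorem, $W^s_0$ can be parameterized (uniquely up to $s$-translation, which is the sense of uniqueness relevant here) by
\EQ{ \label{eq: psi ell}
\psi_\ell(s) = \ell\, e^{-2s} + O(e^{-6s}) \qquad (s\to+\infty),
}
where $\ell\in\R$ is free and determines all higher-order coefficients, the $O(\cdot)$ vanishing for $\ell=0$. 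One checks, using the confining quartic potential $\tfrac14\psi^4-\psi^2$, that the corresponding trajectory extends to all $s\in\R$, so that $\fy_\ell(r):=r^{-1}\psi_\ell(\log r)$ is a $C^2$ (indeed smooth) solution of~\eqref{eq e} on $(0,\infty)$. Translating~\eqref{eq: psi ell} back gives $\fy_\ell(r)=\ell r^{-3}+O(r^{-7})$ as $r\to\infty$, which is~\eqref{fy ell}; and $\ell=0$ gives $\fy_0\equiv0$.

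It remains to show $\fy_\ell\notin L^5(\R^5)$ when $\ell\neq0$. Since $\fy_\ell(r)\sim\ell r^{-3}$ lies in $L^5$ near $r=\infty$, the issue is the behavior as $r\to0$, i.e. $s\to-\infty$; the change of variables $r=e^s$ gives $\int_0^1|\fy_\ell(r)|^5 r^4\,dr=\int_{-\infty}^0|\psi_\ell(s)|^5\,ds$, so it suffices to show $\psi_\ell\notin L^5((-\infty,0])$. Multiplying~\eqref{eq: psi aut} by $\dot\psi$ yields the dissipation identity $\tfrac{d}{ds}E(s)=-\dot\psi(s)^2\le0$ for the energy $E(s):=\tfrac12\dot\psi^2+\tfrac14\psi^4-\psi^2$. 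Along $W^s_0$ one has $(\psi_\ell,\dot\psi_\ell)\to(0,0)$ as $s\to+\infty$, hence $E(s)\to0$ there, so $E(s)\ge0$ for all $s$; moreover $E(s)>0$ for every $s$ when $\ell\neq0$, since $E(s_0)=0$ would force $E\equiv0$ on $[s_0,\infty)$, hence $\dot\psi_\ell\equiv0$ and $\psi_\ell\equiv0$. In particular $E(s)\ge E(0)=:E_0>0$ for all $s\le0$. A short phase-plane argument then concludes: whenever $|\psi_\ell(s)|\le2$ the potential satisfies $\tfrac14\psi^4-\psi^2\le0$, so $\dot\psi_\ell(s)^2=2\bigl(E(s)-(\tfrac14\psi^4-\psi^2)\bigr)\ge2E_0$ and $\psi_\ell$ is strictly monotone there; thus each passage of $\psi_\ell$ through $[-2,2]$ uses $s$-length at most $4/\sqrt{2E_0}$, while $\psi_\ell$ cannot converge to $0$ as $s\to-\infty$ (that would make $\psi_\ell$ a homoclinic loop at the saddle, contradicting the strict decrease of $E$). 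Consequently $\int_{-\infty}^0|\psi_\ell(s)|^5\,ds=+\infty$, so $\fy_\ell\notin L^5(\R^5)$, and hence $\fy_\ell\notin\dot H^{3/2}(\R^5)$ by Sobolev embedding. Alternatively, this last step can be carried out by the argument in~\cite[Proposition~$3.2$]{DKM5}.

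I expect the main obstacle to be exactly this last paragraph — converting the qualitative phase portrait into the quantitative non-integrability of $\fy_\ell$ near $r=0$. The decaying ($r^{-3}$) branch at infinity could a priori flow back to the regular solution $\fy\equiv0$ at the origin, and excluding this is precisely the purpose of the global dissipation identity $\dot E=-\dot\psi^2$ together with the strict positivity of $E$. The reduction to~\eqref{eq: psi aut} and the stable-manifold construction are routine.
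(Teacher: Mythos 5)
Your proposal follows essentially the same route as the paper's proof of Lemma~\ref{lem: ell}: the substitution $\psi(s)=e^{s}\fy_\ell(e^{s})$ reducing~\eqref{eq e} to the damped oscillator $\ddot\psi+\dot\psi-2\psi+\psi^3=0$, the stable-manifold expansion $\ell e^{-2s}+O(e^{-6s})$ at the saddle $(0,0)$, and the monotone energy $E=\tfrac12\dot\psi^2+\tfrac14\psi^4-\psi^2$ with $\dot E=-\dot\psi^2$ to forbid the trajectory from reaching (or converging to) the origin, whence $\fy_\ell\notin L^5(\R^5)$ for $\ell\neq 0$. Your handling of the last step (strict positivity of $E$ on $(-\infty,0]$ and the uniformly fast monotone crossings of $[-2,2]$) is in fact more quantitative than the paper's one-line assertion that $\lim_{r\to 0}r\fy_\ell(r)\neq 0$; the only remaining elision — since the backward-time energy can grow, the excursions outside $[-2,2]$ may become arbitrarily brief, so to get $\int_{-\infty}^0|\psi|^5\,ds=\infty$ one should also observe that each such excursion reaches a turning point with $U(\psi)=E\ge E_0$ and therefore contributes an amount bounded below to the integral — is no worse than the corresponding step in the paper's own argument.
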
 

\begin{proof} Motivated by the reduction to an autonomous system in the plane in the case of the wave maps equations, we seek a similar reduction for solutions to~\eqref{eq e}. Observe that $\fy$ solves~\eqref{eq e} if and only if $w(r):= r \fy(r)$ solves 
\EQ{
- w_{rr}  - \frac{2}{r} w_r + \frac{2 w - w^3}{r^2}  = 0
}
In order for a solution $\fy$ to~\eqref{eq e} to satisfy~\eqref{fy ell} we also require 
\EQ{
\lim_{r \to \infty} w(r) = 0
}
To find the reduction to an autonomous equation, we set $s:= \log(r)$ and $\phi(s) = w(r)$ and obtain the following equation for $\phi$: 
\EQ{ \label{aut ode}
 \ddot{\phi} +  \dot{\phi} - 2 \phi + \phi^3 = 0,  \quad \lim_{s \to \infty} \phi(s)  = 0
 }
 Again, we set $x(s) = \phi(s)$, $y(s) =  \dot\phi(s)$ and rewrite~\eqref{eq: phi} as the system
\EQ{\label{xy ode1}
 \pmat{ \dot{x} \\  \dot y}  = \pmat{ y \\ - y+ 2x - x^3} =: X(x, y)
}
and we denote by $\Phi_s$ the flow associated to the vector filed $X$. The equilibria of \eqref{xy ode1}  occur at points $z_0 = (0 , 0)$ and $z_{\pm}  = ( \pm 1, 0)$.  At $z_0 = (0, 0)$ the flow has a \emph{saddle} structure with eigenvalues $\la_+ = 1$, $\la_-= -2$, and the corresponding unstable and stable invariant subspaces for the linearized flow are given by the spans of $(1, \la_+)=(1, 1)$, respectively $(1, \la_-) = (1, -2)$. The flow has a \emph{sink} at each of the other two equilibrium points $z_{\pm}$.  In a neighborhood $V \ni z_0 = (0, 0)$ we then have the  one-dimensional invariant unstable manifold 
\ant{
W^u = \{ (x, y) \in V \mid \Phi_s(x, y) \to z_0\, \,  \textrm{as} \, \, s \to - \infty\}
}
and the one-dimensional invariant stable manifold 
\ant{
W^{s} = \{ (x, y) \in V \mid \Phi_s(x, y) \to z_0\, \,  \textrm{as} \, \, s \to  \infty\}
}
which are tangent at $z_0 =(0, 0)$ to the invariant subspaces of the linearized flow. In particular, we can parameterize the stable manifold $W^{s}$ by 
\ant{
\phi_{\ell}(s) = \ell e^{-2s} + O(e^{-6s})
}
with $\ell \in \R$ determining all the coefficients of higher order and $O(\cdot)$ vanishing for $\ell =0$. Next, multiply the equation~\eqref{aut ode} by $\phi$ and integrate from $s$ to $+ \infty$ to obtain the energy identity
\EQ{
 \int_s^{\infty} \dot{\phi}_{\ell}( \rho) \, d \rho =  \frac{1}{2} \dot{\phi}^2_{\ell} (s)  - \phi^2_{\ell}(s) + \frac{1}{4} \phi^4_{\ell}(s)
 }
 From the above it is clear that if $\ell \neq 0$ then it is impossible to have $(\phi_{\ell}(s),  \dot{\phi}_{\ell}(s)) = (0, 0)$ for any $s \in [- \infty, \infty)$ since the right-hand-side would vanish while the left-hand-side would be strictly positive. Changing back to the original variables we have $w_{\ell}(r)  = \phi_{\ell} (s)$ and 
 \EQ{
 w_{\ell}(r)  =  \ell r^{-2} + O( r^{-6}) \mas r \to \infty
 }
 and for $\ell  \neq 0$ we have $\lim_{r \to 0} w(r) \neq 0$. Hence, setting $\fy_{\ell}(r)  = w(r)/r$ we see that $\fy_{\ell} \notin L^5(\R^5)$. This finishes the proof. 
\end{proof}

\subsection{Proof of Proposition~\ref{prop: r1}} 
We are now ready to begin the proof of Proposition~\ref{prop: r1} in earnest. The proof proceeds in several steps and is similar to the arguments presented in~\cite[Section $5$]{KLS} as well as~\cite[Section $5$]{L13}. The method is inspired by the ``channels of energy" technique introduced in the seminal papers~\cite{DKM4, DKM5}. The key ingredient in the proof are the following exterior energy estimates for the free radial wave equation in $\R^{1+5}$ that were proved in~\cite[Section $4$]{KLS}. 

\begin{prop}\label{prop: ext en}\cite[Proposition $4.1$]{KLS}
Let $\Box V=0$ in $\R^{1+5}_{t,x}$ with radial data $(V_0,V_1)\in \dot H^1\times L^2(\R^5)$.
Then with some absolute constant $c_0>0$ one has for every $R>0$
\EQ{
\label{ext}
\max_{\pm}\, \limsup_{t\to\pm\infty} \int_{r>R+|t|}^\I(V_t^2 + V_r^2)(t,r) r^4\, dr \ge c_0 \| \pi_a^\perp (V_0,V_1)\|_{\dot H^1\times L^2(r>R)}^2
}
where $\pi_R=\Id-\pi_R^\perp$ is the orthogonal projection onto the plane $$P(R):=\{( c_1 r^{-3}, c_2 r^{-3})\:|\: c_1, c_2\in\R\}$$
in the space $\dot H^1\times L^2(r>R)$.  The left-hand side of~\eqref{ext} is zero for all data in this plane. 
\end{prop}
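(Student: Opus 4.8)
The plan is to reduce the radial free wave equation on $\R^{1+5}$ to the one--dimensional wave equation, where the asymptotic energy flux through an expanding exterior cone is transparent from d'Alembert's formula, and then to transfer the resulting estimate back. The bridge is the classical intertwining identity: if $\Box V=0$ radially in $\R^{1+d}$ then $\tfrac1r\p_r V$ solves the radial free wave equation in $\R^{1+(d+2)}$ (one checks this directly for $d=1\to 3$ and $d=3\to 5$). Iterating in reverse, a radial free wave $V$ on $\R^{1+5}$ is obtained from a one--dimensional free wave $w(t,r)$ on $\{r>0\}$ through $V=\tfrac1r\p_r\big(\tfrac1r\p_r w\big)$; and since only the restriction of the data to $\{r>R\}$ influences the region $\{r>R+|t|\}$ by finite speed of propagation, one may carry out the two inversions of $\tfrac1r\p_r$ by integrating twice starting from $r=R$, the two free constants of integration being the seed of the plane $P(R)$.

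\textbf{The one--dimensional estimate.} For $w_{tt}-w_{rr}=0$ on $\{r>0\}$ with data $(w_0,w_1)$, d'Alembert gives $(w_t+w_r)(t,r)=(w_0'+w_1)(r+t)$ and $(w_t-w_r)(t,r)=-(w_0'-w_1)(r-t)$, hence for $t>0$, after the substitutions $s=r\pm t$,
\ant{
\int_{R+t}^\infty (w_t^2+w_r^2)(t,r)\,dr=\tfrac12\int_{R+2t}^\infty(w_0'+w_1)^2(s)\,ds+\tfrac12\int_R^\infty(w_0'-w_1)^2(s)\,ds .
}
Letting $t\to+\infty$ (and using the mirror identity as $t\to-\infty$), the ``far'' integral vanishes whenever $(w_0',w_1)\in L^2(r>R)$, so $\lim_{t\to\pm\infty}\int_{R+|t|}^\infty(w_t^2+w_r^2)\,dr=\tfrac12\|w_0'\mp w_1\|_{L^2(r>R)}^2$ and therefore $\max_\pm(\cdots)\ge\tfrac14\|(w_0',w_1)\|_{L^2(r>R)}^2$ --- with \emph{no} exceptional subspace in one dimension. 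Applying this to $\p_r^k w$, itself a one--dimensional free wave, gives the same statement with $(w_0',w_1)$ replaced by $(\p_r^{k+1}w_0,\p_r^k w_1)$.

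\textbf{Transfer to $\R^{1+5}$.} First, the left--hand side of~\eqref{ext} depends only on $(V_0,V_1)|_{r>R}$ and annihilates $P(R)$: the data $(r^{-3},0)$ and $(0,r^{-3})$ evolve inside $\{r>R+|t|\}$ into the exact harmonic profiles $r^{-3}$ and $t\,r^{-3}$ (both satisfy $\p_t^2V=\Delta V$ there), whose exterior energy is $O\!\big((R+|t|)^{-3}\big)+O\!\big(t^2(R+|t|)^{-3}\big)\to0$; by linearity~\eqref{ext} is unchanged if $(V_0,V_1)$ is replaced by $\pi_R^\perp(V_0,V_1)$. Second, one applies the one--dimensional estimate with $k=2$ to the wave $w$ built from $\pi_R^\perp(V_0,V_1)$. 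The relevant algebra is that at every fixed time $\p_r^3 w=r^{-1}\p_r(r^3 V)$, so that
\ant{
\int_\rho^\infty (\p_r^3 w)^2\,dr=\int_\rho^\infty r^{-2}\big|\p_r(r^3 V)\big|^2\,dr=\int_\rho^\infty V_r^2\, r^4\,dr-3\rho^3 V(\rho)^2\ \le\ \int_\rho^\infty V_r^2\, r^4\,dr ,
}
and an analogous identity for the $w_t$--component; taking $\rho=R+|t|$ this compares the $w$--flux with the $V$--flux at each time, while at $t=0$ the boundary terms package exactly into $\|\p_r^3 w_0\|_{L^2(R,\infty)}^2=\|\pi_R^\perp V_0\|_{\dot H^1(r>R)}^2$ and $\|\p_r^2 w_1\|_{L^2(R,\infty)}^2=\|\pi_R^\perp V_1\|_{L^2(r>R)}^2$ (the orthogonality $\pi_R(V_0,V_1)=0$ killing the $r\to\infty$ boundary contributions, so that there is no logarithmic Hardy loss). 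Combining with the one--dimensional estimate along a sequence $t_n\to\pm\infty$ yields~\eqref{ext} with an absolute $c_0$.

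\textbf{Main obstacle.} The genuine difficulty is the bookkeeping in the transfer step: carrying the $r$--weights through the two inversions of $\tfrac1r\p_r$, controlling the boundary contributions at $r=R$, and --- the subtlest point --- ensuring that the orthogonality needed to make $w$ and $w_t$ lie in $L^2(r>R)$ is compatible with the evolution, which is precisely why one is forced to project onto $\pi_R^\perp$ and why $P(R)$ is two--dimensional in $\R^5$ (spanned by the Newtonian profiles $r^{-3}$). Making the identities above rigorous requires the sharp one--dimensional Hardy/Rellich inequalities on $(R,\infty)$ and is the technical heart of~\cite{KLS}. An alternative, equivalent route avoids the ladder operators entirely and instead inserts the explicit odd--dimensional representation formula for radial free waves directly into $\int_{r>R+|t|}(V_t^2+V_r^2)r^4\,dr$, computing the limit as $t\to\pm\infty$; it meets the same two issues --- identifying the limiting flux and its kernel $P(R)$ --- bundled into a single asymptotic computation.
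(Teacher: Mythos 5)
First, a point of reference: this paper does not prove Proposition~\ref{prop: ext en} at all --- it is imported verbatim from~\cite{KLS} --- so your proposal has to be measured against that external proof rather than anything in this text. Your strategy is sound, and the identities everything hinges on do check out: with $V=\tfrac1r\p_r\big(\tfrac1r\p_r w\big)$ one has $\p_r^3 w = 3rV+r^2V_r = r^{-1}\p_r(r^3V)$, and one integration by parts using $r^3V^2(t,r)\to0$ as $r\to\infty$ (valid for \emph{any} radial $\dot H^1$ function) gives $\int_\rho^\infty(\p_r^3w)^2\,dr=\int_\rho^\infty V_r^2\,r^4\,dr-3\rho^3V(\rho)^2=\|\pi_\rho^\perp(V,0)\|^2_{\dot H^1(r>\rho)}$; likewise $\p_t\p_r^2w = r^2V_t-\int_r^\infty sV_t\,ds$ yields $\int_\rho^\infty(\p_t\p_r^2w)^2\,dr=\|\pi_\rho^\perp(0,V_t)\|^2_{L^2(r>\rho)}$. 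Combined with your elementary d'Alembert computation on the exterior region $\{r>R+|t|\}$ (where there is indeed no degenerate subspace), this gives~\eqref{ext} with the projections emerging automatically from the boundary terms at $r=\rho$; in particular you do not need to pre-project the data, and your parenthetical claim that the orthogonality $\pi_R(V_0,V_1)=0$ is what kills the $r\to\infty$ boundary contributions is off --- those vanish for arbitrary $\dot H^1\times L^2$ data by the radial decay and Hardy. The points that actually need care are slightly different from the ones you flag: (i) that $\big(3rV+r^2V_r,\ r^2V_t-\int_r^\infty sV_t\,ds\big)$ really is $(\p_r q,\p_t q)$ of a single one-dimensional wave $q=\p_r^2 w$ on the exterior cone for all $t$ --- a direct check shows this pair satisfies the system $\p_t a=\p_r b$, $\p_t b=\p_r a$ there, with finite speed of propagation disposing of the region $r\le R$ and of the boundary at $r=0$; and (ii) that the constant of integration normalizing $\p_t\p_r^2w(t,\cdot)$ at each time is the correct one, which follows because both candidates lie in $L^2(R+|t|,\infty)$ and differ by a constant in $r$. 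Compared with~\cite{KLS}, where the estimate is obtained by an explicit computation of the asymptotic exterior energy from the representation of radial free waves in $\R^{1+5}$ (essentially the ``alternative route'' you mention at the end), your ladder-operator reduction is an equivalent but more structural packaging of the same odd-dimensional explicit solvability: it shortens the computation and makes transparent why the degenerate set is exactly the two-dimensional plane $P(R)$ of Newton-potential data (the data annihilated by $\p_r^3$ and $\p_r^2\p_t$ after the reduction), at the cost of the bookkeeping in (i)--(ii).
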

\begin{rem} The presence of the projections $\pi^{\perp}_R$ on the right-hand-side of~\eqref{ext} can be attributed to the fact that $r^{-3}$ is the Newton potential in $\R^5$. To see this,  consider initial data $(V_0, 0) \in \dot H^1 \times L^2 (r \ge R)$ which satisfies $(V_0, 0) = (r^{-3}, 0)$ for all $r \in \{r \ge R>0\}$, with $V_0(r) \equiv 0$ on $\{r \le R/2\}$. Then, using the finite speed of propagation,  the corresponding free evolution $V(t, r)$ is given by $V(t, r) = r^{-3}$ on the region $\{r \ge R+\abs{t}\}$. It is clear that the left-hand-side of~\eqref{ext} vanishes for this solution as $t \to \pm \infty$, and therefore an estimate without the projection is false. The other family of counterexamples to an estimate without the projection is generated by taking initial data $(0, V_1) = (0, r^{-3})$ on the exterior region $\{r\ge R>0\}$ which has solution $V(t, r) = tr^{-3}$ on $\{r \ge R+ \abs{t}\}$. 
\end{rem}
\begin{rem}
The orthogonal projections $\pi_R$, $\pi_R^{\perp}$ are precisely  
\begin{align*}
&\pi_R(V_0, 0) = R^3r^{-3} V(R), \quad 
\pi_R(0, V_1) = R\,  r^{-3} \int_R^{\infty} V_1( \rho)  \rho \, d \rho,\\
&\pi_R^{\perp}(V_0, 0) = V_0(r) - R^3r^{-3} V_0(a), \quad \pi_R^{\perp}(0, V_1) = V_1(r) - R\,  r^{-3} \int_R^{\infty} V_1( \rho)  \rho \, d \rho,
\end{align*}
and thus we have 
\begin{align*}
\|\pi_R(f,g)\|_{\dot{H}^1 \times L^2(r>R)}^2 &= 3R^3 f^2(R) + R \, \left(\int_R^{\infty} rg(r) \, dr\right)^2 \\
\|\pi_R^{\perp}(f, g)\|_{\dot{H}^1 \times L^2(r>R)}^2&= \int^{\infty}_R f_r^2(r) \, r^4 \, dr - 3R^3f^2(R)\\
& \quad + \int_R^{\infty} g^2(r) \, r^4 \, dr - R\left(\int_R^{\infty} rg(r) \, dr\right)^2.
\end{align*}
\end{rem}

The idea behind the proof is that the exterior energy decay~\eqref{u h1 R} together with the exterior energy estimates for the  free equation, i.e.,~\eqref{ext}, can be combined to obtain exact asymptotics for the initial data of our pre-compact trajectory, $u_0(r)= u(0, r)$  and $u_1(r) = u_t(0, r)$ as $r \to \infty$, viz., 
\EQ{\label{u0 asymp}
&r^3 u_0(r)  \to \ell_0 \mas r \to \infty\\
&r \int_r^{\infty} u_1(s) s \, d s  \to 0 \mas r \to \infty 
}
We conclude by showing via a contradiction argument  that $\vec u(t) =(0, 0)$ is the only solution to either~\eqref{u eq},~\eqref{u eq wms}, or~\eqref{u eq wmh} with both the compactness property as in Proposition~\ref{prop: r1} and initial data with the above asymptotics.  For this final portion of the proof, the exact structure of the nonlinearity in the underlying equation only enters at the level of the underlying elliptic theory , i.e., when we use the results of Lemma~\ref{lem: ell} for solutions to~\eqref{u eq} and Lemma~\ref{lem: hm} when dealing with solutions to either~\eqref{u eq wms} or~\eqref{u eq wmh}.  Given that we have already proved Lemma~\ref{lem: ell} and Lemma~\ref{lem: hm}, and the fact that the conclusions of these results are identical, we carry out the rest of the argument under the general assumption that~$\vec u(t)$ has the compactness property as in Proposition~\ref{prop: r1} and satisfies an equation of the form 
\EQ{\label{eq u F}
&u_{tt} - u_{rr} - \frac{4}{r} u_r = F(r, u)\\
&\vec u(0) = (u_0, u_1)
}
where
\EQ{ \label{F u3}
\abs{F(r, u)} \lesssim  \abs{u}^3
}
We note that the estimate~\eqref{F u3} is clear in the case of the nonlinearities in both~\eqref{u eq} and~\eqref{u eq wms}. Moreover, since we will only be considering solutions $\vec u(t)$ to~\eqref{u eq wmh} with the compactness property, the bound on the $\dot{H}^{3/2}\times \dot{H}^{1/2}$ norm of $\vec u(t)$ implies  $L^{\infty}$ control over $r u(t, r)$, which means that in our situation the  nonlinearity $F_{\Hp^3}(r, u) = Z_{\Hp^3}(ru) u^3$ also satisfies the estimate~\eqref{F u3}. 

The proof now proceeds in serval steps. \\
\textbf{Step 1:} First,  we use the exterior energy estimates for the free radial wave equation in Proposition~\ref{prop: ext en} together with~\eqref{u h1 R} to deduce an inequality for a solutions $\vec u(t)$ with the compactness property as in Proposition~\ref{prop: r1}. We remark that by using compactness this result holds uniformly in time.  

\begin{lem} \label{lem: R ineq} Let $\vec u(t)$ be as in Lemma~\ref{lem: compact}. There exists a number $R_0>0$ such that for all $R>R_0$ and for all $t \in I_{\max}(\vec u)$ we have 
\EQ{ \label{R ineq}
\left\| \pi^{\perp}_R \vec u(t) \right\|_{\dot{H}^1 \times L^2(r \ge R)} \lesssim R^{-1} \| \pi_R \vec u(t)\|_{\dot{H}^1 \times L^2(r \ge R)}^3 
}
where $\pi_R$ and $\pi_{R}^{\perp}$ are as in Proposition~\ref{prop: ext en}. We remark that the constant in~\eqref{R ineq} is uniform in $t \in I_{\max}(\vec u)$. 
\end{lem}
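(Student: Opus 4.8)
The plan is to run the channels-of-energy argument of Proposition~\ref{prop: ext en} on the free wave generated by $\vec u$ at an arbitrary time $t_0\in I_{\max}(\vec u)$; since every constant produced below depends only on $M:=\sup_{t\in I_{\max}}\|\vec u(t)\|_{\dot H^1\times L^2}$, which is finite by Lemma~\ref{lem: compact}, the uniformity in $t_0$ claimed in~\eqref{R ineq} is automatic. Write the Duhamel decomposition $\vec u(t)=V(t)+\vec u_N(t)$ on $I_{\max}$, where $V(t):=S(t-t_0)\vec u(t_0)$ and $\vec u_N(t):=\int_{t_0}^t S(t-s)(0,F(r,u(s)))\,ds$ (this makes sense in the energy space by Lemma~\ref{lem: compact}). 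First I would record two standard facts about the free wave $V$. Integrating $V_{tt}=V_{rr}+\frac4r V_r$ by parts over the exterior region shows that $\mathcal{E}_R(t):=\|V(t)\|_{\dot H^1\times L^2(r>R+|t-t_0|)}^2$ satisfies $\frac{d}{dt}\mathcal{E}_R(t)\le 0$ for $t>t_0$ and $\ge0$ for $t<t_0$ (the exact derivative is a non-positive boundary term on $\{r=R+|t-t_0|\}$), i.e.\ $\mathcal{E}_R$ is non-increasing in $|t-t_0|$; and, by finite speed of propagation, $V(t)$ restricted to $\{r>R+|t-t_0|\}$ depends only on $\vec u(t_0)$ restricted to $\{r>R\}$, whence $\mathcal{E}_R(t)\le\mathcal{E}_R(t_0)=\|\vec u(t_0)\|_{\dot H^1\times L^2(r>R)}^2$ for every $t$.

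The monotonicity lets me replace the limits at $t\to\pm\infty$ in~\eqref{ext} by the limits as $t\to T_\pm$, which dominate them, so~\eqref{ext} still applies. Since $V(t)=\vec u(t)-\vec u_N(t)$ and $\|\vec u(t)\|_{\dot H^1\times L^2(r>R+|t-t_0|)}\to 0$ as $t\to T_\pm$ by~\eqref{u h1 R}, Proposition~\ref{prop: ext en} yields
\EQ{
\|\pi_R^{\perp}\vec u(t_0)\|_{\dot H^1\times L^2(r>R)}^2\;\lesssim\;\max_\pm\limsup_{t\to T_\pm}\|\vec u_N(t)\|_{\dot H^1\times L^2(r>R+|t-t_0|)}^2.
}
To bound the right-hand side I would use finite speed once more: the exterior part of $\vec u_N(t)$ only sees the source $F(r,u(s))$ on $\{r>R+|s-t_0|\}$, so the (localized) energy estimate gives $\|\vec u_N(t)\|_{\dot H^1\times L^2(r>R+|t-t_0|)}\lesssim\int_{t_0}^t\|F(r,u(s))\|_{L^2(r>R+|s-t_0|)}\,ds$. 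Using $|F(r,u)|\lesssim|u|^3$ and the localized radial Sobolev bound $\|g\|_{L^6(r>\rho)}^3\lesssim\rho^{-2}\|g\|_{\dot H^1(r>\rho)}^3$ (a consequence of $|g(r)|\lesssim r^{-3/2}\|g\|_{\dot H^1(r'>r)}$; cf.\ Lemma~\ref{lem: rad se}), the integrand is $\lesssim(R+|s-t_0|)^{-2}\Phi(t_0)^3$, where $\Phi(t_0):=\sup_{s\in I_{\max}}\|\vec u(s)\|_{\dot H^1\times L^2(r>R+|s-t_0|)}\le M$, and integrating the factor $(R+|s-t_0|)^{-2}$ over $I_{\max}$ costs only $R^{-1}$. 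Hence $\|\pi_R^{\perp}\vec u(t_0)\|_{\dot H^1\times L^2(r>R)}\lesssim R^{-1}\Phi(t_0)^3$.

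It remains to upgrade $\Phi(t_0)$ to $\|\pi_R\vec u(t_0)\|$. Splitting $\vec u(s)=V(s)+\vec u_N(s)$ on the exterior cone, bounding $\|V(s)\|_{\dot H^1\times L^2(r>R+|s-t_0|)}^2=\mathcal{E}_R(s)\le\|\vec u(t_0)\|_{\dot H^1\times L^2(r>R)}^2$, and using the bound on $\vec u_N$ just obtained gives $\Phi(t_0)\lesssim\|\vec u(t_0)\|_{\dot H^1\times L^2(r>R)}+R^{-1}\Phi(t_0)^3$; since $\Phi(t_0)\le M$, the cubic term is $\le\tfrac12\Phi(t_0)$ once $R>R_0$ with $R_0=R_0(M)$, and may be absorbed, so $\Phi(t_0)\lesssim\|\pi_R\vec u(t_0)\|+\|\pi_R^{\perp}\vec u(t_0)\|$. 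Feeding this into the previous display, $\|\pi_R^{\perp}\vec u(t_0)\|\lesssim R^{-1}\|\pi_R\vec u(t_0)\|^3+R^{-1}\|\pi_R^{\perp}\vec u(t_0)\|^3$, and since $R^{-1}\|\pi_R^{\perp}\vec u(t_0)\|^2\le R^{-1}M^2\le\tfrac12$ for $R$ large, the last term is absorbed as well, which is~\eqref{R ineq}. I expect the genuine work to lie in the finite-speed-of-propagation bookkeeping — pinning down exactly which portion of the source the exterior cone feels, and verifying that the monotonicity of $\mathcal{E}_R$ really does dispose of the finite-time blow-up case (where $T_\pm$ are finite) with no separate treatment — rather than in any single isolated estimate.
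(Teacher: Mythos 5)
Your argument is correct, but it is not the route the paper takes, so a comparison is in order. The paper never works with the Duhamel formula for $\vec u$ directly: it truncates the data at time $t_0$ (constant extension inside $B(0,R)$, so that the truncated data is globally small in $\dot H^1\times L^2$ by compactness), cuts the nonlinearity off inside $B(0,R/2)$, and invokes a global small-data/perturbation theory for this modified, now energy-subcritical equation (Lemma~\ref{lem: h sd}, proved via Strichartz), which delivers in one stroke the estimate $\sup_t\|\vec u_R(t)-\vec u_{R,L}(t)\|_{\dot H^1\times L^2}\lesssim R^{-1}\|\vec u_R(0)\|_{\dot H^1\times L^2}^3$; finite speed of propagation identifies $\vec u_R$ with $\vec u$ on the exterior cone, and Proposition~\ref{prop: ext en} is applied to the free wave $\vec u_{R,L}$, which is global, so the limits $|t|\to\infty$ in \eqref{ext} can be taken literally even when $T_\pm$ are finite. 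You instead compare $\vec u$ with the free evolution of the untruncated data $\vec u(t_0)$, estimate the Duhamel correction on exterior cones by a plain $L^1_tL^2_x$ energy estimate after truncating the source by finite speed, and get the needed $R^{-1}$ from the exterior Strauss/Hardy weighted bound $\||u|^3\|_{L^2(r>\rho)}\lesssim\rho^{-2}\|u\|_{\dot H^1(r>\rho)}^3$ together with the integrability of $(R+|s-t_0|)^{-2}$; this forces two extra (but harmless) ingredients that the paper does not need: the monotonicity of the exterior free energy to pass from $t\to\pm\infty$ to $t\to T_\pm$ in \eqref{ext}, and the self-consistent absorption converting $\Phi(t_0)$ into $\|\vec u(t_0)\|_{\dot H^1\times L^2(r\ge R)}$, which is legitimate because $\Phi(t_0)\le M<\infty$ a priori. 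What your route buys is the elimination of Lemma~\ref{lem: h sd} (no Strichartz, no contraction mapping, no modified Cauchy problem); what the paper's route buys is a cleaner one-step perturbative bound and a globally defined comparison solution, so no monotonicity or bootstrap is needed. Two small points to tidy up: the vanishing you quote from \eqref{u h1 R} is stated for cones centered at $t=0$, whereas you need it centered at $t_0$; it follows for each fixed $t_0$ from the uniform small tails provided by the pre-compactness of $K_1$ (this is also how the paper handles general $t_0$), and it need only be qualitative since it enters through a limsup. Also note that in the finite-time case the vanishing ultimately rests on the compact support property of Remark~\ref{rem: N}, exactly as in the paper's proof of Lemma~\ref{lem: compact}, so no genuinely new treatment is required there.
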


Before beginning the proof of Lemma~\ref{lem: R ineq} we  need a preliminary result regarding a modified Cauchy problem with a small data theory in the energy space, which also captures the dynamics of the solution with the compactness property on exterior cones $\Cc_R:=\{(t, r) \mid r \ge R + \abs{t}\}$. In order to compare a nonlinear wave and the underlying free evolution with the same initial data in the energy space, we need to be in a small data setting  where the Duhamel formula and Strichartz estimates can be combined effectively. That we only will consider the evolution on the exterior cone $\Cc_R$ allows us to truncate the initial data as well as the nonlinearity in a way that renders the initial value problem \emph{subcritical} relative to the energy, but still preserves the flow on $\Cc_R$. 

 We now fix a smooth radial function $\chi \in C^{\infty}(\R^5)$ where $\chi(\abs{x}) = \chi(r) = 1$ for $\{r \ge 1\}$ and $\chi(r) = 0$ on $\{r \le 1/2\}$. Then rescaling we set $\chi_R(r):= \chi(r/R)$ and for every $R>0$ we consider the modified Cauchy problem: 
\EQ{\label{h eq}
&h_{tt} - h_{rr} - \frac{4}{r} h_r = F_R(r, h), \quad F_{R}(r, h) := \chi_R(r)F(r, u)\\
&\vec h(0) = (h_0, h_1)
}
The point of this modification is that by forcing the nonlinearity to have support outside $B(0, R)$ we remove the super-critical nature of the problem. This allows for a small-data theory in the energy space $\dot{H}^1 \times L^2$ by way of Strichartz estimates and the usual contraction mapping based argument. We define a norm $Z(I)$ where $0 \in I \subset \R$ is a time interval by 
\EQ{ \label{def: Z}
\|\vec h\|_{Z(I)} = \|h\|_{L^{2}_t(I; L^{5}_x(\R^5))} + \|\vec h(t) \|_{L^{\infty}_t(I; \dot H^1 \times L^2(\R^5))}
}

\begin{lem} \label{lem: h sd}
There exists an $\e_0$ small enough, so that for all  $R>0$ and all initial data $\vec h(0)=(h_0, h_1) \in \dot{H}^1 \times L^2 (\R^5)$ with 
\ant{
\| \vec h(0) \|_{ \dot H^1 \times L^2(\R^5)} < \e_0
}
there exists a unique global-in-time solution $\vec h(t) \in \dot{H}^1 \times L^2$ to \eqref{h eq}. Moreover,  $\vec h(t)$ satisfies 
\EQ{\label{h Z small}
\|\vec h\|_{Z(\R)} \lesssim \|\vec h(0)\|_{\dot{H}^1 \times L^2} \lesssim  \e_0
}
If we denote the free evolution of the same initial data by $\vec h_L(t):= S(t) \vec h(0)$, then we also have 
\EQ{\label{h hL}
\sup_{t \in \R} \|  \vec h(t) - \vec h_L(t) \|_{ \dot{H}^1 \times L^2 } \lesssim R^{-1} \| \vec h(0) \|_{\dot H^1 \times L^2}^3  
}
\end{lem}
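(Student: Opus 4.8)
The proof is a standard fixed-point argument in the subcritical-energy regime, made possible by the cutoff $\chi_R$ which forces the nonlinearity to be supported on $\{r \ge R/2\}$, where the radial Sobolev embedding gains smallness. First I would set up the contraction. Using the Strichartz estimates of Proposition~\ref{prop stric} at the energy regularity $s=1$ (so with the admissible triple $(p,q,\gamma)=(2,5,1)$ on the left and a dual admissible pair suited to putting the nonlinearity in $L^1_t L^2_x$ on the right), the solution map
\ant{
\Phi(\vec h)(t) := S(t)\vec h(0) + \int_0^t S(t-s)(0, F_R(r,h(s)))\, ds
}
should be a contraction on a ball in $Z(\R)$ of radius $\sim \e_0$. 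The key nonlinear estimate is that for $h$ supported-in-interaction on $\{r \ge R/2\}$ (because of $\chi_R$) one has
\ant{
\|\chi_R(r) F(r, h)\|_{L^1_t L^2_x(\R)} \lesssim \| \chi_R(r) |h|^3 \|_{L^1_t L^2_x} \lesssim \| h \|_{L^{\infty}_t \dot H^1}\, \| h \|_{L^2_t L^5_x}\, \| r^{-1} \chi_R \|_{L^{\infty}} \, \|h\|_{L^2_t L^{5}_x},
}
or more cleanly: on $\{r \ge R/2\}$, radial Sobolev embedding (Lemma~\ref{lem: rad se}) gives $\|h\|_{L^\infty_x(r \ge R/2)} \lesssim R^{-3/2}\|h\|_{\dot H^1}$, so that $\| \chi_R |h|^3\|_{L^2_x} \lesssim R^{-3/2} \|h\|_{\dot H^1} \|h^2\|_{L^2_x}\lesssim R^{-3/2}\|h\|_{\dot H^1}\|h\|_{L^5_x}^2 \cdot (\text{vol factor})$; integrating in time and using $\|h\|_{L^2_t L^5_x}$ from the $Z$-norm produces the bound $\lesssim R^{-3/2}\|\vec h\|_{Z}^3$, which will also directly give~\eqref{h hL}. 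One has to be slightly careful that $F(r,h)$ for the wave-maps nonlinearities is only bounded by $|h|^3$ after invoking the uniform $L^\infty$ bound on $rh$, but since we are in the small-energy regime and the interaction is confined to $r \ge R/2$, this is automatic; alternatively one just uses~\eqref{F u3}. Combining, $\|\Phi(\vec h)\|_{Z(\R)} \lesssim \|\vec h(0)\|_{\dot H^1 \times L^2} + R^{-3/2}\|\vec h\|_Z^3$, and a parallel difference estimate gives contraction once $\e_0$ is small; this yields global existence, uniqueness, and~\eqref{h Z small}.

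**Deriving~\eqref{h hL}.** Once the solution is constructed, write $\vec h(t) - \vec h_L(t) = \int_0^t S(t-s)(0, \chi_R F(r,h(s)))\,ds$, apply the energy estimate (the $\dot H^1 \times L^2$ Strichartz bound), and use exactly the nonlinear estimate above to get
\ant{
\sup_t \|\vec h(t) - \vec h_L(t)\|_{\dot H^1 \times L^2} \lesssim \|\chi_R F(r,h)\|_{L^1_t L^2_x} \lesssim R^{-1}\|\vec h\|_{Z(\R)}^3 \lesssim R^{-1}\|\vec h(0)\|_{\dot H^1 \times L^2}^3,
}
where I have been generous and written $R^{-1}$ rather than $R^{-3/2}$ (the statement only claims $R^{-1}$, and in any case $R^{-3/2}\le R^{-1}$ for $R\ge 1$; for small $R$ one simply absorbs the constant, or notes the cutoff is at scale $R$ so the relevant gain is at least $R^{-1}$ — I would track the exact power and state whichever is honestly obtained). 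The last inequality uses~\eqref{h Z small}.

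**Main obstacle.** The only genuinely delicate point is the nonlinear/Strichartz estimate: one must pick admissible exponents so that the energy-level Strichartz estimate closes \emph{and} the cutoff $\chi_R$ actually produces a negative power of $R$. The natural choice is to measure $h$ in $L^2_t L^5_x$ (which controls $\dot W^{0,5}$ at regularity $1$ via $(2,5,1)$ admissible) together with $L^\infty_t \dot H^1$, and to place the forcing term $\chi_R|h|^3$ in $L^1_t L^2_x$; the factor $R^{-1}$ (or $R^{-3/2}$) then comes from $\|r^{-1}\|_{L^\infty(r\ge R/2)} \lesssim R^{-1}$ combined with Hardy's inequality $\|r^{-1}h\|_{L^2} \lesssim \|h\|_{\dot H^1}$, or equivalently from radial Sobolev embedding on the exterior region. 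Everything else — existence, uniqueness, persistence, the difference estimate — is routine once this single estimate is in hand. I would also remark that $\e_0$ can be taken independent of $R$ precisely because the cutoff only \emph{helps}: the bound $R^{-3/2}\|\vec h\|_Z^3 \le \|\vec h\|_Z^3$ for $R\ge 1$ means the $R=\infty$ (no-cutoff, but then supercritical) heuristic is never needed; for small $R$ the constants are harmless since $\chi_R$ is still a fixed bump, just rescaled.
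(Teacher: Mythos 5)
Your proposal is correct and follows essentially the same route as the paper: a contraction in $Z(\R)$ via Proposition~\ref{prop stric} with $s=1$, $(p,q,\ga)=(2,5,1)$ and the forcing placed in $L^1_tL^2_x$, with the gain $R^{-1}$ coming from the cutoff together with exterior radial Sobolev, i.e. exactly the paper's one-line estimate $\| \chi_R F(r,h)\|_{L^1_tL^2_x} \lesssim \|\chi_R |h|\|_{L^\infty_t L^{10}_x}\|h\|_{L^2_tL^5_x}^2 \lesssim R^{-1}\|h\|_{L^\infty_t\dot H^1}\|h\|_{L^2_tL^5_x}^2\lesssim R^{-1}\|h\|_Z^3$, where $\|\chi_R h\|_{L^{10}}\lesssim R^{-1}\|r h\|_{L^{10}}\lesssim R^{-1}\|h\|_{\dot H^1}$ is Lemma~\ref{lem: rad se} with $\be=1$, which is what your first displayed bound amounts to and which is uniform in $R>0$ by scaling (so no separate discussion of small $R$, nor the weaker $R^{-3/2}$, is needed). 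You should, however, delete the ``more cleanly'' variant: the step $\|h^2\|_{L^2_x}\lesssim \|h\|_{L^5_x}^2\cdot(\text{vol factor})$ is false on the infinite-volume exterior region (it is an $L^4$ vs.\ $L^5$ mismatch), and likewise the pairing ``$\|r^{-1}\chi_R\|_{L^\infty}\lesssim R^{-1}$ plus Hardy'' does not by itself produce the needed H\"older split -- the correct mechanism is the $L^2=L^{10}\cdot L^5\cdot L^5$ split with the weighted embedding above.
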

\begin{proof}
As we mentioned above, the proof follows the usual argument based on Strichartz estimates and the Duhamel formula. Using Proposition~\ref{prop stric} with $s=1$,  $(p, q, \ga) = (2, 5, 1)$ and $(a, b, \rho) = (\infty, 2, 0)$, it suffices to control 
\EQ{
\| \chi_R F(r, h)\|_{L^1_tL^2_x} &\lesssim \| \chi_R  \abs{h}\|_{L^{\infty}_t L^{10}_x} \| h\|_{L^{2}_tL^{5}_x}^2 \\
& \lesssim R^{-1} \| h\|_{L^{\infty}_t  \dot{H}^1_x}\| h\|_{L^{2}_tL^{5}_x}^2 \lesssim R^{-1} \| h\|_{Z}^3
}
where we have used~\eqref{F u3} as well as radial Sobolev embedding, i.e., Lemma~\ref{lem: rad se}. 
\end{proof}

\begin{rem}\label{u uR}
We remark that for every $t \in I_{\max}(u)$ the nonlinearity $F_R$ in~\eqref{h eq} satisfies
\ant{
F_R(r, u) = F(r, u), \quad \forall r \ge R + \abs{t}.
}
 By the finite speed of propagation we can deduce that solutions to~\eqref{h eq} and~\eqref{eq u F} agree on the exterior cone $\Cc_R:=\{(t, r) \mid r \ge R + \abs{t}\}$.
\end{rem}
With the above remark in mind, we can now prove Lemma~\ref{lem: R ineq}. 
\begin{proof}[Proof of Lemma~\ref{lem: R ineq}] As will always be the case in this section, $\vec u(t)$ is a solution with the compactness property as in Lemma~\ref{lem: compact}. We will prove the inequality~\eqref{R ineq} first for time $t=0$. The proof for all times $t \in I_{\max}$ for   $R>R_0$ independent of $t$ will follow immediately from the compactness property of $\vec u(t)$. 
First,  define truncated  data, $\vec u_R(0) = (u_{0, R}, u_{1, R})$ by
\ant{
&u_{0, R}(r):= \begin{cases} u_0(r) \mfor r \ge R\\ u_0(R) \mfor 0 \le r \le R \end{cases}\\
&u_{1, R}(r):= \begin{cases} u_1(r) \mfor r \ge R\\ 0 \mfor 0 \le r \le R \end{cases}
}
This new data is small in $\dot H^1 \times L^2$  for large $R>0$. In fact, 
\ant{
\| \vec u_R(0)\|_{\dot{H}^1 \times L^2(\R^5)} = \|\vec u(0) \|_{\dot{H}^1 \times L^2(r \ge R)}
}
which means $R_0>0$ can be chosen large enough such that for all $R \ge R_0$ we have 
\ant{
\| \vec u_R(0)\|_{\dot{H}^1 \times L^2} \le \de \le \min(\e_0, 1)
}
where $\e_0$ is chosen as in Lemma~\eqref{lem: h sd}. By Lemma~\eqref{lem: h sd} we can find the unique global solution $\vec u_R(t)$ to~\eqref{h eq} with initial data $\vec u_R(0)$, which satisfies~\eqref{h Z small} and \eqref{h hL}. 

Let $\vec u_{R, L}(t)$ be the free evolution of the initial data $\vec u_{R}(0)$. Note that 
\begin{multline*}
\| \vec u_R(t) \|_{\dot{H}^1\times L^2(r \ge R+\abs{t})} = \| \vec u_R(t) \|_{\dot{H}^1\times L^2(r \ge R+\abs{t})} \\
\ge \|u_{R, L}(t) \|_{\dot{H}^1\times L^2(r \ge R+\abs{t})} - \|\vec u_R(t)- u_{R, L}(t) \|_{\dot{H}^1\times L^2(r \ge R+\abs{t})} 
\end{multline*}
Now apply~\eqref{h hL} to $\vec u_{R}(t)$ and take $R>R_0$ large enough so that  
\begin{align*}
\|\vec u_R(t)- u_{R, L}(t) \|_{\dot{H}^1\times L^2(r \ge R+\abs{t})}  &\le  \|\vec u_R(t)- u_{R, L}(t) \|_{\dot{H}^1\times L^2} \\
&\lesssim R^{-1} \| \vec u_{R}(0)\|_{\dot{H}^1 \times L^2}^3 \\
& = R^{-1} \| \vec u(0)\|_{\dot{H}^1 \times L^2(r \ge R)}^3 
\end{align*}
Next, we put together the two preceding inequalities to obtain
\EQ{ \label{uR ineq}
\| \vec u_R(t) \|_{\dot{H}^1\times L^2(r \ge R+\abs{t})} \ge  \|u_{R, L}(t) \|_{\dot{H}^1\times L^2(r \ge R+\abs{t})} - CR^{-1}\| \vec u(0)\|_{\dot{H}^1 \times L^2(r \ge R)}^3.
}
We note that by Remark~\ref{u uR} we have 
\begin{align}\label{u uR eq}
\vec u_R(t) = \vec u(t), \quad \forall (t, r) \in \Cc_R.
\end{align}
This means that we can use Lemma~\ref{lem: compact} to deduce that 
\EQ{ \label{uR to 0}
\lim_{\abs{t} \to \infty}  \| \vec u_R(t)\|_{\dot{H}^1 \times L^2(r \ge R+ \abs{t})} = 0
}
Next we will let $t \to \pm \infty$ in~\eqref{uR ineq} (the choice here is determined by the maximum in Proposition~\ref{prop: ext en}) and  use Proposition~\ref{prop: ext en} to provide a lower bound for the right-hand side of~\eqref{uR ineq}. Taking the limit as $t \to \pm \infty$, using~\eqref{uR to 0}, Proposition~\ref{prop: ext en} and the fact that $\vec u_R(0)= \vec u(0)$ on $\{r \ge R\}$ we obtain that 
\ant{
 \| \pi^{\perp}_R  \vec u(0) \|_{ \dot{H}^1 \times L^2(r \ge R)} \lesssim R^{-1}\| \vec u(0)\|_{\dot{H}^1 \times L^2(r \ge R)}^3.
 }
Now,  use the orthogonality of the projection $\pi_{R}$ to expand out the right-hand side  above
 \ant{
 \| \pi^{\perp}_R  \vec u(0) \|_{ \dot H ^1 \times L^2(r \ge R)} \lesssim R^{-1}\left(\| \pi_R\vec u(0)\|_{\dot H ^1 \times L^2(r \ge R)}^2  + \| \pi_R^{\perp}\vec u(0)\|_{\dot H^1 \times L^2(r \ge R)}^2 \right)^{\frac{3}{2}}.
 }
 To complete the proof, we let $R_0$ be large enough so that we can absorb the $\pi_R^{\perp}$ term above on the right-hand-side into the left-hand-side which gives 
  \begin{align*}
  \| \pi^{\perp}_R  \vec u(0) \|_{ \dot H^1 \times L^2(r \ge R)} \lesssim R^{-1}\| \pi_R\vec u(0)\|_{\dot H^1 \times L^2(r \ge R)}^3,
  \end{align*}
 proving the lemma for $t=0$. To see that~\eqref{R ineq} holds for all $t \in \R$ we note that by the pre-compactness of $K$ we can choose $R_0 = R_0( \e_0)$ so that  for all $R \ge R_0$ we have 
 \ant{
 \| \vec u(t) \|_{ \dot{H}^1 \times L^2( r \ge R)} \le \min(\e_0, 1)
 } 
 uniformly in $t \in I_{\max}$. Now simply repeat the above argument with truncated initial data at time $t=t_0$ and $R \ge R_0$ given by 
 \ant{
&u_{0, R, t_0}(r):= \begin{cases} u(t_0, r) \mfor r \ge R\\ u_0(t_0, R) \mfor 0 \le r \le R \end{cases}\\
&u_{1, R, t_0}(r):= \begin{cases} u_t(t_0, r) \mfor r \ge R\\ 0 \mfor 0 \le r \le R \end{cases}
}
This ends the proof. 
\end{proof}

\textbf{Step 2:}
Next, we will use the estimates in Lemma~\ref{lem: R ineq} to deduce the asymptotic behavior of $\vec u(0, r)$ as $r \to \infty$ that was described in~\eqref{u0 asymp}. To be precise, we establish the following lemma. 

\begin{lem}\label{lem: space decay}
Let $\vec u(t)$ be as in Proposition~\ref{prop: r1}. Then we can find an $\ell_0 \in \R$ such that 
\begin{align}
r^3 u_0(r) \to \ell_0 \mas r \to \infty \label{u0 decay}\\
r \int_r^\I u_1( \rho) \rho \, d \rho \to 0 \mas r \to \infty \label{u1 decay}
\end{align}
at the rates 
\begin{align}
&\abs{r^3 u_0(r) - \ell_0}  = O(r^{-4}) \mas r \to \infty \label{u0 rate}\\
&\abs{r \int_r^\I u_1( \rho) \rho \, d \rho } = O(r^{-2}) \mas r \to \infty\label{u1 rate}
\end{align}
\end{lem}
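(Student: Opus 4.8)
The plan is to extract the asymptotics of $(u_0,u_1)$ from the key inequality \eqref{R ineq}, which says that for $R>R_0$,
\[
\left\| \pi_R^\perp \vec u(0)\right\|_{\dot H^1\times L^2(r\ge R)} \lesssim R^{-1}\|\pi_R \vec u(0)\|_{\dot H^1\times L^2(r\ge R)}^3 .
\]
The first step is to unwind what $\pi_R^\perp$ controls. Writing out the explicit formula for $\pi_R^\perp$ from the remark after Proposition \ref{prop: ext en}, the left-hand side controls $\int_R^\infty (u_0)_r^2\, r^4\,dr - 3R^3 u_0(R)^2$ and $\int_R^\infty u_1^2\, r^4\,dr - R\left(\int_R^\infty u_1(\rho)\rho\,d\rho\right)^2$. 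Both are nonnegative (Cauchy–Schwarz / the projection property). Since $\vec u(0)\in\dot H^1\times L^2$ (by Lemma \ref{lem: compact}), the right-hand side $\|\pi_R \vec u(0)\|^3_{\dot H^1\times L^2(r\ge R)}$ is bounded by $\|\vec u(0)\|^3_{\dot H^1\times L^2(r\ge R)}\to 0$ as $R\to\infty$; in particular it is $\lesssim 1$. Thus $\|\pi_R^\perp \vec u(0)\|_{\dot H^1\times L^2(r\ge R)}\lesssim R^{-1}$, and by the pre-compactness (hence uniform smallness of tails) this can in fact be iterated: plugging the resulting decay of $\|\vec u(0)\|_{\dot H^1\times L^2(r\ge R)}$ back in improves the power, but $R^{-1}$ already suffices for what follows. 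The quantitative form we extract is
\[
\Big| \int_R^\infty (u_0)_r^2 r^4\,dr - 3R^3 u_0(R)^2\Big| + \Big|\int_R^\infty u_1^2 r^4\,dr - R\Big(\int_R^\infty u_1 \rho\,d\rho\Big)^2\Big| \lesssim R^{-2}.
\]

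The second step is a soft ODE/monotonicity argument to convert this into pointwise asymptotics, exactly as in \cite[Section 5]{KLS} or \cite[Section 5]{L13}. Define $g(R):=R^3 u_0(R)$. A direct computation using $g'(R) = 3R^2 u_0(R) + R^3 (u_0)_r(R)$ together with the estimate above (and Cauchy–Schwarz to control $\int_R^\infty (u_0)_r \rho\,d\rho$ in terms of $\big(\int_R^\infty (u_0)_r^2\rho^4\,d\rho\big)^{1/2}$) shows that $g$ is Cauchy as $R\to\infty$: $|g(R_2)-g(R_1)|\lesssim R_1^{-4}$ for $R_2>R_1>R_0$. Hence $g(R)\to\ell_0$ for some $\ell_0\in\R$, with rate $|r^3 u_0(r)-\ell_0| = O(r^{-4})$, giving \eqref{u0 decay} and \eqref{u0 rate}. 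For $u_1$, set $h(R):= R\int_R^\infty u_1(\rho)\rho\,d\rho$; the second term in the displayed estimate bounds $h(R)^2 \le \int_R^\infty u_1^2\rho^4\,d\rho + O(R^{-2})$, and since $\int_R^\infty u_1^2\rho^4\,d\rho\to 0$ (tail of the $L^2$ norm) we get $h(R)\to 0$; the rate $|h(R)|=O(R^{-2})$ follows by combining $\int_R^\infty u_1^2\rho^4\,d\rho = O(R^{-2})$ (which comes from the same iteration, since it is $\le \|\vec u(0)\|^2_{\dot H^1\times L^2(r\ge R)}$ and this tail decays at least like $R^{-2}$ once one inserts \eqref{R ineq} once more) with the estimate above. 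This yields \eqref{u1 decay} and \eqref{u1 rate}.

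The main obstacle I expect is getting the rates $O(r^{-4})$ and $O(r^{-2})$ rather than just convergence: plain use of \eqref{R ineq} with the a priori bound $\|\vec u(0)\|_{\dot H^1\times L^2}\lesssim 1$ only gives $\|\pi_R^\perp\vec u(0)\|\lesssim R^{-1}$, which after the ODE bookkeeping produces $|r^3u_0(r)-\ell_0|=o(1)$ with a polynomial-but-not-optimal rate. To sharpen, one must run a bootstrap: the exterior energy $\|\vec u(0)\|_{\dot H^1\times L^2(r\ge R)}$ decays (first $o(1)$, then after substituting the improved pointwise decay of $u_0$ back into the integral, like $R^{-2}$, etc.), and feeding this improved tail into the cubic right-hand side of \eqref{R ineq} closes at the stated rates. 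One has to be slightly careful that the bootstrap is consistent — that the decay assumed matches the decay proved — but since the nonlinearity is cubic the gain is strict at each stage and the iteration converges quickly. The rest is routine: Cauchy–Schwarz, the fundamental theorem of calculus, and the explicit projection formulae.
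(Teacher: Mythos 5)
Your plan correctly identifies Lemma~\ref{lem: R ineq} as the engine, but it has a genuine gap at the single most delicate point: the claim that $r\int_r^\infty u_1(\rho)\rho\,d\rho\to 0$. This cannot be extracted from the static inequality at $t=0$. Data with $u_1(r)=c\,r^{-3}$ for large $r$ (with $c\neq 0$) lies \emph{exactly} in the plane $P(R)$, so $\pi_R^\perp$ annihilates it and~\eqref{R ineq} carries no information about it; this is precisely the counterexample channel $V(t,r)=t\,r^{-3}$ noted after Proposition~\ref{prop: ext en}. Your quantitative step is also off: the projection identity gives $h(R)^2/R=R\bigl(\int_R^\infty u_1\rho\,d\rho\bigr)^2\le\int_R^\infty u_1^2\rho^4\,d\rho+O(R^{-2})$, i.e.\ $h(R)^2\lesssim R\cdot o(1)+O(R^{-1})$ (you dropped a factor of $R$), which does not force $h(R)\to 0$. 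The paper closes this by a genuinely dynamical argument: it first shows $v_1(t,r):=r\int_r^\infty u_t(t,\rho)\rho\,d\rho\to\ell_1(t)$ with rate $O(r^{-2})$, then proves $\ell_1(t)$ is independent of $t$ by averaging in $R$, writing $u_t(t_2)-u_t(t_1)=\int u_{tt}$, substituting the equation and integrating by parts (using the pointwise bound $|u(t,r)|\lesssim r^{-17/6}$), and finally shows $\ell_1=0$ by exploiting the time direction: either the finite-speed support property when $T_+<\infty$, or integrating the uniform lower bound $|\ell_1|/2$ over $[0,T]$ and contradicting the $O(R^{1/6})$ growth bound as $T\to\infty$. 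None of this is available in a $t=0$-only argument.

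There is a secondary issue with your route to $\ell_0$. From $\|\pi_R^\perp\vec u(0)\|\lesssim R^{-1}$ alone, the fundamental-theorem-of-calculus/Cauchy--Schwarz step gives $|g(2R)-g(R)|\lesssim R^{3/2}\|\pi_R^\perp\vec u(0)\|\lesssim R^{1/2}$, which is not even bounded, let alone summable with rate $R^{-4}$; the cubic structure of the right-hand side of~\eqref{R ineq} must be kept. The paper's bookkeeping is: (i) difference estimates $|v_0(r)-v_0(r')|\lesssim r^{-4}|v_0(r)|^3+r^{-1}|v_1(r)|^3$ (and similarly for $v_1$) for $r\le r'\le 2r$; (ii) a dyadic induction using the smallness $\delta_1$ from compactness to get the a priori growth $|v_0|,|v_1|\lesssim r^{1/6}$; (iii) the dynamical argument above to get $|v_1(r)|=O(r^{-2})$; and only then (iv) feeding this into the $v_0$ difference estimate to obtain $r^3u_0(r)\to\ell_0$ at rate $O(r^{-4})$. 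In particular the stated rates~\eqref{u0 rate}--\eqref{u1 rate} are downstream of $\ell_1=0$, so your bootstrap cannot close without supplying the missing time-dependent ingredient.
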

For the proof of Lemma~\ref{lem: space decay} we make the following substitutions, which simplify the notation. Set
\EQ{\label{v def}
&v_0(t, r):= r^3 u(t, r), \\
&v_1(t, r):= r \int_r^\I u_t( t, \rho) \rho \, d \rho.
}
We will also often write $v_0(r) :=v_0(0, r)$,  $v_1(r):=v_1(0, r)$. Writing the projections in terms of $v_0, v_1$ gives
\EQ{\label{v u project} 
&\| \pi_R \vec u(t)\|^2_{\dot{H}^1 \times L^2(r \ge R)} = 3R^{-3} v_0^2(t, R) + R^{-1}v_1^2(t, R)\\
&\| \pi_R^{\perp} \vec u(t)\|^2_{\dot{H}^1 \times L^2(r \ge R)} = \int_R^\I \left( \frac{1}{r} \p_r v_0(t, r) \right)^2 \, dr + \int_R^\I \left( \p_r v_1(t ,r) \right)^2 \, dr,
}
and we can rephrase the conclusions of Lemma~\ref{lem: R ineq} in terms of the newly defined functions $(v_0, v_1)$. 
\begin{lem}\label{lem: v pro}
Let $(v_0, v_1)$ be defined as in~\eqref{v def}. Then there is an $R_0>0$ so that for all $R \ge R_0$, 
\ant{
\left( \int_R^\I \left( \frac{1}{r} \p_r v_0(t, r) \right)^2 + \left( \p_r v_1(t ,r) \right)^2 \, dr \right)^{\frac{1}{2}} \lesssim R^{-1} \left(3R^{-3} v_0^2(t, R) + R^{-1}v_1^2(t, R)\right)^{\frac{3}{2}}
}
where the implicit constant above is uniform in $t \in I_{\max}(\vec u)$. 
\end{lem}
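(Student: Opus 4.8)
The plan is to recognize Lemma~\ref{lem: v pro} as a verbatim translation of Lemma~\ref{lem: R ineq} under the change of unknowns $v_0(t,r)=r^3u(t,r)$ and $v_1(t,r)=r\int_r^\I u_t(t,\rho)\rho\,d\rho$; the only genuine content is the pair of identities~\eqref{v u project} expressing the projections $\pi_R,\pi_R^\perp$ through $v_0,v_1$, which I would establish by two integrations by parts and then substitute into~\eqref{R ineq}.

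For the $\dot H^1$ component, from $v_0=r^3u$ one has $\tfrac1r\p_r v_0=3ru+r^2u_r$, hence $\int_R^\I(\tfrac1r\p_r v_0)^2\,dr=\int_R^\I\bigl(9r^2u^2+6r^3uu_r+r^4u_r^2\bigr)\,dr$; writing $6r^3uu_r=3r^3\p_r(u^2)$ and integrating by parts cancels the $9r^2u^2$ term and leaves the boundary contribution $-3R^3u^2(t,R)$, so that $\int_R^\I(\tfrac1r\p_r v_0)^2\,dr=\int_R^\I u_r^2(t,r)\,r^4\,dr-3R^{-3}v_0^2(t,R)$. By the explicit formulas for $\pi_R,\pi_R^\perp$ recorded in the remark following Proposition~\ref{prop: ext en}, this equals $\|\pi_R^\perp\vec u(t)\|_{\dot H^1(r\ge R)}^2$, while $3R^{-3}v_0^2(t,R)=3R^3u^2(t,R)=\|\pi_R\vec u(t)\|_{\dot H^1(r\ge R)}^2$. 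Similarly, $\tfrac1r v_1(t,r)=\int_r^\I u_t(t,\rho)\rho\,d\rho$ gives $\p_r v_1=\tfrac1r v_1-r^2u_t$ and $ru_t=-\p_r(v_1/r)$; expanding $(\p_r v_1)^2$ and integrating the cross term by parts in the variable $w=v_1/r$ yields $\int_R^\I(\p_r v_1)^2\,dr=\int_R^\I u_t^2(t,r)\,r^4\,dr-R^{-1}v_1^2(t,R)$, which matches $\|\pi_R^\perp\vec u(t)\|_{L^2(r\ge R)}^2$, with $R^{-1}v_1^2(t,R)=R\bigl(\int_R^\I u_t(t,\rho)\rho\,d\rho\bigr)^2=\|\pi_R\vec u(t)\|_{L^2(r\ge R)}^2$. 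Adding these two identities establishes~\eqref{v u project}, and inserting them into~\eqref{R ineq} produces exactly the inequality of Lemma~\ref{lem: v pro}.

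The only delicate point is the vanishing of the boundary terms at $r=\I$ in the two integrations by parts, namely $r^3u^2(t,r)\to0$ and $r^{-1}v_1^2(t,r)\to0$ as $r\to\I$, together with the finiteness of all the integrals involved. Both follow from $\vec u(t)\in\dot H^1\times L^2(\R^5)$ for every $t\in I_{\max}$ --- which is the content of Lemma~\ref{lem: compact} --- combined with the elementary radial estimates $r^3|u(t,r)|^2\lesssim\|u(t)\|_{\dot H^1(s\ge r)}^2$ and $r\bigl(\int_r^\I u_t(t,\rho)\rho\,d\rho\bigr)^2\lesssim\|u_t(t)\|_{L^2(s\ge r)}^2$, obtained by writing $u(t,r)=-\int_r^\I u_r(t,s)\,ds$ and applying Cauchy--Schwarz, and whose right-hand sides tend to $0$ as $r\to\I$. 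Finally, the uniformity of the implicit constant over $t\in I_{\max}(\vec u)$ is inherited directly from the analogous uniformity in Lemma~\ref{lem: R ineq}, which itself came from the pre-compactness of $K$. No real obstacle is expected here; the step most prone to sign and coefficient errors is the pair of integrations by parts.
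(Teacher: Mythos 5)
Your proposal is correct and follows essentially the same route as the paper, which treats Lemma~\ref{lem: v pro} as a direct rephrasing of Lemma~\ref{lem: R ineq} via the identities~\eqref{v u project} and the explicit projection formulas recorded after Proposition~\ref{prop: ext en}. Your integrations by parts verifying~\eqref{v u project}, together with the vanishing of the boundary terms at infinity from $\vec u(t)\in\dot H^1\times L^2$ and Cauchy--Schwarz, are exactly the (omitted) computation the paper relies on.
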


Lemma~\ref{lem: v pro} is now used to establish difference estimates. We let $\de_1>0$ be small (to be determined precisely below) so that $\de_1 \le \e_0^2$ where $\e_0$ is the small constant in Lemma~\ref{lem: h sd}. We also choose $R_1 = R_1(\de _1)$ large enough such that for all $R \ge R_1$, 
\EQ{\label{R1 de1}
\| \vec u(t) \|_{\dot H^1 \times L^2(r \ge R)}^2 \le  \de_1 \le \e_0^2, \quad R_1^{-1} \le  \de_1
} 
Such an $R_1$ exists by the assumption that $\vec u$ satisfies the compactness property on $I_{\max}$.  

\begin{cor}\label{cor: diff est}
Let $R_1$ be as above. Then for all $R_1 \le r \le r' \le 2r$ and for all $t \in \R$  we have 
\begin{align}\label{v0 diff est}
&\abs{ v_0(t, r) - v_0(t, r') } \lesssim r^{-4} \abs{ v_0(t, r)}^3 + r^{-1} \abs{ v_1(t, r)}^3\\
\label{v1 diff est}
&\abs{ v_1(t, r) - v_1(t, r') } \lesssim r^{-5} \abs{ v_0(t, r)}^3 + r^{-2} \abs{ v_1(t, r)}^3
\end{align}
with the above estimates holding uniformly in $t \in \R$. 
\end{cor}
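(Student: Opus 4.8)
The plan is to obtain both estimates as a direct consequence of Lemma~\ref{lem: v pro}, by writing the differences as integrals of $\partial_\rho v_0$ and $\partial_\rho v_1$ and then applying Cauchy--Schwarz, pairing $\partial_\rho v_j$ against exactly the weight that appears on the left-hand side of that lemma. First, fix $t$ and radii $R_1\le r\le r'\le 2r$. Since $r\ge R_1\ge R_0$, Lemma~\ref{lem: v pro} applies at radius $R=r$ and bounds $\big(\int_r^\infty \rho^{-2}(\partial_\rho v_0(t,\rho))^2+(\partial_\rho v_1(t,\rho))^2\,d\rho\big)^{1/2}$ by $r^{-1}\big(3r^{-3}v_0^2(t,r)+r^{-1}v_1^2(t,r)\big)^{3/2}$, with a constant uniform in $t$.

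For \eqref{v0 diff est} I would write $v_0(t,r)-v_0(t,r')=-\int_r^{r'}\partial_\rho v_0(t,\rho)\,d\rho$ and estimate $|v_0(t,r)-v_0(t,r')|\le \big(\int_r^{r'}\rho^2\,d\rho\big)^{1/2}\big(\int_r^{r'}\rho^{-2}(\partial_\rho v_0(t,\rho))^2\,d\rho\big)^{1/2}$. Because $r'\le 2r$ the first factor is $\lesssim r^{3/2}$, while the second is controlled by the quantity bounded in the previous paragraph, so $|v_0(t,r)-v_0(t,r')|\lesssim r^{1/2}\big(3r^{-3}v_0^2(t,r)+r^{-1}v_1^2(t,r)\big)^{3/2}$. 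Expanding with $(a+b)^{3/2}\lesssim a^{3/2}+b^{3/2}$ for $a,b\ge 0$ turns this into $r^{-4}|v_0(t,r)|^3+r^{-1}|v_1(t,r)|^3$, which is \eqref{v0 diff est}. For \eqref{v1 diff est} the only change is that $v_1(t,r)-v_1(t,r')=-\int_r^{r'}\partial_\rho v_1(t,\rho)\,d\rho$ is paired against the trivial weight, so the length factor is $(r'-r)^{1/2}\le r^{1/2}$ rather than $r^{3/2}$; carrying this through gives $|v_1(t,r)-v_1(t,r')|\lesssim r^{-1/2}\big(3r^{-3}v_0^2(t,r)+r^{-1}v_1^2(t,r)\big)^{3/2}\lesssim r^{-5}|v_0(t,r)|^3+r^{-2}|v_1(t,r)|^3$. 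This extra factor $r^{-1}$ relative to \eqref{v0 diff est} is exactly the discrepancy recorded in the statement.

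There is essentially no hard step: the argument is bookkeeping of the weights, the factor $\rho^{-2}$ in front of $(\partial_\rho v_0)^2$ being precisely the one coming from expressing the $\dot H^1(\R^5)$ part of the seminorm of $u$ in terms of $v_0=r^3u$ as in \eqref{v u project}. The only point worth a sentence of justification is that the fundamental theorem of calculus is legitimate for $v_0(t,\cdot)$ and $v_1(t,\cdot)$ on $(0,\infty)$: for $v_0$ this follows from $u(t)\in\dot H^1(\R^5)$ (Lemma~\ref{lem: compact}) together with the radial Sobolev embedding, and $v_1$ is by definition an indefinite integral of the $L^2$ function $\rho\,u_t(t,\rho)$. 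Uniformity of all constants in $t\in I_{\max}(\vec u)$ is inherited verbatim from the corresponding uniformity in Lemma~\ref{lem: v pro}.
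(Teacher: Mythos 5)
Your proposal is correct and follows the paper's argument essentially verbatim: the paper likewise writes each difference as $\int_r^{r'}\partial_\rho v_j\,d\rho$, applies Cauchy--Schwarz with weight $\rho^2$ (for $v_0$) or $1$ (for $v_1$), invokes Lemma~\ref{lem: v pro} at $R=r$, and uses $r'-r\le r$ to produce the factors $r^{3/2}$ and $r^{1/2}$. The bookkeeping of exponents and the uniformity in $t$ match the paper exactly.
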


For convenience we  also record a  rewording of Corollary~\ref{cor: diff est} that  is a direct consequence of~\eqref{v u project}. 

\begin{cor} \label{cor: diff est 2}
Let  $R_1, \de_1$ be defined as in~\eqref{R1 de1}. Then for all $r, r'$ with $R_1 \le r \le r' \le 2r$ and for all $t \in \R$ we have 
\begin{align}\label{v0 diff est2}
&\abs{ v_0(t, r) - v_0(t, r') } \lesssim r^{-1} \de_1 \abs{ v_0(t, r)}+ \de_1 \abs{ v_1(t, r)}\\
\label{v1 diff est2}
&\abs{ v_1(t, r) - v_1(t, r') } \lesssim r^{-2} \de_1 \abs{ v_0(t, r)} + r^{-1} \de_1 \abs{ v_1(t, r)}
\end{align} 
where all of the above  hold uniformly in $t \in I_{\max}( \vec u)$. 
\end{cor}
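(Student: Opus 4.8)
The plan is to derive this corollary as an immediate consequence of Corollary~\ref{cor: diff est} together with the choice of $R_1$ in~\eqref{R1 de1}. The key observation is that the cubic terms $r^{-4}\abs{v_0(t,r)}^3$ and $r^{-1}\abs{v_1(t,r)}^3$ appearing on the right-hand sides of~\eqref{v0 diff est} and~\eqref{v1 diff est} can each be bounded by one power of the relevant $\dot{H}^1\times L^2$ exterior norm times a single factor of $v_0$ or $v_1$, and that factor of the norm is exactly what $\de_1$ controls once $r\ge R_1$.

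First I would rewrite the first cubic term as
\[
r^{-4}\abs{v_0(t,r)}^3 = r^{-1}\cdot \bigl(r^{-3}v_0^2(t,r)\bigr)\cdot \abs{v_0(t,r)},
\]
and observe from~\eqref{v u project} that $r^{-3}v_0^2(t,r)\le \tfrac13\|\pi_r\vec u(t)\|_{\dot H^1\times L^2(s\ge r)}^2\le \|\vec u(t)\|_{\dot H^1\times L^2(s\ge r)}^2\le\de_1$, using~\eqref{R1 de1} and the fact that $\|\pi_r\vec u(t)\|\le\|\vec u(t)\|$. Hence $r^{-4}\abs{v_0(t,r)}^3\lesssim r^{-1}\de_1\abs{v_0(t,r)}$. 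Similarly, the second cubic term $r^{-1}\abs{v_1(t,r)}^3 = \bigl(r^{-1}v_1^2(t,r)\bigr)\abs{v_1(t,r)}\le \de_1\abs{v_1(t,r)}$ by the same chain of inequalities applied to $R^{-1}v_1^2(t,R)$. Plugging these two bounds into~\eqref{v0 diff est} gives~\eqref{v0 diff est2}. For~\eqref{v1 diff est2} one argues identically: write $r^{-5}\abs{v_0(t,r)}^3 = r^{-2}\bigl(r^{-3}v_0^2(t,r)\bigr)\abs{v_0(t,r)}\lesssim r^{-2}\de_1\abs{v_0(t,r)}$ and $r^{-2}\abs{v_1(t,r)}^3 = r^{-1}\bigl(r^{-1}v_1^2(t,r)\bigr)\abs{v_1(t,r)}\le r^{-1}\de_1\abs{v_1(t,r)}$, and substitute into~\eqref{v1 diff est}. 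Throughout, the uniformity in $t\in I_{\max}(\vec u)$ is inherited directly from the uniformity in Corollary~\ref{cor: diff est} and from the compactness-based choice of $R_1$ in~\eqref{R1 de1}, which does not depend on $t$.

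There is essentially no obstacle here — this is a bookkeeping step that repackages the cubic nonlinear estimates of Corollary~\ref{cor: diff est} into a linear-in-$(v_0,v_1)$ form with a small constant $\de_1$, which is the form that will actually be iterated in the subsequent fixed-point/ODE-type argument establishing the asymptotics~\eqref{u0 asymp}. The only minor point to be careful about is that the restriction $R_1\le r\le r'\le 2r$ and the bound $\|\vec u(t)\|_{\dot H^1\times L^2(s\ge r)}^2\le\de_1$ are invoked at radius $r$ (the smaller of the two), which is consistent since the exterior norm is non-increasing in the radius.
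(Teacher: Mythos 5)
Your proposal is correct and is exactly the argument the paper intends: the paper dispenses with Corollary~\ref{cor: diff est 2} as a "direct consequence of~\eqref{v u project}," i.e.\ one factors each cubic term in Corollary~\ref{cor: diff est} as a power of $r$ times $\bigl(r^{-3}v_0^2\bigr)$ or $\bigl(r^{-1}v_1^2\bigr)$ times $\abs{v_0}$ or $\abs{v_1}$, and bounds the middle factor by $\|\pi_r\vec u(t)\|^2_{\dot H^1\times L^2(\,\cdot\,\ge r)}\le\|\vec u(t)\|^2_{\dot H^1\times L^2(\,\cdot\,\ge r)}\le\de_1$ for $r\ge R_1$, uniformly in $t$. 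Your bookkeeping, including the observation that the smallness is invoked at the smaller radius $r\ge R_1$, matches this exactly.
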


\begin{proof}[Proof of Corollary~\ref{cor: diff est}]
This is an immediate consequence of Lemma~\ref{lem: v pro}. If $r \ge R_1$ and $r \le r' \le 2r$, then using Lemma~\ref{lem: v pro} gives 
\ant{
\abs{ v_0(t, r) - v_0(t, r') } &\le \left( \int_r^{r'} \abs{\p_r v_0(t,  \rho) } \, d \rho\right)\\
& \le  \left( \int_r^{r'} \abs{\frac{1}{\rho}\p_r v_0(t,  \rho) }^2 \, d \rho\right)^{\frac{1}{2}} \left( \int_r^{r'} \rho^2 \, d \rho\right)^{\frac{1}{2}}\\
&\lesssim r^{\frac{3}{2}} \left[ r^{-1} \left(3r^{-3} v_0^2(t, r) + r^{-1}v_1^2(t, r)\right)^{\frac{3}{2}}\right]\\
&\lesssim r^{-4} \abs{ v_0(t, r)}^3 + r^{-1} \abs{ v_1(t, r)}^3.
}
In the same fashion
\ant{
\abs{ v_1(t, r) - v_1(t, r') } 
& \le  \left( \int_r^{r'} \abs{\p_r v_1(t,  \rho) }^2 \, d \rho\right)^{\frac{1}{2}} \left( \int_r^{r'}  \, d \rho\right)^{\frac{1}{2}}\\
&\lesssim r^{-5} \abs{ v_0(t, r)}^3 + r^{-2} \abs{ v_1(t, r)}^3,
}
which proves the difference estimates. 
\end{proof}

Now, we use the difference estimates to establish an upper bound on the growth rates of $v_0(t, r)$ and $v_1(t, r)$.  
\begin{claim}
Let $v_0(t, r)$ and $v_1(t, r)$ be as in~\eqref{v def}. Then 
\begin{align}
 &\abs{v_0(t, r)} \lesssim r^{\frac{1}{6}} \label{v0 16}\\
 &\abs{v_1(t, r)} \lesssim r^{\frac{1}{6}}\label{v1 16}
\end{align}
with  constants that are uniform in $t \in I_{\max}$. 
\end{claim}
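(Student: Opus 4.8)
The plan is to run a dyadic bootstrap on the pair $(v_0,v_1)$, starting from a crude bound at the fixed radius $R_1$ and propagating it outward scale by scale via the difference estimates of Corollary~\ref{cor: diff est 2}, whose constant carries the small parameter $\delta_1$. For the base case I would first note that, by Lemma~\ref{lem: compact}, the set $K_1=\{\vec u(t):t\in I_{\max}\}$ is pre-compact in $\dot H^1\times L^2(\R^5)$, so $\sup_t\|\vec u(t)\|_{\dot H^1\times L^2}\lesssim 1$; radial Sobolev embedding (Lemma~\ref{lem: rad se}) then gives $|u(t,R_1)|\lesssim R_1^{-3/2}$ and, by Cauchy--Schwarz, $\big|\int_{R_1}^\infty u_t(t,\rho)\rho\,d\rho\big|\lesssim R_1^{-1/2}$, uniformly in $t$, so that $S(0):=|v_0(t,R_1)|+|v_1(t,R_1)|\le C_0$ with $C_0$ independent of $t$.

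Next I would set $r_n:=2^nR_1$ and $S(n):=|v_0(t,r_n)|+|v_1(t,r_n)|$ for $n\ge 0$. Applying Corollary~\ref{cor: diff est 2} with $r=r_n$, $r'=r_{n+1}$, and using $r_n\ge R_1\ge 1$ together with $\delta_1\le 1$, each of the two differences $|v_0(t,r_{n+1})-v_0(t,r_n)|$ and $|v_1(t,r_{n+1})-v_1(t,r_n)|$ is $\lesssim \delta_1 S(n)$; hence $S(n+1)\le (1+C_1\delta_1)S(n)$ for an absolute constant $C_1$, uniformly in $t$, and so $S(n)\le C_0(1+C_1\delta_1)^n$. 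I would then fix $\delta_1>0$ small enough --- in addition to the smallness already required of $\delta_1$, one simply takes the minimum of the finitely many thresholds --- so that $\log_2(1+C_1\delta_1)\le \tfrac16$. With this choice $(1+C_1\delta_1)^n\le 2^{n/6}=(r_n/R_1)^{1/6}$, giving $|v_0(t,r_n)|,|v_1(t,r_n)|\le S(n)\lesssim r_n^{1/6}$ uniformly in $t$. Finally, for general $r\ge R_1$ I would pick $n$ with $r_n\le r\le r_{n+1}$ and use Corollary~\ref{cor: diff est} to bound $|v_0(t,r)-v_0(t,r_n)|\lesssim r_n^{-4}|v_0(t,r_n)|^3+r_n^{-1}|v_1(t,r_n)|^3\lesssim r_n^{-1/2}\lesssim 1\lesssim r^{1/6}$, and similarly for $v_1$; combined with $|v_0(t,r_n)|,|v_1(t,r_n)|\lesssim r_n^{1/6}\simeq r^{1/6}$ this yields \eqref{v0 16} and \eqref{v1 16}.

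The argument is a soft bootstrap, and the only point that needs care is that the smallness of $\delta_1$ --- with $R_1=R_1(\delta_1)$ chosen accordingly --- must simultaneously handle the absorption steps used in the preceding lemmas and the new requirement $\log_2(1+C_1\delta_1)\le 1/6$; there is no real obstacle here, since all of these are finitely many smallness conditions imposed on a single parameter. I note that the exponent $1/6$ is not special: the same iteration produces an $O(r^\epsilon)$ bound for any fixed $\epsilon>0$ once $\delta_1$ is taken correspondingly small, and $1/6$ is recorded here only in anticipation of later use.
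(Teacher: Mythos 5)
Your proposal is correct and follows essentially the same route as the paper: the same dyadic iteration $r_n=2^nr_0$ using the difference estimates of Corollary~\ref{cor: diff est 2}, with $\delta_1$ chosen so the per-step growth factor is at most $2^{1/6}$, and the passage to general $r$ via Corollary~\ref{cor: diff est}. The only difference is that you make the base-case bound at $r=R_1$ (uniform in $t$, via the energy bound from Lemma~\ref{lem: compact} and radial Sobolev embedding) explicit, which the paper leaves implicit in the constant $C(r_0)$.
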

\begin{proof}
 Again, it  suffices to deduce the claim when $t=0$ because  the argument uses only the estimates in this section that hold uniformly in $t \in I_{\max}$. 

Let $r_0 \ge R_1$ be fixed and note that by setting $r= 2^nr_0$, $r'=2^{n+1}r_0$ in the difference estimates~\eqref{v0 diff est2},~\eqref{v1 diff est2} we have for all positive integers $n \in \N$, 
\begin{align}
&\abs{v_0(2^{n+1} r_0)} \le (1+ C_1(2^n r_0)^{-1} \de_1) \abs{v_0(2^nr_0)} + C_1 \de_1 \abs{v_1(2^nr_0)}\label{v0 trianlge}\\
&\abs{v_1(2^{n+1} r_0)} \le (1+ C_1(2^n r_0)^{-1} \de_1) \abs{v_1(2^nr_0)} + C_1 \de_1(2^n r_0)^{-2} \abs{v_0(2^nr_0)}\label{v1 triangle}
\end{align}
We introduce the notation 
\ant{
a_n:= \abs{v_0(2^nr_0)}\\
b_n:=\abs{v_1(2^nr_0)}
}
Adding~\eqref{v0 trianlge} to \eqref{v1 triangle} gives
\begin{align*}
a_{n+1} + b_{n+1} &\le (1+ C_1\de_1((2^n r_0)^{-1} +(2^n r_0)^{-2}))a_n + (1+C_1 \de_1(1+(2^n r_0)^{-1}))b_n\\
&\le (1+ 2C_1 \de_1)(a_n + b_n)
\end{align*}
Arguing inductively we conclude that for all $n \in \N$, 
\begin{align*}
(a_n+b_n) \le (1+2C_1 \de_1)^{n}(a_0+b_0).
\end{align*}
Now let $\de_1$ be small enough so that $(1+2C_1 \de_1) \le 2^{\frac{1}{6}}$, giving 
\EQ{\label{an bn}
a_n \le C(2^n r_0)^{\frac{1}{6}},\\
b_n \le C(2^n r_0)^{\frac{1}{6}}.
}
Note that  $C=C(r_0)$ but this is irrelevant for our purposes since we have fixed $r_0$.  Observe that~\eqref{an bn} proves~\eqref{v0 16} and~\eqref{v1 16} for $r=2^nr_0$. The estimates~\eqref{v0 16} and~\eqref{v1 16} for arbitrary $r$ now follow by combining~\eqref{an bn} together with the difference estimates~\eqref{v0 diff est},~\eqref{v1 diff est}. 
\end{proof}

We are now ready to begin  extracting a limits. We first show that  $v_1(t, r)$ tends to zero in  order to get the correct decay rate for $v_0(t, r)$. This requires several steps. 
\begin{claim} \label{claim: ell1}For every $t \in I_{\max}$ there is a number $\ell_1(t) \in \R$ so that 
\EQ{
\abs{ v_1(t, r) -  \ell_1(t)}  = O(r^{-2}) \mas r \to \infty
}
where the implicit constant is uniform in $t \in I_{\max}$. 
\end{claim}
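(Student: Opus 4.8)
The plan is to extract the limit $\ell_1(t)$ as a Cauchy sequence argument using the difference estimate~\eqref{v1 diff est}, and then to upgrade the resulting telescoping bound to the sharp $O(r^{-2})$ rate using the a priori growth bounds~\eqref{v0 16} and~\eqref{v1 16}. Since all the estimates in Corollary~\ref{cor: diff est} hold uniformly in $t\in I_{\max}$, it suffices to fix $t$ and carry out the argument at that time slice; I will suppress $t$ from the notation and write $v_0(r),v_1(r)$.

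First I would iterate~\eqref{v1 diff est} along a dyadic sequence. Fix $r_0\ge R_1$ and set $r_n:=2^n r_0$. Applying~\eqref{v1 diff est} with $r=r_n$, $r'=r_{n+1}$ and plugging in the growth bounds~\eqref{v0 16}, \eqref{v1 16}, namely $\abs{v_0(r_n)}\lesssim r_n^{1/6}$ and $\abs{v_1(r_n)}\lesssim r_n^{1/6}$, gives
\EQ{
\abs{v_1(r_{n+1}) - v_1(r_n)} \lesssim r_n^{-5} r_n^{1/2} + r_n^{-2} r_n^{1/2} \lesssim r_n^{-3/2}.
}
Hence the sequence $\{v_1(r_n)\}_n$ is Cauchy: for $m>n$,
\EQ{
\abs{v_1(r_m) - v_1(r_n)} \le \sum_{k=n}^{m-1} \abs{v_1(r_{k+1}) - v_1(r_k)} \lesssim \sum_{k\ge n} r_k^{-3/2} \lesssim r_n^{-3/2}.
}
Therefore $v_1(r_n)$ converges to some limit, which I define to be $\ell_1=\ell_1(t)$, and letting $m\to\infty$ gives $\abs{v_1(r_n)-\ell_1}\lesssim r_n^{-3/2}$. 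For an arbitrary $r\ge r_0$, choose $n$ with $r_n\le r\le r_{n+1}$ and use~\eqref{v1 diff est} once more together with the growth bound to interpolate between $v_1(r)$ and $v_1(r_n)$, obtaining $\abs{v_1(r)-\ell_1} = O(r^{-3/2})$ as $r\to\infty$; the implicit constant is uniform in $t$ because every ingredient is. This already establishes the existence of the limit with rate $O(r^{-3/2})$.

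To reach the claimed sharper rate $O(r^{-2})$, I would feed this preliminary bound back into the difference estimate. Now that we know $\abs{v_1(r)}\le \abs{\ell_1} + O(r^{-3/2})$ is in fact \emph{bounded} (uniformly in $r$ and, by compactness, in $t$ — note $\ell_1(t)$ is bounded since $K_1$ is precompact in $\dot H^1\times L^2$ and one can control $\abs{v_1(t,R_1)}$ via~\eqref{v u project}), and similarly $\abs{v_0(r)}$ grows at most like $r^{1/6}$, we re-run~\eqref{v1 diff est} with these improved inputs: $\abs{v_1(r_{k+1})-v_1(r_k)} \lesssim r_k^{-5}\cdot r_k^{1/2} + r_k^{-2}\cdot 1 \lesssim r_k^{-2}$. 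Summing the geometric tail from $r_n$ gives $\abs{v_1(r_n)-\ell_1}\lesssim r_n^{-2}$, and the same dyadic-interpolation step as before upgrades this to $\abs{v_1(r)-\ell_1} = O(r^{-2})$ for all $r$, with constant uniform in $t\in I_{\max}$. This completes the proof of the claim.

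The main obstacle here is a conceptually minor but technically delicate bookkeeping point: the constants in Corollary~\ref{cor: diff est} and in the growth bounds~\eqref{v0 16}, \eqref{v1 16} are advertised as uniform in $t$, but the growth-rate constant $C(r_0)$ appearing in~\eqref{an bn} depends on $r_0$ (hence on the data at the anchoring radius). One must be careful that the \emph{final} bound $\abs{v_1(t,r)-\ell_1(t)} = O(r^{-2})$ has a constant independent of $t$; this is where precompactness of $K_1$ in $\dot H^1\times L^2$ (Lemma~\ref{lem: compact}) is essential, since it controls $\abs{v_0(t,R_1)}$ and $\abs{v_1(t,R_1)}$ uniformly in $t$ via~\eqref{v u project}, thereby making the anchoring data — and all downstream constants — uniform. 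The rest is routine summation of geometric series and dyadic interpolation.
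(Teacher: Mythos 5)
Your proposal is correct and follows essentially the same route as the paper: plug the growth bounds $\abs{v_0},\abs{v_1}\lesssim r^{1/6}$ into the difference estimate~\eqref{v1 diff est} to get a summable dyadic telescoping bound of size $r^{-3/2}$, extract the limit $\ell_1(t)$, and then bootstrap the resulting boundedness of $v_1$ back through the same estimate to upgrade the rate to $O(r^{-2})$. Your extra remark on tracking uniformity in $t$ through the anchoring values $v_0(t,r_0), v_1(t,r_0)$ via precompactness is exactly the point the paper disposes of by reducing to $t=0$ and noting all inputs are uniform in $t$.
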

\begin{proof}
Again it suffices to prove the claim for  $t=0$. Let $r_0 \ge R_1$ with $R_1 $ as in~\eqref{R1 de1}. Inserting~\eqref{v0 16},~\eqref{v1 16} into the difference estimate~\eqref{v1 diff est} yields
\EQ{\label{v1 lim}
\abs{ v_1(2^{n+1}r_0) - v_1(2^n r_0) } &\lesssim (2^nr_0)^{-5} (2^nr_0)^{\frac{1}{2}} + (2^nr_0)^{-2} (2^nr_0)^{\frac{1}{2}}\\
& \lesssim  (2^nr_0)^{-\frac{3}{2}}
} 
Therefore the infinite series 
\begin{align*}
 \sum_n \abs{ v_1(2^{n+1}r_0) - v_1(2^n r_0) } < \infty,
 \end{align*}
 is bounded, 
 which then implies that there  exists  $\ell_1 \in \R$ so that 
 \ant{
  \lim_{n \to \infty} v_1(2^nr_0) = \ell_1.
  }
Another application of  the difference estimates along with the growth estimates~\eqref{v0 16},~\eqref{v1 16} gives, 
\begin{align*}
  \lim_{r \to \infty} v_1(r) = \ell_1.
  \end{align*}
To establish the correct rate of convergence, we remark  that the  limit above implies that $\abs{v_1(r)}$ is bounded, and therefore  the same logic that give~\eqref{v1 lim} can be used to get 
\ant{
\abs{ v_1(2^{n+1}r) - v_1(2^n r) } &\lesssim(2^nr)^{-2}
}
for all $r$ large enough.
Finally, 
\ant{
\abs{v_1(r) - \ell_1} = \abs{ \sum_{n \ge 0}(v_1(2^{n+1} r) - v_1(2^n r)) } \lesssim r^{-2} \sum_{n \ge 0} 2^{-2n} \lesssim r^{-2}
}
which finishes the argument.
\end{proof}

Next, we show that  $\ell_1(t) = \ell_1$ is independent of $t$. 
\begin{claim}\label{claim: ell1 constant} The function $\ell_1(t)$ in Claim~\ref{claim: ell1} is independent of $t \in I_{\max}$, that is there is a fixed number $\ell_1 \in \R$ so that  $\ell_1(t) =  \ell_1$ for all $t \in I_{\max}$.
\end{claim}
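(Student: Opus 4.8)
The plan is to show $\tfrac{d}{dt}\ell_1(t)=0$ by differentiating $v_1(t,r)$ in $t$, using the equation~\eqref{eq u F} to eliminate $u_{tt}$, and checking that the boundary contributions produced by the resulting integration by parts vanish as $r\to\infty$, uniformly for $t$ in compact subsets of $I_{\max}(\vec u)$ thanks to the decay bounds already established together with the compactness property.

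First I would record the following identity. Since $u_{tt}\rho=\partial_\rho\big(\rho u_\rho+3u\big)+F(\rho,u)\,\rho$ by~\eqref{eq u F}, and since $\rho u_\rho+3u=\rho^{-2}\partial_\rho v_0$, one obtains, for any $r$ and any $t_1,t_2\in I_{\max}$,
\begin{align*}
v_1(t_2,r)-v_1(t_1,r) &= r\int_{t_1}^{t_2}\!\!\int_r^\infty u_{tt}(t,\rho)\,\rho\,d\rho\,dt \\
&= -\int_{t_1}^{t_2}\frac{\partial_r v_0(t,r)}{r}\,dt + \int_{t_1}^{t_2} r\!\int_r^\infty F(\rho,u)\,\rho\,d\rho\,dt .
\end{align*}
Here the boundary term at $\rho=\infty$, namely $\lim_{\rho\to\infty}\rho^{-2}\partial_\rho v_0(t,\rho)$, vanishes: $\rho^{-1}\partial_\rho v_0(t,\cdot)\in L^2((R,\infty))$ by Lemma~\ref{lem: v pro} together with $|v_0(t,\rho)|\lesssim\rho^{1/6}$, so no nonzero constant limit is possible, while $u(t,\rho)\to 0$ by Lemma~\ref{lem: rad se}.

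Now I would let $r\to\infty$. The nonlinear term is negligible: by~\eqref{F u3} and $|v_0(t,\rho)|\lesssim\rho^{1/6}$ one has $|F(\rho,u)|\lesssim\rho^{-9}|v_0(t,\rho)|^3\lesssim\rho^{-17/2}$, hence $\big|r\int_r^\infty F\rho\,d\rho\big|\lesssim r^{-11/2}\to 0$ uniformly in $t$. For the linear term $\int_{t_1}^{t_2}r^{-1}\partial_r v_0(t,r)\,dt$ there is no pointwise-in-$r$ decay available, only the $L^2$ bound $\int_R^\infty\big(\rho^{-1}\partial_\rho v_0(t,\rho)\big)^2\,d\rho\lesssim R^{-4}$ coming from Lemma~\ref{lem: v pro} and the growth bounds; turning this into a usable limit is the main obstacle. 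I would get around it exactly as in the dyadic-averaging step used earlier: averaging the displayed identity over $r\in[S,2S]$ against $\tfrac{dr}{r}$, the left side tends to $\ell_1(t_2)-\ell_1(t_1)$ as $S\to\infty$ (because $|v_1(t_i,r)-\ell_1(t_i)|=O(r^{-2})$ uniformly by Claim~\ref{claim: ell1}), whereas, by Fubini and Cauchy–Schwarz,
\begin{align*}
\left|\frac{1}{\ln 2}\int_S^{2S}\!\!\int_{t_1}^{t_2}\frac{\partial_r v_0(t,r)}{r}\,dt\,\frac{dr}{r}\right|
&\lesssim \int_{t_1}^{t_2}\frac{S^{1/2}}{S}\left(\int_S^{2S}\Big(\frac{\partial_r v_0(t,r)}{r}\Big)^{2}dr\right)^{1/2}\!dt \\
&\lesssim (t_2-t_1)\,S^{-5/2}\longrightarrow 0,
\end{align*}
and the averaged nonlinear term likewise vanishes. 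Therefore $\ell_1(t_2)=\ell_1(t_1)$, i.e.\ $\ell_1(t)\equiv\ell_1$ is a fixed constant, which is the assertion of Claim~\ref{claim: ell1 constant}. Throughout, the uniformity in $t$ of the decay estimates — guaranteed by the compactness property of $\vec u$ — is what licenses the differentiation under the integral sign and the passage of the limit $S\to\infty$ inside the $t$-integral.
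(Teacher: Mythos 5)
Your argument is correct and follows essentially the same route as the paper: write $\ell_1(t_2)-\ell_1(t_1)$ via $u_{tt}$ using the fundamental theorem of calculus, substitute the equation~\eqref{eq u F}, integrate by parts in the radial variable, average over a dyadic block in $r$, and let the uniform decay coming from $\abs{v_0(t,r)}\lesssim r^{1/6}$ kill all terms. The only (minor) difference is bookkeeping: you stop after one radial integration by parts and control the leftover term $r^{-1}\partial_r v_0$ through the exterior $L^2$ bound of Lemma~\ref{lem: v pro} plus Cauchy--Schwarz, obtaining $O(S^{-5/2})$, whereas the paper integrates by parts once more and uses only the pointwise bound $\abs{u(t,r)}\lesssim r^{-17/6}$, obtaining $O(R^{-11/6})$; both suffice, and your handling of the boundary term at $\rho=\infty$ is at the same level of rigor as the paper's.
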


\begin{proof} Recall that
\begin{align*}
v_1(t, r) := r\int_r^{\I} u_t(t, \rho) \, \rho \, d\rho
\end{align*}
By Claim~\eqref{claim: ell1}, we see that  
\ant{
\ell_1(t) =  r\int_r^{\I} u_t(t, \rho) \, \rho \, d\rho + O(r^{-2}) \mas r \to \infty
}
Let $t_1, t_2 \in I_{\max}$ be arbitrary with, say  $t_1 \neq t_2$. We prove that 
\EQ{
\ell_1(t_2)- \ell_1(t_1) =0
}
Indeed,  averaging in $R \ge R_1$ yields 
\ant{
\ell_1(t_2) - \ell_1(t_1) 
&= \frac{1}{R} \int_R^{2R}\left(s \int_s^{\infty} (u_t(t_2, r) - u_t(t_1, r)) r \, dr \right) \, ds + O(R^{-2})\\
&=\frac{1}{R} \int_R^{2R}\left(s \int_s^{\infty}  \int_{t_1}^{t_2}u_{tt}(t, r) \, dt\,  r \, dr \right) \, ds + O(R^{-2})
}
Using the fact that $\vec u(t)$ is a solution to \eqref{eq u F}, we can rewrite the above integral as
\begin{align*}
&=  \int_{t_1}^{t_2}\frac{1}{R} \int_R^{2R}\left(s \int_s^{\infty}  (ru_{rr}(t, r) + 4u_r(t, r) ) \, dr \right) \, ds\, dt + \\
&\, +  \int_{t_1}^{t_2} \frac{1}{R} \int_R^{2R}\left(s \int_s^{\infty}F(r, u)   \, dr \right) \,ds\, dt  + O(R^{-2})\\
& = I + II +  O(R^{-2})
\end{align*}
To estimate $I$ we integrate by parts twice: 
\EQ{ \label{I}
I&= \int_{t_1}^{t_2}\frac{1}{R} \int_R^{2R}\left(s \int_s^{\infty}  \frac{1}{r^3} \p_r(r^4 u_r(t, r)) \, dr \right) \, ds\, dt\\
& = 3\int_{t_1}^{t_2}\frac{1}{R} \int_R^{2R}\left(s \int_s^{\infty}  u_r(t, r) \, dr \right) \, ds\, dt -\int_{t_1}^{t_2}\frac{1}{R} \int_R^{2R}s^2  u_r(t, s) \, ds\, dt \\
& = -3\int_{t_1}^{t_2}\frac{1}{R} \int_R^{2R}r \, u(t, r) \, dr\, dt -\int_{t_1}^{t_2}\frac{1}{R} \int_R^{2R}r^2\, u_r(t, r) \, dr\, dt \\
&= -\int_{t_1}^{t_2}\frac{1}{R} \int_R^{2R}r \, u(t, r) \, dr\, dt + \int_{t_1}^{t_2} (Ru(t,R)-2Ru(t, 2R))\, dt
}
To bound the above we recall that by the definition of $v_0$ and~\eqref{v0 16} we have 
\EQ{\label{u bound1}
r^3\abs{u(t, r)} := \abs{v_0(t, r))} \lesssim r^{\frac{1}{6}}
}
uniformly in $t \in I_{\max}$, and hence $\abs{u(t, r)} \lesssim r^{-\frac{17}{6}}$ uniformly in $t \in I_{\max}$. Inserting~\eqref{u bound1} into the last line in~\eqref{I} gives, 
\begin{align*}
I = \abs{t_2-t_1}O(R^{-\frac{11}{6}})
\end{align*}
Next we estimate $II$. We again use~\eqref{u bound1} to see that for $r>R_1$ we have 
\ant{
\abs{F(r, u(t, r))} \lesssim \abs{u(t, r)}^3  \lesssim r^{-\frac{17}{2}}  \lesssim r^{-8}
}
Therefore, 
\ant{
II \lesssim \int_{t_1}^{t_2} \frac{1}{R} \int_R^{2R}\left(s \int_s^{\infty}r^{-8}   \, dr \right) \,ds\, dt = \abs{t_2- t_1} O(r^{-6})
}
Combining all of the above we have
\ant{
\abs{\ell_1(t_2)- \ell_1(t_1)}  = O(R^{-\frac{11}{6}}) \mas R \to \infty
}
which means that $\ell_1(t_1) = \ell_1(t_2)$.
\end{proof}

Next, we prove  that $\ell_1 \equiv 0$. 
\begin{claim}\label{claim: ell1=0}  $\ell_1 =0$. 
\end{claim}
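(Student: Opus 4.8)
The plan is to split into two cases depending on whether $T_+(\vec u)$ is finite. If $T_+(\vec u) < \infty$, then by Remark~\ref{rem: N}$(2)$ we have $\supp \vec u(t, \cdot) \subset B(0, T_+ - t)$ for every $t \in [0, T_+)$, so $u_t(t, \rho) = 0$ whenever $\rho \ge T_+$; hence $v_1(t, r) = r\int_r^{\infty} u_t(t, \rho)\, \rho\, d\rho = 0$ for all $r \ge T_+$, and in particular $\ell_1 = \lim_{r \to \infty} v_1(0, r) = 0$. Thus it remains only to treat the case $T_+(\vec u) = \infty$.

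In that case the idea is to integrate the defining identity for $v_1$ in time over $[0, \tau]$ and let $\tau \to \infty$. Since $K_1$ is pre-compact in $\dot H^1 \times L^2$ (Lemma~\ref{lem: compact}), in particular $u_t(t) \in L^2(\R^5)$ with $\sup_t \|u_t(t)\|_{L^2} < \infty$, so by Cauchy--Schwarz $\int_r^{\infty} \abs{u_t(t, \rho)}\, \rho\, d\rho \lesssim r^{-1/2}$ uniformly in $t$; this makes the double integral over $[0,\tau] \times \{\rho \ge r\}$ finite and legitimizes Fubini's theorem, giving
\EQ{\label{eq: v1 avg}
\int_0^{\tau} v_1(t, r)\, dt = r\int_r^{\infty} \big( u(\tau, \rho) - u(0, \rho)\big)\, \rho\, d\rho .
}
To bound the right-hand side I would use the pointwise decay $\abs{u(t, \rho)} \lesssim \rho^{-17/6}$, which follows from $v_0(t, r) = r^3 u(t, r)$ together with the growth bound~\eqref{v0 16}, so that the right-hand side of~\eqref{eq: v1 avg} is $O(r^{1/6})$ \emph{uniformly in} $\tau$. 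On the left-hand side, Claim~\ref{claim: ell1} and Claim~\ref{claim: ell1 constant} give $\abs{v_1(t, r) - \ell_1} \le C r^{-2}$ uniformly in $t \in I_{\max}$, so $\int_0^{\tau} v_1(t, r)\, dt = \tau \ell_1 + O(\tau r^{-2})$. Comparing the two sides of~\eqref{eq: v1 avg} and rearranging yields
\ant{
\tau\big( \abs{\ell_1} - C r^{-2}\big) \lesssim r^{1/6} \quad \text{for all } r > 0 \text{ and all } \tau > 0 .
}
If $\ell_1 \neq 0$, fixing $r$ so large that $C r^{-2} \le \tfrac{1}{2}\abs{\ell_1}$ would force $\tfrac{1}{2}\abs{\ell_1}\, \tau \lesssim r^{1/6}$ for every $\tau > 0$, which is impossible since $T_+(\vec u) = \infty$ permits $\tau \to \infty$. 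Hence $\ell_1 = 0$.

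The computation above is short, and I expect the only point requiring genuine care to be the convergence and interchange-of-integration issues around~\eqref{eq: v1 avg}: one must verify that $\int_r^{\infty} u_t(t, \rho)\, \rho\, d\rho$ and $\int_r^{\infty} u(t, \rho)\, \rho\, d\rho$ converge absolutely (respectively from $u_t(t) \in L^2(\R^5)$ and from $\abs{u(t,\rho)} \lesssim \rho^{-17/6}$), that Fubini's theorem applies on $[0, \tau] \times \{\rho \ge r\}$, and that the inner $t$-integral is evaluated by the fundamental theorem of calculus. All of these are consequences of estimates already established in Section~\ref{sec: rig}, so no new ingredient is needed.
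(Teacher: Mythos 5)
Your proof is correct and follows essentially the same route as the paper: in the finite-time case the compact support of $\vec u(t)$ forces the limit to vanish, and in the global case you integrate $v_1(t,r)$ in time, use the fundamental theorem of calculus together with the pointwise bound $\abs{u(t,\rho)} \lesssim \rho^{-17/6}$ (from~\eqref{v0 16}) to bound the result by $O(r^{1/6})$ uniformly in $\tau$, and compare against the linear-in-$\tau$ growth forced by $\ell_1 \neq 0$. The only cosmetic differences are that you take $r \to \infty$ at $t=0$ in the finite-time case (the paper sends $t \to T_+$ at fixed $R$) and you use the two-sided estimate $\abs{v_1(t,r)-\ell_1} \lesssim r^{-2}$ in place of the paper's fixed-sign argument; both are equivalent.
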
 
\begin{proof} Suppose that $\ell_1 \neq 0$. We have shown that for $R \ge R_1$  and for every $t \in I_{\max}$,
\ant{
R\int_R^{\infty}u_t(t, r) \, r \, dr = \ell_1 + O(R^{-2}),
}
Hence, by taking $R$ large, the left-hand side will have the same sign as $\ell_1$, for all $t \in I_{\max}$.
Therefore, we can choose $R \ge R_1$ large enough such that for all $t \in I_{\max}$, 
\EQ{
\label{ge ell1}
\abs{ R\int_{R}^{\infty}u_t(t, r) \, r \, dr }\ge \frac{ |\ell_1|}{2}.
}
We now consider two cases. First suppose that $T_+< \infty$. Then, since both $$\supp(u), \supp u_t \subset B(0, T_+-t)$$ we can find $t$ close enough to $T_+$ so that the left-hand-side of~\eqref{ge ell1} is identically zero, which is a contradiction if $\ell_1 \neq 0$. 

Next, consider the case that $T_+ = \infty$.  By integrating~\eqref{ge ell1} from $t=0$ to $t=T$ we obtain
\ant{
\Big| \int_0^TR\int_{R}^{\infty}u_t(t, r) \, r \, dr\, dt\Big|  \ge T\frac{|\ell_1|}{2} .
}
However, integrating in $t$ on the left-hand side and using \eqref{u bound1} we also have 
\ant{
\abs{R\int_{R}^{\infty}\int_0^Tu_t(t, r) \, r \, dt \, dr }&= \abs{ R\int_R^{\infty}(u(T, r) - u(0,r) )\, r \, dr}\\
& \lesssim R\int_R^{\infty} r^{-\frac{11}{6}}\, dr
\lesssim R^{\frac{1}{6}}.
}
Thus for a large  $R$ fixed we have 
\ant{
T\frac{|\ell_1|}{2}  \lesssim R^{\frac{1}{6}},
}
which is a contradiction once $T$ is taken large enough. 
\end{proof}

We are ready to complete the proof of Lemma~\ref{lem: space decay}. 
\begin{proof}[Proof of Lemma~\ref{lem: space decay}]
First we remark that  by putting together  Claims~\ref{claim: ell1},~\ref{claim: ell1 constant}, and~\ref{claim: ell1=0}, we have proved~\eqref{u1 decay} as well as~\eqref{u1 rate}, i.e.,  
\EQ{\label{v1  0}
\abs{v_1(r)} = O(r^{-2}) \mas r \to 0.
}
 It thus remains to prove that there exists $\ell_0 \in \R$ such that 
\ant{
\abs{v_0(r) - \ell_0} = O(r^{-4}) \mas r \to \infty.
}
To show the above, we plug~\eqref{v1 0} along with~\eqref{v0 16} into the difference estimate~\eqref{v0 diff est}. We see  that for fixed $r_0 \ge R_1$ and all $n \in \N$,  
\begin{align*}
\abs{v_0(2^{n+1}r_0) - v_0(2^nr_0)} &\lesssim (2^nr_0)^{-4} (2^nr_0)^{\frac{1}{2}} + (2^nr_0)^{-1} (2^nr_0)^{-6}\\
&\lesssim (2^nr_0)^{-\frac{7}{2}}.
\end{align*}
Thus the infinite series 
\ant{
 \sum_n \abs{ v_0(2^{n+1}r_0) - v_0(2^n r_0) } < \infty,
 }
 which then implies that there exists $\ell_0 \in \R$ so that 
 \ant{
  \lim_{n \to \infty} v_0(2^nr_0) = \ell_0.
  }
Another application of the  difference estimates~\eqref{v0 diff est} and the fact that $\abs{v_1(r)} \to 0$ gives that, 
 \ant{
  \lim_{r \to \infty} v_0(r) = \ell_0.
  }
To prove the  rate of convergence, we observe that $\abs{v_0(r)}$ is bounded since it has a limit, and therefore the difference estimate can be upgraded show that   
\ant{
\abs{ v_0(2^{n+1}r) - v_0(2^n r) } &\lesssim(2^nr)^{-4}
}
 for every $r \ge R_1$. Thus, 
\begin{align*}
\abs{v_0(r) - \ell_0} = \abs{ \sum_{n \ge 0}(v_1(2^{n+1} r) - v_1(2^n r)) } \lesssim r^{-4} \sum_{n \ge 0} 2^{-4n} \lesssim r^{-2},
\end{align*}
completing the proof.
\end{proof}

\textbf{Step $3$:} The final step in the proof of  Proposition~\ref{prop: r1} is to conclude that $\vec u(t, r)  \equiv (0, 0)$. We divide the final step into two cases depending on whether  $\ell_0$ in Lemma~\ref{lem: space decay} is zero or nonzero. 
\\
 \textbf{Case 1: $\ell_0=0$ implies $\vec u(t)\equiv(0, 0)$:}

We formulate the above as a lemma: 
\begin{lem} \label{lem: ell=0}Let $\vec u(t)$ be as in Proposition~\ref{prop: r1} and let $\ell_0\in \R$ be as in Lemma~\ref{lem: space decay}.  If $\ell_0 = 0$ then $\vec u(t)  \equiv (0, 0)$. 
\end{lem}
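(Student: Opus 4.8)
The plan is to show that when $\ell_0 = 0$ the asymptotics of Lemma~\ref{lem: space decay} force the exterior energy at $t=0$ to decay so fast (as the truncation radius $R \to \infty$) that the channels-of-energy inequality of Lemma~\ref{lem: R ineq} becomes self-improving, and a standard bootstrap then collapses $\vec u(0)$ to zero on $\{r \ge R_1\}$; unique continuation / finite speed of propagation arguments combined with the compactness property then propagate this to $\vec u \equiv (0,0)$ everywhere. More concretely, I would first translate the hypothesis $\ell_0 = 0$ via~\eqref{u0 rate} and~\eqref{u1 rate} into the decay statements $\abs{v_0(r)} = O(r^{-4})$ and $\abs{v_1(r)} = O(r^{-2})$ as $r \to \infty$, where $v_0, v_1$ are as in~\eqref{v def}. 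In terms of the projections~\eqref{v u project}, this gives
\EQ{
\| \pi_R \vec u(0)\|_{\dot{H}^1 \times L^2(r \ge R)}^2 = 3R^{-3} v_0^2(0,R) + R^{-1} v_1^2(0,R) \lesssim R^{-5} + R^{-5} \lesssim R^{-5},
}
so that $\| \pi_R \vec u(0)\|_{\dot H^1 \times L^2(r \ge R)} \lesssim R^{-5/2}$ for all $R \ge R_1$.

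Next I would feed this into the inequality~\eqref{R ineq} of Lemma~\ref{lem: R ineq}, which reads $\| \pi_R^\perp \vec u(0)\|_{\dot H^1 \times L^2(r \ge R)} \lesssim R^{-1} \| \pi_R \vec u(0)\|^3_{\dot H^1 \times L^2(r \ge R)}$. Combining the two,
\EQ{
\| \vec u(0)\|_{\dot H^1 \times L^2(r \ge R)}^2 = \| \pi_R \vec u(0)\|^2 + \| \pi_R^\perp \vec u(0)\|^2 \lesssim R^{-5} + R^{-2}\cdot R^{-15} \lesssim R^{-5}.
}
Actually this already says $\| \vec u(0)\|_{\dot H^1 \times L^2(r \ge R)} \to 0$ at a definite polynomial rate, but to conclude vanishing I would run a bootstrap on $v_0, v_1$ directly using the difference estimates of Corollary~\ref{cor: diff est}: since $\pi_R \vec u(0)$ is controlled by $v_0(0,R), v_1(0,R)$, and $\pi_R^\perp$ controls $\int_R^\infty (\rho^{-1}\partial_\rho v_0)^2 + (\partial_\rho v_1)^2\, d\rho$, the inequality~\eqref{R ineq} together with $\ell_0 = 0$ (so $v_0(r) \to 0$, $v_1(r)\to 0$) gives $\abs{v_0(r)} \lesssim \| \pi_r\|^3 \lesssim $ (a much higher power of $r^{-1}$) after one iteration, and iterating shows $v_0(r) = v_1(r) = 0$ for all $r \ge R_1$ — i.e. $\vec u(0) \equiv (0,0)$ on $\{r \ge R_1\}$. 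The precise mechanism is: let $g(R) := \| \pi_R \vec u(0)\|_{\dot H^1\times L^2(r\ge R)}$; one shows $g$ is essentially non-increasing up to the $\pi_R^\perp$ correction, and $g(R)^2 \lesssim R^{-2} g(R)^6 + (\text{tail from }r \ge 2R)$, which with $g$ already small forces $g \equiv 0$ on a neighborhood of infinity.

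Having shown $\vec u(0) \equiv (0,0)$ on $\{r \ge R_1\}$, I would then upgrade to $\vec u(0) \equiv (0,0)$ everywhere. One route: by finite speed of propagation and Remark~\ref{u uR}, $\vec u$ agrees on the exterior cone $\mathcal{C}_{R_1}$ with the solution $\vec u_{R_1}$ of the truncated problem~\eqref{h eq} having zero data, hence $\vec u \equiv 0$ on $\mathcal{C}_{R_1}$; but the compactness property with $\inf_t N(t) > 0$ (indeed after the reductions of Remark~\ref{rem: N} we may take $N(t) \equiv 1$, $I_{\max} = \R$) is incompatible with a nonzero solution supported inside $\{r \le R_1 + \abs{t}\}$ for all $t$ — by scaling the pre-compact set $K$ down and using that any nonzero element would have to sit at arbitrarily small scales, forcing $\vec u(t) \to 0$, hence $\vec u(t) \equiv 0$. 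Alternatively one applies the same exterior-energy argument at every time $t$ (the constants in Lemma~\ref{lem: R ineq} are uniform in $t$) to get $\vec u(t) \equiv (0,0)$ on $\{r \ge R_1\}$ for all $t$, and then a standard ODE/unique-continuation argument in the interior region (using the compact support when $T_+ < \infty$, or the energy-space compactness when $T_+ = \infty$) finishes. The main obstacle I anticipate is making the bootstrap from~\eqref{R ineq} genuinely self-improving rather than merely giving a fixed polynomial rate: one has to be careful that the $\pi_R^\perp$ term, when re-expressed via the fundamental theorem of calculus on dyadic annuli (as in Corollary~\ref{cor: diff est}), actually closes the iteration down to exact vanishing, and that the tail contributions from $\{r \ge 2R\}$ do not obstruct this — this is where the precise cubic structure $\abs{F(r,u)} \lesssim \abs{u}^3$ and the gain of one power of $R^{-1}$ in~\eqref{R ineq} are essential.
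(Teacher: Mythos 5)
The first half of your proposal is essentially the paper's argument: translating $\ell_0=0$ into $\abs{v_0(r)}=O(r^{-4})$, $\abs{v_1(r)}=O(r^{-2})$, and playing the dyadic difference estimates of Corollary~\ref{cor: diff est} (which give an almost-monotonicity lower bound $\abs{\vec v(2^{n+1}r_0)}\ge (1-Cr_0^{-5})\abs{\vec v(2^n r_0)}$, hence at worst geometric decay like $(3/4)^n$) against the upper bound $\abs{\vec v(2^n r_0)}\lesssim 2^{-2n}$ forced by $\ell_0=0$. Since $4^{-n}$ beats $(4/3)^{-n}$, this forces $\vec v(r_0)=(0,0)$ for all large $r_0$, and then~\eqref{R ineq} kills $\pi_{r_0}^\perp\vec u(0)$ as well; this is exactly Claim~\ref{claim: comp supp} in the paper, and your worry about "closing the iteration down to exact vanishing" is resolved precisely by this comparison of rates. (Minor slip: $3R^{-3}v_0^2(R)\lesssim R^{-11}$, not $R^{-5}$, but the conclusion $\|\pi_R\vec u(0)\|\lesssim R^{-5/2}$ stands.)

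The genuine gap is in your final step, upgrading from "$\vec u(0)\equiv 0$ on $\{r\ge R_1\}$" to "$\vec u(0)\equiv 0$ everywhere." Neither of your two routes works. Route 1 asserts that a solution with the compactness property, $\inf_t N(t)>0$, and support in $\{r\le R_1+\abs{t}\}$ must vanish "by scaling the pre-compact set down" — but support in a growing cylinder is no constraint at all for large $\abs{t}$ when $N(t)\equiv 1$ and $T_+=\infty$, and ruling out such solutions is the content of Proposition~\ref{prop: r1} itself, so this is circular. Route 2 invokes a "standard ODE/unique-continuation argument in the interior," but unique continuation from the exterior of a timelike cylinder into the interior is not available off the shelf for these nonlinear equations, and the paper does not use one. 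What the paper actually does is a shrinking-support argument: having established that $(u_0,u_1)$ is compactly supported, it defines $\rho_0$ as the infimum of radii beyond which the data vanishes, picks $\rho_1<\rho_0$ close enough to $\rho_0$ that $\|\vec u(0)\|_{\dot H^1\times L^2(r\ge\rho_1)}$ is as small as desired (this smallness, not largeness of the radius, is what licenses Lemma~\ref{lem: h sd} and hence the exterior-energy inequality at $R=\rho_1$), and then uses the difference estimates between $\rho_1$ and $\rho_0$ — where $v_0(\rho_0)=v_1(\rho_0)=0$ — to conclude $\abs{v_0(\rho_1)}+\abs{v_1(\rho_1)}\le C\e\,(\abs{v_0(\rho_1)}+\abs{v_1(\rho_1)})$, hence zero, hence vanishing on $r\ge\rho_1$, contradicting the minimality of $\rho_0$. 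This boundary-of-support mechanism is the missing idea; without it your argument stops at a fixed large radius $R_1$ and cannot penetrate the interior.
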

To prove the lemma, we will first prove the following  preliminary claim, which says that if $\ell_0 = 0$ then $(u_0, u_1)$ is compactly supported. We conclude the proof of the lemma by showing that the only solution with pre-compact trajectory and compactly supported initial data is  $\vec u(t) = (0, 0)$. 
\begin{claim}\label{claim: comp supp}Let $\ell_0$ be as in Lemma~\ref{lem: space decay}. If $\ell_0=0$ then $(u_0, u_1)$ is compactly supported. 
\end{claim}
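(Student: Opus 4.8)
The plan is to run a bootstrap on the difference estimates from Corollary~\ref{cor: diff est2}, using the hypothesis $\ell_0 = 0$ together with the already-established decay $\abs{v_1(r)} = O(r^{-2})$ from Lemma~\ref{lem: space decay}. Since $\ell_0 = 0$ we already know $\abs{v_0(r)} = O(r^{-4})$, so in particular $\abs{u_0(r)} = \abs{v_0(r)}r^{-3} = O(r^{-7})$. The point is that this is far better than the generic decay coming from finite energy, and one can now feed these improved bounds back into the difference estimates to get faster and faster decay. First I would rewrite the difference estimates~\eqref{v0 diff est} and~\eqref{v1 diff est} as
\begin{align*}
&\abs{v_0(t,r) - v_0(t, r')} \lesssim r^{-4}\abs{v_0(t,r)}^3 + r^{-1}\abs{v_1(t,r)}^3,\\
&\abs{v_1(t,r) - v_1(t,r')} \lesssim r^{-5}\abs{v_0(t,r)}^3 + r^{-2}\abs{v_1(t,r)}^3,
\end{align*}
valid for $R_1 \le r \le r' \le 2r$ uniformly in $t$. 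Summing the first over dyadic scales $r = 2^n r_0$ and using that $v_0, v_1 \to 0$, one gets $\abs{v_0(r)} \lesssim \sum_{2^n r_0 \ge r} (2^n r_0)^{-4}\abs{v_0(2^n r_0)}^3 + (2^n r_0)^{-1}\abs{v_1(2^n r_0)}^3$, and similarly for $v_1$. Starting from $\abs{v_0(r)}\lesssim r^{-4}$, $\abs{v_1(r)}\lesssim r^{-2}$, the right-hand side of the $v_1$ estimate is $\lesssim \sum (2^n r_0)^{-5-12} + (2^n r_0)^{-2-6} \lesssim r^{-8}$, so $\abs{v_1(r)} \lesssim r^{-8}$; plugging that plus $\abs{v_0(r)}\lesssim r^{-4}$ into the $v_0$ estimate gives $\abs{v_0(r)}\lesssim \sum (2^n r_0)^{-4-12} + (2^n r_0)^{-1-24} \lesssim r^{-16}$. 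Iterating, at each stage the exponents roughly triple, so both $v_0$ and $v_1$ decay faster than any polynomial.

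Once we know $\abs{v_0(r)} + \abs{v_1(r)}$ decays faster than every power of $r^{-1}$ (equivalently $\abs{u_0(r)} + \abs{\int_r^\infty u_1(\rho)\rho\,d\rho}$ does), I would upgrade this to genuine compact support by a similar, but now "super-exponential," run of the bootstrap: from $\abs{v_0(r)}, \abs{v_1(r)} \lesssim_N r^{-N}$ for all $N$, the cubic nonlinearity in the difference estimates makes the decay of the \emph{tail} $\sum_{2^n r_0 \ge r}$ controlled by a geometric-type series whose ratio can be made arbitrarily small, which forces $v_0(r) = v_1(r) = 0$ for $r$ large. Concretely, one fixes a large radius $\rho_0 \ge R_1$, sets $A_0 = \sup_{r \ge \rho_0}\abs{v_0(r)}$, $B_0 = \sup_{r\ge\rho_0}\abs{v_1(r)}$ (finite by the polynomial bound), and from the difference estimates derives $A_0 \lesssim \rho_0^{-1}(A_0^3 + B_0^3)$ and $B_0 \lesssim \rho_0^{-2}(A_0^3 + B_0^3)$; taking $\rho_0$ large and using $A_0, B_0$ already small forces $A_0 = B_0 = 0$. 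This gives $v_0(r) = v_1(r) = 0$ for $r \ge \rho_0$, i.e. $u_0(r) = 0$ and $\int_r^\infty u_1(\rho)\rho\,d\rho = 0$ for all $r \ge \rho_0$; differentiating the latter in $r$ gives $u_1(r) = 0$ for $r \ge \rho_0$ as well. Hence $(u_0, u_1)$ is supported in $B(0,\rho_0)$.

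The main obstacle I expect is organizing the bootstrap cleanly: one must be careful that the implicit constants in the iterated decay bounds do not blow up, and the cleanest route is to not track the decay rates through infinitely many steps but rather to first establish faster-than-polynomial decay (finitely many doublings of the exponent suffice to get, say, $r^{-100}$, which is all one needs for the final geometric argument) and then close with the single fixed-point-type inequality $A_0 \lesssim \rho_0^{-1}(A_0^3 + B_0^3)$ at a large radius. One should also double-check the interchange of the dyadic summation with the sup over $t$, but since all the difference estimates in Corollary~\ref{cor: diff est} and Corollary~\ref{cor: diff est2} hold uniformly in $t \in I_{\max}$, this causes no trouble, and the support radius $\rho_0$ is then independent of $t$ — consistent with Remark~\ref{rem: N}$(2)$.
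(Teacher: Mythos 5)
Your argument is correct, but it closes the claim by a different mechanism than the paper. The paper turns the difference estimates~\eqref{v0 diff est}--\eqref{v1 diff est} into \emph{lower} bounds: since $\ell_0=0$ gives $\abs{v_0}\lesssim r^{-4}$, $\abs{v_1}\lesssim r^{-2}$, the cubic terms are perturbative and one gets $\abs{v_0(2^{n+1}r_0)}+\abs{v_1(2^{n+1}r_0)}\ge(1-2C_1r_0^{-5})\left(\abs{v_0(2^{n}r_0)}+\abs{v_1(2^{n}r_0)}\right)$; iterating yields the lower bound $(3/4)^{n}\left(\abs{v_0(r_0)}+\abs{v_1(r_0)}\right)$, which is incompatible with the upper bound $\lesssim 2^{-2n}r_0^{-2}$ unless $v_0(r_0)=v_1(r_0)=0$, and then Lemma~\ref{lem: R ineq} kills $\pi_{r_0}^{\perp}\vec u(0)$ and hence the full exterior energy. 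You instead telescope the difference estimates \emph{upward} (legitimately, since $v_0,v_1\to0$ because $\ell_0=0$) to obtain the self-improving inequalities $A_0\lesssim\rho_0^{-1}(A_0^3+B_0^3)$, $B_0\lesssim\rho_0^{-2}(A_0^3+B_0^3)$ for the exterior sups $A_0=\sup_{r\ge\rho_0}\abs{v_0}$, $B_0=\sup_{r\ge\rho_0}\abs{v_1}$, and since $A_0+B_0\to0$ as $\rho_0\to\infty$ the cubic inequality forces $A_0=B_0=0$ for $\rho_0$ large. This gives $v_0\equiv v_1\equiv 0$ on $\{r\ge\rho_0\}$ directly, so you bypass the final $\pi_R^{\perp}$ step; your recovery of $u_1=0$ a.e.\ by differentiating $\int_r^{\infty}u_1(\rho)\rho\,d\rho=0$ is fine because $u_1\in L^2$ on the exterior by Lemma~\ref{lem: compact}. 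Your closing device is essentially the same smallness/fixed-point argument the paper uses afterwards in Lemma~\ref{lem: ell=0} at the support radius, here applied ``at infinity.'' Two small remarks: the preliminary bootstrap to faster-than-polynomial decay is harmless but unnecessary, since the final inequality already closes with the $O(\rho_0^{-4})$, $O(\rho_0^{-2})$ bounds (any bound tending to zero suffices); and, as you note, if you do iterate the decay exponents you should stop after finitely many steps so the implicit constants stay under control.
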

\begin{proof}
If $\ell_0 = 0$, then  for $r \ge R_1$ we have
\EQ{ \label{ell0 0}
\abs{v_0(r)}  = O (r^{-4}) \mas r \to \infty\\
\abs{v_1(r)}  = O( r^{-2}) \mas r \to \infty
}
Hence  for $r_0 \ge R_1$, 
\EQ{\label{lower bound}
\abs{v_0(2^nr_0)} + \abs{v_1(2^nr_0)} \lesssim (2^nr_0)^{-4} +(2^nr_0)^{-2}
}
On the other hand, the difference estimates~\eqref{v0 diff est} and~\eqref{v1 diff est} and~\eqref{ell0 0} yield
\ant{
&\abs{v_0(2^{n+1} r_0)} \ge (1-C_1(2^nr_0)^{-12}) \abs{v_0(2^nr_0)} - C_1(2^nr_0)^{-5} \abs{v_1(2^nr_0)}\\
&\abs{v_1(2^{n+1} r_0)} \ge (1-C_1(2^nr_0)^{-6}) \abs{v_0(2^nr_0)} - C_1(2^nr_0)^{-13} \abs{v_1(2^nr_0)}
}
For large $r_0$ we  combine the above two lines to deduce that
\ant{
\abs{v_0(2^{n+1} r_0)} + \abs{v_1(2^{n+1} r_0)} \ge (1- 2C_1r_0^{-5})( \abs{v_0(2^{n} r_0)} + \abs{v_1(2^{n} r_0)})
}
Now fix $r_0$ large enough so that $2C_1r_0^{-5} < \frac{1}{4}$.  By an  inductive argument we conclude that 
\begin{align*}
( \abs{v_0(2^{n} r_0)} + \abs{v_1(2^{n} r_0)}) \ge \left(\frac{3}{4}\right)^{n} ( \abs{v_0(r_0)} + \abs{v_1( r_0)})
\end{align*}
Next, use~\eqref{lower bound} to estimate the left-hand-side above. We have 
\ant{
\left(\frac{3}{4}\right)^{n} ( \abs{v_0(r_0)} + \abs{v_1( r_0)}) \lesssim 2^{-2n} r_0^{-2}
}
which yields 
\ant{
3^n ( \abs{v_0(r_0)} + \abs{v_1( r_0)}) \lesssim 1, 
}
However, this is  impossible unless $(v_0(r_0), v_1(r_0)) = (0, 0)$. Therefore, $$\vec v(r_0):= (v_0(r_0), v_1(r_0)) = (0, 0).$$ To make this statement about $(u_0, u_1)$ we recall that~\eqref{v u project} implies that 
\ant{
\| \pi_{r_0} \vec u(0)\|_{ \dot{H}^1 \times L^2(r \ge r_0)} = 0.
}
 Lemma~\ref{lem: R ineq} then gives 
\ant{
\| \pi_{r_0}^{\perp} \vec u(0)\|_{ \dot{H}^1 \times L^2(r \ge r_0)} = 0,
}
and thus 
\ant{
\|  \vec u(0)\|_{ \dot{H}^1 \times L^2(r \ge r_0)} = 0,
}
which completes the proof since we know that $\lim_{r \to \infty} u_0(r) = 0$. 
\end{proof}

\begin{proof}[Proof of Lemma~\ref{lem: ell=0}] Suppose that $\ell_0 = 0$. Then, by Claim~\ref{claim: comp supp}, $(u_0, u_1)$ must be compactly supported. Assume that $(u_0, u_1) \not \equiv (0, 0)$.  We can then define $\rho_0>0$ by 
\ant{
\rho_0:= \inf\left\{ \rho \,  : \,   \left\| \vec u(0)\right\|_{H^1 \times L^2(r \ge \rho)} = 0\right\}
}
Let $\e>0$ be a small number (to be determined below). Find $\rho_1 = \rho_1( \e)$ with $\frac{ 1}{2} \rho_0 < \rho_1 < \rho_0$  so that 
\begin{align*}
0< \| \vec u(0)\|_{\dot{H}^1 \times L^2(r \ge \rho_1)}^2  < \e \le \de_1^2
\end{align*}
with $\de_1$ as in~\eqref{R1 de1}. We have 
\EQ{\label{small}
3 \rho_1^{-3} v_0^2(\rho_1) + \rho_1^{-1}v_1^2(\rho_1)&+ \int_{\rho_1}^\I \left( \frac{1}{r} \p_r v_0( r) \right)^2 \, dr + \int_{\rho_1}^\I \left( \p_r v_1(r) \right)^2 \, dr  = \\
& = \| \pi_{\rho_1} \vec u(0)\|^2_{\dot{H}^1 \times L^2(r \ge \rho_1)}+\| \pi_{\rho_1}^{\perp} \vec u(0)\|^2_{\dot{H}^1 \times L^2(r \ge \rho_1)} \\
&= \|  \vec u(0)\|^2_{\dot{H}^1 \times L^2(r \ge \rho_1)} < \e
}
Using Lemma~\ref{lem: v pro} with $R= \rho_1$ gives 
\EQ{\label{v rho}
\left( \int_{\rho_1}^\I \left[\left( \frac{1}{r} \p_r v_0(r) \right)^2 + \left( \p_r v_1( r) \right)^2 \right]\, dr\right)^{\frac{1}{2}}  \lesssim \rho_1^{-\frac{11}{2}} \abs{v_0( \rho_1)}^3 + \rho_1^{-\frac{5}{2}}\abs{v_1( \rho_1)}^3
}
Since $v_0( \rho_0) = v_1( \rho_0) = 0$ we can argue as in Corollary~\ref{cor: diff est} and Corollary~\ref{cor: diff est 2} to obtain
\ant{
&\abs{v_0( \rho_1)} = \abs{v_0(\rho_1) - v_0( \rho_0)} \le C_1\, \e\, (\abs{v_0( \rho_1)} + \abs{v_1( \rho_1)})  \\
&\abs{v_1( \rho_1)} = \abs{v_1(\rho_1) - v_1( \rho_0)} \le C_2\, \e\, (\abs{v_0( \rho_1)} + \abs{v_1( \rho_1)})
}
where above we used that $\frac{1}{2} \rho_0< \rho_1 < \rho_0$ to find constants $C_1, C_2$ depending only $\rho_0$ which is fixed, and the uniform constant in~\eqref{v rho}, but not on~$\e$. Combining the above estimates  we get
\EQ{
(\abs{v_0( \rho_1)} + \abs{v_1( \rho_1)}) \le C_3 \e (\abs{v_0( \rho_1)} + \abs{v_1( \rho_1)})
}
which implies that $\abs{v_0( \rho_1)} = \abs{v_1( \rho_1)} = 0$ after choosing $\e>0$ small enough. By~\eqref{v rho} and~\eqref{small} we see that 
\begin{align*}
\| \vec u(0)\|_{\dot{H}^1 \times L^2(r \ge \rho_1)} = 0.
\end{align*}
But this contradicts the definition of $\rho_0$ because $\rho_1< \rho_0$.
\end{proof}
We have finished the proof of Proposition~\ref{prop: r1} in the case where $\ell_0 = 0$. We next move on to  the case $\ell_0 \neq0$.

 \textbf{Case 2: $\ell_0 \neq 0$ cannot happen:} 
To complete the proof of Proposition~\ref{prop: r1} we prove that $\ell_0 \neq 0$ is impossible. Indeed, we show that if $\ell_0 \neq0$ then the solution $\vec u(t)$ to ~\eqref{u eq} (resp.~\eqref{u eq wms}, resp.~\eqref{u eq wmh}) with the compactness property as in Proposition~\ref{prop: r1} must be equal to a nonzero stationary solution, $\fy_{\ell_0}$, of~\eqref{eq e} (resp.~\eqref{hm 5 s}, resp.~\eqref{hm 5 h}). 

On the other hand, we know by Lemma~\ref{lem: ell} (resp. Lemma~\ref{lem: hm}) that $(\fy_{\ell_0}, 0) \not \in \dot{H}^{\frac{3}{2}}(\R^5)$ for any $\ell_0 \neq 0$, which gives us  contradiction since by construction our solution $\vec u(t) \in  \dot{H}^{\frac{3}{2}} \times \dot{H}^{\frac{1}{2}}$.

The basic idea is to linearize about the elliptic solution $\fy_{\ell_0}$ given by Lemma~\ref{lem: hm} (resp. Lemma~\ref{lem: hm}) with the same leading order spacial asymptotics as the critical data $\vec u(0)$.  In Steps $1, 2$ we proved that 
\ant{
r^3u_0(r) = \ell_0 + O(r^{-4}) \mas r \to \infty.
}
Now, let $\fy_{\ell_0}(r)$ be the solution from either Lemma~\ref{lem: ell} or Lemma~\ref{lem: hm} depending of course on which equation $\vec u(0)$ is initial data for.   Hence $\fy_{\ell_0}$ solves the relevant elliptic equation and satisfies
\begin{align*}
\fy_{\ell_0}(r)= \ell_0 + O(r^{-4}) \mas r \to \infty.
\end{align*}
Next define $ \vec w_{\ell_0}(0) = ( w_{\ell_0, 0}, w_{\ell_0, 1})$ by 
\EQ{\label{w l def}
&w_{\ell_0, 0}(r) := u_0(r) - \fy_{\ell_0}(r)\\
&w_{\ell_0, 1}(r):=  u_1(r),
}
and for all  $t \in I_{\max}( \vec u)$  set 
\EQ{\label{w l t def}
w_{\ell_0}(t, r):=  u(t, r) - \fy_{\ell_0}(r).
}
 We  record various properties of $\vec w_{\ell_0} = ( w_{\ell_0}, \p_t w_{\ell_0})$. First, we have
\EQ{\label{w l limits}
&v_{\ell_0, 0}(r):= r^3 w_{\ell_0, 0}(r) = O(r^{-4}) \quad \textrm{as} \, \, \,  r \to \infty\\
&v_{\ell_0, 1}(r):= r \int_r^{\infty} w_{\ell_0, 1}(\rho) \rho, d\rho = O(r^{-2}) \quad \textrm{as} \, \, \, r \to \infty
}
Next, we write down the equation for $\vec w_{\ell_0}$. If $\vec u(t)$ solves~\eqref{u eq} and $\fy_{\ell_0}$ solves~\eqref{eq e} then $\vec w_{\ell_0}(t)$ solves
\EQ{
\label{w eq}
\p_{tt}w_{\ell_0} - \p_{rr} w_{\ell_0} - \frac{4}{r} \p_r w_{\ell_0} &= 3 \fy_{\ell_0}^2w_{\ell_0} + 3 \fy_{\ell_0}w_{\ell_0}^2 + w_{\ell_0}^3\\
& =: \NN_{\textrm{cubic}}(  \fy_{\ell_0}, w_{\ell_0})
}
If $\vec u(t)$ solves~\eqref{u eq wms} or~\eqref{u eq wmh} then $\vec w_{\ell_0}(t)$ solves an equation of the form 
\EQ{
\label{w eq wm}
&\p_{tt}w_{\ell_0} - \p_{rr} w_{\ell_0} - \frac{4}{r} \p_r w_{\ell_0} =  \NN_{\textrm{w.m.}}(r,  \fy_{\ell_0}, w_{\ell_0}) \\
& \NN_{ \textrm{w.m.}}(r,  \fy_{\ell_0}, w_{\ell_0})  := - 2\frac{\CC(2 r \fy_{\ell_0}) - 1}{r^2}  w_{\ell_0} - \Sm(2r \fy_{\ell_0})\frac{ \CC(2 r w_{\ell_0}) - 1}{r^3}  \\
& \qquad \qquad \qquad  \qquad -\CC(2r \fy_{\ell_0})\frac{( \Sm(2r w_{\ell_0})- 2r w_{\ell_0})}{r^3}
}
where if $\vec u(t)$ solves the $\Sp^3$ target equation then $\CC = \cos$, $\Sm = \sin$ and if $\vec u(t)$ solves the $\Hp^3$ target equation we have $\CC = \cosh$, $\Sm = \sinh$. In either case,  we have the estimates
\EQ{ \label{nn wm}
\abs{\NN_{\textrm{w.m.}}( r, \fy_{\ell_0}, w_{\ell_0})} \lesssim  \abs{\fy_{\ell}}^2 \abs{w_{\ell_0}}  +  \abs{ \fy_{\ell_0}} \abs{ w_{\ell_0}}^2 + \abs{ w_{\ell_0}}^3
}
where again we have used our $L^{\infty}$ control over $r u$ and $r \fy_{\ell}$ in the case of the $\Hp^3$ target equation. 

The crucial point here is that  by construction $\vec w_{\ell_0}$ inherits the main conclusions from Lemma~\ref{lem: compact} because $\fy_{\ell_0}$ is stationary, i.e., 
 \EQ{
 \|\vec w_{\ell_0}(t)\|_{\dot H^1 \times L^2(r \ge R+ \abs{t})} \to 0 \mas \abs{t} \to \infty
 }
W are now in position to prove, much as in the case $\ell_0 = 0$, that we must have $\vec w_{\ell_0} \equiv (0, 0)$. We state this conclusion as a lemma. 
\begin{lem}\label{lem: wl 0}
Suppose $\ell_0 \neq 0$. Let   $\vec w_{\ell_0}$ be as in~\eqref{w l def}, \eqref{w l t def}. Then, $\vec w_{\ell_0} \equiv (0, 0)$, that is, 
$\vec u(0) = ( \fy_{\ell_0}, 0)$ where $\fy_{\ell_0}$ is given by either Lemma~\ref{lem: ell} or Lemma~\ref{lem: hm} (depending of course on which equation $\vec u(t)$ solves).   This means that  $\vec u(0) \notin \dot H^{\frac{3}{2}} \times \dot{H}^{\frac{1}{2}}$ which is a contradiction. 
\end{lem}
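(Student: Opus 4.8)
The plan is to run, for the difference $\vec w_{\ell_0}$, the same argument that disposed of the case $\ell_0 = 0$ (Claim~\ref{claim: comp supp} and Lemma~\ref{lem: ell=0}). By construction $\vec w_{\ell_0}$ behaves like a solution whose data has vanishing leading asymptotics: \eqref{w l limits} records $v_{\ell_0, 0}(r) = r^3 w_{\ell_0, 0}(r) = O(r^{-4})$ and $v_{\ell_0, 1}(r) = O(r^{-2})$ as $r \to \infty$, which is exactly the decay that was available in the $\ell_0 = 0$ case, and which follows from \eqref{u0 rate}, \eqref{u1 rate} of Lemma~\ref{lem: space decay} together with the asymptotics of $\fy_{\ell_0}$ in \eqref{fy ell} (resp.\ \eqref{hm ell s}). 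Moreover, since $\fy_{\ell_0}$ is stationary, $\vec w_{\ell_0}(t)$ solves the autonomous equation \eqref{w eq} (cubic case) or \eqref{w eq wm} (wave maps case) on all of $I_{\max}(\vec u)$; it inherits the exterior‑energy vanishing $\|\vec w_{\ell_0}(t)\|_{\dot H^1 \times L^2(r \ge R + |t|)} \to 0$ as $|t| \to \infty$ from the corresponding property of $\vec u$ recorded just above (via Lemma~\ref{lem: compact}); and by finite speed of propagation a compactly supported $\vec w_{\ell_0}(0)$ stays compactly supported for all $t$.

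The one ingredient that must be re-derived is the exterior energy inequality of Lemma~\ref{lem: R ineq} -- equivalently Lemma~\ref{lem: v pro} -- for \eqref{w eq}, \eqref{w eq wm}. Relative to the purely cubic situation of Lemma~\ref{lem: h sd}, the nonlinearities $\NN_{\textrm{cubic}}(\fy_{\ell_0}, w_{\ell_0}) = 3\fy_{\ell_0}^2 w_{\ell_0} + 3\fy_{\ell_0} w_{\ell_0}^2 + w_{\ell_0}^3$ and $\NN_{\textrm{w.m.}}(r, \fy_{\ell_0}, w_{\ell_0})$ (see \eqref{nn wm}) carry an extra term linear in $w_{\ell_0}$ with coefficient $O(\fy_{\ell_0}^2)$ and an extra quadratic term with coefficient $O(\fy_{\ell_0})$. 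The key fact to exploit is $|\fy_{\ell_0}(r)| \lesssim r^{-3}$ for $r$ bounded away from $0$ -- a consequence of \eqref{fy ell} (resp.\ \eqref{hm ell s}), together with the uniform $L^\infty$ bound on $r\fy_{\ell_0}$ in the $\Hp^3$ case -- so the linear potential is $O(r^{-6})$ and the quadratic coefficient $O(r^{-3})$. Localized to an exterior cone $\Cc_R$ these coefficients are small (gaining powers of $R^{-1}$) and, crucially, decay integrably in $t$, since on $\Cc_R$ one has $|\fy_{\ell_0}| \lesssim (R + |t|)^{-3}$; this is what allows the Strichartz/Duhamel comparison of $\vec w_{\ell_0}$ on $\Cc_R$ with the free evolution of its truncated data to close as in Lemma~\ref{lem: h sd}. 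Feeding that comparison and the exterior vanishing into Proposition~\ref{prop: ext en} as in the proof of Lemma~\ref{lem: R ineq} should yield, for $R \ge R_0$ and uniformly in $t$,
\[
\| \pi^{\perp}_R \vec w_{\ell_0}(t) \|_{\dot{H}^1 \times L^2(r \ge R)} \lesssim R^{-1} \| \pi_R \vec w_{\ell_0}(t) \|_{\dot{H}^1 \times L^2(r \ge R)}^3 ,
\]
hence also the analogue of Lemma~\ref{lem: v pro} for the pair $(v_{\ell_0, 0}, v_{\ell_0, 1})$.

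Given this inequality and the decay \eqref{w l limits}, the remaining steps transcribe Steps $1$--$3$ of the $\ell_0 = 0$ case. The difference estimates (as in Corollaries~\ref{cor: diff est}, \ref{cor: diff est 2}) for $(v_{\ell_0, 0}, v_{\ell_0, 1})$ combined with \eqref{w l limits} run the geometric‑series argument of Claim~\ref{claim: comp supp} and force $(v_{\ell_0, 0}(r_0), v_{\ell_0, 1}(r_0)) = (0, 0)$ for some large $r_0$; then $\|\pi_{r_0} \vec w_{\ell_0}(0)\|_{\dot H^1 \times L^2(r \ge r_0)} = 0$, and the inequality above upgrades this to $\|\vec w_{\ell_0}(0)\|_{\dot H^1 \times L^2(r \ge r_0)} = 0$, so $\vec w_{\ell_0}(0)$ is compactly supported. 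Finally the argument of Lemma~\ref{lem: ell=0} -- shrinking the support radius via Lemma~\ref{lem: v pro} and the difference estimates -- gives $\vec w_{\ell_0} \equiv (0,0)$, i.e.\ $\vec u(0) = (\fy_{\ell_0}, 0)$. Since $\ell_0 \neq 0$, Lemma~\ref{lem: ell} (resp.\ Lemma~\ref{lem: hm}) gives $\fy_{\ell_0} \notin \dot H^{\frac{3}{2}}(\R^5)$, contradicting $\vec u(0) \in \dot H^{\frac{3}{2}} \times \dot{H}^{\frac{1}{2}}$ and thereby completing the proof of Proposition~\ref{prop: r1}.

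The hard part will be the middle paragraph: re-running the small‑data / exterior‑energy machinery of Lemma~\ref{lem: h sd} and Lemma~\ref{lem: R ineq} for the linearized flow, where one must verify that the linear potential term $O(\fy_{\ell_0}^2 w_{\ell_0})$ and the quadratic term do not spoil the perturbative comparison with the free evolution. This is precisely the place where the $r^{-3}$ decay of the singular stationary solutions $\fy_{\ell_0}$ from Lemma~\ref{lem: ell} and Lemma~\ref{lem: hm} is indispensable; once the exterior energy inequality for $\vec w_{\ell_0}$ is in hand, the rest is the $\ell_0 = 0$ argument verbatim.
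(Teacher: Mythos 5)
Your overall strategy is exactly the paper's: subtract $\fy_{\ell_0}$, transfer the compactness/exterior-energy information to $\vec w_{\ell_0}$, truncate the potential and nonlinearity outside a cone to get a perturbative small-data theory in $\dot H^1\times L^2$, feed this into Proposition~\ref{prop: ext en}, and then rerun the difference-estimate and support-shrinking arguments. However, the key displayed inequality in your middle paragraph is not what this machinery produces, and as stated it is not provable by your method. The linearized equation \eqref{w eq} contains the term $3\fy_{\ell_0}^2 w_{\ell_0}$, which is \emph{linear} in $w_{\ell_0}$; in the Duhamel comparison of $\vec w_{\ell_0}$ with the free evolution of its truncated data, this term contributes an error of size roughly $R^{-4}\,\|\vec w(0)\|_{\dot H^1\times L^2}$ --- linear in the data --- and the quadratic term $3\fy_{\ell_0}w_{\ell_0}^2$ contributes roughly $R^{-5/2}\|\vec w(0)\|^2$. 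For small data these dominate $R^{-1}\|\vec w(0)\|^3$, so the purely cubic bound
$\|\pi_R^\perp \vec w_{\ell_0}(t)\|\lesssim R^{-1}\|\pi_R \vec w_{\ell_0}(t)\|^3$
cannot be extracted; what one actually gets (this is the paper's Lemma~\ref{lem: w R ineq}) is
\begin{align*}
\| \pi^{\perp}_R \vec w_{\ell_0}(t) \|_{\dot{H}^1 \times L^2(r \ge R)} \lesssim R^{-4} \| \pi_R \vec w_{\ell_0}(t)\|_{\dot{H}^1 \times L^2(r \ge R)} + R^{-\frac{5}{2}} \| \pi_R \vec w_{\ell_0}(t)\|^2_{\dot{H}^1 \times L^2(r \ge R)} + R^{-1} \| \pi_R \vec w_{\ell_0}(t)\|^3_{\dot{H}^1 \times L^2(r \ge R)}.
\end{align*}
You correctly identified that the linear potential is the danger point, but then asserted the wrong conclusion about how it resolves: it does \emph{not} disappear into the cubic term; it survives, and the argument is saved only because its coefficient $R^{-4}$ (coming from $|\fy_{\ell_0}|\lesssim r^{-3}$) is itself small and summable over dyadic scales.

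This also means the downstream steps are not ``the $\ell_0=0$ argument verbatim.'' With the corrected inequality, the difference estimates become \emph{linear} in $|\vec v_{\ell_0}|$ with small decaying coefficients, e.g. $|v_{\ell_0,0}(r)-v_{\ell_0,0}(r')|\lesssim r^{-4}\bigl(v_{\ell_0,0}^2(r)+v_{\ell_0,1}^2(r)\bigr)^{1/2}$, rather than cubic with smallness supplied by $\de_1$; the geometric-series argument forcing compact support then runs off the factor $(1-Cr_0^{-4})$ instead of $(1-C\de_1)$. Likewise, in the final support-shrinking step the absorption $|\vec v_{\ell_0}(\rho_1)|\le C\e\,|\vec v_{\ell_0}(\rho_1)|$ must be obtained from the closeness of radii, $\rho_0-\rho_1<\e$, since the cubic gain in the norm is no longer available. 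These are routine adaptations, but they must be made; as written, your proof would stall at the unobtainable cubic inequality.
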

The proof of Lemma~\ref{lem: wl 0} will be  very similar to the argument that was presented in the previous steps which completed the  case $\ell_0 = 0$. We will thus omit many details here. 


We have  established the asymptotic behavior of $\vec w_{\ell_0}$ given in~\eqref{w l limits}.  However we recall that one of the keys to the argument in the previous steps was the estimate~\eqref{R ineq}, as this gave a quantitative restriction on the proximity of  $\vec u(0)$ to the plane $P(R)$. Here we establish a similar estimate  for $\vec w_{\ell_0}$. As before  modify the Cauchy problems~\eqref{w eq} and~\eqref{w eq wm} on the interior of the cone $\Cc_R(t, r):=\{r \le R + \abs{t}\}$ for large $R$. As in the relevant sections in~\cite{KLS, L13, DKM5} we alter the right-hand-side of~\eqref{w eq} and~\eqref{w eq wm}. 

For each $R>0$ we define $\fy_{\ell_0, R}$ by
\EQ{ 
\fy_{\ell_0,R}(t, r):= \begin{cases} \fy_{\ell_0}(R+ \abs{t}) \mfor 0 \le r \le R + \abs{t}\\ \fy_{\ell_0}(r) \mfor r \ge R+ \abs{t}\end{cases}
}
where $\fy_{\ell_0}$ is the relevant elliptic solution depending on whether we are dealing with solutions to~\eqref{u eq},~\eqref{u eq wms}, or~\eqref{u eq wmh}. 
Next, set
\begin{align*} 
& \NN_{R, \textrm{cubic}}(t, r, w_{\ell_0}):=   \NN_{ \textrm{cubic}}(\fy_{\ell_0, R}, w_{\ell_0})\\ 
&\NN_{R, \textrm{w.m.}}(t,  r, w_{\ell_0}):= \begin{cases}  \NN_{\textrm{w.m.}}(R+\abs{t}, \fy_{\ell_0, R}, w_{\ell_0})\mfor 0 \le r \le R+ \abs{t} \\  \NN_{\textrm{w.m.}}(r, \fy_{ \ell_0, R}, w_{\ell_0}) \mfor r \ge R+ \abs{t} \end{cases}
\end{align*}
Using \eqref{fy ell} and \eqref{hm ell s} together with~\eqref{nn wm}, we can deduce that for $R$ large enough we have  
\begin{align} \label{N1R}
&\abs{\NN_{R, \textrm{cubic}}(t, r, w_{\ell_0})} \lesssim \begin{cases} (R+\abs{t})^{-6} \abs{w_{\ell_0}} + (R+\abs{t})^{-3} \abs{w_{\ell_0}}^2  + \abs{w_{\ell_0}}^3, \\ \mfor 0 \le r \le R + \abs{t}\\ r^{-6} \abs{w_{\ell_0}}^2 + r^{-3} \abs{w_{\ell_0}}^2  + \abs{w_{\ell_0}}^3 \mfor r \ge R + \abs{t}\end{cases}\\
&\abs{\NN_{R, \textrm{cubic}}(t, r, w_{\ell_0})}  \lesssim \begin{cases} (R+\abs{t})^{-6} \abs{w_{\ell_0}} + (R+\abs{t})^{-3} \abs{w_{\ell_0}}^2  + \abs{w_{\ell_0}}^3, \\ \mfor 0 \le r \le R + \abs{t}\\ r^{-6} \abs{w_{\ell_0}}^2 + r^{-3} \abs{w_{\ell_0}}^2  + \abs{w_{\ell_0}}^3 \mfor r \ge R + \abs{t}\end{cases} \label{N2R}
\end{align}
Because we have the same estimates for $\NN_{R, \textrm{cubic}}$ and $\NN_{R, \textrm{w.m.}}$ above we will simply write $\NN_{R}$ for both of them in what follows. We consider a modified Cauchy problem. As in the set-up for Lemma~\ref{lem: h sd} we define  a smooth radial function $\chi \in C^{\infty}(\R^5)$ with $\chi(r) = 1$ for $r \ge 1$ and $\chi(r) = 0$ on $r \le 1/2$. Rescale to define $\chi_R(r):= \chi(r/R)$ and for each $R>0$ we consider: 
\EQ{ \label{w eq R}
&w_{tt}- w_{rr} - \frac{4}{r} w_r =\ti{\NN}_R(t, r,w) \\
&\ti{\NN}_R(t, r, w) :=  \chi_R\NN_{R}(t, r, w)\\
& \vec w(0) = (w_0, w_1) \in \dot{H}^1 \times L^2(\R^5) 
}
The idea  here is that we have introduced spacial decay  as well as time integrability into the potential terms that  will then allow these to be treated in a perturbative manner. We have also introduced the cut-off $\chi_R$. As in the Lemma~\ref{lem: h sd} this removes the super-critical nature of the nonlinearity and allows us to treat the right-hand-side perturbatively in the \emph{energy space}. This is an analog of Lemma~\ref{lem: h sd} but here we have linearized about a nontrivial elliptic solution $\fy_{\ell_0}$. As before we define  the norm $Z(I)$ by
\ant{
\|\vec w\|_{Z(I)} = \|w\|_{L^{2}_t(I; L^{5}_x(\R^5))} + \|\vec w(t) \|_{L^{\infty}_t(I; \dot H^1 \times L^2)}
}
\begin{lem}\label{lem: w cp} There exists $R_2>0$ and  $\de_2>0$ small enough such that for every $R>R_2$ and all data $\vec w(0) = (w_0, w_1) \in \dot{H}^1 \times L^2(\R^5)$ with 
\ant{
\|\vec w(0) \|_{\dot{H}^1 \times L^2(\R^5)}   < \de_2
} 
there exists a unique global solution $\vec w(t) \in \dot{H}^1 \times L^2$ to \eqref{w eq R}. Moreover, $\vec w(t) $ satisfies 
\EQ{ \label{w a pri}
\|w\|_{Z(\R))} \lesssim \|\vec w(0)\|_{\dot{H}^1 \times L^2(\R^5)} \lesssim \de_2
}
Finally, if we denote the free evolution with the same data by $w_L(t):=S(t) \vec h(0)\in \dot{H}^1 \times L^2(\R^5)$, then we have 
\EQ{\label{w wL}
\sup_{t \in \R}\|\vec w(t) - \vec w_L(t)\|_{\dot{H}^1 \times L^2}& \lesssim R^{-4}\|\vec w(0)\|_{\dot{H}^1 \times L^2} + R^{-5/2}\|\vec w(0)\|_{\dot{H}^1 \times L^2}^2 \\
& \quad+R^{-1} \|\vec w(0)\|_{\dot{H}^1 \times L^2}^3 
}
\end{lem}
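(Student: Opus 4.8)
The plan is to prove Lemma~\ref{lem: w cp} by the standard contraction-mapping/continuity argument based on the Strichartz estimates of Proposition~\ref{prop stric}, exactly in the spirit of Lemma~\ref{lem: h sd}, the only new feature being the presence of the time- and space-dependent potential terms coming from linearizing about $\fy_{\ell_0}$. First I would set up the Duhamel iteration map
\[
\Phi(\vec w)(t) := S(t)\vec w(0) + \int_0^t \frac{\sin((t-s)\sqrt{-\Delta})}{\sqrt{-\Delta}} \ti\NN_R(s, \cdot, w(s))\, ds
\]
on the ball $\{\|\vec w\|_{Z(\R)} \le 2C\de_2\}$, using Proposition~\ref{prop stric} with $s=1$, the wave-admissible exponents $(p,q,\ga)=(2,5,1)$ for the solution and $(a,b,\rho) = (\infty,2,0)$ for the forcing, so that it suffices to bound $\|\ti\NN_R\|_{L^1_t L^2_x}$ by (a small constant times) $\|\vec w\|_{Z}$ plus higher powers. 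The cubic-in-$w$ piece $\chi_R |w|^3$ is handled precisely as in Lemma~\ref{lem: h sd}: by the radial Sobolev embedding Lemma~\ref{lem: rad se} one has $\|\chi_R |w|\|_{L^\infty_t L^{10}_x} \lesssim R^{-1}\|w\|_{L^\infty_t \dot H^1}$, and then H\"older in time against $\|w\|_{L^2_t L^5_x}^2$ gives the $R^{-1}\|\vec w\|_Z^3$ contribution.

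Next I would treat the genuinely new terms, namely the quadratic term $\chi_R \fy_{\ell_0,R} w^2$ and the linear potential term $\chi_R (R+|t|)^{-6} w$ (and its exterior analog $r^{-6}w$), using the pointwise bounds~\eqref{N1R}, \eqref{N2R}. For the linear term the key gain is \emph{time integrability}: on the support of $\chi_R$ we have the weight $(R+|t|)^{-3}\cdot(R+|t|)^{-3}$ in the interior and $r^{-3}\cdot r^{-3} \gtrsim (R+|t|)^{-3}\cdot$ (pointwise decay) in the exterior, so pairing one factor of $(R+|t|)^{-3}$ against $\|w(t)\|$ and integrating the remaining $(R+|t|)^{-3}$ in $t$ from $0$ to $\infty$ produces a factor $\lesssim R^{-2}$, and combined with the extra $R^{-1}$ from $\chi_R$ and Sobolev embedding one gets the $R^{-4}\|\vec w(0)\|_{\dot H^1\times L^2}$ bound claimed in~\eqref{w wL}; the precise bookkeeping of exponents (and the use of $\| r^{-3} w\|$ via Hardy/Lemma~\ref{lem: rad se} in the exterior region) is the routine part I would not spell out in detail. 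The quadratic term $\fy_{\ell_0,R} w^2$ is estimated similarly using $|\fy_{\ell_0,R}| \lesssim (R+|t|)^{-3}$ on $\chi_R$-support together with $\| w^2\|$ in a suitable Lebesgue/Sobolev norm, yielding the intermediate $R^{-5/2}\|\vec w(0)\|^2$ contribution. Once the contraction is established on $[0,\infty)$ (and symmetrically on $(-\infty,0]$) the global solution and the a priori bound~\eqref{w a pri} follow, and~\eqref{w wL} is read off by writing $\vec w - \vec w_L = $ the Duhamel integral and applying exactly the same estimates to it.

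The main obstacle I anticipate is organizing the forcing estimate so that \emph{every} term genuinely carries a negative power of $R$ and, for the potential-type terms, a factor that is integrable in time over all of $\R$ — this is what makes the problem effectively energy-subcritical and is the whole point of the truncations $\fy_{\ell_0,R}$ and $\chi_R$. Concretely one must be careful that in the exterior region $\{r\ge R+|t|\}$ the weights $r^{-6}, r^{-3}$ are not themselves small (only the spatial variable is large), so one should trade $r^{-3}\gtrsim (R+|t|)^{-3}$ to recover time decay while keeping one power of $r^{-3}$ (resp.\ $r^{-6}$) to absorb into the $\dot W^{1,\cdot}\hookrightarrow$ radial-Sobolev estimate for $w$; matching Lebesgue exponents in these mixed weighted estimates via Lemma~\ref{lem: rad se} (treating $r^{-1}w$ on the same footing as $\nabla w$, as was done in the proof of Proposition~\ref{prop: wm 52}) is where all the care is needed. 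Everything else is a verbatim repetition of the small-data argument, so I would state the lemma's proof compactly, referring back to Lemma~\ref{lem: h sd} and Lemma~\ref{lem: rad se}, and only display the one or two weighted estimates that exhibit the powers $R^{-4}, R^{-5/2}, R^{-1}$ appearing in~\eqref{w wL}.
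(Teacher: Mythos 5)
Your proposal matches the paper's argument: the authors likewise reduce the lemma to a standard Strichartz/contraction argument as in Lemma~\ref{lem: h sd}, controlling $\ti{\NN}_R$ in $L^1_tL^2_x$ via the pointwise bounds~\eqref{N1R}--\eqref{N2R} (with the weighted radial Sobolev/Hardy estimates supplying the powers $R^{-4}$, $R^{-5/2}$, $R^{-1}$), and they omit the details just as you propose to. No substantive difference in approach.
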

\begin{proof} The proof follows from~\eqref{N1R} and~\eqref{N2R} and is very similar to the proof of Lemma~\ref{lem: h sd}. In particular by a standard argument it suffices to control $\ti{\NN}_R$ in $L^1_tL^2_x$ using the estimates ~\eqref{N1R} and~\eqref{N2R}. We omit the details. See for example~\cite[Lemma~$5.17$]{L13} for a more detailed argument of a similar result. 
\end{proof}

With Lemma~\ref{lem: w cp} in hand, we argue exactly as in the proof of Lemma~\ref{lem: R ineq} to prove the key inequality: 
\begin{lem} \label{lem: w R ineq} There exists $R_2>0$ such  that for all $R>R_2$ and for all $t \in I_{\max}(u)$ we have 
\begin{align*}
\| \pi^{\perp}_R \vec w_{\ell_0}(t) \|_{\dot{H}^1 \times L^2(r \ge R)} &\lesssim R^{-4} \| \pi_R \vec w_{\ell_0}(t)\|_{\dot{H}^1 \times L^2(r \ge R)} +R^{-\frac{5}{2}} \| \pi_R \vec w_{\ell_0}(t)\|_{\dot{H}^1 \times L^2(r \ge R)}^2\\
&+  R^{-1} \| \pi_R \vec w_{\ell_0}(t)\|_{\dot{H}^1 \times L^2(r \ge R)}^3 
\end{align*}
where $\pi_R$,  $\pi_R^{\perp}$ are as in Proposition~\ref{prop: ext en}.  We note that the constant above is uniform in $t \in I_{\max}(u)$. 
\end{lem}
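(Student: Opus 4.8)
The plan is to repeat the proof of Lemma~\ref{lem: R ineq} essentially verbatim, now using the perturbative Cauchy theory of Lemma~\ref{lem: w cp} in place of Lemma~\ref{lem: h sd}. As there, it suffices to establish the inequality at $t=0$: all of the smallness used below holds uniformly in $t\in I_{\max}(\vec u)$, so the estimate at a general time $t_0$ follows by running the identical argument with $\vec w_{\ell_0}(t_0)$ truncated at radius $R$. First I would record that the requisite smallness for $\vec w_{\ell_0}$ is uniform: by precompactness of $K_1$ in $\dot H^1\times L^2$ (Lemma~\ref{lem: compact}) together with the decay $\fy_{\ell_0}(r)=O(r^{-3})$ from Lemma~\ref{lem: ell} (resp.\ Lemma~\ref{lem: hm}), we have $\|\vec w_{\ell_0}(t)\|_{\dot H^1\times L^2(r\ge R)}\le \|\vec u(t)\|_{\dot H^1\times L^2(r\ge R)}+\|(\fy_{\ell_0},0)\|_{\dot H^1\times L^2(r\ge R)}<\de_2$ for all $t$, once $R\ge R_2$ is large, where $\de_2$ is the threshold in Lemma~\ref{lem: w cp}.

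Next I would truncate, defining $\vec w_{\ell_0,R}(0)=(w_{0,R},w_{1,R})$ by $w_{0,R}(r)=w_{\ell_0,0}(r)$, $w_{1,R}(r)=w_{\ell_0,1}(r)$ for $r\ge R$ and $w_{0,R}(r)=w_{\ell_0,0}(R)$, $w_{1,R}(r)=0$ for $r\le R$, so that $\|\vec w_{\ell_0,R}(0)\|_{\dot H^1\times L^2}=\|\vec w_{\ell_0}(0)\|_{\dot H^1\times L^2(r\ge R)}<\de_2$. By Lemma~\ref{lem: w cp} this data launches a global solution $\vec w_{\ell_0,R}(t)$ to the modified problem~\eqref{w eq R} satisfying the a priori bound~\eqref{w a pri} and the difference estimate~\eqref{w wL} relative to its own free evolution $\vec w_{\ell_0,R,L}(t):=S(t)\vec w_{\ell_0,R}(0)$. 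Since $\fy_{\ell_0,R}$ coincides with $\fy_{\ell_0}$ and $\chi_R\equiv1$ on the exterior cone $\Cc_R=\{r\ge R+\abs{t}\}$, and $\vec w_{\ell_0,R}(0)$ agrees with $\vec w_{\ell_0}(0)$ on $\{r\ge R\}$, finite speed of propagation forces $\vec w_{\ell_0,R}(t)=\vec w_{\ell_0}(t)$ on $\Cc_R$; hence, by the exterior energy vanishing for $\vec w_{\ell_0}$ recorded just before the lemma, $\|\vec w_{\ell_0,R}(t)\|_{\dot H^1\times L^2(r\ge R+\abs{t})}\to0$ as $\abs{t}\to\infty$.

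I would then combine the triangle inequality $\|\vec w_{\ell_0,R}(t)\|_{\dot H^1\times L^2(r\ge R+\abs{t})}\ge\|\vec w_{\ell_0,R,L}(t)\|_{\dot H^1\times L^2(r\ge R+\abs{t})}-\|\vec w_{\ell_0,R}(t)-\vec w_{\ell_0,R,L}(t)\|_{\dot H^1\times L^2}$ with~\eqref{w wL}, let $t\to\pm\infty$ (the sign chosen per the maximum in Proposition~\ref{prop: ext en}), use the vanishing of the left side and Proposition~\ref{prop: ext en} on the $\vec w_{\ell_0,R,L}$ term on the right (noting $\vec w_{\ell_0,R}(0)=\vec w_{\ell_0}(0)$ on $\{r\ge R\}$), to arrive at
\[
\|\pi_R^{\perp}\vec w_{\ell_0}(0)\|_{\dot H^1\times L^2(r\ge R)}\lesssim R^{-4}\|\vec w_{\ell_0}(0)\|_{\dot H^1\times L^2(r\ge R)}+R^{-\frac52}\|\vec w_{\ell_0}(0)\|_{\dot H^1\times L^2(r\ge R)}^2+R^{-1}\|\vec w_{\ell_0}(0)\|_{\dot H^1\times L^2(r\ge R)}^3.
\]
Finally, expanding $\|\vec w_{\ell_0}(0)\|_{\dot H^1\times L^2(r\ge R)}^2=\|\pi_R\vec w_{\ell_0}(0)\|^2+\|\pi_R^{\perp}\vec w_{\ell_0}(0)\|^2$ by orthogonality and taking $R_2$ large enough to absorb all $\pi_R^{\perp}$ contributions on the right into the left yields the asserted inequality with $\pi_R$ alone on the right-hand side; the estimate at an arbitrary $t_0\in I_{\max}(\vec u)$ follows identically. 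The only genuinely delicate point is the bookkeeping around finite speed of propagation --- checking that neither the cutoff $\chi_R$ nor the replacement of $\fy_{\ell_0}$ by $\fy_{\ell_0,R}$ perturbs the flow on $\Cc_R$ --- but this is exactly as in the proof of Lemma~\ref{lem: R ineq}; see also~\cite[Section~$5$]{L13}.
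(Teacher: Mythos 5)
Your proposal is correct and is exactly the argument the paper intends: the paper proves Lemma~\ref{lem: w R ineq} by invoking Lemma~\ref{lem: w cp} and repeating the proof of Lemma~\ref{lem: R ineq} (truncation at radius $R$, agreement on the exterior cone by finite speed of propagation, vanishing of exterior energy, Proposition~\ref{prop: ext en}, and absorption of the $\pi_R^{\perp}$ terms), which is what you have written out, including the uniform-in-$t$ smallness coming from Lemma~\ref{lem: compact} and the decay of $\fy_{\ell_0}$.
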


Next, we prove that $(\p_r  w_{\ell_0, 0}, w_{\ell_0, 1})$ must be compactly supported. 
\begin{claim}\label{claim: w comp supp} Let $\vec w_{\ell_0}$ be as in~\eqref{w l def}. Then $(\p_r  w_{\ell_0, 0}, w_{\ell_0, 1})$ must be compactly supported.
\end{claim}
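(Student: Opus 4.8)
The plan is to transcribe the proof of Claim~\ref{claim: comp supp} almost verbatim, with Lemma~\ref{lem: w R ineq} (which linearizes about the elliptic solution $\fy_{\ell_0}$) now playing the role that Lemma~\ref{lem: R ineq} played in the case $\ell_0=0$. First I would phrase Lemma~\ref{lem: w R ineq} in terms of the functions $v_{\ell_0,0}$, $v_{\ell_0,1}$ from~\eqref{w l limits}; exactly as in~\eqref{v u project},
\ant{
\| \pi_R \vec w_{\ell_0}(0) \|_{\dot H^1 \times L^2(r \ge R)}^2 &= 3 R^{-3} v_{\ell_0, 0}^2(R) + R^{-1} v_{\ell_0, 1}^2(R), \\
\| \pi_R^{\perp} \vec w_{\ell_0}(0) \|_{\dot H^1 \times L^2(r \ge R)}^2 &= \int_R^{\infty} \Big( \tfrac1r \p_r v_{\ell_0, 0}(r) \Big)^2 dr + \int_R^{\infty} \big( \p_r v_{\ell_0, 1}(r) \big)^2 dr,
}
so Lemma~\ref{lem: w R ineq} controls $\big(\int_R^\infty[(\tfrac1r\p_r v_{\ell_0,0})^2+(\p_r v_{\ell_0,1})^2]\,dr\big)^{1/2}$ by a multiple of $R^{-4}(3R^{-3}v_{\ell_0,0}^2(R)+R^{-1}v_{\ell_0,1}^2(R))^{1/2}$ plus the analogous quadratic and cubic expressions. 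Feeding this into Cauchy--Schwarz (picking up a factor $r^{3/2}$ against the weight $\tfrac1\rho$ in the $v_{\ell_0,0}$-integral, and $r^{1/2}$ in the $v_{\ell_0,1}$-integral), one obtains difference estimates of the type of Corollary~\ref{cor: diff est}: for $R_2\le r\le r'\le 2r$,
\ant{
| v_{\ell_0,0}(r) - v_{\ell_0,0}(r') | &\lesssim r^{-4} | v_{\ell_0,0}(r) | + r^{-3} | v_{\ell_0,1}(r) | + (\text{quadratic and cubic in } v_{\ell_0,0}, v_{\ell_0,1}), \\
| v_{\ell_0,1}(r) - v_{\ell_0,1}(r') | &\lesssim r^{-5} | v_{\ell_0,0}(r) | + r^{-4} | v_{\ell_0,1}(r) | + (\text{quadratic and cubic}).
}
By~\eqref{w l limits} we already know $|v_{\ell_0,0}(r)| = O(r^{-4})$ and $|v_{\ell_0,1}(r)| = O(r^{-2})$, so the nonlinear contributions are dominated by the displayed linear ones for $r$ large, and these are genuine small perturbations, exactly as in Corollary~\ref{cor: diff est 2}.

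Then I would run the iteration of Claim~\ref{claim: comp supp}. Fix $r_0 \ge R_2$ large and set $a_n := |v_{\ell_0,0}(2^n r_0)|$, $b_n := |v_{\ell_0,1}(2^n r_0)|$. From~\eqref{w l limits}, $a_n + b_n \lesssim 2^{-2n} r_0^{-2} \to 0$. From the difference estimates and the triangle inequality, for $r_0$ large one has $a_{n+1} + b_{n+1} \ge (1 - C r_0^{-3} 2^{-3n})(a_n + b_n)$, so iterating gives $a_N + b_N \ge c_{\infty}(r_0)(a_0 + b_0)$ where $c_{\infty}(r_0) := \prod_{n \ge 0}(1 - C r_0^{-3} 2^{-3n}) > 0$ for $r_0$ large; letting $N \to \infty$ forces $a_0 = b_0 = 0$, i.e.\ $v_{\ell_0, 0}(r_0) = v_{\ell_0, 1}(r_0) = 0$. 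By the first identity above this means $\|\pi_{r_0} \vec w_{\ell_0}(0)\|_{\dot H^1 \times L^2(r \ge r_0)} = 0$, whereupon Lemma~\ref{lem: w R ineq} immediately yields $\|\pi_{r_0}^{\perp} \vec w_{\ell_0}(0)\|_{\dot H^1 \times L^2(r \ge r_0)} = 0$, hence $\|\vec w_{\ell_0}(0)\|_{\dot H^1 \times L^2(r \ge r_0)} = 0$. That is precisely the assertion that $(\p_r w_{\ell_0, 0}, w_{\ell_0, 1})$ is supported in $\{r \le r_0\}$; as in Claim~\ref{claim: comp supp} it suffices to argue at $t = 0$.

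Since this is a direct transcription of the $\ell_0 = 0$ argument, I do not expect an essential new difficulty: the genuine content has already been isolated in Lemma~\ref{lem: w R ineq} and in the asymptotics~\eqref{w l limits}. The one place needing care is the bookkeeping of the extra quadratic and cubic terms produced by Lemma~\ref{lem: w R ineq} — one must verify that, given the a priori decay~\eqref{w l limits}, these remain strictly lower order uniformly along the dyadic iteration (so that the contraction factors stay of the form $1-Cr_0^{-3}2^{-3n}$ and $c_\infty(r_0)$ is bounded away from zero), and likewise that the reverse direction of the difference estimate produces the claimed lower bound. Everything else is routine.
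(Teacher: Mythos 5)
Your proposal is correct and follows essentially the same route as the paper: rephrase Lemma~\ref{lem: w R ineq} in terms of $(v_{\ell_0,0},v_{\ell_0,1})$ via the projection identities, derive dyadic difference estimates by Cauchy--Schwarz, use the a priori decay~\eqref{w l limits} to make the iteration contract, conclude $\vec v_{\ell_0}(r_0)=(0,0)$, and then kill the exterior $\dot H^1\times L^2$ norm with one more application of the lemma. The only (cosmetic) differences are that the paper absorbs the quadratic and cubic terms into a single $r^{-4}|\vec v_{\ell_0}(r)|$ difference bound before iterating and compares $(3/4)^n$ growth against the $2^{-2n}$ decay, whereas you keep the lower-order terms explicit and use a convergent product lower bound; both versions close.
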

\begin{proof}[Proof of Claim~\ref{claim: w comp supp}]
To prove the  claim, we pass to the $\vec v_{\ell_0}$ formulation.  With $(v_{\ell_0, 0}, v_{\ell_0, 1})$ defined as in~\eqref{w l limits} we can  conclude that for all $R>R_2$ large enough we have 
\EQ{\label{v l ineq}
\int_R^{\infty} &\left(\frac{1}{r} \p_r v_{\ell_0, 0}(r)\right)^2 \, dr + \int_R^{\I} (\p_r v_{\ell_0,1}( r))^2 \, dr   
\lesssim  R^{-19}v_{\ell_0,0}^2(R) + R^{-11} v_{\ell_0,0}^4(R)\\
&\quad+ R^{-11}v_{\ell_0,0}^6(R)  
+R^{-17}v_{\ell_0,1}^2(R)
+ R^{-7} v_{\ell_0,1}^4(R) + R^{-5}v_{\ell_0,1}^6( R) \\
&\lesssim  R^{-11}(v_{\ell_0, 0}^2(R) + v_{\ell_0, 1}^2(R))
}
where the first inequality above follows by rephrasing Lemma~\ref{lem: w R ineq} in terms of $ \vec v_{\ell_0} = (v_{\ell_0, 0}, v_{\ell_0, 1})$ by using~\eqref{v u project},  and the last line following from the decay estimates in \eqref{w l limits}.  

Next, arguing as in Corollary~\ref{cor: diff est}, we can prove difference estimates, i.e, for all $R_2 \le r \le r' \le 2r$ we have
\ali{
&\abs{v_{\ell_0, 0}(r) - v_{\ell_0, 0}(r')} \lesssim r^{-4}(v_{\ell_0, 0}^2(r) + v_{\ell_0, 1}^2(r))^{\frac{1}{2}},\\
&\abs{v_{\ell_0, 1}(r) - v_{\ell_0, 1}(r')} \lesssim r^{-5}(v_{\ell_0, 0}^2(r) + v_{\ell_0, 1}^2(r))^{\frac{1}{2}}.
}
Defining the vector  $\vec v_{\ell_0}  = (v_{\ell_0, 0},  v_{\ell_0, 1})$, 
where
\begin{align*}
\abs{\vec v_{\ell_0}(r)}^2:= v_{\ell_0, 0}^2(r) + v_{\ell_0, 1}^2(r)
\end{align*}
this means that  
\ant{
&\abs{\vec v_{\ell_0}(r) - \vec v_{\ell_0}(r')} \lesssim r^{-4}\abs{\vec v_{\ell_0}(r)}.
}
We can then prove that for fixed $r_0 \ge R_2$ large enough,
\begin{align*}
\abs{\vec v_{\ell_0}(2^{n+1}r_0)} \ge \frac{3}{4} \abs{\vec v_{\ell_0}(2^{n}r_0)}.
\end{align*}
Iterating this, we see that for each $n \in \N$, we have
\ant{
\abs{\vec v_{\ell_0}(2^{n}r_0)} \ge \left(\frac{3}{4}\right)^{n} \abs{\vec v_{\ell_0}(r_0)}.
}
On the other hand, using~\eqref{w l limits} we have 
\ant{
\abs{ \vec v_{\ell_0}(2^n r_0)} \lesssim (2^n r_0)^{-2}.
}
Combining the last two lines we obtain
\ant{
3^n \abs{ \vec v_{\ell_0}(r_0)} \lesssim 1,
}
which then implies that $\vec v_{\ell_0}(r_0) = (0, 0)$. Plugging this last fact into~\eqref{v l ineq} yields
\ant{
\int_{r_0}^{\infty} \left(\frac{1}{r} \p_r v_{\ell_0, 0}(r)\right)^2 \, dr + \int_{r_0}^{\I} (\p_r v_{\ell_0,1}( r))^2 \, dr    = 0.
}
Therefore, 
\begin{multline*}
\| \vec u_{\ell_0}\|_{\dot{H}^1 \times L^2(r \ge r_0)}^2= \\= \int_{r_0}^{\infty} \left(\frac{1}{r} \p_r v_{\ell_0, 0}(r)\right)^2 \, dr + \int_{r_0}^{\I} (\p_r v_{\ell_0,1}( r))^2 \, dr + 3r_0^{-3}v_{\ell_0, 0}^2(r_0) + r_0^{-1} v_{\ell_0, 1}^2(r_0) = 0
\end{multline*}
which implies that $(\p_r u_{\ell_0, 0}, u_{\ell_0, 1})$ must be compactly supported. 
\end{proof}

We can now complete the proof of Lemma~\ref{lem: wl 0} by showing that in fact,  $\vec w_{\ell_0} \equiv (0, 0)$.
\begin{proof}[Proof of Lemma~\ref{lem: wl 0}]The proof follows the same argument as the proof of Lemma~\ref{lem: ell=0}. Suppose that $$(\p_r w_{\ell_0,0}, w_{\ell_0,1}) \neq (0, 0).$$  
By Claim~\ref{claim: w comp supp},   $(\p_r w_{\ell_0,0}, w_{\ell_0,1})$ is compactly supported. Then we can define
 $\rho_0>0$ by 
 \ant{
 \rho_0 := \inf \left\{ \rho>0 \, : \,  \|\vec w_{\ell_0}\|_{ \dot H^1 \times L^2(r \ge \rho)} =  0\right\}
 }
Let $\e>0$ be a small number (to be determined below). Let $\rho_1 = \rho_1( \e)$ be chosen so that 
\begin{align} \label{rho1 def}
&\frac{ 1}{2} \rho_0 < \rho_1 < \rho_0, \mand  \rho_0 - \rho_1 < \e, \\
&0< \| \vec u_{\ell_0}(0)\|_{\dot{H}^1 \times L^2(r \ge \rho_1)}^2  < \de_2^2 \label{rho1 def2}
\end{align}
with $\de_2$ is as in Lemma~\ref{lem: w cp}. With $(v_{\ell_0, 0}, v_{\ell, 1})$ as above,  
\EQ{\label{v ell small}
3 \rho_1^{-3} v_{\ell_0,0}^2(\rho_1) + \rho_1^{-1}v_{\ell_0, 1}^2(\rho_1)&+ \int_{\rho_1}^\I \left( \frac{1}{r} \p_r v_{\ell_0,0}( r) \right)^2 \, dr + \int_{\rho_1}^\I \left( \p_r v_{\ell_0,1}( r) \right)^2 \, dr  = \\
& = \| \pi_{\rho_1} \vec u_{\ell_0}(0)\|^2_{\dot{H}^1 \times L^2(r \ge \rho_1)}+\| \pi_{\rho_1}^{\perp} \vec u_{\ell_0}(0)\|^2_{\dot{H}^1 \times L^2(r \ge \rho_1)} \\
&= \|  \vec u_{\ell_0}(0)\|^2_{\dot{H}^1 \times L^2(r \ge \rho_1)} < \de^2_2
}
Setting  $R= \rho_1$ in~\eqref{v l ineq} then yields 
\EQ{\label{vl rho1}
\left( \int_{\rho_1}^\I \left[\left( \frac{1}{r} \p_r v_{\ell_0,0}(r) \right)^2 + \left( \p_r v_{\ell_0,1}( r) \right)^2 \right]\, dr\right)^{\frac{1}{2}} & \lesssim \rho_1^{-\frac{11}{2}}\left( \abs{v_{\ell_0,0}( \rho_1)}^2 + \abs{v_{\ell_0,1}( \rho_1)}^2\right)^{\frac{1}{2}} \\
&\lesssim \rho_0^{-\frac{11}{2}}\left( \abs{v_{\ell_0,0}( \rho_1)}^2 + \abs{v_{\ell_0,1}( \rho_1)}^2\right)^{\frac{1}{2}} 
}
where the assumption $\frac{1}{2} \rho_0< \rho_1< \rho_0$ enters in the last line above. Because $v_{\ell_0, 0}( \rho_0) = v_{\ell_0,1}( \rho_0) = 0$ we can argue exactly as in Corollary~\ref{cor: diff est} to obtain
\ant{
&\abs{v_{\ell_0,0}( \rho_1)}^2 = \abs{v_{\ell_0,0}(\rho_1) - v_{\ell_0,0}( \rho_0)}^2 \le C_1\, \e^{3}\, (\abs{v_{\ell_0,0}( \rho_1)}^2 + \abs{v_{\ell_0,1}( \rho_1)}^2)\\
&\abs{v_{\ell_0,1}( \rho_1)}^2 = \abs{v_{\ell_0,1}(\rho_1) - v_{\ell_0,1}( \rho_0)}^2 \le C_2\, \e\, (\abs{v_{\ell_0,0}( \rho_1)}^2 + \abs{v_{\ell_0,1}( \rho_1)}^2)
}
with $C_1, C_2$ depending only $\rho_0$ which is fixed, and the uniform constant in~\eqref{vl rho1}, but not on~$\e$. 
Combining above  gives 
\EQ{
(\abs{v_{\ell_0,0}( \rho_1)}^2 + \abs{v_{\ell_0,1}( \rho_1)}^2) \le C_3 \e (\abs{v_{\ell_0,0}( \rho_1)}^2 + \abs{v_{\ell_0,1}( \rho_1)}^2),
}
This shows  that $\abs{v_{\ell_0,0}( \rho_1)} = \abs{v_{\ell_0,1}( \rho_1)} = 0$ once $\e>0$ is chosen small enough. By~\eqref{vl rho1} and the equalities in~\eqref{v ell small} we see that 
\ant{
\| \vec w_{\ell_0}(0)\|_{\dot{H}^1 \times L^2(r \ge \rho_1)} = 0. 
}
But this  contradicts the definition of $\rho_0$ since $\rho_1< \rho_0$. Thus, $(\p_r w_{\ell_0,0}, w_{\ell_0,1})  \equiv (0, 0)$. Since $w_{\ell_0}(r) \to 0$ as $r \to \infty$ we  also deduce that $(w_{\ell_0,0}, w_{\ell_0,1})  \equiv (0, 0)$.
\end{proof}

To clarify  this lengthy argument  we summarize the proof of Proposition~\ref{prop: r1}. 
\begin{proof}[Proof of Proposition~\ref{prop: r1}]
Let $\vec u(t)$ be a solution to~\eqref{u eq} (resp.~\eqref{u eq wms}, resp.~\eqref{u eq wmh}) with the compactness property as in Proposition~\ref{prop: r1}.  
By Lemma~\ref{lem: space decay} there exists $\ell_0 \in \R$ so that 
\EQ{
&\abs{r^3 u_0(r) - \ell_0 } = O(r^{-4}) \mas r \to \infty\\
&\abs{r \int_r^{\infty} u_1( \rho) \rho \, d\rho} = O(r^{-2}) \mas r \to \infty 
}
If $\ell_0 \neq 0$ then by Lemma~\ref{lem: wl 0}, $\vec u(0) = (u_0, u_1) = (\fy_{\ell_0}, 0)$ where $\fy_{\ell_0}$ is a nonzero solution to~\eqref{eq e} (resp. ~\eqref{hm 5 s}, resp.~\eqref{hm 5 h}) given by Lemma~\ref{lem: ell} (resp. Lemma~\ref{lem: hm}). However, this is impossible since $\fy_{\ell_0} \notin \dot{H}^{\frac{3}{2}}(\R^5)$, while on the other hand we know that $\vec u(0) \in \dot{H}^{\frac{3}{2}} \times \dot H^{\frac{1}{2}}$. 

Thus, we must have $\ell_0 = 0$.  By Lemma~\ref{lem: ell=0} we can conclude that $\vec u(0)= (0, 0)$, proving Proposition~\ref{prop: r1}. 
\end{proof}

\bibliographystyle{plain}
\bibliography{researchbib}

\begin{thebibliography}{10}

\bibitem{BG}
P.~Bahouri and G\'erard.
\newblock High frequency approximation of solutions to critical nonlinear wave
  equations.
\newblock {\em Amer. J. Math.}, 121:131--175, 1999.

\bibitem{Biz00}
P.~Bizo{{\'n}}.
\newblock Equivariant self-similar wave maps from {M}inkowski spacetime into
  3-sphere.
\newblock {\em Comm. Math. Phys.}, 215(1):45--56, 2000.

\bibitem{BMW}
P.~Bizo{{\'n}}, D.~Maison, and A.~Wasserman.
\newblock Self-similar solutions of semilinear wave equations with a focusing
  nonlinearity.
\newblock {\em Nonlinearity}, 20(9):2061--2074, 2007.

\bibitem{Bul12a}
A.~Bulut.
\newblock The defocusing cubic nonlinear wave equation in the
  energy-supercritical regime.
\newblock In {\em Recent advances in harmonic analysis and partial differential
  equations}, volume 581 of {\em Contemp. Math.}, pages 1--11. Amer. Math.
  Soc., Providence, RI, 2012.

\bibitem{Bul12b}
A.~Bulut.
\newblock Global well-posedness and scattering for the defocusing
  energy-supercritical cubic nonlinear wave equation.
\newblock {\em J. Funct. Anal.}, 263(6):1609--1660, 2012.

\bibitem{CST}
T.~Cazenave, J.~Shatah, and A.~S. Tahvildar-Zadeh.
\newblock Harmonic maps of the hyperbolic space and development of
  singularities in wave maps and {Y}ang-{M}ills fields.
\newblock {\em Ann. Inst. H. Poincar{\'e} Phys. Th{\'e}or.}, 68(3):315--349,
  1998.

\bibitem{ITeamAnnals}
J.~Colliander, M.~Keel, G.~Staffilani, H.~Takaoka, and T.~Tao.
\newblock Global well-posedness and scattering for the energy-critical
  nonlinear {S}chr{\"o}dinger equation in {$\Bbb R^3$}.
\newblock {\em Ann. of Math. (2)}, 167(3):767--865, 2008.

\bibitem{Coll}
C.~Collot.
\newblock Type {II} blow up for the energy supercritical wave equation.
\newblock {\em Preprint}, 07 2014.

\bibitem{D13}
B.~Dodson.
\newblock Global well-posedness and scattering for the defocusing, mass -
  critical generalized {KdV} equation.
\newblock {\em Preprint}, 04 2013.

\bibitem{DL14a}
B.~Dodson and A.~Lawrie.
\newblock Scattering for the radial 3d cubic wave equation.
\newblock {\em Preprint}, 03 2014.

\bibitem{Don10}
R.~Donninger.
\newblock Nonlinear stability of self-similar solutions for semilinear wave
  equations.
\newblock {\em Comm. Partial Differential Equations}, 35(4):669--684, 2010.

\bibitem{Don11}
R.~Donninger.
\newblock On stable self-similar blowup for equivariant wave maps.
\newblock {\em Comm. Pure Appl. Math.}, 64(8):1095--1147, 2011.

\bibitem{DHKS}
R.~Donninger, M.~Huang, J.~Krieger, and W.~Schlag.
\newblock Exotic blowup solutions for the {$u^5$} focusing wave equation in
  {$\mathbb{R}^3$}.
\newblock 12 2012.

\bibitem{DK}
R.~Donninger and J.~Krieger.
\newblock Nonscattering solutions and blowup at infinity for the critical wave
  equation.
\newblock {\em Preprint}, 2012.

\bibitem{DS14}
Roland Donninger and Birgit Sch{{\"o}}rkhuber.
\newblock Stable blow up dynamics for energy supercritical wave equations.
\newblock {\em Trans. Amer. Math. Soc.}, 366(4):2167--2189, 2014.

\bibitem{DKM1}
T.~Duyckaerts, C.~Kenig, and F.~Merle.
\newblock Universality of the blow-up profile for small radial type
  $\textrm{II}$ blow-up solutions of the energy critical wave equation.
\newblock {\em J. Eur math. Soc. (JEMS)}, 13(3):533--599, 2011.

\bibitem{DKM3}
T.~Duyckaerts, C.~Kenig, and F.~Merle.
\newblock Profiles of bounded radial solutions of the focusing, energy-critical
  wave equation.
\newblock {\em Geom. Funct. Anal.}, 22(3):639--698, 2012.

\bibitem{DKM2}
T.~Duyckaerts, C.~Kenig, and F.~Merle.
\newblock Universality of the blow-up profile for small type {II} blow-up
  solutions of the energy-critical wave equation: the nonradial case.
\newblock {\em J. Eur. Math. Soc. (JEMS)}, 14(5):1389--1454, 2012.

\bibitem{DKM4}
T.~Duyckaerts, C.~Kenig, and F.~Merle.
\newblock Classification of radial solutions of the focusing, energy critical
  wave equation.
\newblock {\em Cambridge Journal of Mathematics}, 1(1):75--144, 2013.

\bibitem{DKM5}
T.~Duyckaerts, C.~Kenig, and F.~Merle.
\newblock Scattering for radial, bounded solutions of focusing supercritical
  wave equations.
\newblock {\em Int. Math. Res. Not. IMRN}, (1):224--258, 2014.

\bibitem{DM}
T.~Duyckaerts and F.~Merle.
\newblock Dynamics of threshold solutions for energy-critical wave equation.
\newblock {\em Int. Math. Res. Pap. IMRP}, Art ID rpn002, 67, 2008.

\bibitem{ESS03}
L.~Eskauriaza, G.~A. Ser{{\"e}}gin, and V.~{\v{S}}verak.
\newblock {$L_{3,\infty}$}-solutions of {N}avier-{S}tokes equations and
  backward uniqueness.
\newblock {\em Uspekhi Mat. Nauk}, 58(2(350)):3--44, 2003.

\bibitem{Gri90}
M.~G. Grillakis.
\newblock Regularity and asymptotic behaviour of the wave equation with a
  critical nonlinearity.
\newblock {\em Ann. of Math. (2)}, 132(3):485--509, 1990.

\bibitem{Kee-Tao}
M.~Keel and T.~Tao.
\newblock Endpoint {S}trichartz estimates.
\newblock {\em Amer. J. Math.}, 120(5):955--980, 1998.

\bibitem{KLS}
C.~Kenig, A.~Lawrie, and W.~Schlag.
\newblock Relaxation of wave maps exterior to a ball to harmonic maps for all
  data.
\newblock {\em Geom. Funct. Anal.}, 24(2):610--647, 2014.

\bibitem{KM06}
C.~Kenig and F.~Merle.
\newblock Global well-posedness, scattering and blow-up for the
  energy-critical, focusing, non-linear {S}chr{\"o}dinger equation in the
  radial case.
\newblock {\em Invent. Math.}, 166(3):645--675, 2006.

\bibitem{KM08}
C.~Kenig and F.~Merle.
\newblock Global well-posedness, scattering and blow-up for the energy-critical
  focusing non-linear wave equation.
\newblock {\em Acta Math.}, 201(2):147--212, 2008.

\bibitem{KM10}
C.~Kenig and F.~Merle.
\newblock Scattering for {$\dot H^{1/2}$} bounded solutions to the cubic,
  defocusing {NLS} in 3 dimensions.
\newblock {\em Trans. Amer. Math. Soc.}, 362(4):1937--1962, 2010.

\bibitem{KM11a}
C.~Kenig and F.~Merle.
\newblock Nondispersive radial solutions to energy supercritical non-linear
  wave equations, with applications.
\newblock {\em Amer. J. Math.}, 133(4):1029--1065, 2011.

\bibitem{KM11b}
C.~Kenig and F.~Merle.
\newblock Radial solutions to energy supercritical wave equations in odd
  dimensions.
\newblock {\em Discrete Contin. Dyn. Syst.}, 31(4):1365--1381, 2011.

\bibitem{KKSV}
R.~Killip, S.~Kwon, S.~Shao, and M.~Visan.
\newblock On the mass-critical generalized {K}d{V} equation.
\newblock {\em Discrete Contin. Dyn. Syst.}, 32(1):191--221, 2012.

\bibitem{KTV09}
R.~Killip, T.~Tao, and M.~Visan.
\newblock The cubic nonlinear {S}chr{\"o}dinger equation in two dimensions with
  radial data.
\newblock {\em J. Eur. Math. Soc. (JEMS)}, 11(6):1203--1258, 2009.

\bibitem{KTV}
R.~Killip, T.~Tao, and M.~Visan.
\newblock The cubic nonlinear {S}chr{\"o}dinger equation in two dimensions with
  radial data.
\newblock {\em J. Eur. Math. Soc. (JEMS)}, 11(6):1203--1258, 2009.

\bibitem{KV10CPDE}
R.~Killip and M.~Visan.
\newblock Energy-supercritical {NLS}: critical {$\dot H^s$}-bounds imply
  scattering.
\newblock {\em Comm. Partial Differential Equations}, 35(6):945--987, 2010.

\bibitem{KV10AJM}
R.~Killip and M.~Visan.
\newblock The focusing energy-critical nonlinear {S}chr{\"o}dinger equation in
  dimensions five and higher.
\newblock {\em Amer. J. Math.}, 132(2):361--424, 2010.

\bibitem{KV11TAMS}
R.~Killip and M.~Visan.
\newblock The defocusing energy-supercritical nonlinear wave equation in three
  space dimensions.
\newblock {\em Trans. Amer. Math. Soc.}, 363(7):3893--3934, 2011.

\bibitem{KV11a}
R.~Killip and M.~Visan.
\newblock The radial defocusing energy-supercritical nonlinear wave equation in
  all space dimensions.
\newblock {\em Proc. Amer. Math. Soc.}, 139(5):1805--1817, 2011.

\bibitem{KVClay}
R.~Killip and M.~Vi{\c{s}}an.
\newblock Nonlinear {S}chr{\"o}dinger equations at critical regularity.
\newblock In {\em Evolution equations}, volume~17 of {\em Clay Math. Proc.},
  pages 325--437. Amer. Math. Soc., Providence, RI, 2013.

\bibitem{KNS13AJM}
J.~Krieger, K.~Nakanishi, and W.~Schlag.
\newblock Global dynamics away from the ground state for the energy-critical
  nonlinear wave equation.
\newblock {\em Amer. J. Math.}, 135(4):935--965, 2013.

\bibitem{KNS13DCDS}
J.~Krieger, K.~Nakanishi, and W.~Schlag.
\newblock Global dynamics of the nonradial energy-critical wave equation above
  the ground state energy.
\newblock {\em Discrete Contin. Dyn. Syst.}, 33(6):2423--2450, 2013.

\bibitem{KNS14CMP}
J.~Krieger, K.~Nakanishi, and W.~Schlag.
\newblock Threshold {P}henomenon for the {Q}uintic {W}ave {E}quation in {T}hree
  {D}imensions.
\newblock {\em Comm. Math. Phys.}, 327(1):309--332, 2014.

\bibitem{KS12}
J.~Krieger and W.~Schlag.
\newblock Full range of blow up exponents for the quintic wave equation in
  three dimensions.
\newblock {\em Preprint}, 12 2012.

\bibitem{KS14}
J.~Krieger and W.~Schlag.
\newblock Large global solutions for energy supercritical nonlinear wave
  equations on {$\mathbb{R}^{3+1}$}.
\newblock {\em Preprint}, 2014.

\bibitem{KST3}
J.~Krieger, W.~Schlag, and D.~Tataru.
\newblock Slow blow-up solutions for the {$H^1(\Bbb R^3)$} critical focusing
  semilinear wave equation.
\newblock {\em Duke Math. J.}, 147(1):1--53, 2009.

\bibitem{L13}
A.~Lawrie.
\newblock Conditional global existence and scattering for a semi-linear
  {S}kyrme equation with large data.
\newblock {\em To appear in Comm. Math. Phys.}, preprint 2013.

\bibitem{LinS}
H.~Lindblad and C.~D. Sogge.
\newblock On existence and scattering with minimal regularity for semilinear
  wave equations.
\newblock {\em J. Funct. Anal.}, 130(2):357--426, 1995.

\bibitem{MRR}
F.~Merle, P.~Raphael, and I~Rodnianski.
\newblock Type {II} blow up for the energy supercritical {NLS}.
\newblock {\em Preprint}, 07 2014.

\bibitem{MZ03ajm}
Frank Merle and Hatem Zaag.
\newblock Determination of the blow-up rate for the semilinear wave equation.
\newblock {\em Amer. J. Math.}, 125(5):1147--1164, 2003.

\bibitem{MZ05ma}
Frank Merle and Hatem Zaag.
\newblock Determination of the blow-up rate for a critical semilinear wave
  equation.
\newblock {\em Math. Ann.}, 331(2):395--416, 2005.

\bibitem{Shatah}
J.~Shatah.
\newblock Weak solutions and development of singularities of the {${\rm
  SU}(2)$} {$\sigma$}-model.
\newblock {\em Comm. Pure Appl. Math.}, 41(4):459--469, 1988.

\bibitem{SSbook}
J.~Shatah and M.~Struwe.
\newblock {\em Geometric wave equations}.
\newblock Courant Lecture notes in Mathematics, New York University, Courant
  Institute of Mathematical Sciences, New York. American Mathematical Society,
  Providence RI, 1998.

\bibitem{Shen}
R.~Shen.
\newblock On the energy subcritical nonlinear wave equation with radial data
  for ${3}<p<{5}$.
\newblock {\em Preprint}, 2012.

\bibitem{Sogge}
C.~D. Sogge.
\newblock {\em Lectures on non-linear wave equations}.
\newblock International Press, Boston, MA, second edition, 2008.

\bibitem{Struwe88}
M.~Struwe.
\newblock Globally regular solutions to the {$u^5$} {K}lein-{G}ordon equation.
\newblock {\em Ann. Scuola Norm. Sup. Pisa Cl. Sci. (4)}, 15(3):495--513
  (1989), 1988.

\bibitem{Tao1}
T.~Tao.
\newblock Global regularity of wave maps. {I}. {S}mall critical {S}obolev norm
  in high dimension.
\newblock {\em Internat. Math. Res. Notices}, (6):299--328, 2001.

\bibitem{Taobook}
T.~Tao.
\newblock {\em Nonlinear dispersive equations}, volume 106 of {\em CBMS
  Regional Conference Series in Mathematics}.
\newblock Published for the Conference Board of the Mathematical Sciences,
  Washington, DC, 2006.
\newblock Local and global analysis.

\bibitem{Tao07}
T.~Tao.
\newblock A (concentration-)compact attractor for high-dimensional non-linear
  {S}chr{\"o}dinger equations.
\newblock {\em Dyn. Partial Differ. Equ.}, 4(1):1--53, 2007.

\bibitem{TVZ}
T.~Tao, M.~Visan, and X.~Zhang.
\newblock Global well-posedness and scattering for the defocusing mass-critical
  nonlinear {S}chr{\"o}dinger equation for radial data in high dimensions.
\newblock {\em Duke Math. J.}, 140(1):165--202, 2007.

\bibitem{TVZ08}
T.~Tao, M.~Visan, and X.~Zhang.
\newblock Minimal-mass blowup solutions of the mass-critical {NLS}.
\newblock {\em Forum Math.}, 20(5):881--919, 2008.

\bibitem{TS}
N.~Turok and D~Spergel.
\newblock Global texture and the microwave background.
\newblock {\em Physical Review Letters}, 64(23):2736--2739, 1990.

\end{thebibliography}

 \bigskip
 
 \vspace{1in}
 \bigskip 
 
\centerline{\scshape Benjamin Dodson}
\smallskip
{\footnotesize
 \centerline{Department of Mathematics, Johns Hopkins University}
\centerline{404 Krieger Hall, 3400 N. Charles Street,
 Baltimore, MD 21218 U.S.A.}
\centerline{\email{dodson@math.jhu.edu}}
} 

 \centerline{\scshape Andrew Lawrie}
\smallskip
{\footnotesize
 \centerline{Department of Mathematics, The University of California, Berkeley}
\centerline{970 Evans Hall \#3840, Berkeley, CA 94720, U.S.A.}
\centerline{\email{ alawrie@math.berkeley.edu}}
} 

\end{document}